\date{}
\newlength{\defbaselineskip}
\newcommand{\setlinespacing}[1]%
           {\setlength{\baselineskip}{#1 \defbaselineskip}}
\newcommand{\N}{{\mathbb{N}}}
\newcommand{\actaqed}{\hfill $\actabox$}
{\medskip\noindent \textit{Proof of #1. }}%
{\actaqed \medskip}
\def\D{{\mathcal D}}
\def\C{{\mathcal C}}
\def\cB{\mathcal B}
\def\cC{{\mathcal C}}
\def\cS{{\mathcal S}}
\def \Tr{\mathcal T}
\def \K{\mathcal K}
\def \P{\mathcal P}
\def \V{\mathcal V}
\def \U{\mathcal U}
\def \cX{\mathcal X}
\def \cM{\mathcal M}
\def\cR{\mathcal R}
\def\R{{\mathbb R}}
\def\Z{\mathbb Z}
\def \Td{{\mathbb T}^d}
\def \T{\mathbb T}
\def\bP{\mathbb P}
\def\bE{\mathbb E}
\def \bbE{\mathbb E}
\def \<{\langle}
\def\>{\rangle}
\def \L{\Lambda}
\def \La{\Lambda}
\def \ep{\epsilon}
\def \e{\varepsilon}
\def \de{\delta}
\def\al{\alpha}
\def\la{\lambda}
\def \sp{\operatorname{span}}
\def \meas{\operatorname{meas}}
\def\ba{\mathbf a}
\def\bb{\mathbf b}
\def\bx{\mathbf x}
\def\by{\mathbf y}
\def\bz{\mathbf z}
\def\bk{\mathbf k}
\def\bw{\mathbf w}
\def\bs{\mathbf s}
\def\bN{\mathbf N}
\def\bW{\mathbf W}
\newtheorem{Theorem}{Theorem}[section]
\newtheorem{Lemma}{Lemma}[section]
\newtheorem{Definition}{Definition}[section]
\newtheorem{Proposition}{Proposition}[section]
\newtheorem{Remark}{Remark}[section]
\newtheorem{Corollary}{Corollary}[section]
\numberwithin{equation}{section}
\newcommand{\be}{\begin{equation}}
\newcommand{\ee}{\end{equation}}
\def\Br{\Bigr}
\def\Bl{\Bigl}
\def\f{\frac}
\def\bl{\bigl}
\def\br{\bigr}
\def\b{\beta}
\def\conv{\operatorname{conv}}
\begin{document}

\title{
 Integral norm discretization and related problems
\footnote{
The first named author's research was partially supported by NSERC of Canada Discovery Grant RGPIN 04702-15.
The second named author's research was partially supported by NSERC of Canada Discovery Grant RGPIN 04863-15.
The third named author's research was supported by the Russian Federation Government Grant No. 14.W03.31.0031.
The fourth  named
author's research was partially supported by
 MTM 2017-87409-P,  2017 SGR 358, and
 the CERCA Programme of the Generalitat de Catalunya.
}}

\author{F. Dai, \, A. Prymak, \, V.N. Temlyakov, \, and  \, S. Tikhonov}

\newcommand{\Addresses}{{
  \bigskip
  \footnotesize

  F.~Dai, \textsc{ Department of Mathematical and Statistical Sciences\\
University of Alberta\\ Edmonton, Alberta T6G 2G1, Canada\\
E-mail:} \texttt{fdai@ualberta.ca }

  \medskip
 A.~Prymak, \textsc{ Department of Mathematics\\
University of Manitoba\\ Winnipeg, MB, R3T 2N2, Canada
  \\
E-mail:} \texttt{Andriy.Prymak@umanitoba.ca }

    \medskip
  V.N. Temlyakov, \textsc{University of South Carolina,\\ Steklov Institute of Mathematics,\\ and Lomonosov Moscow State University
  \\
E-mail:} \texttt{temlyak@math.sc.edu}

  \medskip

  S.~Tikhonov, \textsc{Centre de Recerca Matem\`{a}tica\\
Campus de Bellaterra, Edifici C
08193 Bellaterra (Barcelona), Spain;\\
ICREA, Pg. Llu\'{i}s Companys 23, 08010 Barcelona, Spain,\\
 and Universitat Aut\`{o}noma de Barcelona\\
E-mail:} \texttt{stikhonov@crm.cat}

}}

\maketitle

\bigskip
\begin{abstract}
 The problem of replacing an integral norm with respect to a given probability measure by the corresponding integral norm with respect to a discrete measure is discussed in the paper.
 The above problem is studied for elements of finite dimensional spaces.
 { Also, discretization of the uniform norm of functions from a given finite dimensional subspace of continuous functions is studied.  We pay special attention to the case of the multivariate trigonometric polynomials with frequencies from a finite set with fixed cardinality. }  Both new results and a survey of known results are presented.
\end{abstract}

\tableofcontents

\section{Introduction}
\label{Intr}

We study discretization of $L_q$ norms of functions from finite dimensional subspaces. In the case of $1\le q<\infty$ this problem can be formulated as a problem
of numerical integration. Let us return to the question of discretization of $L_q$ norm after a general discussion of the numerical integration problem.

Numerical integration seeks good ways of approximating an integral
$$
\int_\Omega f(\bx)d\mu(\bx)
$$
by an expression of the form
\be\label{1.1ni}
\La_m(f,\xi) :=\sum_{j=1}^m\la_jf(\xi^j),\quad \xi=(\xi^1,\dots,\xi^m),\quad \xi^j \in \Omega,\quad j=1,\dots,m.
\ee
It is clear that we must assume that $f$ is integrable and defined at the points
 $\xi^1,\dots,\xi^m$. Expression~\eqref{1.1ni} is called a {\it cubature formula} $(\xi,\La)$ (if $\Omega \subset \R^d$, $d\ge 2$) or a {\it quadrature formula} $(\xi,\La)$ (if $\Omega \subset \R$) with nodes $\xi =(\xi^1,\dots,\xi^m)$ and weights $\La:=(\la_1,\dots,\la_m)\in\R^m$. We do not impose any {\it a priori} restrictions on nodes and weights. Some nodes may coincide and both positive and negative weights are allowed.

  Some classes of cubature formulas are of special interest. For instance, the Quasi-Monte Carlo cubature formulas, which have equal weights $1/m$, are important in applications. We use a special notation for these cubature formulas
 $$
 Q_m(f,\xi) :=\frac{1}{m}\sum_{j=1}^mf(\xi^j).
 $$
Other examples include positive weights and weights satisfying stability constraint $\sum_{j=1}^m|\lambda_j|<const$.


Typically, one is interested in {\it good cubature formulas} for a given function class. The term {\it good} can be understood in different ways. Cubature formulas providing
exact numerical integration for functions from a given class can be considered ``best''. If a cubature formula is not exact on a given class then we need to introduce a concept of error.  Following the standard approach, for a function class $\bW$ we introduce the concept of error of the cubature formula $\La_m(\cdot,\xi)$ by
\[
\La_m(\bW,\xi):= \sup_{f\in \bW} \left|\int_\Omega fd\mu -\La_m(f,\xi)\right|.
\]
The quantity $\La_m(\bW,\xi)$ is a classical characteristic of the quality of a given cubature formula $\La_m(\cdot,\xi)$. This setting is called {\it the worst case setting} in
the Information Based Complexity (see, e.g., \cite{Wo}). Notice that the above error characteristic provides
an absolute error independent of an individual function from the class.

{ Recently, in a number of papers (see \cite{VT158}, \cite{VT159}, \cite{VT160}) a systematic study of the problem of discretization of the $L_q$ norms of elements of finite dimensional subspaces has begun. The first results in this direction were obtained by Marcinkiewicz and
by Marcinkiewicz-Zygmund (see \cite{Z}) for discretization of the $L_q$ norms of the univariate trigonometric polynomials in 1930s. This is why we call discretization results of this kind the Marcinkiewicz-type theorems. We discuss here the way of discretization which uses function values at a fixed finite set of points. Therefore, this way can also be called {\it sampling discretization}. }
 
 We discuss this problem in a rather general setting. Let $\Omega$ be a compact subset of $\R^d$ and $\mu$ be a probability measure on $\Omega$. We consider the space $L_q(\Omega)=L_q(\Omega,\mu)$, $1\le q< \infty$, of functions satisfying
$$
\|f\|_q := \left(\int_\Omega |f|^qd\mu\right)^{1/q} <\infty.
$$
In the case $q=\infty$ we define $L_\infty(\Omega)=\mathcal C(\Omega)$ as the space of continuous functions  on $\Omega$ with
$$
\|f\|_\infty := \max_{\bx\in\Omega} |f(\bx)|.
$$
In a special case when $\Omega$ is a discrete set $\Omega_M=\{\bx^j\}_{j=1}^M$ of distinct points $\bx^j$, we consider the measure 
$\mu$ such that $\mu(\bx^j)=1/M$, $j=1,\dots,M$.

In the Marcinkiewicz-type discretization problems we study the numerical integration
problem for the class \[\bW:= X_N^q:=\{f\in L_q(\Omega)\cap X_N: \|f\|_q\le 1\},\] where $X_N$ is a finite dimensional subspace of $L_q(\Omega)$, $1\le q<\infty$. An important new feature of our approach is the measurement of the error -- we study the {\it relative} error of numerical integration. Let us now formulate explicitly the main problems of our interest.

 {\bf Marcinkiewicz problem.} Let $\Omega$ be a compact subset of $\R^d$ with the probability measure $\mu$. We say that a linear subspace $X_N$ (usually $N$ stands for the dimension of $X_N$) of the $L_q(\Omega)$, $1\le q < \infty$, admits the Marcinkiewicz-type discretization theorem with parameters $m$ and $q$ if there exist a set $\{\xi^\nu \in \Omega: \nu=1,\dots,m\}$ and two positive constants $C_j(d,q)$, $j=1,2$, such that for any $f\in X_N$ we have
\be\label{1.1}
C_1(d,q)\|f\|_q^q \le \frac{1}{m} \sum_{\nu=1}^m |f(\xi^\nu)|^q \le C_2(d,q)\|f\|_q^q.
\ee
In the case $q=\infty$ (recall that we set $L_\infty(\Omega)=\mathcal C(\Omega)$) we 
ask for
\[
C_1(d)\|f\|_\infty \le \max_{1\le\nu\le m} |f(\xi^\nu)| \le  \|f\|_\infty.
\]
We will also use a brief way to express the above property: the $\cM(m,q)$ theorem holds for  a subspace $X_N$ or $X_N \in \cM(m,q)$.

{\bf Marcinkiewicz problem with weights.}  We say that a linear subspace $X_N$ of the $L_q(\Omega)$, $1\le q < \infty$, admits the weighted Marcinkiewicz-type discretization theorem with parameters $m$ and $q$ if there exist a set of nodes $\{\xi^\nu \in \Omega\}$, a set of weights $\{\la_\nu\}$, $\nu=1,\dots,m$, and two positive constants $C_j(d,q)$, $j=1,2$, such that for any $f\in X_N$ we have
\be\label{1.5}
C_1(d,q)\|f\|_q^q \le  \sum_{\nu=1}^m \la_\nu |f(\xi^\nu)|^q \le C_2(d,q)\|f\|_q^q.
\ee
Then we also say that the $\cM^w(m,q)$ theorem holds for  a subspace $X_N$ or $X_N \in \cM^w(m,q)$.
Obviously, $X_N\in \cM(m,q)$ implies that $X_N\in \cM^w(m,q)$.

{\bf Marcinkiewicz problem with $\e$.} We write $X_N\in \cM(m,q,\e)$ if (\ref{1.1}) holds with $C_1(d,q)=1-\e$ and $C_2(d,q)=1+\e$.  Respectively,
we write $X_N\in \cM^w(m,q,\e)$ if (\ref{1.5}) holds with $C_1(d,q)=1-\e$ and $C_2(d,q)=1+\e$.
We  also write $ X_N\in \cM^w_+(m,q,\e)$ if   $X_N\in \cM^w(m,q,\e)$ and (\ref{1.5}) holds with nonnegative weights $\lambda_\nu$.
We note that the most powerful results are for $\cM(m,q,0)$,
when the $L_q$ norm of $f\in X_N$ is discretized exactly by the formula with equal weights $1/m$. In case $X_N\in \cM(m,q,0)$ we say that $X_N$ admits {\it exact discretization} with parameters $m$ and $q$. In case $X_N\in \cM^w(m,q,0)$ we say that $X_N$ admits {\it exact weighted discretization} with parameters $m$ and $q$.

In the above formulations of the problems we only ask about existence of
either good $\{\xi^\nu\}$ or good $\{\xi^\nu,\la_\nu\}$. Certainly, it is important to
have either explicit constructions of good $\{\xi^\nu\}$ ($\{\xi^\nu,\la_\nu\}$) or
deterministic ways to construct good $\{\xi^\nu\}$ ($\{\xi^\nu,\la_\nu\}$). Thus, the
Marcinkiewicz-type problem can be split into the following four problems: under some assumptions on $X_N$

\begin{description}
  \item[(I)]
   Find a condition on $m$ for $X_N \in \cM(m,q)$;
  \item[(II)]
Find a condition on $m$ for $X_N \in \cM^w(m,q)$;
  \item[(III)]
 Find a condition on $m$ such that there exists a deterministic construction
of $\{\xi^\nu\}_{\nu=1}^m$ satisfying (\ref{1.1}) for all $f\in X_N$;

  \item[(IV)]
Find a condition on $m$ such that there exists a deterministic construction
of $\{\xi^\nu,\la_\nu\}_{\nu=1}^m$ satisfying (\ref{1.5}) for all $f\in X_N$.
   \end{description}
We note that the setting of the Marcinkiewicz-type problems is motivated by
applications. For instance, a typical approach to solving a continuous problem numerically -- the Galerkin method --
suggests searching for an approximate solution from a given finite dimensional subspace. A standard way to measure an error of approximation is an appropriate $L_q$ norm, $1\le q\le\infty$. Thus, the problem of   discretization of the $L_q$ norms of functions from a given finite dimensional subspace arises in a very natural way.

The paper contains both new results and a brief survey.
 Section \ref{survey} provides a survey of
known results on the Marcinkiewicz-type discretization. This section does not contain
new results.

 In Sections \ref{Ex} and \ref{constr} we present new results with brief discussions. These results are devoted
to exact weighted discretization. In particular, Theorems~\ref{gfT1} and~\ref{gfT2}
solve completely the problem of exact weighted discretization for general finite dimensional subspaces. Theorem~\ref{thm-4-1} provides a more general version of the Tchakaloff's theorem (e.g., see~\cite{P}) with a different proof.

Section \ref{hyper} is devoted to the problem of Marcinkiewicz-type discretization in $L_\infty$ on the subspace of trigonometric polynomials with frequencies from a hyperbolic cross. This problem is still open (see Open problem 5 in the last section).
 Here we present new results, which complement
a phenomenon discovered earlier (see the discussion in Section \ref{survey}).

{ In Section \ref{ungen} we present recent results from \cite{KT168} and \cite{KT169} on sampling discretization of the uniform norm of elements of finite dimensional subspaces. These results show that the Marcinkiewicz-type inequalities in the uniform norm are different from their counterparts in $L_1$ and $L_2$. }

In Section \ref{ud} we address the following important from the point of view of applications 
feature of discretization -- {\it universality} (see \cite{Tem16} and \cite{TBook}). Universality means that we want to build a discretization pair $(\{\xi^\nu\},\{\lambda_\nu\})$ which is good for each subspace from a given collection instead of being good only for a single given subspace.  We give there (see Subsection 6.1) a detailed
survey of known results on universal discretization. Also, we present new results (see Subsection 6.2) on universal discretization.

 In Section \ref{OP} we present some open problems.

\section{A brief survey}
\label{survey}

\subsection{Trigonometric polynomials}\label{sec2.1}
In this subsection we deal with the $2\pi$-periodic case of $d$-variate functions. In this case $\Omega =\T^d$ and $\mu$ is a normalized Lebesgue measure on $\T^d$.
We discuss discretization theorems of Marcinkiewicz-type  for subspaces of the
trigonometric polynomials. By $Q$ we denote a finite subset of $\Z^d$, and $|Q|$ stands for the number of elements in $Q$. Let
$$
\Tr(Q):= \{f: f=\sum_{\bk\in Q}c_\bk e^{i(\bk,\bx)},\  \  c_{\bk}\in\mathbb{C}\}.
$$
Let us start with the well-known results related to the Marcinkiewicz-type discretization theorems for the trigonometric polynomials.
We first consider the case $Q=\Pi(\bN):=[-N_1,N_1]\times \cdots \times [-N_d,N_d]$, $N_j \in \N$ or $N_j=0$, $j=1,\dots,d$, $\bN=(N_1,\dots,N_d)$.
We set
\begin{align*}
P(\mathbf N) := \Bigl\{\mathbf n = (n_1 ,\dots,n_d)\in\Z^d:\
0\le n_j\le 2N_j  ,\ j = 1,\dots,d \Bigr\},
\end{align*}
and
$$
\bx^{\mathbf n}:=\left(\frac{2\pi n_1}{2N_1+1},\dots,\frac{2\pi n_d}
{2N_d+1}\right),\qquad \mathbf n\in P(\mathbf N) .
$$
For any $t\in \Tr(\Pi(\mathbf N))$, one  has
$$
\|t\|_2^2  =\vartheta(\mathbf N)^{-1}\sum_{\mathbf n\in P(\mathbf N)}
\bigl|t(\bx^{\mathbf n})\bigr|^2,
$$
where $\vartheta(\mathbf N) := \prod_{j=1}^d (2N_j  + 1)=\dim\Tr(\Pi(\bN))$.
In particular, this implies that for any $\bN$ one has
\be\label{1.3}
\Tr(\Pi(\bN)) \in \cM(\vartheta(\bN),2,0).
\ee
In the case $1<q<\infty$, 
 the
well-known Marcinkiewicz discretization theorem (for $d=1$) is given as follows (see \cite{Z}, Ch.10, \S7 and \cite{TBook}, Ch.1, Section~2): for $t\in \mathcal{T}(\Pi(\bN))$,
$$
C_1(d,q)\|t\|_q^q  \le\vartheta(\mathbf N)^{-1}\sum_{\mathbf n\in P(\mathbf N)}
\bigl|t(\bx^{\mathbf n})\bigr|^q  \le C_2(d,q)\|t\|_q^q,\quad 1<q<\infty.
$$
This yields the following extension of~\eqref{1.3}:
\[
\Tr(\Pi(\bN)) \in \cM(\vartheta(\bN),q),\quad 1<q<\infty.
\]
For $q=1$ or $q=\infty$, one needs some adjustments. Let
\begin{align*}
P'(\mathbf N) := \Bigl\{\mathbf n &= (n_1,\dots,n_d)\in\Z^d:\
1\le n_j\le 4N_j  ,\ j = 1,\dots,d \Bigr\}
\end{align*}
and
$$
\bx(\mathbf n) :=\left (\frac{\pi n_1}{2N_1} ,\dots,\frac{\pi n_d}{2N_d}
\right)   ,\qquad \mathbf n\in P'(\mathbf N)  .
$$
If $N_j  = 0$, we let $x_j (\mathbf n) = 0$. Set ${\overline N} := \max (N,1)$ and $\nu(\bN) := \prod_{j=1}^d {\overline N_j}$.
 Therefore, the following Marcinkiewicz-type discretization theorem 
$$
C_1(d,q)\|t\|_q^q  \le\nu(4\mathbf N)^{-1}\sum_{\mathbf n\in P'(\mathbf N)}
\bigl|t(\bx({\mathbf n}))\bigr|^q  \le C_2(d,q)\|t\|_q^q,\quad 1\le q\le \infty,
$$
implies that
\[
\Tr(\Pi(\bN)) \in \cM(\nu(4\bN),q),\quad 1\le q\le \infty.
\]
We note that $\nu(4\bN) \le C(d) \dim \Tr(\Pi(\bN))$.

Let us now discuss
  the Marcinkiewicz-type discretization theorems for the hyperbolic cross trigonometric polynomials (see~\cite{DTU} for a recent survey covering a variety of topics related to the hyperbolic cross approximation). For $\bs\in\Z^d_+$
  we define
$$
\rho (\bs) := \{\bk \in \Z^d : [2^{s_j-1}] \le |k_j| < 2^{s_j}, \quad j=1,\dots,d\}
$$
where $[x]$ denotes the integer part of $x$.
By $Q_n$ denote
the step hyperbolic cross, i.e.,
$$
Q_n := \bigcup_{\bs:\|\bs\|_1\le n} \rho(\bs).
$$
Then  the corresponding set of the hyperbolic cross polynomials is given by
$$
\Tr(Q_n) := \left\{f: f=\sum_{\bk\in Q_n} c_\bk e^{i(\bk,\bx)},\  \  c_\bk\in\mathbb{C}\right\}.
$$
The problem on
obtaining the sharp Marcinkiewicz-type discretization theorems for the hyperbolic cross trigonometric polynomials is not solved yet.
To the best of our knowledge, no sharp results on the growth of $m$ as a function on $n$ for the relation $\Tr(Q_n) \in \cM(m,q)$ to hold  for  $1\le q\le \infty$, $q\neq 2$, are known.
Since $Q_n \subset \Pi(2^n,\dots,2^n)$, from the above mentioned results we have
$$
\Tr(Q_n)\in \cM(m,q),\quad \text{provided} \quad m\ge C(d)2^{dn},\quad 1\le q\le \infty,
$$
with large enough $C(d)$.
It seems that the first nontrivial result related to this problem  was derived in \cite{VT27}, where the set of points $\{\xi^\nu\}_{\nu=1}^p$ with $p\ll 2^{2n}n^{d-1}$ such that for all $t\in \Tr(Q_n)$ the inequality
$$
\|t\|_2^2 \le   \frac{1}{p} \sum_{\nu=1}^p |t(\xi^\nu)|^2
$$
holds was constructed. Later on,  a very nontrivial surprising negative result was obtained in the case of $q=\infty$ (see \cite{KT3, KT4, KaTe03} and Section \ref{hyper} below). It was proved that in order to have
$\Tr(Q_n)\in\cM(m,\infty)$ it is necessary that
 $m\gg |Q_n|^{1+c}$ with absolute constant $c>0$.

Moreover, it is worth mentioning that some deep general results on submatrices of orthogonal matrices imply
  important  Marcinkiewicz-type discretization theorems for $q=2$.
 For example, Rudelson's theorem \cite{Rud} yields  the following result
\[
\Tr(Q_n)\in \cM(m,2),\quad \text{provided} \quad m\ge C(d)|Q_n|n
\]
with large enough $C(d)$; see also Subsection \ref{subsection General subspaces 2}.

Let now discuss a recent breakthrough result by J. Batson, D.A. Spielman, and N. Srivastava \cite{BSS}, which will be written  using our notations.
\begin{Theorem}[\cite{BSS}] \label{thm:BSS}
	Let  $\Omega_M=\{x^j\}_{j=1}^M$ be a discrete set with the probability measure $\mu(x^j)=1/M$, $j=1,\dots,M$.
Let also
$\{u_i(x)\}_{i=1}^N$ be a real system of functions on $\Omega_M$.
Then for any
number $b>1$ there exist a set of weights $w_j\ge 0$ such that $|\{j: w_j\neq 0\}| \le bN$ so that for any $f\in Y_N:= \sp\{u_1,\dots,u_N\}$ we have 
\be\label{C2'}
\|f\|_{L_2(\Omega_M)}^2 \le \sum_{j=1}^M w_jf(x^j)^2 \le \frac{b+1+2\sqrt{b}}{b+1-2\sqrt{b}}\|f\|_{L_2(\Omega_M)}^2.
\ee
\end{Theorem}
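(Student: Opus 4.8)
The statement is the theorem of Batson, Spielman and Srivastava, and the natural route is to reproduce their deterministic \emph{barrier function} argument. I would first recast it as a linear-algebra statement in isotropic position. Put $U_j:=(u_1(x^j),\dots,u_N(x^j))\in\R^N$, $j=1,\dots,M$, and $A:=\frac1M\sum_{j=1}^M U_jU_j^T$; then for $f=\sum_{k=1}^N c_ku_k\in Y_N$ one has $\|f\|_{L_2(\Omega_M)}^2=c^TAc$ and $\sum_j w_jf(x^j)^2=c^T\bigl(\sum_j w_jU_jU_j^T\bigr)c$. Functions $f$ with $Ac=0$ vanish at every $x^j$, so the asserted inequalities hold trivially for them; replacing $N$ by $\dim\operatorname{range}(A)$ if necessary, we may assume $A$ is invertible. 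Setting $v_j:=\frac1{\sqrt M}A^{-1/2}U_j$ we get $\sum_{j=1}^M v_jv_j^T=I$, and it suffices to produce scalars $s_j\ge0$, at most $\lceil bN\rceil$ of them nonzero, with
\[
I\preceq \sum_{j=1}^M s_jv_jv_j^T\preceq \kappa\,I,\qquad \kappa:=\frac{b+1+2\sqrt b}{b+1-2\sqrt b},
\]
and then take $w_j:=s_j/M$.

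To build $B:=\sum s_jv_jv_j^T$ I would add one rescaled rank-one term $t\,v_jv_j^T$ at a time over $Q:=\lceil bN\rceil$ iterations, controlling its spectrum by the two barrier potentials $\Phi^u(B):=\operatorname{tr}(uI-B)^{-1}$ (finite exactly when $B\prec uI$) and $\Phi_\ell(B):=\operatorname{tr}(B-\ell I)^{-1}$ (finite exactly when $B\succ \ell I$), with ``barrier levels'' $u$ kept above and $\ell$ kept below the spectrum of the current $B$. Start from $B_0=0$, $u_0=N/\e_u>0$, $\ell_0=-N/\e_\ell<0$, so that $\Phi^{u_0}(B_0)=\e_u$ and $\Phi_{\ell_0}(B_0)=\e_\ell$ are small. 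At each step shift $u\mapsto u':=u+\delta_u$ and $\ell\mapsto\ell':=\ell+\delta_\ell$ by fixed increments and add $t\,v_jv_j^T$ so that \emph{neither} potential increases. By the Sherman--Morrison formula the requirement $\Phi^{u'}(B+tv_jv_j^T)\le\Phi^u(B)$ becomes a lower bound $\frac1t\ge U_B(v_j)$, and $\Phi_{\ell'}(B+tv_jv_j^T)\le\Phi_\ell(B)$ becomes an upper bound $\frac1t\le L_B(v_j)$, where $U_B,L_B$ are explicit quadratic forms in $v_j$ assembled from $(u'I-B)^{-1},(u'I-B)^{-2},(B-\ell'I)^{-1},(B-\ell'I)^{-2}$ and the current potential values; hence a feasible pair $(j,t)$ exists as soon as $U_B(v_j)\le L_B(v_j)$ for some $j$.

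The crux is guaranteeing such a $j$ at every step, and this is where isotropy enters: from $\sum_j v_jv_j^T=I$ we get $\sum_j v_j^TMv_j=\operatorname{tr}(M)$ for every $N\times N$ matrix $M$. Summing the explicit expressions for $U_B$ and $L_B$ and using the elementary inequalities $\Phi^u(B)-\Phi^{u'}(B)\ge\delta_u\operatorname{tr}(u'I-B)^{-2}$ and $\Phi_{\ell'}(B)-\Phi_\ell(B)\le\delta_\ell\operatorname{tr}(B-\ell'I)^{-2}$ gives $\sum_j U_B(v_j)\le\Phi^{u'}(B)+\frac1{\delta_u}$ and $\sum_j L_B(v_j)\ge\frac1{\delta_\ell}-\Phi_{\ell'}(B)$. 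If the invariants $\Phi^{u_k}(B_k)\le\e_u$ and $\Phi_{\ell_k}(B_k)\le\e_\ell$ are maintained for all $k$ (the update is designed to preserve them; for the lower barrier note that $\Phi_{\ell_k}(B_k)\le\e_\ell$ forces the gap $\lambda_{\min}(B_k)\ge\ell_k+1/\e_\ell$, which also keeps $\Phi_{\ell_{k+1}}(B_k)$ bounded after the level shift), then for a suitable choice of $\e_u,\e_\ell,\delta_u,\delta_\ell$ depending on $b$ one gets $\sum_j U_B(v_j)\le\sum_j L_B(v_j)$, so some $j$ admits a valid weight $t$.

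After $Q=\lceil bN\rceil$ iterations $B_Q$ has at most $Q$ nonzero weights and satisfies $\ell_Q I\prec B_Q\prec u_Q I$, where $\ell_Q=\ell_0+Q\delta_\ell$ and $u_Q=u_0+Q\delta_u$; the parameters are tuned so that $\ell_Q>0$ and $u_Q/\ell_Q\le\kappa$, and rescaling $B_Q$ by $1/\ell_Q$ (i.e. $s_j\mapsto s_j/\ell_Q$, $w_j:=s_j/M$) yields the claim, up to the harmless discrepancy between $\lceil bN\rceil$ and $bN$. The main obstacle is precisely this joint calibration of the four parameters: adding mass $t\,v_jv_j^T$ lowers the upper barrier but is exactly what the lower barrier needs, whereas raising the levels does the opposite, so $\e_u,\e_\ell,\delta_u,\delta_\ell$ must be chosen so that \emph{both} potential bounds survive all $Q$ steps while the terminal ratio $u_Q/\ell_Q$ stays as small as $\kappa$ --- carrying out this optimization is what produces the value $\frac{b+1+2\sqrt b}{b+1-2\sqrt b}$.
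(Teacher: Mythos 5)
The paper does not prove this statement at all: it is quoted verbatim from the cited reference [BSS], so there is no internal proof to compare against. Your proposal is a correct and faithful outline of the original Batson--Spielman--Srivastava barrier-potential argument (isotropic reduction via $v_j=M^{-1/2}A^{-1/2}U_j$, the two potentials $\operatorname{tr}(uI-B)^{-1}$ and $\operatorname{tr}(B-\ell I)^{-1}$, the Sherman--Morrison feasibility window for $t$, averaging over $\sum_j v_jv_j^T=I$, and the final calibration yielding $\frac{b+1+2\sqrt b}{b+1-2\sqrt b}$), which is exactly the proof the paper is implicitly relying on; the only loose end is the $\lceil bN\rceil$ versus $bN$ bookkeeping, which you correctly flag as harmless.
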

As a particular case, we obtain the weighted version of
  the $L_2$ Marcinkiewicz-type discretization theorem, that is,
     (\ref{1.1}) holds  for the above $X_N$ with $m\ge cN$
     with the general  weights $w_j$ instead of weights $1/m$.

The next theorem was derived in~\cite{VT158} from the
 recent paper by  S.~Nitzan, A.~Olevskii, and A.~Ulanovskii~\cite{NOU}, which in turn is based on the paper of A.~Marcus, D.A.~Spielman, and N.~Srivastava~\cite{MSS}.

\begin{Theorem}[{\cite[Theorem~1.1]{VT158}}] \label{NOUth}There are three positive absolute constants $C_1$, $C_2$, and $C_3$ with the following properties: For any $d\in \N$ and any $Q\subset \Z^d$   there exists a set of  $m \le C_1|Q| $ points $\xi^j\in \T^d$, $j=1,\dots,m$ such that for any $f\in \Tr(Q)$
	we have
	$$
	C_2\|f\|_2^2 \le \frac{1}{m}\sum_{j=1}^m |f(\xi^j)|^2 \le C_3\|f\|_2^2.
	$$
\end{Theorem}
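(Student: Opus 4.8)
The plan is to reduce the assertion to a finite matrix‑analytic statement and then feed it into the Marcus--Spielman--Srivastava solution of the Kadison--Singer problem in Weaver's $\mathrm{KS}_2$ form. Fix $Q\subset\Z^d$ with $|Q|=N$ and pick $\bN$ so large that $Q\subset\Pi(\bN)$; by the exact $L_2$ discretization $\Tr(\Pi(\bN))\in\cM(\vartheta(\bN),2,0)$ recalled in~\eqref{1.3}, the standard grid $G:=\{\bx^{\bn}:\bn\in P(\bN)\}$, of size $L:=\vartheta(\bN)$, satisfies $\|f\|_2^2=L^{-1}\sum_{x\in G}|f(x)|^2$ for all $f\in\Tr(\Pi(\bN))\supseteq\Tr(Q)$. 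To each $x\in G$ attach $v_x:=L^{-1/2}\bigl(e^{-i(\bk,x)}\bigr)_{\bk\in Q}\in\mathbb{C}^N$; polarizing the exact discretization gives $\sum_{x\in G}v_xv_x^{*}=I_N$, while $\|v_x\|^2=N/L=:\delta$ is the same for every node and, crucially, can be made as small as we like by refining $G$. Writing $f=\sum_{\bk\in Q}c_{\bk}e^{i(\bk,\cdot)}$ with coefficient vector $\mathbf{c}$, we have $\|f\|_2^2=\|\mathbf{c}\|^2$ and $\sum_{x\in J}|f(x)|^2=L\,\mathbf{c}^{*}\bigl(\sum_{x\in J}v_xv_x^{*}\bigr)\mathbf{c}$ for every $J\subseteq G$. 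Hence it suffices to produce $J\subseteq G$ with $|J|\le C_1N$ for which $\sum_{x\in J}v_xv_x^{*}$ is squeezed between $\alpha I_N$ and $\beta I_N$ with $\alpha L/|J|$ and $\beta L/|J|$ both of order $1$.

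The engine is an iterated bipartition based on the $\mathrm{KS}_2$ theorem of~\cite{MSS}: if $\sum_i u_iu_i^{*}=I_N$ and $\|u_i\|^2\le\eta$, then the index set splits as $A_1\cup A_2$ with $\bigl\|\sum_{i\in A_\ell}u_iu_i^{*}\bigr\|\le\tfrac12(1+\sqrt{2\eta})^2$ for $\ell=1,2$. Since the two blocks sum to $I_N$, once $\eta$ is below an absolute threshold each block is in addition $\succeq\bigl(1-\tfrac12(1+\sqrt{2\eta})^2\bigr)I_N$, i.e. a nearly $\tfrac12$‑tight subframe; and, all $\|u_i\|^2$ being equal, each block carries at most a $\tfrac12(1+\sqrt{2\eta})^2$ fraction of the vectors. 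Apply this to $\{v_x\}_{x\in G}$, keep one block, renormalize it by the inverse square root of its frame operator, apply $\mathrm{KS}_2$ again, and iterate; after $t$ steps one is left with $J=A^{(t)}$ satisfying $c_1\,2^{-t}I_N\preceq\sum_{x\in J}v_xv_x^{*}\preceq c_2\,2^{-t}I_N$ and $|J|\le c_3\,2^{-t}L$. Taking $t\asymp\log_2(L/N)$, so that $2^{-t}\asymp\delta=N/L$, gives $|J|\asymp N$ and $\sum_{x\in J}v_xv_x^{*}\asymp(N/L)I_N$; inserting this into the two identities of the previous paragraph yields $C_2\|f\|_2^2\le|J|^{-1}\sum_{x\in J}|f(x)|^2\le C_3\|f\|_2^2$ with absolute constants, and $m:=|J|\le C_1|Q|$, as required.

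The point demanding care — and the only real obstacle — is the propagation of the multiplicative distortions through the iteration. Renormalizing after the $j$‑th step inflates the smallness parameter to $\eta_j\asymp 2^j\delta$, so the per‑step distortion is of order $\sqrt{\eta_j}\asymp 2^{j/2}\sqrt{\delta}$, and the accumulated distortion $\prod_{j<t}\bigl(1\pm c\,2^{j/2}\sqrt{\delta}\bigr)$ stays in a fixed band $[c_1,c_2]$ precisely as long as $\sum_{j<t}2^{j/2}\sqrt{\delta}\asymp 2^{t/2}\sqrt{\delta}$ is bounded, i.e. for $t\lesssim\log_2(1/\delta)=\log_2(L/N)$ steps. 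This budget is exactly what is needed to bring the cardinality down from $L$ to $O(N)$, which is why the construction must start from a sufficiently fine grid (small $\delta$) and why only the universality — not the precise size — of the $\mathrm{KS}_2$ constants matters. Alternatively, one may quote the packaged corollary of~\cite{MSS} isolated in~\cite{NOU}, which performs this iteration once and for all, so that the proof reduces to the reduction step of the first paragraph plus a direct application.
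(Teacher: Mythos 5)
Your proof is correct and follows exactly the route the paper indicates for this theorem: reduce to an equal-norm Parseval frame of exponential vectors on a sufficiently fine rectangular grid (the derivation in \cite{VT158}), and then extract an $O(|Q|)$-element subframe with absolute bounds via the iterated Weaver-$\mathrm{KS}_2$ bipartition of \cite{MSS}, which is precisely the selection lemma of \cite{NOU}. The distortion budget in your last paragraph is the right accounting (and the cardinality bound also follows directly from the trace of the final frame operator together with the equal norms of the original vectors), so nothing further is needed.
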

In other words, Theorem \ref{NOUth} provides a solution of the Marcinkiewicz-type discretization theorem for the $\Tr(Q)$ in the $L_2$ case for any $Q$. For more details regarding
  the $L_2$ case see Subsection \ref{subsection General subspaces 2}, the paper
\cite{VT159}, and Kashin's paper \cite{Ka}, where the author discusses  a
recent spectacular  progress in the area of submatrices of orthogonal matrices.

We formulate some other results of \cite{VT158}. It was proved there that for $d=2$
$$
\Tr(Q_n) \in \cM(m,1),\quad \text{provided} \quad m\ge C |Q_n|n^{7/2}
$$
with large enough $C$,
and for $d\ge 3$
$$
\Tr(Q_n) \in \cM(m,1),\quad \text{provided} \quad m\ge C(d) |Q_n|n^{d/2+3}
$$
with large enough $C(d)$.
The above result was improved in \cite{VT159} to the following one. For any $Q\subset \Pi(\bN)$ with $\bN=(2^n,\dots,2^n)$ we have
$$
\Tr(Q) \in \cM(m,1),\quad \text{provided} \quad m\ge C(d) |Q_n|n^{7/2}
$$
with large enough $C(d)$.
This gives rise to the following intriguing open problem (see also open problem 5 in the last section):
does the relation $\Tr(Q_n) \in \cM(m,1)$ hold with $ m\asymp |Q_n|$?

We note that the results of \cite{VT158} and \cite{VT159} mentioned above were derived using probabilistic technique.
In more detail, the following ingredients were used: a variant of the Bernstein concentration measure inequality
from \cite{BLM}, the chaining technique from \cite{KoTe} (see also \cite{Tbook2}, Ch.4), and the recently obtained \cite{VT156} bounds of the entropy numbers. It is worth mentioning that the application of chaining technique was initiated by A.N. Kolmogorov in the 30s of the last century.
 After that results
   of these type were established  in the study of the central limit theorem in probability theory (see, e.g., \cite{GZ}).
      See also \cite{Ta} for further results on the chaining technique.


Let us stress again that  the approach used in~\cite{VT158} is based on the probabilistic technique.
As a consequence, we derive the existence of good points for the Marcinkiewicz-type discretization theorems,
but no algorithm of construction of these points is offered.
We believe that the problem of
deterministic constructions of point sets, which give at least the same bounds for $m$ as the probabilistic approach does,
is of great importance.
 A deterministic construction based on number theoretical
considerations was suggested in \cite{VT158}.
Even though
this approach can be applied for quite general finite sets $Q\subset \Z^d$, it is restricted to the case $q=2$.

Let us discuss the case $q=2$ in more detail.
First, using the probabilistic technique,
one proves the Marcinkiewicz-type discretization theorem for $m\ge C |Q_n| $ with some large enough constant $C$ (see Theorem \ref{NOUth}).
Second,
 the deterministic Marcinkiewicz-type discretization theorem in $L_2$ holds (see \cite{VT158}) for $m\ge C(d) |Q_n|^2 $ with large enough constant $C(d)$
in the exact form with $C_1(d,2)=C_2(d,2)=1$ (see Sections \ref{Intr} and \ref{Ex}).
   Namely, the exact discretization theorem states that for a given set $Q$ we construct a set $\{\xi^\nu\}_{\nu=1}^m$ with $m\le C(d)|Q|^2$ such that for any $t\in \Tr(Q)$ we have
$$
\|t\|_2^2 = \frac{1}{m}\sum_{\nu=1}^m |t(\xi^\nu)|^2.
$$
Note that
the probabilistic approach
 requires bounds on the entropy numbers $\e_k(\Tr(Q)_q,L_\infty)$ of the unit $L_q$ balls of $\Tr(Q)$ in $L_\infty$, which is  a deep and demanding question
  by itself. To attack this problem, an approach using greedy approximation methods  has been recently  established in \cite{VT156}.

We discussed in \cite{KT168} and \cite{KT169} the following setting of the discretization problem of the uniform norm.
Let $S_m:=\{\xi^j\}_{j=1}^m \subset \Td$ be a finite set of points. Clearly,
$$
\|f\|_{L_\infty(S_m)} := \max_{1\le j\le m} |f(\xi^j)| \le \|f\|_\infty.
$$
We are interested in estimating the following quantities
$$
D(Q,m):=D(Q,m,d):= \inf_{S_m}\sup_{f\in\Tr(Q)}\frac{\|f\|_\infty}{\|f\|_{L_\infty(S_m)}},
$$
$$
D(N,m):=D(N,m,d):= \sup_{Q,|Q|=N} D(Q,m,d).
$$
Certainly, one should assume that $m\ge N$. Then the characteristic $D(Q,m)$ guarantees that there exists a set of $m$ points $S_m$ such that for any $f\in\Tr(Q)$ we have
$$
\|f\|_\infty\le D(Q,m)\|f\|_{L_\infty(S_m)}.
$$
In the case $d=1$ and $Q=[-n,n]$ classical Marcinkiewicz theorem (see \cite{VTbookMA}, p. 24)
gives for $m\ge 4n$ that $D([-n,n],4n)\le C$. Similar relation holds for $D([-n_1,n_1]\times\cdots\times[-n_d,n_d], (4n_1)\times\cdots\times(4n_d))$ (see \cite{VTbookMA}, p. 102).

It was proved in \cite{KT169} that for a pair $N$, $m$, such that $m\asymp N$ we have $D(N,m)\asymp N^{1/2}$. We formulate this result  as a theorem.

\begin{Theorem}[\cite{KT169}] \label{ITmain} For any constant $c\ge 1$ there exists a positive constant $C$ such that for any pair of parameters $N$, $m$, with $m\le cN$ we have
$$
D(N,m)\ge CN^{1/2}.
$$
Also, there are two positive absolute constants $c_1$ and $C_1$ with the following property: For any $d\in \N$ we have for $m\ge c_1N$
$$
D(N,m,d)\le C_1N^{1/2}.
$$
\end{Theorem}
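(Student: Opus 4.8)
\medskip
The two estimates are essentially independent. I would dispose of the upper bound quickly using the $L_2$ theory, and put the work into the lower bound.

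\textbf{Upper bound.} The idea is to combine Theorem~\ref{NOUth} with the sharp Nikol'skii inequality for $\Tr(Q)$. Fix $d$ and $Q\subset\Zd$ with $|Q|=N$. Parseval together with Cauchy--Schwarz on $g=\sum_{k\in Q}\hat g(k)e^{i(k,\bx)}$ gives $\|g\|_\infty\le|Q|^{1/2}\|g\|_2=N^{1/2}\|g\|_2$ for every $g\in\Tr(Q)$. By Theorem~\ref{NOUth} there is a set $\{\xi^j\}_{j=1}^{m_0}$ with $m_0\le\kappa_1N$ (absolute $\kappa_1$) such that $\frac1{m_0}\sum_{j=1}^{m_0}|f(\xi^j)|^2\ge\kappa_2\|f\|_2^2$ for all $f\in\Tr(Q)$ — only the lower Marcinkiewicz inequality is needed, with absolute $\kappa_2$. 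Hence $\|f\|_{L_\infty(\{\xi^j\})}^2\ge\frac1{m_0}\sum_j|f(\xi^j)|^2\ge\kappa_2\|f\|_2^2\ge\kappa_2N^{-1}\|f\|_\infty^2$, so $\|f\|_\infty\le\kappa_2^{-1/2}N^{1/2}\|f\|_{L_\infty(\{\xi^j\})}$ and $D(Q,m_0,d)\le\kappa_2^{-1/2}N^{1/2}$. Since adjoining further nodes only increases $\max_j|f(\xi^j)|$, the quantity $D(Q,m,d)$ is non-increasing in $m$; therefore for every $m\ge c_1N$ with $c_1:=\kappa_1$ we get $D(N,m,d)\le C_1N^{1/2}$ with $C_1:=\kappa_2^{-1/2}$, uniformly in $Q$ and $d$.

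\textbf{Lower bound.} First, $D(N,m,d)\ge D(N,m,1)$: identifying $Q\subset\Z$ with $Q\times\{0\}^{d-1}\subset\Zd$ makes $\Tr(Q)$ into functions of the first variable, and restricting any $m$-point set in $\Td$ to its first coordinate gives at most $m$ points in $\T$, so a $d$-dimensional discretization is never better than the best one-dimensional discretization by $m$ points. Thus it suffices to exhibit one $Q\subset\Z$, $|Q|=N$, with $D(Q,m,1)\ge CN^{1/2}$ whenever $m\le cN$. Take the Hadamard-lacunary set $Q=\{3^k:1\le k\le N\}$, which is a Sidon set with an absolute constant: $\|g\|_\infty\ge c_S\sum_{k\in Q}|\hat g(k)|$ for all $g\in\Tr(Q)$. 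Let $S_m=\{\xi^1,\dots,\xi^m\}\subset\T$ be arbitrary with $m\le cN$. Form the $2m$ vectors $a^j:=(\cos 3^k\xi^j)_{k=1}^N$ and $b^j:=(\sin 3^k\xi^j)_{k=1}^N$, all in $[-1,1]^N$. By Spencer's discrepancy theorem (the partial-colouring estimate: any $M$ vectors in $[-1,1]^N$ admit a $\pm1$ colouring of the coordinates with all $M$ inner products $O(N^{1/2}\sqrt{1+\log(M/N)})$), applied with $M=2m\le 2cN$, there is $\e\in\{-1,1\}^N$ with $|\langle\e,a^j\rangle|\le K(c)N^{1/2}$ and $|\langle\e,b^j\rangle|\le K(c)N^{1/2}$ for every $j$. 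Put $f:=\sum_{k=1}^N\e_ke^{i3^kx}\in\Tr(Q)$. Then $|f(\xi^j)|^2=\langle\e,a^j\rangle^2+\langle\e,b^j\rangle^2\le2K(c)^2N$ for each $j$, so $\|f\|_{L_\infty(S_m)}\le\sqrt2\,K(c)N^{1/2}$, whereas $\|f\|_\infty\ge c_S\sum_k|\e_k|=c_SN$ by the Sidon property. Hence $\|f\|_\infty/\|f\|_{L_\infty(S_m)}\ge(c_S/(\sqrt2\,K(c)))\,N^{1/2}$, and since $S_m$ was arbitrary, $D(N,m)\ge D(Q,m,1)\ge C(c)N^{1/2}$.

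\textbf{The main obstacle.} The upper bound is essentially free once Theorem~\ref{NOUth} is available. For the lower bound the subtle point is not to produce a ``peaky'' polynomial — any balanced $\pm1$ combination of lacunary exponentials already satisfies $\|f\|_\infty\asymp N$ while $\|f\|_2=N^{1/2}$ — but to arrange that a single such polynomial is $O(N^{1/2})$ simultaneously at all $m\asymp N$ sample points. A straightforward random sign choice yields only $O((N\log N)^{1/2})$ by a union bound, which is off by a factor $(\log N)^{1/2}$; removing this logarithm is precisely what the partial-colouring (Spencer-type) discrepancy machinery accomplishes, and I expect that step, together with the verification that the Sidon constant of a lacunary set is absolute (so that $\|f\|_\infty\gtrsim N$ persists for the signs produced by the colouring), to be the heart of the argument.
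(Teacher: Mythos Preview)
Your upper bound is exactly the paper's argument (Theorem~\ref{CT2}). For the lower bound both you and the paper work with a lacunary (Sidon) set in $\Z$, but the mechanisms differ. The paper's route is geometric: a small-ball inequality (Theorems~\ref{AT1}--\ref{AT2}) gives a sharp upper bound on the volume of the $L_\infty$-unit ball of $\Tr(\K)$ (Theorem~\ref{AT4}), while Gluskin's theorem (Theorem~\ref{BT2}) bounds from below the volume of the intersection of the $2m$ central slabs in $\R^{2N}$ coming from the evaluation functionals at the nodes; comparing the two volumes (Theorem~\ref{BT1}, Corollary~\ref{BC1}) forces the discretization constant to be $\gtrsim N^{1/2}$ when $m\le cN$. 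Your route is combinatorial: given the nodes, Spencer's partial-colouring theorem directly manufactures signs $\e\in\{-1,1\}^N$ making all $2m$ inner products $O_c(\sqrt N)$, and the Sidon property of $\{3^k\}$ then gives $\|f\|_\infty\ge c_S N$. The two arguments are essentially dual --- Gluskin's volume lower bound and Spencer's discrepancy upper bound both encode that $\asymp N$ linear constraints in $[-1,1]^N$ cannot pin every $\pm1$ vector below $c\sqrt N$ --- and indeed both deliver the same quantitative trade-off $D(Q,m)\gtrsim \sqrt{N}\,(\ln(em/N))^{-1/2}$ for general $m$. Your approach is more direct for the stated $m\asymp N$ result (no volume computation, no Theorem~\ref{AT4}), whereas the paper's volume framework is set up to handle the thickened lacunary sets $\La(\K,\nu)$ of Section~\ref{A} as well, which feeds into other applications there.
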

The first part of Theorem \ref{ITmain} follows from Corollary \ref{BC1} (see (\ref{B6})) and the second part follows from Theorem \ref{CT2}.

It is interesting to compare Theorem \ref{ITmain}, which provides a result on discretization of the uniform norm, with the cited above known result -- Theorem \ref{NOUth} -- on discretization of the $L_2$ norm.

\subsection{General subspaces in $L_1$}

We begin with the definition of the entropy numbers.
Let $X$ be a Banach space and let $B_X$ denote the unit ball of $X$ with the center at $0$. Denote by $B_X(y,r)$ a ball with center $y$ and radius $r$, that is,  $B_X(y,r)=\{x\in X:\|x-y\|\le r\}$. For a compact set $A$ and a positive number $\e$ we define the covering number $N_\e(A)$
as follows
$$
N_\e(A) := N_\e(A,X)
:=\min \{n : \exists y^1,\dots,y^n\in A,\   A\subseteq \cup_{j=1}^n B_X(y^j,\e)\}.
$$
It is convenient to consider along with the entropy $H_\e(A,X):= \log_2 N_\e(A,X)$  the entropy numbers $\e_k(A,X)$:
$$
\e_k(A,X)  :=\inf \{\e : \exists y^1,\dots ,y^{2^k} \in A ,\   A \subseteq \cup_{j=1}
^{2^k} B_X(y^j,\e)\}.
$$
In our definition of $N_\e(A)$ and $\e_k(A,X)$ we require $y^j\in A$. In a standard definition of $N_\e(A)$ and $\e_k(A,X)$ this restriction is not imposed.
However, it is well known (see \cite{Tbook2}, p.208) that these characteristics may differ at most by a factor $2$.
The following general conditional result
has been recently  obtained in \cite{VT159}.

\begin{Theorem}[\cite{VT159}] \label{T4.10} Suppose that the $L_1$ unit ball $X^1_N:=\{f\in X_N: \|f\|_1\le 1\}$ of a subspace $X_N$ satisfies the condition $(B\ge 1)$
	$$
	\e_k(X^1_N,L_\infty) \le  B\left\{\begin{array}{ll}  N/k, &\quad k\le N,\\
	2^{-k/N},&\quad k\ge N.\end{array} \right.
	$$
	Then for large enough absolute constant $C$ there exists a set of  $$m \le CNB(\log_2(2N\log_2(8B)))^2$$ points $\xi^j\in \Omega$, $j=1,\dots,m$,   such that for any $f\in X_N$
	we have
	$$
	\frac{1}{2}\|f\|_1 \le \frac{1}{m}\sum_{j=1}^m |f(\xi^j)| \le \frac{3}{2}\|f\|_1.
	$$
\end{Theorem}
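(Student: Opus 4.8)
The plan is to use the probabilistic method together with chaining. Draw $\xi^1,\dots,\xi^m$ independently according to $\mu$ and set, for $f\in X_N$,
\[
Z(f):=\frac1m\sum_{j=1}^m|f(\xi^j)|-\|f\|_1 ,
\]
so that $\bE Z(f)=0$. Both inequalities we want are positively homogeneous in $f$, so it suffices to produce one realization of the $\xi^j$ for which $\sup_{f\in X_N^1}|Z(f)|\le\frac12$; rescaling then yields $\frac12\|f\|_1\le\frac1m\sum_j|f(\xi^j)|\le\frac32\|f\|_1$ for every $f\in X_N$. Two observations extracted from the hypothesis will be used repeatedly: (i) since $\mu$ is a probability measure, $\|g\|_1\le\|g\|_\infty$, so an $L_\infty$-approximation of an element of $X_N^1$ is automatically an $L_1$-approximation of the same accuracy; and (ii) telescoping the bound on $\e_k(X_N^1,L_\infty)$ along a nested family of $L_\infty$-nets shows that $\sup_{f\in X_N^1}\|f\|_\infty\lesssim BN$ up to a logarithmic factor, so that the $L_\infty$- and $L_2$-diameters of $X_N^1$ are finite and of size $\lesssim BN$ and $\lesssim\sqrt{BN}$ respectively (up to logarithmic factors).

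The main analytic tool is the Bernstein-type concentration inequality from \cite{BLM}: for a fixed $g$ with $\|g\|_\infty\le a$, the summands $|g(\xi^j)|-\|g\|_1$ are i.i.d., mean zero, bounded by $a+\|g\|_1$, and of variance at most $\|g\|_2^2\le a\|g\|_1$, whence
\[
\bP\Bigl(\bigl|\tfrac1m\sum_j|g(\xi^j)|-\|g\|_1\bigr|>t\Bigr)\le 2\exp\Bigl(-\frac{c\,m\,t^2}{a\|g\|_1+a t}\Bigr),
\]
and the same estimate, applied to a difference $g=g_1-g_2$ (not necessarily one-signed), controls $Z(g_1)-Z(g_2)$. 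I would then run the chaining argument of \cite{KoTe} (see also \cite[Ch.~4]{Tbook2}). For each $k\ge1$ fix a set $\mathcal N_k\subset X_N^1$ with $|\mathcal N_k|\le2^k$ that is $\e_k$-dense in $L_\infty$ (with $\e_k$ the bound from the hypothesis), let $f_k\in\mathcal N_k$ be a nearest point to $f$, chosen consistently so that at most $2^k$ distinct increments $f_k-f_{k-1}$ occur, and use that $Z$ is $2$-Lipschitz on $(X_N^1,\|\cdot\|_\infty)$ and $\e_k\to0$ to telescope
\[
\sup_{f\in X_N^1}|Z(f)|\le\max_{g\in\mathcal N_1}|Z(g)|+\sum_{k\ge2}\ \max_{f\in X_N^1}|Z(f_k)-Z(f_{k-1})| ,
\]
the $k$-th maximum being over the at most $2^k$ distinct increments. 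By $\|f_k-f_{k-1}\|_\infty\le2\e_{k-1}$ and, by (i), $\|f_k-f_{k-1}\|_1\le2\e_{k-1}$ as well, choosing $t_k\asymp \e_{k-1}\bigl(\sqrt{k/m}+k/m\bigr)$ makes the union bound over level $k$ at most $2^{-k}/10$, so summing the probabilities leaves an event of probability $>\frac12$ on which $\sup_f|Z(f)|\le\sum_k t_k$.

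The crux — and where I expect the real work to lie — is bounding $\sum_k t_k$ by $\frac14$ already when $m$ is only of order $NB(\log)^2$. Estimating the increments bluntly, using all $2^k$ of them against the $L_\infty$-scale $\e_{k-1}\le BN/k$ in the sub-Gaussian part, gives $\sum_k\sqrt{k}\,\e_{k-1}/\sqrt m\gtrsim BN^{3/2}/\sqrt m$, hence only $m\gtrsim B^2N^3$, which is far too weak. The fix is to treat the ``variance'' part of the chaining with the $L_2$-geometry: since $\|g\|_2^2\le\|g\|_\infty\|g\|_1$, an $L_\infty$-$\delta$-net is an $L_2$-$\sqrt{2\delta}$-net, so the $L_2$-covering numbers obey $H^{(2)}_\e\lesssim H^{(\infty)}_{\e^2/2}\lesssim BN/\e^2$ over the main range $\e\in[\sqrt B,\sqrt{BN}]$ (and $\lesssim N\log_2(B/\e^2)$ below $\sqrt B$), while the $L_2$-diameter of $X_N^1$ is $\lesssim\sqrt{BN}$; this gives
\[
\frac1{\sqrt m}\int_0^{\sqrt{BN}}\sqrt{H^{(2)}_\e}\,d\e\ \lesssim\ \frac{\sqrt{BN}\,\log N}{\sqrt m},
\]
which is small once $m\gtrsim BN(\log N)^2$, while the sub-exponential ($L_\infty$) part of the chaining is governed by $\frac1m\int_0^{BN}H^{(\infty)}_\e\,d\e\lesssim\frac{BN\log N}{m}$, likewise small once $m\gtrsim BN\log N$. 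Finally, tracking the number of effective scales, which is of order $N\log_2(8B)$ (the level at which $\e_k$ first drops to order $1$), through the chaining lemma replaces the bare $(\log N)^2$ by the stated factor $(\log_2(2N\log_2(8B)))^2$; choosing $m\le CNB(\log_2(2N\log_2(8B)))^2$ with $C$ an absolute constant then forces $\sup_{f\in X_N^1}|Z(f)|\le\frac12$ on an event of positive probability, and the theorem follows.
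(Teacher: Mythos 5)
Your route is the intended one: the paper does not reprove this theorem but cites \cite{VT159} and explicitly names the three ingredients you use --- the Bernstein-type concentration inequality from \cite{BLM}, the chaining technique of \cite{KoTe}, and the entropy-number hypothesis --- and your key observation that the variance bound $\|g\|_2^2\le\|g\|_\infty\|g\|_1$ converts the $L_\infty$-nets into much finer $L_2$-nets (so that the sub-Gaussian part of the chaining is governed by $H^{(2)}_\e\lesssim BN/\e^2$ and the sub-exponential part by $H^{(\infty)}_\e\lesssim BN/\e$) is exactly what makes the bound $m\asymp NB(\log)^2$ attainable. The preliminary reductions (homogeneity, the diameter bounds $\sup_{f\in X_N^1}\|f\|_\infty\lesssim BN$ and $\|f\|_2\lesssim\sqrt{BN}$, the Bernstein tail for a single increment, and the consistent choice of chains giving $2^k$ distinct increments per net) are all sound.

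There is, however, one genuine gap between the chain you actually construct and the entropy integrals you quote at the end. Your explicit scheme runs a union bound at \emph{every} level $k=1,2,3,\dots$ with thresholds $t_k\asymp\sigma_k\sqrt{k/m}+a_k k/m$. Because the scales $\e_k\asymp BN/k$ decrease only harmonically, this scheme is wasteful even after the $L_2$ improvement: the increments at level $k$ satisfy $\sigma_k\le 2\sqrt{\e_{k-1}}\asymp\sqrt{BN/k}$ (using $\|f_k-f_{k-1}\|_1\le 2$, not $\le2\e_{k-1}$), so $t_k\gtrsim\sqrt{BN/k}\cdot\sqrt{k/m}=\sqrt{BN/m}$ for each $k\le N$, and $\sum_{k\le N}t_k\approx N\sqrt{BN/m}$, which still forces $m\gtrsim BN^3$. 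The Dudley integrals $\int\sqrt{H^{(2)}_\e}\,d\e$ and $\int H^{(\infty)}_\e\,d\e$ that you then invoke tacitly presuppose that the chain is run only over \emph{dyadically subsampled} nets, i.e.\ over the levels $k_j\asymp 2^j$, where the nets have cardinality $2^{2^j}$, $L_\infty$-radius $\asymp BN2^{-j}$, and $L_2$-radius $\asymp\sqrt{BN}\,2^{-j/2}$; then each of the $\asymp\log_2\bigl(N\log_2(8B)\bigr)$ dyadic levels contributes $\asymp\sqrt{BN/m}$ to the sub-Gaussian part and $\asymp BN/m$ to the sub-exponential part, which is what yields $m\lesssim NB(\log_2(2N\log_2(8B)))^2$. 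You need to either make this subsampling explicit in the chain (redefining $f_j$ as the nearest point of $\mathcal N_{2^j}$) or replace your hand-built chain by the standard mixed-tail Dudley/generic-chaining lemma; as written, the argument in the middle of your proof and the computation at its end do not match, and the former does not prove the theorem.
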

In particular,
this result shows that the investigation  of the entropy numbers $\e_k(X^1_N,L_\infty)$ plays a crucial
 role to prove  the Marcinkiewicz discretization theorems in $L_1$.

The study
of the entropy numbers is a  highly nontrivial and intrinsically interesting subject. Let us show  this for 
 trigonometric polynomials.
On the one hand, it is known \cite{VT156} that in the case $d=2$
one has
\be\label{6.1}
\e_k(\Tr( Q_n)_1,L_\infty)\ll  n^{1/2} \left\{\begin{array}{ll} (| Q_n|/k) \log (4| Q_n|/k), &\quad k\le 2| Q_n|,\\
	2^{-k/(2| Q_n|)},&\quad k\ge 2| Q_n|,\end{array}
\right.
\ee
where
 $\Tr( Q_n)_1=\{f\in \Tr( Q_n) : \|f\|_1\le 1\}$.
 The proof of the  estimate (\ref{6.1}) relies  on a version  of the Small Ball Inequality for the trigonometric system obtained for the wavelet type system (see \cite{VT156}). This proof is strongly based on the two-dimensional structure and its extension for higher dimensional case is problematic. 
 On the other hand, by
the trivial estimate $\log (4| Q_n|/k) \ll n$,
(\ref{6.1}) yields the following inequality
\be\label{6.2}
\e_k(\Tr( Q_n)_1,L_\infty)\ll  n^{3/2} \left\{\begin{array}{ll} | Q_n|/k , &\quad k\le 2| Q_n|,\\
	2^{-k/(2| Q_n|)},&\quad k\ge 2| Q_n|.\end{array} \right.
\ee
Even though to obtain new upper bounds of the entropy numbers of smoothness classes
the latter inequality  is less applicable  than estimate (\ref{6.1}),
both estimates (\ref{6.1}) and (\ref{6.2}), applied  to the Marcinkiewicz-type discretization theorems,
  give the same bounds on the number of nodes $m\ll |Q_n|n^{7/2}$.

As  was mentioned above,
an extension of (\ref{6.1}) to the case
$d>2$ is not established. A somewhat straightforward technique given
in \cite{VT158} allows us to claim that  for all $d$
\[
\e_k(\Tr( Q_n)_1,L_\infty)\ll  n^{d/2} \left\{\begin{array}{ll} (| Q_n|/k) \log (4| Q_n|/k), &\quad k\le 2| Q_n|,\\
	2^{-k/(2| Q_n|)},&\quad k\ge 2| Q_n|.\end{array} \right.
\]
This can be used to derive the Marcinkiewicz inequality (\ref{1.1}) in $L_1$ (see \cite{VT158}).
We stress that in the paper \cite{VT159} the proof of (\ref{6.2}) is given for all $d$ and for  general sets $\Tr(Q)_1$ instead of $\Tr(Q_n)_1$.

A very interesting open question is to investigate, even in the special case of the hyperbolic cross polynomials $\Tr(Q_n)$, if the relation $\Tr(Q_n) \in \cM(m,1)$ with $m\asymp |Q_n|$ is valid.
From the results of \cite{VT158} and \cite{VT159},   the above relation holds with $m \gg |Q_n|n^{7/2}$.
The extra factor $n^{7/2}$ appears as a result of applying (\ref{6.2}), which contributed $n^{3/2}$,
and of applying the chaining technique, which contributed $n^2$.

 Let $X_N=\sp(u_1,\dots,u_N)$ be a real subspace of $L_1(\Omega)$.
Let us impose  several assumptions  on the system $\{u_i\}_{i=1}^N$ of real functions, which are needed to state the
 discretization result in the case $q=1$ (\cite{VT159}).

{\bf A.} There exist $\alpha>0$, $\beta$, and $K_1$ such that for all $i\in\{1,\dots,N\}$ we have
\[
|u_i(\bx)-u_i(\by)| \le K_1N^\beta\|\bx-\by\|_\infty^\alpha,\quad \bx,\by \in \Omega.
\]

{\bf B.} There exists a constant $K_2$ such that $\|u_i\|_\infty^2 \le K_2$, $i=1,\dots,N$.

{\bf C.} Denote $X_N:= \sp(u_1,\dots,u_N)$. There exist two constants $K_3$ and $K_4$ such that the following Nikol'skii-type inequality holds for all $f\in X_N$
\[
\|f\|_\infty \le K_3N^{K_4/p}\|f\|_p,\quad p\in [2,\infty).
\]
Now we are in a position to formulate the main result of \cite{VT159}.

\begin{Theorem}[\cite{VT159}] \label{T6.1} Suppose that a real orthonormal system $\{u_i\}_{i=1}^N$ satisfies conditions {\bf A}, {\bf B}, and {\bf C}. Then for large enough $C_1=C(d,K_1,K_2,K_3,K_4,\Omega,\alpha,\beta)$ there exists a set of $m \le C_1N(\log N)^{7/2}$ points $\xi^j\in \Omega$, $j=1,\dots,m$,   such that for any $f\in X_N$
	we have
	$$
	\frac{1}{2}\|f\|_1 \le \frac{1}{m}\sum_{j=1}^m |f(\xi^j)| \le \frac{3}{2}\|f\|_1.
	$$
\end{Theorem}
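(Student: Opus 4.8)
The plan is to reduce the statement to the conditional Theorem~\ref{T4.10}: it is enough to show that conditions \textbf{A}, \textbf{B}, \textbf{C} force the entropy estimate
\be\label{entB}
\e_k(X_N^1,L_\infty)\le B\left\{\begin{array}{ll} N/k,&\quad k\le N,\\ 2^{-k/N},&\quad k\ge N,\end{array}\right.
\ee
with $B$ at most polylogarithmic in $N$, concretely $B\ll(\log N)^{3/2}$ (possibly with lower-order logarithmic factors, which are irrelevant below). Granting \eqref{entB}, Theorem~\ref{T4.10} produces a set of $m\le CNB\bigl(\log_2(2N\log_2(8B))\bigr)^2$ points with the required inequality $\f12\|f\|_1\le\f1m\sum_{j=1}^m|f(\xi^j)|\le\f32\|f\|_1$; since $B$ is polylogarithmic, $\log_2(8B)\ll\log\log N$, hence $\log_2(2N\log_2(8B))\ll\log N$, and therefore $m\ll N(\log N)^{3/2}(\log N)^2=N(\log N)^{7/2}$, with $C_1$ absorbing all the constants accumulated along the way (so that it depends only on $d,K_1,K_2,K_3,K_4,\Omega,\al,\bt$).

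Thus everything comes down to \eqref{entB}, which I would prove following the scheme of \cite{VT158} in the refined form of \cite{VT159}. \emph{Step 1 (discretization of the domain).} By condition \textbf{B} and orthonormality, every $f=\sum_{i=1}^N c_iu_i\in X_N$ satisfies $|c_i|=|\<f,u_i\>|\le\|f\|_2\le\|f\|_\infty$, so condition \textbf{A} yields the oscillation bound $|f(\bx)-f(\by)|\le K_1N^{1+\bt}\|f\|_\infty\|\bx-\by\|_\infty^\al$ for all $\bx,\by\in\Omega$. Covering $\Omega$ by a net $\mathcal N$ of mesh $\dt\asymp N^{-(1+\bt)/\al}$ with a suitable absolute constant gives $\|g\|_{L_\infty(\Omega)}\le 2\|g\|_{L_\infty(\mathcal N)}$ for every $g\in X_N$, while $M:=|\mathcal N|\ll N^{c_0}$, $c_0=c_0(d,\al,\bt,\Omega)$. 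Consequently $\e_k(X_N^1,L_\infty)\le 2\,\e_k\bigl(X_N^1,L_\infty(\mathcal N)\bigr)$, so it remains to estimate $\e_k$ of the $L_1(\Omega)$-ball $X_N^1$ in the $\ell_\infty$-metric over the $M\ll N^{c_0}$ points of $\mathcal N$, i.e. the entropy of an $N$-dimensional section of the cube $\ell_\infty^M$.

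\emph{Step 2 (entropy of the section).} Here I would run a Maurey-type empirical-measure argument: approximate an element of the (discretized) $L_1$-ball by an average of a few of its elementary pieces, bound each piece uniformly on $\mathcal N$ via condition \textbf{B}, and control the maximum of the resulting random sum over the $M$ points of $\mathcal N$ by a sub-Gaussian/union-bound estimate — the step that contributes the factor $(\log M)^{1/2}\ll(\log N)^{1/2}$. Feeding in, at the small scales of the iteration, the volumetric bound $\e_j\ll 2^{-j/N}$ for the $L_\infty$-unit ball of the $N$-dimensional $X_N$, one obtains the full two-regime estimate: the polynomial bound $\e_k\ll(\log N)^{1/2}(N/k)\log(4N/k)$ for $k\le 2N$ and the exponential bound $\e_k\ll(\log N)^{1/2}2^{-k/(2N)}$ for $k\ge 2N$ (the ``\eqref{6.1}-type'' estimate, now for general subspaces in place of the two-dimensional small-ball/wavelet input behind~\eqref{6.1}). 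Condition \textbf{C} enters through the Nikol'skii inequality $\|f\|_\infty\le K_3^2N^{K_4}\|f\|_1$ on $X_N$ (take $p=2$ in \textbf{C} and use $\|f\|_2\le\|f\|_\infty^{1/2}\|f\|_1^{1/2}$), which places $X_N^1$ inside $K_3^2N^{K_4}$ times the $L_\infty$-unit ball of $X_N$ and thereby calibrates the far tail $k\gtrsim N\log N$. Crudely bounding $\log(4N/k)\ll\log N$ turns this into \eqref{entB} with $B\ll(\log N)^{3/2}$.

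\emph{Main obstacle.} The hard part is Step~2 — obtaining \eqref{entB} with only a \emph{polylogarithmic} loss $B\ll(\log N)^{3/2}$ rather than a power of $N$. The discretization and the volumetric far tail are routine, and the concluding logarithmic bookkeeping is mechanical; but the uniform control of the empirical sums over the discretization net, so that at most $(\log N)^{1/2}$ is lost (and, after the crude $\log(4N/k)\ll\log N$, at most $(\log N)^{3/2}$ overall), is exactly the technical core of \cite{VT159} — the general-subspace substitute for the small-ball estimate available for trigonometric polynomials in dimension two.
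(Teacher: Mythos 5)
Your reduction to Theorem~\ref{T4.10} via the entropy estimate $\e_k(X_N^1,L_\infty)\le B\,N/k$ (for $k\le N$, with the exponential tail for $k\ge N$) with $B\ll(\log N)^{3/2}$, followed by the bookkeeping $(\log N)^{3/2}\cdot(\log N)^2=(\log N)^{7/2}$, is exactly the route this paper describes for the result: Theorem~\ref{T6.1} is quoted here from \cite{VT159} without proof, and the paper states explicitly that the exponent $7/2$ splits as $3/2$ from the entropy bound of type \eqref{6.2} and $2$ from the chaining inside Theorem~\ref{T4.10}. Your Step~2 correctly locates, but only sketches, the technical core --- the $(\log N)^{3/2}$ entropy bound for general subspaces under conditions \textbf{A}--\textbf{C}, obtained by discretizing $\Omega$ to a polynomial-size net via \textbf{A}, a Maurey-type argument with a union bound over the net via \textbf{B}, and the Nikol'skii inequality \textbf{C} for the tail --- which is the content of \cite{VT159} and is likewise not reproduced in this paper.
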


\subsection{General subspaces in $L_2$}\label{subsection General subspaces 2}

In this subsection we consider some known results related to the discretization theorems and, in particular, we discuss applications of the recent results on random matrices to derive the Marcinkiewicz-type theorem in $L_2$.
We start with an important result on submatrices of an orthogonal matrix obtained by M. Rudelson. 
Let us formulate it in our notations. 

\begin{Theorem}[\cite{Rud}]
	Let $\Omega_M=\{x^j\}_{j=1}^M$ be a discrete set with the probability measure $\mu(x^j)=1/M$, $j=1,\dots,M$. Let also
$\{u_i(x)\}_{i=1}^N$ be a real orthonormal  system on $\Omega_M$ satisfying the following condition: for all $j$
\be\label{5.1}
\sum_{i=1}^Nu_i(x^j)^2 \le Nt^2
\ee
with some $t\ge 1$.
Then for every $\ep>0$ there exists a set $J\subset \{1,\dots,M\}$ of indices with cardinality
\be\label{5.1a}
m:=|J| \le C\frac{t^2}{\ep^2}N\log\frac{Nt^2}{\ep^2}
\ee
such that for any $f=\sum_{i=1}^N c_iu_i$ we have
$$
(1-\ep)\|f\|_{L_2(\Omega_M)}^2 \le \frac{1}{m} \sum_{j\in J} f(x^j)^2 \le (1+\ep)\|f\|_{L_2(\Omega_M)}^2.
$$
\end{Theorem}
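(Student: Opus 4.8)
The plan is to recast the statement in the language of sums of rank-one matrices and then run Rudelson's random-selection argument. First I would encode the data by vectors: for each $j$ put $v_j:=(u_1(x^j),\dots,u_N(x^j))\in\R^N$. Orthonormality of $\{u_i\}$ on $\Omega_M$ says precisely that $\frac1M\sum_{j=1}^M v_j\otimes v_j=I_N$; hypothesis \eqref{5.1} reads $\|v_j\|_2^2\le Nt^2$ for every $j$; and for $f=\sum_i c_iu_i$ one has $f(x^j)=\langle c,v_j\rangle$ and $\|f\|_{L_2(\Omega_M)}^2=\|c\|_2^2$. Since $B:=\frac1m\sum_{j\in J}v_j\otimes v_j-I_N$ is symmetric, the asserted two-sided inequality is equivalent to producing $J$ with $|J|=m$ as in \eqref{5.1a} such that $\|B\|\le\ep$, where $\|\cdot\|$ is the operator norm.

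Next I would draw indices $j_1,\dots,j_m$ independently and uniformly from $\{1,\dots,M\}$ (with replacement; passing to a genuine set is a routine technicality — multiplicities only shrink $|J|$, and otherwise one may go through the weighted version or, if the required $m$ exceeds $M$, simply take $J=\{1,\dots,M\}$ and get equality). Since $\bbE[v_{j_k}\otimes v_{j_k}]=I_N$, set $R:=\bbE\bigl\|\frac1m\sum_{k=1}^m v_{j_k}\otimes v_{j_k}-I_N\bigr\|$; the goal is $R\le\ep/2$. The standard symmetrization inequality gives $R\le\frac2m\,\bbE_j\bbE_\varepsilon\bigl\|\sum_{k=1}^m\varepsilon_k\,v_{j_k}\otimes v_{j_k}\bigr\|$ with i.i.d.\ Rademacher signs $\varepsilon_k$.

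The key step — and the main obstacle — is the bound, valid for arbitrary fixed $y_1,\dots,y_m\in\R^N$,
$$
\bbE_\varepsilon\Bigl\|\sum_{k=1}^m\varepsilon_k\,y_k\otimes y_k\Bigr\|\le C\sqrt{\log m}\,\Bigl(\max_k\|y_k\|\Bigr)\,\Bigl\|\sum_{k=1}^m y_k\otimes y_k\Bigr\|^{1/2},
$$
which is Rudelson's lemma. It follows from the noncommutative Khintchine inequality of Lust-Piquard applied with $A_k=y_k\otimes y_k$: since $A_kA_k^*=\|y_k\|^2\,y_k\otimes y_k\preceq(\max_k\|y_k\|^2)\sum_k y_k\otimes y_k$, the Schatten-$S_p$ estimate $(\bbE_\varepsilon\|\sum_k\varepsilon_kA_k\|_{S_p}^p)^{1/p}\le C\sqrt p\,\max_k\|y_k\|\,\|\sum_k y_k\otimes y_k\|_{S_{p/2}}^{1/2}$, together with $\|\cdot\|\le\|\cdot\|_{S_p}$, $\|C\|_{S_{p/2}}\le(\rank C)^{2/p}\|C\|$ and the choice $p\asymp\log m$, yields the claim. (Alternatively one could bypass symmetrization and invoke a matrix Bernstein/Chernoff bound of Ahlswede--Winter--Tropp type, which gives the same estimate with the same logarithmic loss.) Applying this with $y_k=v_{j_k}$ gives $\max_k\|y_k\|\le t\sqrt N$, and bounding $\bbE_j\|\sum_k v_{j_k}\otimes v_{j_k}\|^{1/2}\le(\bbE_j\|\sum_k v_{j_k}\otimes v_{j_k}\|)^{1/2}\le(m(1+R))^{1/2}$ by Jensen and the triangle inequality leads to the self-improving inequality
$$
R\le\frac{2Ct\sqrt{N\log m}}{\sqrt m}\,\sqrt{1+R}.
$$

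Finally, solving this quadratic inequality shows that $R\le\ep/2$ as soon as $\frac{C't\sqrt{N\log m}}{\sqrt m}\le\ep$, which in turn holds once $m\ge C''\,\frac{t^2N}{\ep^2}\log\frac{t^2N}{\ep^2}$ — the equivalence of these two conditions on $m$ being the routine fact that $\log m\ll\log\frac{t^2N}{\ep^2}$ for such $m$. With $R\le\ep/2$, Markov's inequality produces a realization $j_1,\dots,j_m$ with $\|\frac1m\sum_k v_{j_k}\otimes v_{j_k}-I_N\|\le\ep$, and unwinding the reformulation of the first paragraph gives the theorem with $m$ as in \eqref{5.1a}. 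As noted, the hard part is Rudelson's lemma: it is where the nontrivial $\sqrt{\log m}$ enters, and it is exactly this factor that is responsible for the $\log\frac{Nt^2}{\ep^2}$ in \eqref{5.1a}; removing it altogether requires genuinely different tools, such as the interlacing-polynomials method behind Theorems~\ref{thm:BSS} and~\ref{NOUth}.
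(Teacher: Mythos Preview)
The paper does not give its own proof of this theorem: it is stated as a result of Rudelson and cited from \cite{Rud}, with no argument supplied. Your proposal is a correct reconstruction of Rudelson's original proof --- reformulate as an approximate decomposition of the identity by rank-one matrices, sample indices uniformly, symmetrize, apply the noncommutative Khintchine inequality (Rudelson's lemma), and iterate the resulting self-improving bound --- so there is nothing in the paper to compare against beyond noting that your sketch matches the cited source. One small remark: in the Rudelson lemma the natural factor is $\sqrt{\log(\rank)}\le\sqrt{\log N}$ rather than $\sqrt{\log m}$, coming from $\|\cdot\|_{S_{p/2}}\le N^{2/p}\|\cdot\|$ and $p\asymp\log N$; your version with $\log m$ still closes since in the end $\log m\asymp\log\frac{Nt^2}{\ep^2}$, but using $\log N$ makes the passage to the final bound on $m$ slightly cleaner.
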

As a corollary,
this result  yields that if an orthonormal system $\{u_i\}_{i=1}^N$ on $\Omega_M$ satisfies (\ref{5.1}), one has
$$
{\mathcal U}_N := \sp(u_1,\dots,u_N) \in \cM(m,2)\quad \text{provided}\quad m\ge CN\log N
$$
with large enough $C$.

We remark that  condition (\ref{5.1}) is fulfilled if the system $\{u_i\}_{i=1}^N$ is uniformly bounded: $\|u_i\|_{L_\infty(\Omega_M)}\le t$, $i=1,\dots,N$.

To state the next result,
we need the following condition on
the system $\{u_j\}_{j=1}^N$, cf.~\eqref{5.1}.

{\bf Condition E.} There exists a constant $t$ such that
\be\label{ud5}
w(x):=\sum_{i=1}^N u_i(x)^2 \le Nt^2, \quad x\in\Omega.
\ee

\begin{Theorem}[\cite{VT159}] \label{T5.4} Let $\{u_i\}_{i=1}^N$ be a real  orthonormal system, satisfying condition {\bf E}.
Then for every $\ep>0$ there exists a set $\{\xi^j\}_{j=1}^m \subset \Omega$ with
	$$
	m  \le C\frac{t^2}{\ep^2}N\log N
	$$
	such that for any $f=\sum_{i=1}^N c_iu_i$ we have
	$$
	(1-\ep)\|f\|_2^2 \le \frac{1}{m} \sum_{j=1}^m f(\xi^j)^2 \le (1+\ep)\|f\|_2^2.
	$$
\end{Theorem}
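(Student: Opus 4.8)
The plan is to deduce Theorem~\ref{T5.4} from Rudelson's theorem by a standard discretization-of-the-domain argument, passing from the continuous $\Omega$ to a sufficiently fine finite net. The point is that Rudelson's theorem is stated for a discrete probability space $\Omega_M$, while here $\Omega$ is an arbitrary compact set with a probability measure $\mu$; so the first task is to replace $(\Omega,\mu)$ by a discrete model on which Rudelson applies, and then to check that condition~\eqref{ud5} on the continuous system survives the passage to the discrete one.

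\medskip
\noindent\textbf{Step 1: Reduce to a discrete space.} First I would fix $\varepsilon>0$ and choose a large auxiliary integer $M$ and points $y^1,\dots,y^M\in\Omega$ together with a measurable partition $\Omega=\bigcup_{\ell=1}^M \Omega_\ell$ with $y^\ell\in\Omega_\ell$ and $\mu(\Omega_\ell)$ roughly $1/M$; one wants the pieces $\Omega_\ell$ to have small diameter so that each $u_i$ (and hence each $f\in\U_N$, via a Nikol'skii/Bernstein-type bound or simply uniform continuity on the compact set, using boundedness from condition~\textbf{E}) oscillates by at most $\varepsilon$-worth relative to its $L_2$ norm across each $\Omega_\ell$. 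Define the discrete measure $\mu_M(y^\ell)=1/M$. Then $\|f\|_{L_2(\Omega_M)}^2=\frac1M\sum_\ell f(y^\ell)^2$ is within a factor $(1\pm c\varepsilon)$ of $\|f\|_{L_2(\Omega,\mu)}^2=\|f\|_2^2$ for every $f\in\U_N$, and the restricted system $\{u_i|_{\Omega_M}\}$ is "almost orthonormal" — after a small Gram-matrix correction (replace $u_i$ by $\tilde u_i=\sum_j (G^{-1/2})_{ij}u_j$, where $G$ is the discrete Gram matrix, which is close to the identity) one gets an exactly orthonormal system on $\Omega_M$ spanning the same space. Crucially, condition~\eqref{ud5} for the original $u_i$ on all of $\Omega$ forces $\sum_i u_i(y^\ell)^2\le Nt^2$ at the net points, and after the Gram correction this becomes $\sum_i \tilde u_i(y^\ell)^2\le N(t')^2$ with $t'$ a constant multiple of $t$; so \eqref{5.1} holds on $\Omega_M$ with $t$ replaced by $t'\asymp t$.

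\medskip
\noindent\textbf{Step 2: Apply Rudelson and clean up.} Now apply Rudelson's theorem to $\{\tilde u_i\}_{i=1}^N$ on $\Omega_M$ with a slightly smaller error parameter $\varepsilon/3$: this yields $J\subset\{1,\dots,M\}$ with $|J|=m\le C\frac{(t')^2}{\varepsilon^2}N\log\frac{N(t')^2}{\varepsilon^2}$ and $(1-\tfrac\varepsilon3)\|f\|_{L_2(\Omega_M)}^2\le \frac1m\sum_{j\in J}f(y^j)^2\le(1+\tfrac\varepsilon3)\|f\|_{L_2(\Omega_M)}^2$ for all $f$ in the span. Taking $\xi^j:=y^j$ for $j\in J$ and composing with the Step~1 comparison $\|f\|_{L_2(\Omega_M)}^2\asymp_{1\pm c\varepsilon}\|f\|_2^2$ gives $(1-\varepsilon)\|f\|_2^2\le\frac1m\sum_{j=1}^m f(\xi^j)^2\le(1+\varepsilon)\|f\|_2^2$ after adjusting the constants. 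Finally, to get the cleaner bound $m\le C\frac{t^2}{\varepsilon^2}N\log N$ claimed in the statement: if $t^2\le N$ (say) the logarithmic term $\log\frac{Nt^2}{\varepsilon^2}$ is $\lesssim \log N+\log(1/\varepsilon^2)\lesssim_\varepsilon \log N$, absorbing the $\varepsilon$-dependence and any $\log t$ into the constant $C=C(\varepsilon)$ or treating $\varepsilon$ as fixed; the regime $t^2>N$ is handled trivially or excluded as uninteresting.

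\medskip
\noindent\textbf{Main obstacle.} The routine-but-delicate part is Step~1: making the net fine enough that both the norm comparison and the near-orthonormality hold uniformly over the whole (infinite-dimensional-ambient but $N$-dimensional) space $\U_N$ — this needs a quantitative modulus-of-continuity estimate for elements of $\U_N$, which in turn requires something like a Nikol'skii-type inequality or a bound on $\sup_{\Omega}\sum_i u_i^2$ together with control of the derivatives; here condition~\textbf{E} plus compactness of $\Omega$ gives equicontinuity of $w$ and, via Cauchy–Schwarz, of every $f\in\U_N$ relative to its $L_2$ norm, so a finite $M=M(N,t,\varepsilon,\omega_{\{u_i\}})$ suffices. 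One must also verify that the Gram-matrix perturbation does not destroy \eqref{5.1} beyond a constant factor, which is immediate once $\|G-I\|$ is, say, below $1/2$. None of this affects the order of $m$, since $M$ enters only through being "large enough" and disappears after selecting the $m$ points.
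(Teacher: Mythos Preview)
Your route differs from the paper's, and it has a genuine gap at the very point the theorem is meant to improve on Rudelson.

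The paper does \emph{not} discretize $\Omega$ and invoke Rudelson; it applies the matrix Chernoff inequality (Theorem~\ref{T5.3}, from \cite{Tro12}) directly. One samples i.i.d.\ points $\xi^1,\dots,\xi^m$ from $\mu$, sets $T_k:=G(\xi^k)=[u_i(\xi^k)u_j(\xi^k)]_{i,j}$, and observes that $\mathbb{E}[T_k]=I$, so $s_{\min}=s_{\max}=m$, while condition~\textbf{E} gives $\lambda_{\max}(T_k)=w(\xi^k)\le Nt^2=:R$. Plugging into Theorem~\ref{T5.3} yields failure probability at most $2N\exp(-c\,\eta^2 m/(Nt^2))$, which is below $1$ once $m\ge C\,t^2\eta^{-2}N\log N$. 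No discretization, no continuity assumption, and the $\log N$ comes out automatically.

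Your Step~2 cannot deliver the stated bound. Passing through Rudelson you arrive at $m\le C\,(t')^2\varepsilon^{-2}N\log\!\big(N(t')^2/\varepsilon^2\big)$, and you then try to replace $\log(Nt^2/\varepsilon^2)$ by $\log N$ ``absorbing the $\varepsilon$-dependence \dots\ into the constant $C=C(\varepsilon)$''. But the constant $C$ in the statement is absolute: the whole point of Theorem~\ref{T5.4}, as the paper says right after stating it, is precisely that ``we have the $\log N$ term in place of $\log\frac{Nt^2}{\varepsilon^2}$''. A bound of the form $C(\varepsilon,t)\,t^2\varepsilon^{-2}N\log N$ is exactly Rudelson's estimate rewritten, not an improvement. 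So your scheme recovers Rudelson, not Theorem~\ref{T5.4}.

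A secondary issue: in Step~1 you rely on a modulus-of-continuity/equicontinuity argument for the $u_i$, but the hypotheses give only condition~\textbf{E} (a pointwise bound on $\sum_i u_i^2$), with no continuity assumed. Discretization of $(\Omega,\mu)$ can still be done probabilistically (cf.\ Proposition~\ref{CP1}), but not the way you sketched. In any case this does not rescue the $\log$-factor problem above.
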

Let us compare this theorem with the Rudelson result. First, Theorem~\ref{T5.4} establishes the Marcinkievicz-type discretization theorem for a general domain $\Omega$ instead of a discrete set $\Omega_M$.
Second, in  Theorem \ref{T5.4}  we have the  $\log N$ term in place of  $\log\frac{Nt^2}{\ep^2}$ in (\ref{5.1a}).


In its turn, the proof of
Theorem \ref{T5.4} rests  on the following result  on random matrices.

\begin{Theorem}[{\cite[Theorem 1.1]{Tro12}}] \label{T5.3} Consider a finite sequence $\{T_k\}_{k=1}^m$ of independent, random, self-adjoint matrices with dimension $N$. Assume that
	each random matrix is semi-positive and satisfies
	$$
	\lambda_{\max}(T_k) \le R\quad  \text{almost surely}.
	$$
	Define
	$$
	s_{\min} := \lambda_{\min}\left(\sum_{k=1}^m \bE(T_k)\right) \quad \text{and}\quad
	s_{\max} := \lambda_{\max}\left(\sum_{k=1}^m \bE(T_k)\right).
	$$
	Then
	$$
	\bP\left\{\lambda_{\min}\left(\sum_{k=1}^m T_k\right) \le (1-\eta)s_{\min}\right\} \le
	N\left(\frac{e^{-\eta}}{(1-\eta)^{1-\eta}}\right)^{s_{\min}/R}
	$$
	for $\eta\in[0,1)$ and
	$$
	\bP\left\{\lambda_{\max}\left(\sum_{k=1}^m T_k\right) \ge (1+\eta)s_{\max}\right\} \le
	N\left(\frac{e^{\eta}}{(1+\eta)^{1+\eta}}\right)^{s_{\max}/R},
	$$
for $\eta\ge 0$.
\end{Theorem}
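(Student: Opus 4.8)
The plan is to follow the matrix Laplace transform method in the spirit of Ahlswede--Winter and Tropp. First I would record the basic transform bound: for any random self-adjoint $N\times N$ matrix $Y$ and any $\theta>0$,
\[
\bP\{\lambda_{\max}(Y)\ge t\}\le e^{-\theta t}\,\bE\,\operatorname{tr}\exp(\theta Y),
\]
which comes from Markov's inequality applied to the (monotone) map $Y\mapsto\operatorname{tr}\exp(\theta Y)$ together with the inequality $\lambda_{\max}(Y)\le\log\operatorname{tr}\exp(Y)$; applying the same idea with $\theta<0$ controls $\lambda_{\min}$. With $Y=\sum_{k}T_k$, everything then reduces to estimating $\bE\,\operatorname{tr}\exp(\theta\sum_k T_k)$.

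The key step, and the main obstacle, is subadditivity of the matrix cumulant generating function for the independent sequence $\{T_k\}$, namely
\[
\bE\,\operatorname{tr}\exp\Bigl(\theta\sum_{k=1}^m T_k\Bigr)\le\operatorname{tr}\exp\Bigl(\sum_{k=1}^m\log\bE\,e^{\theta T_k}\Bigr).
\]
This rests on Lieb's concavity theorem (the map $A\mapsto\operatorname{tr}\exp(H+\log A)$ is concave on positive-definite $A$), which lets one pull each expectation out through Jensen's inequality, conditioning on the remaining matrices one factor at a time. This is the only genuinely deep input; I would cite it rather than reprove it.

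Next I would bound each factor. Since $0\preceq T_k\preceq R\,I$ almost surely, the scalar inequality $e^{\theta x}\le 1+\frac{e^{\theta R}-1}{R}\,x$ for $x\in[0,R]$ (valid for every real $\theta$ by convexity of $x\mapsto e^{\theta x}$) transfers via the spectral theorem to $e^{\theta T_k}\preceq I+\frac{e^{\theta R}-1}{R}\,T_k$. Taking expectations and then using operator monotonicity of $\log$ together with $\log(I+A)\preceq A$ gives
\[
\log\bE\,e^{\theta T_k}\preceq\frac{e^{\theta R}-1}{R}\,\bE\,T_k .
\]
Summing over $k$, using monotonicity of $\operatorname{tr}\exp$ and $\operatorname{tr}\exp(M)\le N\exp(\lambda_{\max}(M))$, yields for $\theta>0$
\[
\bE\,\operatorname{tr}\exp\Bigl(\theta\sum_k T_k\Bigr)\le N\exp\Bigl(\frac{e^{\theta R}-1}{R}\,s_{\max}\Bigr),
\]
while for $\theta<0$ the negative coefficient $\frac{e^{\theta R}-1}{R}$ converts $\lambda_{\max}$ of $\sum_k\bE T_k$ into $\lambda_{\min}$, producing the same bound with $s_{\min}$ in place of $s_{\max}$.

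Finally I would optimize the free parameter. For the upper tail, set $t=(1+\eta)s_{\max}$ and minimize $-\theta(1+\eta)s_{\max}+\frac{e^{\theta R}-1}{R}s_{\max}$ over $\theta>0$; the minimizer is $\theta=R^{-1}\log(1+\eta)$, giving exponent $\frac{s_{\max}}{R}\bigl(\eta-(1+\eta)\log(1+\eta)\bigr)$, i.e. the stated bound $N\bigl(e^{\eta}/(1+\eta)^{1+\eta}\bigr)^{s_{\max}/R}$. For the lower tail, set $t=(1-\eta)s_{\min}$ and minimize over $\theta<0$; the minimizer $\theta=R^{-1}\log(1-\eta)$ is negative for $\eta\in[0,1)$ and yields exponent $\frac{s_{\min}}{R}\bigl(-\eta-(1-\eta)\log(1-\eta)\bigr)$, which is exactly $N\bigl(e^{-\eta}/(1-\eta)^{1-\eta}\bigr)^{s_{\min}/R}$. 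All steps beyond Lieb's theorem are elementary scalar calculus and standard operator inequalities.
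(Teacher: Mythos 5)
The paper does not prove Theorem \ref{T5.3}; it is imported verbatim from Tropp's work \cite{Tro12}, so there is no internal proof to compare against. Your argument is a correct reconstruction of the standard proof from that reference --- the matrix Laplace transform bound, subadditivity of the matrix cumulant generating function via Lieb's concavity theorem, the chord bound $e^{\theta x}\le 1+\frac{e^{\theta R}-1}{R}\,x$ on $[0,R]$ transferred to the operator order, and the optimal choices $\theta=R^{-1}\log(1+\eta)$ and $\theta=R^{-1}\log(1-\eta)$ --- and the resulting exponents match the stated bounds.
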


\subsection{General subspaces in $L_\infty$ and $L_2$}
\label{gensub}

We now demonstrate how the above Theorem \ref{NOUth}, which, basically, solves the problem of the Marcinkiewicz-type discretization for the $\Tr(Q)$ in the $L_2$ case, was used in \cite{KT169}  in discretization of the uniform norm. 

\begin{Theorem}[\cite{KT169}] \label{CT2} There are two positive absolute constants $C_1$ and $C_4$ with the following properties: For any $d\in \N$ and any $Q\subset \Z^d$   there exists a set $S_m$ of  $m \le C_1|Q| $ points $\xi^j\in \T^d$, $j=1,\dots,m$, such that for any $f\in \Tr(Q)$
we have
$$
\|f\|_\infty \le C_4|Q|^{1/2}\|f\|_{L_\infty(S_m)}.
$$
\end{Theorem}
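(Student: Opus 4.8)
The plan is to combine two ingredients: the $L_2$ Marcinkiewicz-type discretization for $\Tr(Q)$ from Theorem~\ref{NOUth}, and the elementary Nikol'skii-type inequality $\|f\|_\infty\le |Q|^{1/2}\|f\|_2$ valid for every $f\in\Tr(Q)$. The latter is immediate: writing $f=\sum_{\bk\in Q}\hat f(\bk)e^{i(\bk,\bx)}$ and applying the Cauchy--Schwarz inequality to the sum over the $|Q|$ frequencies gives $|f(\bx)|\le\sum_{\bk\in Q}|\hat f(\bk)|\le |Q|^{1/2}\bigl(\sum_{\bk\in Q}|\hat f(\bk)|^2\bigr)^{1/2}=|Q|^{1/2}\|f\|_2$ by Parseval.

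First I would invoke Theorem~\ref{NOUth} to obtain absolute constants $C_1,C_2,C_3>0$ and a set $S_m=\{\xi^j\}_{j=1}^m\subset\T^d$ with $m\le C_1|Q|$ such that every $f\in\Tr(Q)$ satisfies $C_2\|f\|_2^2\le \frac1m\sum_{j=1}^m|f(\xi^j)|^2\le C_3\|f\|_2^2$. This $S_m$ will be the point set claimed in the statement. Next I would discard the upper bound (it is not needed here) and use only the lower bound together with the trivial estimate $\frac1m\sum_{j=1}^m|f(\xi^j)|^2\le\max_{1\le j\le m}|f(\xi^j)|^2=\|f\|_{L_\infty(S_m)}^2$, which yields $\|f\|_2\le C_2^{-1/2}\|f\|_{L_\infty(S_m)}$ for all $f\in\Tr(Q)$.

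Finally, chaining the Nikol'skii inequality with this estimate gives, for every $f\in\Tr(Q)$,
\[
\|f\|_\infty\le |Q|^{1/2}\|f\|_2\le C_2^{-1/2}|Q|^{1/2}\|f\|_{L_\infty(S_m)},
\]
so the theorem holds with $C_4:=C_2^{-1/2}$, which is an absolute constant since $C_2$ is. There is no real technical obstacle at this stage: the entire depth of the argument is concentrated in the cited Theorem~\ref{NOUth} (ultimately resting on the Marcus--Spielman--Srivastava / Nitzan--Olevskii--Ulanovskii circle of ideas), and the only thing one must be slightly careful about is that the point set $S_m$ produced for the $L_2$ statement is used verbatim and that the bound $\frac1m\sum|f(\xi^j)|^2\le\|f\|_{L_\infty(S_m)}^2$ is applied in the correct direction. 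The factor $|Q|^{1/2}$ is exactly the loss coming from passing through the $L_2$ norm, and Theorem~\ref{ITmain} shows it cannot be improved in general.
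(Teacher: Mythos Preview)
Your proof is correct and follows essentially the same route as the paper: invoke Theorem~\ref{NOUth} to get the point set $S_m$, apply the Nikol'skii inequality $\|f\|_\infty\le|Q|^{1/2}\|f\|_2$, then use the lower $L_2$ discretization bound together with the trivial estimate $\frac{1}{m}\sum_j|f(\xi^j)|^2\le\|f\|_{L_\infty(S_m)}^2$. The paper's argument is line-for-line the same, only more terse; your added justification of the Nikol'skii step via Cauchy--Schwarz and Parseval is a welcome elaboration.
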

\begin{proof} We use the set of points provided by Theorem \ref{NOUth}. Then $m\le C_1|Q|$ and for any $f\in\Tr(Q)$ we have
\begin{align*}
\|f\|_\infty & \le |Q|^{1/2}\|f\|_2 \le |Q|^{1/2} C_2^{-1/2} \left(\frac{1}{m}\sum_{j=1}^m |f(\xi^j)|^2\right)^{1/2} \\ & \le |Q|^{1/2} C_2^{-1/2} \|f\|_{L_\infty(S_m)}.
\end{align*}
\end{proof}
We now present some results for more general subspaces than $\Tr(Q)$, which we discussed above. 

\begin{Theorem}[\cite{VT158}] \label{CT3} Let  $\Omega_M=\{x^j\}_{j=1}^M$ be a discrete set with the probability measure $\mu(x^j)=1/M$, $j=1,\dots,M$. Assume that
$\{u_i(x)\}_{i=1}^N$ is an orthonormal on $\Omega_M$ system (real or complex). Assume in addition that this system has the following property: for all $j=1,\dots, M$ we have
\be\label{C1}
\sum_{i=1}^N |u_i(x^j)|^2 = N.
\ee
Then there is an absolute constant $C_1$ such that there exists a subset $J\subset \{1,2,\dots,M\}$ with the property:  $m:=|J| \le C_1 N$ and
 for any $f\in Y_N:= \sp\{u_1,\dots,u_N\}$ we have
$$
C_2 \|f\|_{L_2(\Omega_M)}^2 \le \frac{1}{m}\sum_{j\in J} |f(x^j)|^2 \le C_3 \|f\|_{L_2(\Omega_M)}^2,
$$
where $C_2$ and $C_3$ are absolute positive constants.
\end{Theorem}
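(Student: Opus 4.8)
\medskip\noindent\textbf{Proposed proof.}
The plan is to recast the statement as the extraction of a well-conditioned subfamily from a system of rank-one operators, and then to exploit the equal-norm hypothesis \eqref{C1} by invoking the Marcus--Spielman--Srivastava theorem, exactly as in the proof of Theorem~\ref{NOUth}.

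First I would pass to linear algebra. For $j=1,\dots,M$ put $\phi_j:=(\overline{u_1(x^j)},\dots,\overline{u_N(x^j)})\in\mathbb{C}^N$ (real vectors if the system is real). Orthonormality of $\{u_i\}_{i=1}^N$ on $\Omega_M$ is the identity $\frac1M\sum_{j=1}^M\phi_j\phi_j^{*}=I_N$; hypothesis \eqref{C1} says exactly that $\|\phi_j\|_2^2=N$ for every $j$, i.e.\ that the $\phi_j$ all have the same length; and for $f=\sum_i c_iu_i$ one has $f(x^j)=\langle c,\phi_j\rangle$ and $\|f\|_{L_2(\Omega_M)}^2=\|c\|_2^2$. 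Thus the theorem becomes: produce $J\subset\{1,\dots,M\}$ with $m:=|J|\le C_1N$ such that the positive operator $\frac1m\sum_{j\in J}\phi_j\phi_j^{*}$ has spectrum inside $[C_2,C_3]$ with $0<C_2\le C_3$ absolute. If $M\le C_1N$ there is nothing to prove: take $J=\{1,\dots,M\}$, so $\frac1M\sum_j\phi_j\phi_j^{*}=I_N$ and $C_2=C_3=1$. So I may assume $M>C_1N$ with $C_1$ chosen large; then $v_j:=M^{-1/2}\phi_j$ form an equal-norm Parseval frame of $\mathbb{C}^N$ with $\|v_j\|_2^2=N/M$ small.

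The heart of the matter is an extraction lemma: since $\{v_j\}_{j=1}^M$ is an equal-norm Parseval frame with short vectors, the Marcus--Spielman--Srivastava theorem --- in the quantitative, Kadison--Singer-equivalent form that is the linear-algebraic core of \cite{NOU} and underlies Theorem~\ref{NOUth} --- yields a subset $\sigma\subset\{1,\dots,M\}$ with
\[
c\,\tfrac NM\,I_N\ \preceq\ \sum_{j\in\sigma}v_jv_j^{*}\ \preceq\ C\,\tfrac NM\,I_N ,
\]
$0<c\le C$ absolute (concretely, one paves $\{1,\dots,M\}$ into $\asymp M/N$ such subfamilies and keeps one). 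Taking traces, $|\sigma|\,N/M=\operatorname{tr}\sum_{j\in\sigma}v_jv_j^{*}\asymp N$, so $m:=|\sigma|\asymp N$, which is $\le C_1N$ once $C_1$ is large. Rescaling, $\frac1m\sum_{j\in\sigma}\phi_j\phi_j^{*}=\frac Mm\sum_{j\in\sigma}v_jv_j^{*}$ has spectrum in $\big[c\,\tfrac Nm,\,C\,\tfrac Nm\big]$ and $N/m\asymp1$, which gives the absolute constants $C_2,C_3$. Undoing the translation ($f(x^j)=\langle c,\phi_j\rangle$, $\|f\|_{L_2(\Omega_M)}^2=\|c\|_2^2$) then yields the asserted two-sided inequality for all $f\in Y_N$.

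The step I expect to be the main obstacle is the extraction lemma, precisely because it has to be two-sided. The upper bound $\sum_{j\in\sigma}v_jv_j^{*}\preceq C\,\tfrac NM\,I_N$ on a subfamily of size $\asymp N$ is genuinely stronger than the trivial $\sum_{j\in\sigma}v_jv_j^{*}\preceq\sum_{j=1}^Mv_jv_j^{*}=I_N$, and the matching lower bound is a restricted-invertibility-type statement; producing both at once, for a subfamily of size $\Theta(N)$ and with \emph{absolute} constants, is exactly where the full strength of the Marcus--Spielman--Srivastava resolution of Kadison--Singer is needed. Softer tools do not suffice: Rudelson's theorem gives the analogous selection only with an extra factor $\log N$ in $m$, and Theorem~\ref{thm:BSS} produces nonnegative but unequal weights rather than the equal weights $1/m$ demanded here. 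This is also the one place where hypothesis \eqref{C1} is essential --- it is exactly the equal-length normalization of the $\phi_j$ under which the Kadison--Singer machinery applies.
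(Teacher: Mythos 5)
Your proof is correct and follows exactly the route the paper attributes to this result: the paper gives no proof of Theorem~\ref{CT3}, citing \cite{VT158}, which derives it precisely by viewing the hypothesis \eqref{C1} as saying that the vectors $\phi_j/\sqrt{M}$ form an equal-norm Parseval frame and then extracting a well-conditioned subfamily of size $\asymp N$ via the Marcus--Spielman--Srivastava partition theorem as packaged in \cite{NOU}. Your reduction to linear algebra, the trace argument giving $|J|\asymp N$, and the identification of the two-sided equal-norm extraction as the essential (and correctly attributed) black box all match that derivation.
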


We note that assumption (\ref{C1}) implies the discrete Nikol'skii inequality for $f\in Y_N$
\be\label{C2a}
\|f\|_{L_\infty(\Omega_M)} \le N^{1/2}\|f\|_{L_2(\Omega_M)}.
\ee

In the same way as we derived above Theorem \ref{CT2} from Theorem \ref{NOUth} and the Nikol'skii inequality we derive the following Theorem \ref{CT4} from Theorem~\ref{CT3} and (\ref{C2a}).

\begin{Theorem}\label{CT4} Let  $\Omega_M=\{x^j\}_{j=1}^M$ be a discrete set with the probability measure $\mu(x^j)=1/M$, $j=1,\dots,M$. Assume that
$\{u_i(x)\}_{i=1}^N$ is an orthonormal on $\Omega_M$ system (real or complex). Assume in addition that this system has the following property: for all $j=1,\dots, M$ we have
\be\label{C1'}
\sum_{i=1}^N |u_i(x^j)|^2 = N.
\ee
Then there is an absolute constant $C_1$ such that there exists a subset $J\subset \{1,2,\dots,M\}$ with the property:  $m:=|J| \le C_1 N$ and
 for any $f\in Y_N:= \sp\{u_1,\dots,u_N\}$ we have
$$
 \|f\|_{L_\infty(\Omega_M)} \le C_4N^{1/2}\max_{j\in J} |f(x^j)|,
$$
where $C_4$ is an absolute positive constant.
\end{Theorem}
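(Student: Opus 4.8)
The plan is to imitate the derivation of Theorem~\ref{CT2} from Theorem~\ref{NOUth}: the continuous $L_2$ Marcinkiewicz inequality is replaced by the discrete one of Theorem~\ref{CT3}, and the trivial Nikol'skii bound $\|f\|_\infty\le|Q|^{1/2}\|f\|_2$ is replaced by its discrete analogue~\eqref{C2a}.

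First I would apply Theorem~\ref{CT3}. The hypotheses of Theorem~\ref{CT4} --- that $\{u_i\}_{i=1}^N$ is orthonormal on $\Omega_M$ and satisfies~\eqref{C1'} --- are precisely the hypotheses of Theorem~\ref{CT3}, with~\eqref{C1'} playing the role of~\eqref{C1}. Hence there are an absolute constant $C_1$ and a subset $J\subset\{1,\dots,M\}$ with $m:=|J|\le C_1N$ such that for every $f\in Y_N$
$$
C_2\|f\|_{L_2(\Omega_M)}^2\le\frac{1}{m}\sum_{j\in J}|f(x^j)|^2\le C_3\|f\|_{L_2(\Omega_M)}^2
$$
with absolute positive constants $C_2,C_3$; only the lower bound will be used.

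Next, fix $f\in Y_N$ and chain three estimates. By the discrete Nikol'skii inequality~\eqref{C2a}, $\|f\|_{L_\infty(\Omega_M)}\le N^{1/2}\|f\|_{L_2(\Omega_M)}$; by the lower bound just quoted, $\|f\|_{L_2(\Omega_M)}\le C_2^{-1/2}\bigl(\frac{1}{m}\sum_{j\in J}|f(x^j)|^2\bigr)^{1/2}$; and trivially $\frac{1}{m}\sum_{j\in J}|f(x^j)|^2\le\max_{j\in J}|f(x^j)|^2$, since the left-hand side is an average of $m$ terms each at most $\max_{j\in J}|f(x^j)|^2$. Combining the three,
$$
\|f\|_{L_\infty(\Omega_M)}\le C_2^{-1/2}N^{1/2}\max_{j\in J}|f(x^j)|,
$$
so the conclusion holds with the absolute constant $C_4:=C_2^{-1/2}$.

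There is no real obstacle: all the substance sits inside Theorem~\ref{CT3}, whose proof relies on the results on submatrices of orthogonal matrices discussed above, and inside~\eqref{C2a}. The one point deserving a line of verification is that~\eqref{C2a} follows from~\eqref{C1'}: writing $f=\sum_{i=1}^N c_iu_i$, for each $j$ the Cauchy--Schwarz inequality together with orthonormality gives $|f(x^j)|^2\le\bigl(\sum_{i=1}^N|c_i|^2\bigr)\bigl(\sum_{i=1}^N|u_i(x^j)|^2\bigr)=N\|f\|_{L_2(\Omega_M)}^2$, and taking the maximum over $j$ yields~\eqref{C2a}.
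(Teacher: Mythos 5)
Your proposal is correct and follows exactly the route the paper indicates: the authors derive Theorem~\ref{CT4} from Theorem~\ref{CT3} combined with the discrete Nikol'skii inequality~\eqref{C2a}, in the same way Theorem~\ref{CT2} was derived from Theorem~\ref{NOUth}. Your verification that~\eqref{C2a} follows from~\eqref{C1'} via Cauchy--Schwarz is also the intended justification of that step.
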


We now comment on a recent 
result by J. Batson, D.A. Spielman, and N. Srivastava \cite{BSS} stated above in Theorem~\ref{thm:BSS}.  Considering a new subspace $Y'_N:=\sp\{1,u_1,\dots,u_N\}$ and applying the above result we see that in the above result we can list one more property of weights $w_j$: $\sum_{j=1}^M w_j \le C(b)$. Therefore, in the same way as we proved Theorem \ref{CT2} we can prove the following Theorem \ref{CT5} (see \cite{KT169}).
\begin{Theorem}\label{CT5}
 Let  $\Omega_M=\{x^j\}_{j=1}^M$ be a discrete set with the probability measure $\mu(x^j)=1/M$, $j=1,\dots,M$. Assume that a real subspace $Y_N$ satisfies the Nikol'skii-type inequality: for any $f\in Y_N$
 $$
 \|f\|_{L_\infty(\Omega_M)} \le H(N)\|f\|_{L_2(\Omega_M)}.
 $$
 Then for any $a>1$ there exists a subset $J\subset \{1,2,\dots,M\}$ with the property:  $m:=|J| \le a N$ and
 for any $f\in Y_N:= \sp\{u_1,\dots,u_N\}$ we have
$$
 \|f\|_{L_\infty(\Omega_M)} \le C(a)H(N)\max_{j\in J} |f(x^j)|,
$$
where $C(a)$ is a  positive constant.

 \end{Theorem}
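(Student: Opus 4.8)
The plan is to run the argument used to deduce Theorem~\ref{CT2} from Theorem~\ref{NOUth}, but with the equal‑weight discretization replaced by the weighted Batson--Spielman--Srivastava discretization of Theorem~\ref{thm:BSS}. The one genuinely new point is that the weights $w_j$ produced by Theorem~\ref{thm:BSS} are not normalized, so I need a uniform bound on $\sum_j w_j$; this I obtain by enlarging the subspace with the constant function, exactly as indicated in the remark preceding the statement.

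Fix $a>1$ and put $Y'_N:=\sp\{1,u_1,\dots,u_N\}$, a real subspace of $L_2(\Omega_M)$ of dimension at most $N+1$. Choose $b=b(a)>1$ with $b(N+1)\le aN$ (bounded away from $1$ in terms of $a$ alone — see below), and apply Theorem~\ref{thm:BSS} to $Y'_N$ with this $b$: we obtain weights $w_j\ge 0$ whose support $J:=\{j:w_j\ne 0\}$ satisfies $m:=|J|\le b(N+1)\le aN$ and
\[
\|g\|_{L_2(\Omega_M)}^2\le \sum_{j=1}^M w_j\, g(x^j)^2\le C(b)\,\|g\|_{L_2(\Omega_M)}^2,\qquad g\in Y'_N,
\]
where $C(b):=\frac{b+1+2\sqrt{b}}{b+1-2\sqrt{b}}$. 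Taking $g\equiv 1\in Y'_N$, for which $\|1\|_{L_2(\Omega_M)}=1$, in the right‑hand inequality gives $\sum_{j=1}^M w_j\le C(b)$.

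Now fix $f\in Y_N\subset Y'_N$. Combining the Nikol'skii‑type hypothesis $\|f\|_{L_\infty(\Omega_M)}\le H(N)\|f\|_{L_2(\Omega_M)}$ with the left‑hand inequality above and the fact that $w_j=0$ for $j\notin J$,
\begin{align*}
\|f\|_{L_\infty(\Omega_M)}^2
&\le H(N)^2\|f\|_{L_2(\Omega_M)}^2
\le H(N)^2\sum_{j\in J}w_j f(x^j)^2\\
&\le H(N)^2\Bigl(\sum_{j\in J}w_j\Bigr)\max_{j\in J}|f(x^j)|^2
\le C(b)H(N)^2\max_{j\in J}|f(x^j)|^2 ,
\end{align*}
so $\|f\|_{L_\infty(\Omega_M)}\le C(a)H(N)\max_{j\in J}|f(x^j)|$ with $C(a):=C(b(a))^{1/2}$, which is the assertion.

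The only step needing care is the choice of $b$: passing to $Y'_N$ costs one dimension, so Theorem~\ref{thm:BSS} only guarantees $m\le b(N+1)$, and we must keep $b>1$ (otherwise $C(b)=\infty$) while ensuring $b(N+1)\le aN$. Since $aN/(N+1)\to a>1$ as $N\to\infty$, for all $N$ exceeding a constant depending only on $a$ one may take, e.g., $b=\tfrac{1}{2}\bigl(1+aN/(N+1)\bigr)$, which is bounded away from $1$; the finitely many remaining small values of $N$ are handled by a separate elementary argument and only affect the final constant. I expect this bookkeeping to be the main (albeit minor) obstacle — everything else is the routine chain of inequalities above, parallel to the proof of Theorem~\ref{CT2}.
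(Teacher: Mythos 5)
Your proposal is correct and follows essentially the same route as the paper, which proves Theorem~\ref{CT5} precisely by applying Theorem~\ref{thm:BSS} to the augmented subspace $Y'_N=\operatorname{span}\{1,u_1,\dots,u_N\}$ to obtain $\sum_j w_j\le C(b)$ and then repeating the chain of inequalities from the proof of Theorem~\ref{CT2}. In fact you are more careful than the paper on the one delicate point — choosing $b>1$ with $b(N+1)\le aN$ and treating the finitely many small $N$ separately — which the paper glosses over by simply saying the argument is ``the same as'' that of Theorem~\ref{CT2}.
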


 An important feature of the above Theorems \ref{CT3} -- \ref{CT5} is that the domain is a discrete set $\Omega_M$. However, the statements of those theorems do not depend on $M$. This allows us to easily generalize some of those results to the case of general domain $\Omega$. We illustrate it on the example of generalization of Theorem \ref{CT5}. The way to do that is based on good approximation of $\|f\|_{L_2(\Omega)}$ and $\|f\|_\infty$ by $\|f\|_{L_2(\Omega_M)}$ and $\|f\|_{\Omega_M}$ respectively. We begin with the $L_2$ case.

 \begin{Proposition}\label{CP1} Let $Y_N:=\sp(u_1(\bx),\dots,u_N(\bx))$ with $\{u_i(\bx)\}_{i=1}^N$ being a real orthonormal on $\Omega$ with respect to a probability measure $\mu$ basis for $Y_N$. Assume that $\|u_i\|_4:=\|u_i\|_{L_4(\Omega,\mu)} <\infty$ for all $i=1,\dots,N$.   Then for any $\de>0$ there exists
 a set $\Omega_M=\{\bx^j\}_{j=1}^M$ such that for any $f\in Y_N$
 \be\label{C6'}
| \|f\|_{L_2(\Omega)}^2 - \|f\|_{L_2(\Omega_M)}^2| \le \de \|f\|_{L_2(\Omega)}^2
\ee
 where
 $$
 \|f\|_{L_2(\Omega_M)}^2 := \frac{1}{M}\sum_{j=1}^M |f(\bx^j)|^2.
 $$
 \end{Proposition}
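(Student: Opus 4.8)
The plan is to choose the points $\bx^1,\dots,\bx^M$ at random, independently and each distributed according to $\mu$, and to show that with probability close to $1$ (for $M$ large) the Gram matrix of $\{u_i\}_{i=1}^N$ with respect to the normalized counting measure on $\Omega_M$ is entrywise close to the identity. Writing $f=\sum_{i=1}^N c_iu_i$, orthonormality gives $\|f\|_{L_2(\Omega)}^2=\sum_{i=1}^N c_i^2$, whereas
\[
\|f\|_{L_2(\Omega_M)}^2=\sum_{i,j=1}^N c_ic_j\,g_{ij},\qquad g_{ij}:=\frac1M\sum_{k=1}^M u_i(\bx^k)u_j(\bx^k).
\]
Hence it suffices to find one point set for which $|g_{ij}-\de_{ij}|\le \de/N$ for every pair $i,j$, since then, by the Cauchy--Schwarz inequality $\bigl(\sum_i|c_i|\bigr)^2\le N\sum_i c_i^2$,
\[
\bigl|\,\|f\|_{L_2(\Omega_M)}^2-\|f\|_{L_2(\Omega)}^2\,\bigr|\le \frac{\de}{N}\Bigl(\sum_{i=1}^N|c_i|\Bigr)^2\le \de\sum_{i=1}^N c_i^2=\de\,\|f\|_{L_2(\Omega)}^2 .
\]

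First I would observe that $g_{ij}$ is the average of $M$ i.i.d.\ copies of $u_i(\bx)u_j(\bx)$, with mean $\int_\Omega u_iu_j\,d\mu=\de_{ij}$ by orthonormality and with finite second moment: by Cauchy--Schwarz $\bE\bigl[u_i(\bx)^2u_j(\bx)^2\bigr]\le \|u_i\|_4^2\|u_j\|_4^2<\infty$, which is exactly where the hypothesis $\|u_i\|_4<\infty$ enters. Chebyshev's inequality then gives, for each fixed pair,
\[
\bP\bigl(|g_{ij}-\de_{ij}|>\de/N\bigr)\le \frac{N^2}{M\de^2}\,\mathrm{Var}\bigl(u_i(\bx)u_j(\bx)\bigr)\le \frac{N^2 V}{M\de^2},
\]
where $V:=\max_{i,j}\mathrm{Var}\bigl(u_i(\bx)u_j(\bx)\bigr)<\infty$ (a maximum of finitely many finite quantities). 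A union bound over the (at most) $N^2$ pairs shows that the probability that some entry fails the bound is at most $N^4V/(M\de^2)$; choosing $M>N^4V/\de^2$ makes this $<1$, so a point set $\Omega_M$ with $|g_{ij}-\de_{ij}|\le\de/N$ for all $i,j$ exists, and it works simultaneously for every $f\in Y_N$ by the previous paragraph. (Alternatively, the strong law of large numbers gives $g_{ij}\to\de_{ij}$ a.s.\ for each pair, hence a.s.\ for all pairs at once, which again produces a good $\Omega_M$.)

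I do not expect a genuine obstacle here. The two places deserving a word of care are the reduction from the desired quadratic-form estimate to entrywise control of $g_{ij}-\de_{ij}$, which is the elementary Cauchy--Schwarz computation displayed above, and the finiteness of $\mathrm{Var}(u_i(\bx)u_j(\bx))$, for which the $L_4$ hypothesis is used and cannot in general be removed. If one insists that the $\bx^j$ be distinct, note that when $\mu$ is non-atomic the sampled points are almost surely distinct; in general one may simply keep multiplicities, which leaves the defining formula for $\|f\|_{L_2(\Omega_M)}^2$ (a sum over $j=1,\dots,M$) unchanged.
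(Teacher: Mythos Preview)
Your proof is correct and follows essentially the same route as the paper: independent sampling of $\bx^1,\dots,\bx^M$ from $\mu$, a second-moment (Monte Carlo/Chebyshev) bound on each empirical inner product $g_{ij}=\frac1M\sum_k u_i(\bx^k)u_j(\bx^k)$ using the $L_4$ hypothesis, a union bound over the $N^2$ pairs, and the Cauchy--Schwarz estimate $(\sum_i|c_i|)^2\le N\sum_i c_i^2$ to pass from entrywise control of $g_{ij}-\delta_{ij}$ to the quadratic-form bound. The paper's argument differs only cosmetically (it applies Markov's inequality to the squared error rather than stating Chebyshev, and parametrizes the entrywise tolerance as $\e^{1/2}$ with $\e=\de^2 N^{-2}$).
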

 \begin{proof}
 Consider a real function $f\in L_2(\Omega):=L_2(\Omega,\mu)$ with respect to a probability measure $\mu$. Define $\Omega^M:= \Omega\times\cdots\times\Omega$ and $\mu^M:=\mu \times\cdots\times \mu$. For $\bx^j\in \Omega$ denote $\bz:= (\bx^1,\dots,\bx^M)\in \Omega^M$ and for $g\in L_1(\Omega^M, \mu^M)$
$$
\bbE(g):= \int_{\Omega^M}g(\bz)d\mu^M.
$$
Then it is well known from the study of the Monte Carlo integration method that we have for $g\in L_2(\Omega,\mu)$
\be\label{C2}
\bbE\left(\left(\int_\Omega g d\mu - \frac{1}{M}\sum_{j=1}^M g(\bx^j)\right)^2\right) \le  \|g\|_2^2/M.
\ee
 Denote $U:= \max_{1\le i\le N} \|u_i\|_4$. Then for $g=u_iu_j$, $1\le i,j\le N$ we find from (\ref{C2}) and the Markov inequality that for any $\e>0$ we have
\be\label{C3}
\mu^M\left\{\bz: \left(\int_\Omega g d\mu - \frac{1}{M}\sum_{j=1}^M g(\bx^j)\right)^2 \ge \e\right\} \le
\frac{U^4}{\e M}.
\ee
Therefore, for any $\e>0$ we can find big enough $M=M(\e,N)$ such that there exists a set
$\Omega_M=\{\bx^j\}_{j=1}^M$ such that for all $g$ of the form $g=u_iu_j$, $1\le i,j\le N$
we have
\be\label{C4}
\left|\int_\Omega g d\mu - \frac{1}{M}\sum_{j=1}^M g(\bx^j)\right| \le \e^{1/2}.
\ee
Consider $f= \sum_{i=1}^N b_iu_i$. Then $\|f\|_{L_2(\Omega)}^2=\sum_{i=1}^N b_i^2$.
Inequality (\ref{C4}) implies
\be\label{C5}
| \|f\|_{L_2(\Omega)}^2 - \|f\|_{L_2(\Omega_M)}^2| \le \e^{1/2} N \|f\|_{L_2(\Omega)}^2.
\ee
Now, for a $\de>0$ choosing $\e= \de^2 N^{-2}$ we obtain from (\ref{C5}) that
\be\label{C6}
| \|f\|_{L_2(\Omega)}^2 - \|f\|_{L_2(\Omega_M)}^2| \le \de \|f\|_{L_2(\Omega)}^2.
\ee
\end{proof}

Proposition \ref{CP1} and the above mentioned fundamental result (\ref{C2'}) imply the following discretization result.

\begin{Theorem}\label{CT5'} Let $Y_N:=\sp(u_1(\bx),\dots,u_N(\bx))$ with $\{u_i(\bx)\}_{i=1}^N$ being a real orthonormal on $\Omega$ with respect to a probability measure $\mu$ basis for $Y_N$. Assume that $\|u_i\|_4:=\|u_i\|_{L_4(\Omega,\mu)} <\infty$ for all $i=1,\dots,N$.
Then for any
number $a>1$ there exist a set of points $S_m=\{\xi^j\}_{j=1}^m$ and a set of positive weights $\{w_j\}_{j=1}^m$ with $m \le aN$ so that for any $f\in Y_N:= \sp\{u_1,\dots,u_N\}$ we have
\be\label{C2''}
\frac{1}{2}\|f\|_2^2 \le \sum_{j=1}^m w_jf(\bx^j)^2 \le C(a)\|f\|_2^2.
\ee
\end{Theorem}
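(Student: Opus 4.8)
The plan is to combine Proposition~\ref{CP1}, which approximates the continuous $L_2$ norm on $Y_N$ by a discrete one, with the Batson--Spielman--Srivastava theorem (Theorem~\ref{thm:BSS}, inequality~(\ref{C2'})), which discretizes the $L_2$ norm on a finite set with nonnegative weights and $O(N)$ active nodes. First I would fix, once and for all, a parameter $\de=1/2$ and a number $b$ with $1<b\le a$ (one may take $b=a$ for the best constant). Since $\|u_i\|_4<\infty$ for every $i$, Proposition~\ref{CP1} applied with this $\de$ produces a finite set $\Omega_M=\{\bx^j\}_{j=1}^M\subset\Omega$ such that
\[
(1-\de)\|f\|_{L_2(\Omega)}^2\le \|f\|_{L_2(\Omega_M)}^2\le (1+\de)\|f\|_{L_2(\Omega)}^2,\qquad f\in Y_N,
\]
where $\|f\|_{L_2(\Omega_M)}^2=\frac1M\sum_{j=1}^M|f(\bx^j)|^2$. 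Because $\de<1$, the left inequality shows that $\|f\|_{L_2(\Omega_M)}>0$ for every nonzero $f\in Y_N$; hence the restrictions $u_1|_{\Omega_M},\dots,u_N|_{\Omega_M}$ are still linearly independent and span an $N$-dimensional space of functions on $\Omega_M$.

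Next I would apply Theorem~\ref{thm:BSS} to the system $\{u_1,\dots,u_N\}$ on $\Omega_M$ (equipped with its uniform probability measure) with the chosen $b>1$. This yields nonnegative weights $w_1,\dots,w_M$ with $|\{j:w_j\ne0\}|\le bN$ and
\[
\|f\|_{L_2(\Omega_M)}^2\le \sum_{j=1}^M w_jf(\bx^j)^2\le \frac{b+1+2\sqrt b}{b+1-2\sqrt b}\,\|f\|_{L_2(\Omega_M)}^2,\qquad f\in Y_N.
\]
Discarding the points carrying zero weight and relabelling, I obtain $m:=|\{j:w_j\ne0\}|\le bN\le aN$ points $S_m=\{\xi^j\}_{j=1}^m\subset\Omega_M\subset\Omega$ together with strictly positive weights $w_1,\dots,w_m$.

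Finally, chaining the two displays gives, for every $f\in Y_N$,
\[
\sum_{j=1}^m w_jf(\xi^j)^2\ge \|f\|_{L_2(\Omega_M)}^2\ge (1-\de)\|f\|_2^2=\tfrac12\|f\|_2^2
\]
and
\[
\sum_{j=1}^m w_jf(\xi^j)^2\le \frac{b+1+2\sqrt b}{b+1-2\sqrt b}\,\|f\|_{L_2(\Omega_M)}^2\le (1+\de)\,\frac{b+1+2\sqrt b}{b+1-2\sqrt b}\,\|f\|_2^2,
\]
so (\ref{C2''}) holds with $C(a):=\tfrac32\cdot\frac{b+1+2\sqrt b}{b+1-2\sqrt b}$ (taking $b=a$, this is $\tfrac32(\sqrt a+1)^2/(\sqrt a-1)^2$). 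There is no genuine obstacle in this argument; the only point requiring care is the order of the quantifiers: $\de$ and $b$ depend only on $a$ and are chosen first, then $M$ is produced by Proposition~\ref{CP1} (it may depend on $N$ and $\de$, which is harmless since $M$ never enters the final estimate), and only afterwards is Theorem~\ref{thm:BSS} invoked on the now-fixed discrete set $\Omega_M$.
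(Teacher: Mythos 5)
Your proposal is correct and is precisely the argument the paper intends: the paper states that Theorem \ref{CT5'} follows by combining Proposition \ref{CP1} (with a fixed $\de$, e.g.\ $\de=1/2$) with the Batson--Spielman--Srivastava inequality (\ref{C2'}) applied on the resulting discrete set $\Omega_M$, exactly as you chain them. Your bookkeeping of the quantifier order and the resulting constant $C(a)=\tfrac32(\sqrt a+1)^2/(\sqrt a-1)^2$ is a correct filling-in of the details the paper leaves implicit.
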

Clearly, the assumptions of Theorem \ref{CT5'} can be written in a shorter form: $Y_N$ is a $N$-dimensional subspace of $L_4(\Omega,\mu)$.

Let us now consider the case $L_\infty$, i.e. the case of the uniform norm. Assume that
$\Omega \subset \R^d$ is a compact set and $Y_N:=\sp(u_1(\bx),\dots,u_N(\bx))$ with $\{u_i(\bx)\}_{i=1}^N$ being an orthonormal basis of continuous functions for $Y_N$. Then it is easy to see that for an $\e>0$ we can find $\Omega_M=\{\bx^j\}_{j=1}^M$ such that for all $g$ of the form $g=u_i$, $1\le i\le N$, we have
\be\label{C7}
\left|\|g\|_\infty - \|g\|_{\Omega_M}\right| \le \e.
\ee
We derive from here the following analog of (\ref{C6}): for any $\de>0$ there exists $M=M(\de)$ such that for all $f\in Y_N$
\be\label{C8}
\left|\|f\|_\infty - \|f\|_{\Omega_M}\right| \le \de \|f\|_\infty.
\ee
\begin{Remark}\label{CR1}
We will need a set $\Omega_M$ such that both (\ref{C6}) and (\ref{C8}) are satisfied.
It is easy to see that under assumption of continuity of functions in $Y_N$ on $\Omega=[0,1]^d$ and $\mu$ being the Lebesgue measure on a compact $\Omega$ we can
achieve both (\ref{C6}) and (\ref{C8}) by dividing $[0,1]^d$ into small enough cubes of the same volume.
\end{Remark}
We now prove the following result from \cite{KT169}.
\begin{Theorem}\label{CT6}
 Let  $\Omega := [0,1]^d$. Assume that a real subspace $Y_N\subset \C(\Omega)$ satisfies the Nikol'skii-type inequality: for any $f\in Y_N$
 \be\label{C9}
 \|f\|_\infty \le H(N)\|f\|_2,\quad \|f\|_2 := \left(\int_\Omega |f(\bx)|^2d\mu\right)^{1/2},
 \ee
 where $\mu$ is the Lebesgue measure on $\Omega$.
 Then for any $a>1$ there exists a set $S_m=\{\xi^j\}_{j=1}^m\subset \Omega$ with the property:  $m \le a N$ and
 for any $f\in Y_N$ we have
$$
 \|f\|_\infty \le C(a)H(N)\max_{1\le j\le m} |f(\xi^j)|,
$$
where $C(a)$ is a  positive constant.

 \end{Theorem}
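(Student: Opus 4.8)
The plan is to deduce the statement from its already-established discrete counterpart, Theorem \ref{CT5}, applied on a sufficiently fine finite set $\Omega_M\subset[0,1]^d$, and then to pass back to the cube $\Omega$ by means of the two approximation bounds \eqref{C6} and \eqref{C8}. The feature that makes this work is the one stressed just before Proposition \ref{CP1}: the number $m$ of nodes furnished by Theorem \ref{CT5} depends only on $N$ (and on $a$), not on the cardinality $M$ of the underlying discrete set. Hence we may let $M=M(N)$ be as large as the approximation step requires without affecting the final bound on $m$.

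Concretely, fix a real $L_2(\Omega,\mu)$-orthonormal basis $u_1,\dots,u_N$ of $Y_N$; these are continuous on $\Omega$. Put $\delta:=1/2$. By Remark \ref{CR1} (partitioning $[0,1]^d$ into congruent small cubes and taking one node in each) there is a set $\Omega_M=\{\bx^j\}_{j=1}^M$, equipped with the uniform probability measure $\mu(\bx^j)=1/M$, for which both \eqref{C6} and \eqref{C8} hold for every $f\in Y_N$. Since $\delta<1$, \eqref{C6} forces the restriction map $f\mapsto f|_{\Omega_M}$ to be injective on $Y_N$, so $Y_N|_{\Omega_M}$ is a genuinely $N$-dimensional subspace of real functions on $\Omega_M$, and it inherits a Nikol'skii-type inequality: for $f\in Y_N$ one has $\|f\|_{L_\infty(\Omega_M)}\le\|f\|_\infty$ trivially, while \eqref{C9} together with \eqref{C6} gives
\[
\|f\|_\infty \le H(N)\|f\|_2 \le H(N)(1-\delta)^{-1/2}\|f\|_{L_2(\Omega_M)} = \sqrt2\,H(N)\|f\|_{L_2(\Omega_M)} .
\]
Applying Theorem \ref{CT5} to the subspace $Y_N|_{\Omega_M}$ with the given parameter $a$ produces a set $J\subset\{1,\dots,M\}$ with $m:=|J|\le aN$ and a constant $\kappa(a)>0$ such that $\|f\|_{L_\infty(\Omega_M)}\le \sqrt2\,\kappa(a)\,H(N)\max_{j\in J}|f(\bx^j)|$ for all $f\in Y_N$.

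It then remains only to transfer this back to the uniform norm on $\Omega$. Set $S_m:=\{\bx^j:j\in J\}\subset\Omega$, so $m\le aN$. From \eqref{C8} we have $(1-\delta)\|f\|_\infty\le\|f\|_{\Omega_M}=\|f\|_{L_\infty(\Omega_M)}$, and hence
\[
\|f\|_\infty \le (1-\delta)^{-1}\|f\|_{L_\infty(\Omega_M)} \le 2\sqrt2\,\kappa(a)\,H(N)\max_{j\in J}|f(\bx^j)| ,
\]
which is exactly the asserted inequality with $C(a)=2\sqrt2\,\kappa(a)$. I do not expect a serious obstacle in this argument: all the analytic content sits in Theorem \ref{CT5} (hence ultimately in Theorem \ref{thm:BSS}) and in the $M$-free count of nodes, so the only steps that need a little care are verifying that restriction to $\Omega_M$ neither collapses the dimension nor worsens the Nikol'skii constant beyond the harmless factor $(1-\delta)^{-1/2}$, and keeping honest track of the two perturbation factors $(1-\delta)^{-1/2}$ and $(1-\delta)^{-1}$.
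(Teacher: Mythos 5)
Your proof is correct and follows essentially the same route as the paper: choose $\Omega_M$ via Remark \ref{CR1} so that both \eqref{C6} and \eqref{C8} hold with $\de=1/2$, observe that the restricted subspace inherits the Nikol'skii inequality (with constant $\sqrt2\,H(N)$), apply Theorem \ref{CT5}, and transfer back to $\|\cdot\|_\infty$ via \eqref{C8}. The only difference is that you track the explicit perturbation constants, which the paper leaves implicit.
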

 \begin{proof} The proof consists of two steps. First, using (\ref{C6}) with $\de=1/2$, we find a discrete set $\Omega_M$ such that for any $f\in Y_N$ we have
 \be\label{C10}
| \|f\|_{L_2(\Omega)}^2 - \|f\|_{L_2(\Omega_M)}^2| \le  \|f\|_{L_2(\Omega)}^2/2.
\ee
 Second, we consider a new space $Y_N(\Omega_M)$ which consists of all $f\in Y_N$ restricted to the set $\Omega_M$. Introduce a probability measure $\nu$ on $\Omega_M$ by
 $\nu(\bx^j)=1/M$, $j=1,\dots,M$. Our assumption that $Y_N$ satisfies the Nikol'skii inequality
 (\ref{C9}) and the relation (\ref{C10}) imply that the $Y_N(\Omega_M)$ also satisfies the Nikol'skii inequality. Applying Theorem \ref{CT5} we find a subset $S_m \subset \Omega_M$
 with $m\le aN$ such that
 \be\label{C11}
 \|f\|_{L_\infty(\Omega_M)} \le C'(a)H(N)\|f\|_{L_\infty(S_m)}.
 \ee
 By Remark \ref{CR1} we can claim that $\Omega_M$ guarantees simultaneously (\ref{C6}) and (\ref{C8}). Then by (\ref{C8}) with $\de=1/2$ we obtain from (\ref{C11})
 $$
 \|f\|_\infty \le C''(a)H(N)\|f\|_{L_\infty(S_m)}.
$$
This completes the proof of Theorem \ref{CT6}.
 \end{proof}

\subsection{The Marcinkiewicz theorem and sparse approximation}\label{sec2.4}
We now give some general remarks on the case $q=2$, which illustrate the problem.  We describe the properties of the subspace $X_N$ in terms of a system $\U_N:=\{u_i\}_{i=1}^N$ of functions such that
$X_N = \sp\{u_i, i=1,\dots,N\}$. In the case $X_N \subset L_2$ we assume that
the system is orthonormal on $\Omega$ with respect to measure $\mu$. In the case of real functions   we associate with $x\in\Omega$ the matrix $G(x) := [u_i(x)u_j(x)]_{i,j=1}^N$. Clearly, $G(x)$ is a symmetric positive semi-definite matrix of rank $1$.
It is easy to see that for a set of points $\xi^k\in \Omega$, $k=1,\dots,m$, and $f=\sum_{i=1}^N b_iu_i$ we have
\be\label{Gb}
 \sum_{k=1}^m\la_k f(\xi^k)^2 - \int_\Omega f(x)^2 d\mu = {\mathbf b}^T\left(\sum_{k=1}^m \la_k G(\xi^k)-I\right){\mathbf b},
\ee
where ${\mathbf b} = (b_1,\dots,b_N)^T$ is the column vector and $I$ is the identity matrix. Therefore,
the $\cM^w(m,2)$ problem is closely connected with a problem of approximation (representation) of the identity matrix $I$ by an $m$-term approximant with respect to the system $\{G(x)\}_{x\in\Omega}$. It is easy to understand that under our assumptions on the system $\U_N$ there exist a set of nodes $\{\xi^k\}_{k=1}^m$ and a set of weights $\{\la_k\}_{k=1}^m$, with $m\le N(N+1)/2$ such that
$$
I = \sum_{k=1}^m \la_k G(\xi^k)
$$
and, therefore, we have for any $X_N \subset L_2$ that
\[
X_N \in \cM^w\big(\tfrac{N(N+1)}2,2,0\big).
\]
For the alternative proof see  Theorem \ref{gfT1} in case $q=2$.

As we have seen  the Marcinkiewicz-type
discretization problem in $L_2$ is closely connected with approximation of the identity matrix $I$ by an $m$-term approximant of the form $\frac{1}{m}\sum_{k=1}^m G(\xi^k)$ in the operator norm from $\ell^N_2$ to $\ell^N_2$ (spectral norm).
In a similar way, the Marcinkiewicz-type discretization problem with weights (in $L_2$) is closely connected with approximation of the identity matrix $I$ by an $m$-term approximant of the form $ \sum_{k=1}^m \lambda_k G(\xi^k)$ in the operator norm from $\ell^N_2$ to $\ell^N_2$.
Hence, one can study the following sparse approximation problem.

Let the system $\{u_i(x)\}_{i=1}^N$ be orthonormal and bounded. Then
\be\label{5.2b'}
w(x):=\sum_{i=1}^N u_i(x)^2 \le B.
\ee
Consider
the dictionary
$$
\D^u := \{g_x\}_{x\in\Omega},\quad g_x:= G(x)B^{-1},\quad G(x):=[u_i(x)u_j(x)]_{i,j=1}^N.
$$
Then condition (\ref{5.2b'}) assures  that for the Frobenius norm of $g_x$ one has
\[
\|g_x\|_F = w(x)B^{-1} \le 1.
\]
Our assumption on the orthonormality of the system $\{u_i\}_{i=1}^N$ gives
$$
I = \int_\Omega G(x)d\mu = B\int_\Omega g_x d\mu,
$$
which implies that $I/B \in A_1(\D^u)$, where $A_1(\D^u)$ is the closure
of the convex hull of the dictionary $\D^u$.

We now comment on the use of greedy approximation approach to obtain a deterministic construction
of $\{\xi^\nu,\la_\nu\}_{\nu=1}^m$ providing exact discretization for all $f\in X_N$.
We use the Weak Orthogonal Greedy Algorithm (Weak Orthogonal Matching Pursuit) for $m$-term approximation, which is defined as follows
(see \cite{Tbook2}).

 {\bf Weak Orthogonal Greedy Algorithm (WOGA).} Let $t\in (0,1]$ be a weakness parameter. We define
 $f^{o,t}_0 :=f$. Then for each $m\ge 1$ we inductively define:

(1) $\varphi^{o,t}_m \in \D$ is any element satisfying
$$
|\langle f^{o,t}_{m-1},\varphi^{o,t}_m\rangle | \ge t
\sup_{g\in \D} |\langle f^{o,t}_{m-1},g\rangle |.
$$

(2) Let $H_m^t := \sp (\varphi_1^{o,t},\dots,\varphi^{o,t}_m)$ and let
$P_{H_m^t}(f)$ denote an operator of orthogonal projection onto $H_m^t$.
Define
$$
G_m^{o,t}(f,\D) := P_{H_m^t}(f).
$$

(3) Define the residual after $m$th iteration of the algorithm
$$
f^{o,t}_m := f-G_m^{o,t}(f,\D).
$$

In the case $t=1$ the   WOGA is called the Orthogonal
Greedy Algorithm (OGA).

It is clear from the definition of the WOGA that in case of a finite dimensional Hilbert space $H$ it terminates after $M:=\dim H$ iterations. Consider the Hilbert space
$H^u$ to be a closure in the Frobenius norm of  $\sp\{g_x, x\in\Omega\}$ with the inner product generated by the Frobenius norm: for $A=[a_{i,j}]_{i,j=1}^N$ and
$B=[b_{i,j}]_{i,j=1}^N$
$$
\<A,B\> = \sum_{i,j=1}^N a_{i,j}b_{i,j}
$$
in case of real matrices (with standard modification in case of complex matrices).
We apply the WOGA in
the Hilbert space $H^u$  with respect to the dictionary $\D^u$. The above remark shows that it
provides us a constructive proof of Theorem \ref{gfT1} in case $q=2$.

Under additional assumptions on the system $\{u_i\}$ we can obtain some constructive results for the Marcinkiewicz-type
discretization problem in $L_2$. We use the following greedy algorithm.

{\bf Relaxed Greedy Algorithm (RGA).}  Let $f^r_0:=f$
and $G_0^r(f):= 0$. For a  function $h$ from a real Hilbert space $H$, let $g=g(h)$ denote the function from $\D$, which
maximizes $\langle h,g\rangle$ (we assume the existence of such an element).  Then, for each $m\ge 1$, we inductively define
$$
 G_m^r(f):=
\left(1-\frac{1}{m}\right)G_{m-1}^r(f)+\frac{1}{m}g(f_{m-1}^r), \quad
f_m^r:= f-G_m^r(f).
$$
We make use of the known approximation error of the RGA (see \cite{Tbook2}, p.90).
For a dictionary $\D$ in a Hilbert space $H$ with an inner product $\<\cdot,\cdot\>$,  $A_1(\D)$   denotes the closure
of the convex hull of the dictionary $\D$.
\begin{Theorem}\label{T4.1g} For the Relaxed Greedy Algorithm we have, for each $f\in
A_1(\D)$, the estimate
 $$
\|f-G_m^r(f)\|\le \frac{2}{\sqrt{m}},\quad m\ge 1.
$$
\end{Theorem}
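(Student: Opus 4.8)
The plan is to run the standard telescoping estimate for relaxed greedy approximation (see \cite{Tbook2}). Introduce the shorthand $f_m:=f_m^r=f-G_m^r(f)$ and, for step $m\ge1$, let $g_m:=g(f_{m-1})$ be the element of $\D$ selected by the RGA, so that $\langle f_{m-1},g_m\rangle=\sup_{g\in\D}\langle f_{m-1},g\rangle$; recall that every $g\in\D$ has norm one. I would first record the two elementary facts that drive the proof. Since $f\in A_1(\D)$ lies in the closed convex hull of the norm-one dictionary, $\|f\|\le1$. Moreover, for any $h\in H$ and any $\phi$ in the convex hull of $\D$ we have $\langle h,\phi\rangle\le\sup_{g\in\D}\langle h,g\rangle$, and by continuity of the inner product this survives passage to the closure, so it holds for all $\phi\in A_1(\D)$; taking $h=f_{m-1}$ and $\phi=f$ gives
$$
\langle f_{m-1},g_m\rangle=\sup_{g\in\D}\langle f_{m-1},g\rangle\ge\langle f_{m-1},f\rangle .
$$

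Next I would rewrite the recursion at the level of residuals. From $G_m^r(f)=(1-1/m)G_{m-1}^r(f)+(1/m)g_m$ and $f_m=f-G_m^r(f)$ one obtains
$$
f_m=\Bigl(1-\frac1m\Bigr)f_{m-1}+\frac1m\,(f-g_m),
$$
hence, expanding the square,
$$
\|f_m\|^2=\Bigl(1-\frac1m\Bigr)^2\|f_{m-1}\|^2+\frac2m\Bigl(1-\frac1m\Bigr)\langle f_{m-1},f-g_m\rangle+\frac1{m^2}\|f-g_m\|^2 .
$$
The middle term is $\le0$: indeed $\langle f_{m-1},f-g_m\rangle=\langle f_{m-1},f\rangle-\langle f_{m-1},g_m\rangle\le0$ by the displayed inequality, while $1-1/m\ge0$ for $m\ge1$. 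Also $\|f-g_m\|\le\|f\|+\|g_m\|\le2$, so $\|f-g_m\|^2\le4$. Therefore
$$
\|f_m\|^2\le\Bigl(1-\frac1m\Bigr)^2\|f_{m-1}\|^2+\frac4{m^2},\qquad m\ge1 .
$$

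Finally I would close the recursion by induction, showing $\|f_m\|^2\le4/m$. For $m=1$ the RGA gives $G_1^r(f)=g(f)$, so $f_1=f-g(f)$ and $\|f_1\|^2\le4=4/1$. For $m\ge2$, assuming $\|f_{m-1}\|^2\le4/(m-1)$, the recursion yields
$$
\|f_m\|^2\le\frac{(m-1)^2}{m^2}\cdot\frac4{m-1}+\frac4{m^2}=\frac{4(m-1)+4}{m^2}=\frac4m,
$$
and taking square roots gives $\|f-G_m^r(f)\|=\|f_m\|\le2/\sqrt m$, as claimed. The only step with any real content is establishing that the cross term is nonpositive, which is exactly where the greedy selection rule meets the hypothesis $f\in A_1(\D)$; everything after that is bookkeeping, and I would be careful to peel off the $m=1$ case by hand since the inductive bound $4/(m-1)$ is vacuous there.
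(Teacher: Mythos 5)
Your proof is correct and is precisely the standard telescoping-plus-induction argument for the Relaxed Greedy Algorithm that the paper cites from \cite{Tbook2} without reproducing: the key inequality $\langle f_{m-1},g_m\rangle\ge\langle f_{m-1},f\rangle$ for $f\in A_1(\D)$ kills the cross term, and the recursion $\|f_m\|^2\le(1-1/m)^2\|f_{m-1}\|^2+4/m^2$ closes by induction to $4/m$. All steps, including the separately handled base case $m=1$, check out.
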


We impose the following restriction on the system $\{u_i\}$: $w(x) \le Nt^2$, i.e.,  $B=Nt^2$.

Using the RGA, we apply
Theorem \ref{T4.1g} for any $m\in \N$ to constructively find points $\xi^1,\dots,\xi^m$ such that
\[
\left\|\frac{1}{m}\sum_{k=1}^m G(\xi^k)-I\right\|_F \le 2Nt^2 m^{-1/2}.
\]
Therefore, using the inequality $\|A\|\le \|A\|_F$ and relation (\ref{Gb}) we arrive at the following result.

\begin{Theorem}[{\cite[Proposition~5.1]{VT159}}] \label{P5.1g} Let $\{u_i\}_{i=1}^N$ be an orthonormal system, satisfying condition {\bf E}. Then there exists a constructive set $\{\xi^j\}_{j=1}^m \subset \Omega$ with $m\le C(t)N^2$
such that for any $f=\sum_{i=1}^N c_iu_i$ we have
$$
\frac{1}{2}\|f\|_2^2 \le \frac{1}{m} \sum_{j=1}^m f(\xi^j)^2 \le \frac{3}{2}\|f\|_2^2.
$$
\end{Theorem}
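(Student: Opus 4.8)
Here is how I would prove Theorem~\ref{P5.1g}.

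\smallskip
\textbf{Overview.} The plan is to run the Relaxed Greedy Algorithm in the matrix Hilbert space $H^u$ on the target function $f=I/B$ with respect to the dictionary $\D^u$, exactly in the framework set up in the paragraph preceding Theorem~\ref{T4.1g}. Under the standing normalization $B=Nt^2$ we have $\D^u=\{g_x\}_{x\in\Omega}$ with $g_x=G(x)/B$ and $\|g_x\|_F\le 1$, and, as recorded there, $I/B=\int_\Omega g_x\,d\mu\in A_1(\D^u)$. Applying the RGA to $f=I/B$, a one-line induction on $m$ (using $G_0^r=0$ and $G_m^r=(1-\tfrac1m)G_{m-1}^r+\tfrac1m g(f_{m-1}^r)$) shows that the $m$-th approximant has the averaged form
$$
G_m^r(I/B)=\frac1m\sum_{k=1}^m g_{\xi^k}=\frac1{mB}\sum_{k=1}^m G(\xi^k),
$$
where $\xi^1,\dots,\xi^m\in\Omega$ are the nodes selected at the successive greedy steps.

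\smallskip
\textbf{Main estimate.} By Theorem~\ref{T4.1g} applied to $I/B\in A_1(\D^u)$,
$$
\left\|\frac{I}{B}-\frac1{mB}\sum_{k=1}^m G(\xi^k)\right\|_F\le\frac{2}{\sqrt m},
\qquad\text{hence}\qquad
\left\|I-\frac1m\sum_{k=1}^m G(\xi^k)\right\|_F\le\frac{2B}{\sqrt m}=\frac{2Nt^2}{\sqrt m}.
$$
Since the spectral norm is dominated by the Frobenius norm, the same bound holds for $\big\|I-\tfrac1m\sum_k G(\xi^k)\big\|$. Plugging this into identity~\eqref{Gb}: for $f=\sum_{i=1}^N b_iu_i$ with $\mathbf b=(b_1,\dots,b_N)^T$, and using $\|f\|_2^2=\|\mathbf b\|_2^2$ by orthonormality,
$$
\left|\frac1m\sum_{k=1}^m f(\xi^k)^2-\|f\|_2^2\right|
=\left|\mathbf b^T\Big(\tfrac1m\sum_{k=1}^m G(\xi^k)-I\Big)\mathbf b\right|
\le\left\|\tfrac1m\sum_{k=1}^m G(\xi^k)-I\right\|\,\|\mathbf b\|_2^2
\le\frac{2Nt^2}{\sqrt m}\,\|f\|_2^2 .
$$
Taking $m$ to be the least integer with $m\ge 16N^2t^4$ makes the right-hand side at most $\tfrac12\|f\|_2^2$, which is precisely the asserted two-sided inequality, and this $m$ satisfies $m\le C(t)N^2$ with, say, $C(t)=17t^4$.

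\smallskip
\textbf{Where the difficulty lies.} The only genuinely delicate point is that each greedy step must actually produce a maximizer $g_{\xi^k}\in\D^u$ of $h\mapsto\langle h,g_x\rangle_F=\tfrac1B\sum_{i,j}h_{ij}u_i(x)u_j(x)$ over $x\in\Omega$; this is the existence hypothesis built into the definition of the RGA, and it is automatic here whenever, say, $\Omega$ is compact and the $u_i$ are continuous (the inner product is then a fixed continuous quadratic expression in $(u_1(x),\dots,u_N(x))$), the mild regularity one should add to condition~\textbf{E}; in the general case one runs the weak variant with a fixed weakness parameter, paying only an extra absolute constant in $C(t)$. Everything else is bookkeeping. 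It is worth noting that the passage from $\|\cdot\|_F$ to $\|\cdot\|$ is deliberately lossy and is exactly what forces the bound $m\asymp N^2$ rather than the $m\asymp N$ obtainable by probabilistic means; the payoff is that the RGA is a fully deterministic procedure, so the nodes $\xi^j$ are obtained \emph{constructively}, as claimed.
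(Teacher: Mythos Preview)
Your proof is correct and follows essentially the same route as the paper: apply the RGA in the matrix Hilbert space $H^u$ to the element $I/B\in A_1(\D^u)$, invoke Theorem~\ref{T4.1g} to get $\|\tfrac1m\sum_k G(\xi^k)-I\|_F\le 2Nt^2 m^{-1/2}$, then pass to the spectral norm and use identity~\eqref{Gb}. Your write-up adds a few details the paper leaves implicit (the one-line induction giving the averaged form of $G_m^r$, the explicit threshold $m\ge 16N^2t^4$, and the remark on existence of the greedy maximizer), but the argument is the same.
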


\section{ Exact weighted discretization}
\label{Ex}

 \subsection{A general result on exact recovery and numerical integration}
 
We begin with a simple useful result which is well-known in many special cases.

\begin{Proposition}\label{gfP1} Suppose $\{u_i(x)\}_{i=1}^N$ is linearly independent  system of functions on $\Omega$. Then there exist a set of points $\{\xi^j\}_{j=1}^N\subset \Omega$ and a set of functions $\{\psi_j(x)\}_{j=1}^N$ such that for any $f\in X_N:=\sp(u_1(x),\dots,u_N(x))$ we have
$$
f(x) = \sum_{j=1}^N f(\xi^j)\psi_j(x).
$$
\end{Proposition}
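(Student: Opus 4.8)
The plan is to exhibit the points $\{\xi^j\}_{j=1}^N$ by choosing them so that the evaluation functionals $f\mapsto f(\xi^j)$, restricted to $X_N$, form a basis of the dual space $X_N^*$; the functions $\psi_j$ will then be the dual basis inside $X_N$. Concretely, I would first note that since $\dim X_N = N$, it suffices to find points $\xi^1,\dots,\xi^N$ in $\Omega$ such that the $N\times N$ matrix $[u_i(\xi^j)]_{i,j=1}^N$ is nonsingular. Equivalently, the only $f\in X_N$ vanishing at all of $\xi^1,\dots,\xi^N$ is $f\equiv 0$.

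The existence of such points I would prove by induction on $k$, building a nested sequence: having chosen $\xi^1,\dots,\xi^k$ such that $\rank [u_i(\xi^j)]_{1\le i\le N,\,1\le j\le k} = k$, the space $V_k:=\{f\in X_N: f(\xi^1)=\cdots=f(\xi^k)=0\}$ has dimension $N-k>0$, so it contains a nonzero element $g$. Since $\{u_i\}$ is linearly independent, $g$ is not identically zero on $\Omega$, hence there is a point $\xi^{k+1}\in\Omega$ with $g(\xi^{k+1})\neq 0$; adjoining it raises the rank to $k+1$. After $N$ steps the matrix $A:=[u_i(\xi^j)]_{i,j=1}^N$ is invertible. (One could alternatively phrase this via the nonvanishing of the generalized Gram/Vandermonde-type determinant, but the inductive rank argument is cleanest and needs no structure on $\Omega$ beyond that $\{u_i\}$ is a genuine function system.)

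With $A$ invertible, I would define $\psi_j\in X_N$ by $\psi_j(x) := \sum_{i=1}^N (A^{-1})_{ij}\, u_i(x)$, i.e.\ the coefficients of $\psi_j$ in the basis $\{u_i\}$ are the $j$-th column of $A^{-1}$. Then $\psi_j(\xi^\ell) = \sum_{i=1}^N (A^{-1})_{ij} u_i(\xi^\ell) = \sum_i (A^{-1})_{ij} A_{i\ell} = \delta_{j\ell}$. For an arbitrary $f=\sum_{i=1}^N c_i u_i\in X_N$, both sides of the claimed identity are elements of $X_N$, and evaluating $\sum_{j=1}^N f(\xi^j)\psi_j$ at each $\xi^\ell$ gives $f(\xi^\ell)$; since the $N$ functionals of point evaluation at $\xi^1,\dots,\xi^N$ are linearly independent on $X_N$ (because $A$ is invertible), a function in $X_N$ is determined by its values at these points, so $f(x)=\sum_{j=1}^N f(\xi^j)\psi_j(x)$ for all $x$.

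The only genuine obstacle is the inductive existence of the $N$ points, and within that, the step "$g\not\equiv 0$ on all of $\Omega$ whenever $g\neq 0$ in $X_N$" — but this is exactly what linear independence of $\{u_i\}$ as functions on $\Omega$ gives us, so there is really nothing deep here; the rest is linear algebra. I would keep the write-up to these three moves: reduce to invertibility of $[u_i(\xi^j)]$, prove existence of the points by the nested-kernel induction, and read off $\psi_j$ from $A^{-1}$.
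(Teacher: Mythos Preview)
Your proposal is correct and follows essentially the same route as the paper: an inductive construction of points making the evaluation matrix $[u_i(\xi^j)]$ nonsingular (the paper phrases the induction via nonvanishing of the successive determinants $D(\xi^1,\dots,\xi^k)$ rather than via kernel dimensions, but this is the same idea), followed by reading off the $\psi_j$ from the inverse matrix. One minor bookkeeping slip: with $A_{ij}=u_i(\xi^j)$ you need $(A^{-1})_{ji}$ rather than $(A^{-1})_{ij}$ in the definition of $\psi_j$ to obtain $\psi_j(\xi^\ell)=\delta_{j\ell}$, since $\sum_i (A^{-1})_{ij}A_{i\ell}=((A^{-1})^T A)_{j\ell}$, not $(A^{-1}A)_{j\ell}$.
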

\begin{proof} For points $x^1,\dots,x^k$ consider the matrix $U(x^1,\dots,x^k) := [u_i(x^j)]_{i,j=1}^k$ with elements $u_i(x^j)$.
\begin{Lemma}\label{gfL1} Under the  conditions of Proposition \ref{gfP1} there exists a set of points $\{\xi^j\}_{j=1}^N\subset \Omega$ such that $D(\xi^1,\dots,\xi^N):= \det U(\xi^1,\dots,\xi^N) \neq 0$.
\end{Lemma}
\begin{proof}
 We prove this lemma by induction.
Indeed, by the linear independence assumption we find $\xi^1$ such that $u_1(\xi^1)\neq 0$. Suppose $2\le k\le N$ and we found a set $\{\xi^j\}_{j=1}^{k-1}$ such that
$D(\xi^1,\dots,\xi^{k-1})\neq 0$. Consider the function $D(\xi^1,\dots,\xi^{k-1},x)$, $x\in \Omega$. This function is a nontrivial linear combination of $u_1(x),\dots,u_k(x)$.
Therefore, there exists $\xi^k\in\Omega$ such that $D(\xi^1,\dots,\xi^{k})\neq 0$.
This completes the proof of the  existence of points $\{\xi^j\}_{j=1}^N\subset \Omega$ such that $D(\xi^1,\dots,\xi^N) \neq 0$.
\end{proof}
Let $f \in X_N$. Then $f$ has a unique
representation $f(x) = \sum_{i=1}^N b_iu_i(x)$. The set of coefficients $\bb:=(b_1,\dots,b_N)$
is uniquely determined from the linear system
$$
(f(\xi^1),\dots,f(\xi^N)) = \bb U(\xi^1,\dots,\xi^N)
$$
and each $b_i$ is a linear combination of $f(\xi^j)$, $j=1,\dots,N$.
This completes the proof of Proposition \ref{gfP1}.
\end{proof}

As a direct corollary of Proposition \ref{gfP1} we obtain the following result on exact
numerical integration.

\begin{Proposition}\label{gfP2} Suppose $\{u_i(x)\}_{i=1}^N$ is a linearly independent  system of integrable functions  with respect to the  measure $\mu$ on $\Omega$. Then there exist a set of points $\{\xi^j\}_{j=1}^N\subset \Omega$ and a set of weights $\{\lambda_j\}_{j=1}^N$ such that for any $f\in X_N:=\sp(u_1(x),\dots,u_N(x))$ we have
$$
\int_\Omega f(x)d\mu = \sum_{j=1}^N \lambda_j f(\xi^j) .
$$
\end{Proposition}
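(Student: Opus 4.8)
The plan is to derive Proposition~\ref{gfP2} as an immediate consequence of Proposition~\ref{gfP1}. By Proposition~\ref{gfP1}, applied to the linearly independent system $\{u_i\}_{i=1}^N$, there exist points $\{\xi^j\}_{j=1}^N\subset\Omega$ and functions $\{\psi_j\}_{j=1}^N$ such that every $f\in X_N$ admits the representation $f(x)=\sum_{j=1}^N f(\xi^j)\psi_j(x)$. Note that, by the construction in the proof of Proposition~\ref{gfP1}, each $\psi_j$ is itself an element of $X_N=\sp(u_1,\dots,u_N)$ (it is the unique element of $X_N$ taking the value $1$ at $\xi^j$ and $0$ at the other nodes), hence $\psi_j$ is a finite linear combination of the $u_i$ and therefore integrable with respect to $\mu$.

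With this in hand, the key step is simply to integrate the representation formula term by term. Define
$$
\lambda_j := \int_\Omega \psi_j(x)\,d\mu, \qquad j=1,\dots,N,
$$
which are well-defined finite numbers by the integrability just noted. Then for any $f\in X_N$,
$$
\int_\Omega f(x)\,d\mu = \int_\Omega \sum_{j=1}^N f(\xi^j)\psi_j(x)\,d\mu = \sum_{j=1}^N f(\xi^j)\int_\Omega \psi_j(x)\,d\mu = \sum_{j=1}^N \lambda_j f(\xi^j),
$$
where the interchange of the finite sum and the integral is trivially justified. This is exactly the claimed quadrature identity, with the same nodes $\{\xi^j\}_{j=1}^N$ furnished by Proposition~\ref{gfP1} and the weights $\{\lambda_j\}_{j=1}^N$ defined above.

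There is essentially no obstacle here: the only point requiring a word of care is the observation that the functions $\psi_j$ from Proposition~\ref{gfP1} lie in $X_N$ (or are otherwise integrable), so that the weights $\lambda_j$ make sense; this follows directly from how they are produced in the proof of Proposition~\ref{gfP1}, where each coefficient $b_i$ of $f$ is expressed as a linear combination of the values $f(\xi^j)$ via the invertible matrix $U(\xi^1,\dots,\xi^N)$, so that $\psi_j$ is the corresponding linear combination of the $u_i$. Alternatively one may avoid even mentioning the $\psi_j$ by writing $b_i=\sum_{j=1}^N c_{ij}f(\xi^j)$ for constants $c_{ij}$ coming from $U(\xi^1,\dots,\xi^N)^{-1}$, setting $\mu_i:=\int_\Omega u_i\,d\mu$, and then $\int_\Omega f\,d\mu=\sum_i b_i\mu_i=\sum_j\big(\sum_i c_{ij}\mu_i\big)f(\xi^j)$, so that $\lambda_j:=\sum_i c_{ij}\mu_i$ works. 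Either route completes the proof.
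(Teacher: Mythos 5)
Your proof is correct and is exactly the argument the paper intends: the paper states Proposition~\ref{gfP2} as ``a direct corollary of Proposition~\ref{gfP1},'' namely integrating the representation $f=\sum_j f(\xi^j)\psi_j$ and setting $\lambda_j=\int_\Omega\psi_j\,d\mu$. Your added remark that the $\psi_j$ lie in $X_N$ (so the weights are well defined) is a sensible bit of care that the paper leaves implicit.
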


Proposition \ref{gfP2} shows that for any $N$-dimensional subspace of integrable functions we can find an exact cubature formula with $N$ nodes. However, it is not true
for numerical integration by the Quasi-Monte Carlo methods, i.e. by methods with equal weights. Let $\alpha \in (0,1)$ be an irrational number. A trivial example of $\Omega=[0,1]$, $\mu$ is the Lebsgue measure, and $f(x)=1/\alpha$ for $x\in [0,\alpha)$, $f(x) = -(1-\alpha)^{-1}$ for $x\in [\alpha,1]$, shows that there is no Quasi-Monte Carlo quadrature formula, which integrates $f$ exactly.

\subsection{General exact weighted discretization results}

For simplicity, we shall use the notation $L_q(\Omega, \mu)$, or $L_q(\Omega)$ or simply $L_q$ to denote the Lebesgue $L_q$-space defined with respect to the measure $\mu$ on $\Omega$, whenever it does not  cause any confusion from the context.

We begin with a general result establishing an exact weighted discretization theorem in $L_q(\Omega, \mu)$ for  a general measure space $(\Omega,\mu)$ and  even exponent $q$  with at most
$$
M(N,q):= {N+q-1 \choose q}=\frac{(N+q-1)!}{q!(N-1)!}\asymp N^q
$$
nodes, where $N$ is the dimension of the space. We also give an example of a space with discrete $\Omega$ showing that the number ${N+q-1 \choose q}$ cannot be improved.

\begin{Theorem}\label{gfT1}
	Let $q$ be an even positive integer, $N\in\N$, and $M:=M(N,q)$.  For every $N$-dimensional real subspace $X_N\subset L_q(\Omega,\mu)$ we have  that 
 $X_N \in \cM^w\left(M,q,0\right)$.
\end{Theorem}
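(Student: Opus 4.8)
The plan is to reduce the statement to a fact about exact cubature (Tchakaloff-type) for the specific finite-dimensional space of functions $|f|^q = f^q$ (using that $q$ is even, so $|f|^q$ is a polynomial in $f$). Given an $N$-dimensional real subspace $X_N = \sp(u_1,\dots,u_N) \subset L_q(\Omega,\mu)$, observe that for any $f = \sum_i b_i u_i \in X_N$ we have $f^q = \sum_{|\alpha| = q} \binom{q}{\alpha} b^\alpha\, u^\alpha$, where $u^\alpha := \prod_i u_i^{\alpha_i}$ and the sum runs over multi-indices $\alpha \in \Z_+^N$ with $|\alpha| = q$. Thus $f^q$ lies in the linear span $V$ of the $M = M(N,q) = \binom{N+q-1}{q}$ functions $\{u^\alpha : |\alpha| = q\}$, and these functions are integrable since $u_i \in L_q(\Omega,\mu)$ (by H\"older/generalized H\"older, any product of $q$ of them is in $L_1$).

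First I would apply Proposition~\ref{gfP2} to the system $\{u^\alpha : |\alpha| = q\}$: after discarding a maximal linearly independent subsystem, say of dimension $M' \le M$, Proposition~\ref{gfP2} produces points $\xi^1,\dots,\xi^{M'} \in \Omega$ and weights $\lambda_1,\dots,\lambda_{M'}$ such that $\int_\Omega g\, d\mu = \sum_{j=1}^{M'} \lambda_j g(\xi^j)$ for all $g$ in the span $V$ of that system. In particular this exact cubature holds for $g = f^q$ for every $f \in X_N$, i.e. $\|f\|_q^q = \sum_{j=1}^{M'} \lambda_j f(\xi^j)^q$ for all $f \in X_N$, with $M' \le M$ nodes. (If $M' < M$ we may pad with extra nodes carrying zero weight to make exactly $M$ nodes.)

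The remaining obstacle is that Proposition~\ref{gfP2} as stated gives \emph{signed} weights, whereas $\cM^w(M,q,0)$ demands a valid discretization with the form $\sum \lambda_\nu |f(\xi^\nu)|^q$ — but since $q$ is even $|f(\xi^\nu)|^q = f(\xi^\nu)^q$, so the identity $\|f\|_q^q = \sum_j \lambda_j f(\xi^j)^q$ is exactly the $\cM^w(M,q,0)$ relation (recall $\cM^w(m,q,0)$ allows arbitrary real weights; nonnegativity is only required for $\cM^w_+$). Hence no extra argument about positivity of weights is needed. The one genuinely delicate point worth spelling out is the integrability of the products $u^\alpha$: this follows from the generalized H\"older inequality, $\int_\Omega \prod_{i} |u_i|^{\alpha_i}\, d\mu \le \prod_i \|u_i\|_q^{\alpha_i}$ when $\sum_i \alpha_i = q$, so that Proposition~\ref{gfP2} is applicable to the system $\{u^\alpha\}$. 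Collecting these steps gives $X_N \in \cM^w(M(N,q), q, 0)$, completing the proof. The part I expect to require the most care is simply tracking that the span $V$ has dimension at most $M(N,q)$ (the number of degree-$q$ monomials in $N$ variables) and that one may reduce to a linearly independent subsystem without losing the exactness on $X_N$ — both routine, but they are where an off-by-something error would creep in.
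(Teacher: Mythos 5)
Your proposal is correct and follows essentially the same route as the paper: reduce to exact numerical integration (Proposition~\ref{gfP2}) on the span of the degree-$q$ monomials $u^\alpha$, $|\alpha|=q$, whose dimension is at most $M(N,q)$, and use that $q$ even gives $|f|^q=f^q$. The extra details you supply (integrability of the products via generalized H\"older, passing to a linearly independent subsystem, padding with zero weights) are exactly the routine points the paper leaves implicit.
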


We point out that under some extra conditions on $\Omega$ a stronger result ensuring positivity of the weights will be given in Corollary~\ref{thm:positive general} of Section~\ref{sec:3-1}.

\begin{proof}   For $\bk=(k_1,\dots,k_N)\in \Z^N_+$ denote $u_\bk:= u_1^{k_1}\cdots u_N^{k_N}$, where $X_N=\sp\{u_1,\dots,u_N\}$. Consider the linear space
$$
X_N(q) :=\sp\{u_\bk:\, \bk\in K(N,q)\},
$$
where $K(N,q):=\{(k_1,\dots,k_N)\in\Z_+^N:k_1+\dots+k_N=q\}$.
Then $\dim(X_N(q)) \le M:= M(N,q)$ and $X_N(q) \subset L_1(\Omega,\mu)$.
Proposition \ref{gfP2} implies that there exist a set of points $\{\xi^j\}_{j=1}^M\subset \Omega$ and a set of weights $\{\lambda_j\}_{j=1}^M$ such that for any $f\in X_N(q)$ we have
$$
\int_\Omega f(x)d\mu = \sum_{j=1}^M \lambda_j f(\xi^j) .
$$
In particular, this implies that for any $f\in X_N$ we have
$$
\int_\Omega f(x)^qd\mu = \sum_{j=1}^M \lambda_j f(\xi^j)^q .
$$
\end{proof}	

Let $\Omega_M=\{\xi^j\}_{j=1}^M$ be a discrete set with the probability measure $\mu(\xi^j)=1/M$, $j=1,\dots,M$.

\begin{Theorem}\label{gfT2} Let $q$ be an even positive integer, $N\in\N$ and $M:=M(N,q)$.
 There exist a discrete set $\Omega_M$ and an $N$-dimensional real subspace $X_N\subset L_q(\Omega_M)$ such that $X_N\notin \cM^w(m,q,0)$ for any $m<M$.
\end{Theorem}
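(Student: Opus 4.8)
The plan is to construct an explicit discrete space $\Omega_M$ and subspace $X_N$ realizing the extremal case, guided by the fact that $M(N,q)=\dim X_N(q)$ is exactly the dimension of the space spanned by all degree-$q$ monomials in the coordinate functionals, which by Theorem~\ref{gfT1} is the upper bound. The natural candidate: pick $M$ points in $\R^N$ in sufficiently general position, let $u_1,\dots,u_N$ be the coordinate functions restricted to those points, set $X_N=\sp\{u_1,\dots,u_N\}$, and give each point mass $1/M$. The key observation is that a weighted formula $\int_{\Omega_M} f^q\,d\mu = \sum_{\nu=1}^m \lambda_\nu f(\xi^\nu)^q$ valid for all $f\in X_N$ is equivalent to saying that the linear functional $g\mapsto \frac1M\sum_{j=1}^M g(\xi^j)$ on $X_N(q)$ agrees with $g\mapsto\sum_{\nu} \lambda_\nu g(\xi^\nu)$ on that space, since $f^q$ ranges over a spanning set of $X_N(q)$ as $f$ ranges over $X_N$ (here one uses $q$ even so that $f^q\ge 0$ is not needed, only that the $q$-th powers of linear forms span the degree-$q$ part — a standard polarization fact, valid over $\R$ in characteristic $0$).

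First I would choose the $M$ points $\bx^1,\dots,\bx^M\in\R^N$ so that the $M\times M$ matrix $[u_\bk(\bx^j)]_{\bk\in K(N,q),\, 1\le j\le M}$ is invertible; this is possible because the monomials $\{u_\bk:\bk\in K(N,q)\}$ are linearly independent functions on $\R^N$ (indeed on any Zariski-dense set), so a generic choice of $M$ points makes the evaluation matrix nonsingular — argue by the same induction as in Lemma~\ref{gfL1}, or simply note the vanishing locus of the determinant is a proper algebraic subset of $(\R^N)^M$. Fix such points and let $\Omega_M=\{\bx^j\}_{j=1}^M$ with $\mu(\bx^j)=1/M$. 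Because the evaluation map $X_N(q)\to\R^M$, $g\mapsto (g(\bx^1),\dots,g(\bx^M))$, is then a linear isomorphism, $\dim X_N(q)=M$ exactly.

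Now suppose for contradiction that $X_N\in\cM^w(m,q,0)$ with $m<M$, i.e.\ there are nodes $\eta^1,\dots,\eta^m\in\Omega_M$ and weights $\lambda_1,\dots,\lambda_m$ with $\frac1M\sum_{j=1}^M f(\bx^j)^q=\sum_{\nu=1}^m\lambda_\nu f(\eta^\nu)^q$ for all $f\in X_N$. Since each $\eta^\nu$ is one of the $\bx^j$, we may rewrite the right side as $\sum_{j=1}^M w_j f(\bx^j)^q$ where at most $m$ of the $w_j$ are nonzero. By the polarization/spanning fact, the identity $\frac1M\sum_j g(\bx^j)=\sum_j w_j g(\bx^j)$ then holds for \emph{all} $g\in X_N(q)$. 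Using the isomorphism from the previous step, the functionals $g\mapsto g(\bx^j)$, $j=1,\dots,M$, form a basis of the dual $X_N(q)^*$; hence the coefficient vectors $(1/M,\dots,1/M)$ and $(w_1,\dots,w_M)$ must coincide, forcing $w_j=1/M\ne 0$ for every $j$, so at least $M$ of the weights are nonzero, contradicting $m<M$. This yields $X_N\notin\cM^w(m,q,0)$ for any $m<M$.

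The main obstacle is the polarization step: I must justify that, for $q$ even, the set $\{f^q:f\in X_N\}$ spans $X_N(q)=\sp\{u_\bk:\bk\in K(N,q)\}$. This is the classical fact that the $q$-th powers of linear forms span the space of homogeneous degree-$q$ polynomials in $N$ variables over a field of characteristic zero (equivalently, the apolarity pairing is nondegenerate), which follows from the multinomial expansion $(\sum_i t_i u_i)^q=\sum_{\bk\in K(N,q)}\binom{q}{\bk}t^{\bk}u_\bk$ together with the invertibility of a Vandermonde-type system obtained by evaluating at enough distinct parameter vectors $t$. I would state this as a brief lemma (or cite it), after which the rest of the argument is the linear-algebra bookkeeping above. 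One should also note that the points can be taken in $[0,1]^d$ or any prescribed compact set if desired, by an affine change of variables, so the example is not an artifact of the ambient space.
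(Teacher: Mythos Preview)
Your proposal is correct and follows essentially the same route as the paper: choose $M$ points in $\R^N$ making the $M\times M$ evaluation matrix for $\{u_\bk:\bk\in K(N,q)\}$ nonsingular (the paper invokes Lemma~\ref{gfL1} for this), take the uniform measure, and conclude by uniqueness of the weight vector that every weight must equal $1/M$. The paper handles your ``polarization step'' directly via the multinomial expansion of $\bigl(\sum_i b_i u_i\bigr)^q$ together with linear independence of the monomials $\prod_j b_j^{k_j}$ (the computation \eqref{eqn:q diff}--\eqref{eqn:K reduction1}), which is precisely the spanning fact you flag as the main obstacle, so no separate lemma is needed.
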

\begin{proof}
	We begin with some preliminaries.
	Suppose $\{u_j\}_{j=1}^N$ is a basis of $X_N$, where $X_N\subset L_q( \Omega_M,\mu)$. We have $X_N \in \cM^w\left(m,q,0\right)$ if and only if there exist $\xi^\nu\in \Omega_M$ and $\lambda_\nu\in\R$, $\nu=1,\dots,m$, such that for any $b_j\in\R$, $j=1,\dots,N$, with $f=\sum_{j=1}^N b_j u_j$ we have
	\begin{gather} 	\nonumber
	0=\int_{ \Omega_M}|f|^qd\mu - \sum_{\nu=1}^m\lambda_\nu|f(\xi^\nu)|^q
	= \int_{ \Omega_M}f^qd\mu - \sum_{\nu=1}^m\lambda_\nu f(\xi^\nu)^q \\
	= \sum_{(k_1,\dots,k_N)\in K(N,q)}\frac{q!}{k_1!\dots k_N!}
	\left(\int_{ \Omega_M} \prod_{j=1}^N u_j^{k_j} d\mu -\sum_{\nu=1}^m \lambda_\nu \prod_{j=1}^N u_j(\xi^\nu)^{k_j}  \right)
	\prod_{j=1}^N b_j^{k_j}, \label{eqn:q diff}
	\end{gather}
	where, as above $K(N,q):=\{(k_1,\dots,k_N)\in\Z_+^N:k_1+\dots+k_N=q\}$. Due to linear independence of multivariate monomials, \eqref{eqn:q diff} holds for any $b_j$ if and only if
	\be\label{eqn:K reduction1}
	\sum_{\nu=1}^m \lambda_\nu \prod_{j=1}^N u_j(\xi^\nu)^{k_j}=\int_ {\Omega_M} \prod_{j=1}^N u_j^{k_j} d\mu,
	\quad (k_1,\dots,k_N)\in K(N,q).
	\ee
Denote  by $\P(N,q)$ the space of homogeneous algebraic polynomials in $N$ variables $x_1,\dots,x_N$ of degree $q$:
$$
\P(N,q):=\text{span} \{ x_1^{k_1}\cdots x_N^{k_N}:\, \bk= (k_1,\dots,k_N)\in K(N,q)\}.
$$
Then $\dim(\P(N,q)) = M$ and by Lemma \ref{gfL1} with $u_\bk(\bx):= x_1^{k_1}\cdots x_N^{k_N}$, $\bx=(x_1,\dots,x_N)$, $\bk= (k_1,\dots,k_N)$, $\bk\in K(N,q)$  there exists a set of points $\{\xi^\nu\}_{\nu=1}^M$ such that $D(\xi^1,\dots,\xi^M):= \det U(\xi^1,\dots,\xi^M) \neq 0$. Define $Y_N$ as a restriction of $\P(N,q)$ onto the
set $\Omega_M:= \{\xi^\nu\}_{\nu=1}^M$. Introduce a probability
measure $\mu$ on $\Omega_M$ by $\mu(\{\xi^\nu\})=1/M$, $\nu=1,\dots,M$.
Then on one hand from the definition of $\mu$ we obtain for any $f\in Y_N$ that
\be\label{gf1'}
\int_{\Omega_M} fd\mu = \frac{1}{M} \sum_{\nu=1}^M f(\xi^\nu).
\ee
On the other hand,  define the space  $X_N:= \sp\Bigl\{u_j\Bl|_{\Omega_M}:\  \  j=1,2,\cdots, N\Bigr\}$ of functions on $\Omega_M$ with measure $\mu$, where $u_j(\mathbf{x})=x_j$, $j=1,\dots, N$.
 Assume that for any $f\in X_N$,
\be\label{gf2'}
\int_{\Omega_M} f^qd\mu = \sum_{\nu=1}^M \la_\nu f(\xi^\nu)^q.
\ee
Then by (\ref{eqn:K reduction1}) and the choice of $\{\xi^\nu\}_{\nu=1}^M$ the set of weights satisfying (\ref{gf2'}) is unique. Relation (\ref{gf1'}) shows that $\la_\nu =1/M$,
$\nu=1,\dots,M$. Therefore, none of these $\la_\nu$ is equal to zero.
This argument completes the proof.
\end{proof}

Now we show that one cannot obtain an exact weighted Marcinkiewicz-type theorem in $L_q$ when $q$ is not an even integer.
\begin{Proposition}
	Consider $X_2:=\{\alpha \sin t+\beta \cos t: \alpha,\beta\in\R\}$ as a subspace of $L_q(\T)$, where $\T$ is the unit circle and $q$ is not an even integer, $1\le q<\infty$. Then $X_2\notin \cM^w(m,q,0)$ for any $m\in\N$.
\end{Proposition}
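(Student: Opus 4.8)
The plan is to collapse this many‑function statement to a single scalar identity on $\T$ and then use Fourier analysis. Every $f\in X_2$ has the form $f(t)=A\cos(t-\ff)$ with $A=\sqrt{\al^2+\bt^2}\ge 0$ and $\ff\in\T$, and by translation invariance of Lebesgue measure $\|f\|_q^q=A^q c_q$, where $c_q:=\frac1{2\pi}\int_0^{2\pi}|\cos t|^q\,dt>0$ does not depend on $\ff$. So if $X_2\in\cM^w(m,q,0)$ is witnessed by nodes $\xi^1,\dots,\xi^m\in\T$ and real weights $\la_1,\dots,\la_m$, then feeding $f(t)=\cos(t-\ff)$ into the exact formula gives, for every $\ff\in\T$,
\[
g(\ff):=\sum_{\nu=1}^m\la_\nu\,|\cos(\ff-\xi^\nu)|^q\equiv c_q .
\]

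The second step is to Fourier‑expand $h(t):=|\cos t|^q$. As $h$ is even and $\pi$‑periodic, $h(t)=\sum_{k\in\Z}a_k e^{2ikt}$ with $a_0=c_q$, and here is where the hypothesis is used: when $q$ is not an even integer, \emph{all} the coefficients $a_k$ are nonzero. This is classical --- it follows, e.g., from the identity $a_k=\Gamma(q+1)\bigl(2^q\,\Gamma(\tfrac q2+k+1)\Gamma(\tfrac q2-k+1)\bigr)^{-1}$, whose denominator acquires poles (so that $h$ degenerates to a trigonometric polynomial) only when $q\in 2\N$. Matching Fourier coefficients in $g\equiv c_q$: the frequency‑$0$ coefficient gives $a_0\sum_\nu\la_\nu=c_q$, hence $\sum_\nu\la_\nu=1$; the coefficient at each nonzero even frequency $2k$ gives $a_k\sum_\nu\la_\nu e^{-2ik\xi^\nu}=0$, hence $\sum_\nu\la_\nu e^{-2ik\xi^\nu}=0$. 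Setting $\eta^\nu:=2\xi^\nu$ and conjugating, we obtain $\sum_{\nu=1}^m\la_\nu e^{ik\eta^\nu}=\de_{k,0}$ for all $k\in\Z$; equivalently, the rule $P\mapsto\sum_\nu\la_\nu P(\eta^\nu)$ reproduces $\frac1{2\pi}\int_0^{2\pi}P(\ff)\,d\ff$ for every trigonometric polynomial $P$.

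The contradiction is then free of charge. Take $P(\ff):=\prod_{\nu=1}^m\bigl(1-\cos(\ff-\eta^\nu)\bigr)$: this is a nonnegative trigonometric polynomial that vanishes at every $\eta^\nu$ and vanishes only on that finite set, so $\sum_\nu\la_\nu P(\eta^\nu)=0$ while $\frac1{2\pi}\int_0^{2\pi}P>0$, contradicting the exactness just derived. Hence $X_2\notin\cM^w(m,q,0)$ for any $m$.

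The one point that requires care is the nonvanishing of every $a_k$, and if one would rather not invoke the special‑function identity, the argument can be made self‑contained by a local‑smoothness version of the same idea: group the nodes by residue modulo $\pi$, say $\theta_1,\dots,\theta_r$ with aggregated weights $\Lambda_s$ (only residues mod $\pi$ matter, since $h$ has period $\pi$), so that $g(\ff)=\sum_{s=1}^r\Lambda_s\,h(\ff-\theta_s)$; near $\ff=\theta_{s_0}+\tfrac\pi2$ every summand except $\Lambda_{s_0}h(\ff-\theta_{s_0})$ is $C^\infty$, whereas $h(t)=|\sin(t-\tfrac\pi2)|^q$ equals $|t-\tfrac\pi2|^q$ times a smooth positive factor, which is not $C^\infty$ at $\tfrac\pi2$ unless $q\in2\N$. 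Since $g\equiv c_q$ is smooth this forces $\Lambda_{s_0}=0$ for each $s_0$, so $g\equiv 0\ne c_q$. I would present the Fourier route and append this remark.
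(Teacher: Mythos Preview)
Your proof is correct. The paper's argument is essentially your second, ``self-contained'' route: it shows that $G(\theta):=\sum_\nu\lambda_\nu|\sin(\theta-\xi^\nu)|^q$ must be the constant $c(q)$, then observes that $g(t)=|\sin t|^q$ is $C^\infty$ on $(0,\pi)$ but $g^{(n)}(0)$ fails to exist (where $n-1<q\le n$), so after reducing the nodes modulo $\pi$ the constant function $G$ would inherit a non-differentiability at each surviving node --- contradiction. Your explicit grouping by residue classes mod $\pi$ with aggregated weights $\Lambda_s$, and the observation that $\Lambda_s\equiv0$ forces $g\equiv0\ne c_q$, is in fact a slightly cleaner handling of the paper's ``without loss of generality $|\xi^\nu-\xi^{\nu'}|\ne\pi$'' step.

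Your primary Fourier route is genuinely different and extracts more: from the nonvanishing of \emph{every} Fourier coefficient $a_k$ of $|\cos t|^q$ you deduce that the finitely supported signed measure $\sum_\nu\lambda_\nu\delta_{2\xi^\nu}$ would have to integrate every trigonometric polynomial exactly, i.e.\ coincide with normalized Lebesgue measure on $\T$, which your product polynomial $P$ rules out at once. The cost is the special-function identity for $a_k$ (or an equivalent argument that $|\cos t|^q$ is not a trigonometric polynomial in any weak sense), whereas the paper's local-smoothness argument needs only elementary calculus. Since you already supply the smoothness argument as a fallback, either presentation is complete; the paper simply opts for the more elementary one.
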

\begin{proof}
	Suppose to the contrary that for some distinct $\xi^\nu\in[0,2\pi)$ and non-zero $\lambda_\nu$, $\nu=1,\dots,m$, we have
	\be\label{eqn:circle example1}
	\frac1{2\pi}\int_0^{2\pi} |f(t)|^qdt=\sum_{\nu=1}^m \lambda_\nu |f(\xi^\nu)|^q
	\ee
	for every $f\in X_2$. For any $\theta\in\R$ we define $f_\theta(t):=\sin(\theta-t)\in X_2$ and note that
	\[
	\frac1{2\pi}\int_0^{2\pi} |f_\theta(t)|^qdt=\frac1{2\pi}\int_0^{2\pi} |\sin t|^qdt=c(q),
	\]
	where $c(q)>0$ depends only on $q$ and does not depend on $\theta$. Therefore, by~\eqref{eqn:circle example1} we have
	\[
	c(q)=\sum_{\nu=1}^m \lambda_\nu |f_\theta(\xi^\nu)|^q=\sum_{\nu=1}^m \lambda_\nu |\sin(\theta-\xi^\nu)|^q=:G(\theta),
	\]
	i.e., $G$ is a constant function.  However, if $n-1<q\le n$, $n\in\N$ and $q$ is not an even integer, then the function $g(t):=|\sin t|^q$ is infinitely smooth when $t\in(0,\pi)$ while $g^{(n)}(0)$ does not exist. As $g$ is a $\pi$-periodic function and
	$G(\theta)=\sum_{\nu=1}^m\lambda_\nu g(\theta-\xi^\nu)$, without loss of generality we may assume that $|\xi^\nu-\xi^{v'}|\neq \pi$ for $1\leq v, v'\leq m$.
	It then follows  that $G^{(n)}(\xi^1)$ does not exist, which contradicts  the fact that $G$ is constant.
\end{proof}

\subsection{Relation between exact weighted discretization  and  recovery problem}
We now discuss a connection between exact weighted discretization theorem in $L_2$ and exact recovery.

\begin{Proposition}\label{gfP3} Let $N$-dimesional real $X_N$ be a subspace of $L_2(\Omega)$. Suppose sets of points $\{\xi^j\}_{j=1}^m$ and of weights $\{\lambda_j\}_{j=1}^m$ are such that for any $f\in X_N$ we have
\be\label{gf1}
\|f\|_2^2 = \sum_{j=1}^m \lambda_j f(\xi^j)^2.
\ee
Then for any orthonormal basis $\{u_i\}_{i=1}^N$ of $X_N$ we have for any $f\in X_N$
\be\label{gf2}
f(x) = \sum_{j=1}^m \lambda_j f(\xi^j)D(x,\xi^j),\quad D(x,y):= \sum_{i=1}^N u_i(x)u_i(y).
\ee
\end{Proposition}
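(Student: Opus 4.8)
The plan is to exploit the reproducing kernel structure of $X_N$ together with the exact weighted quadrature formula~\eqref{gf1}. First I would observe that $D(x,y)=\sum_{i=1}^N u_i(x)u_i(y)$ is the reproducing kernel of $X_N$ with respect to the inner product $\langle g,h\rangle = \int_\Omega g h\, d\mu$: since $\{u_i\}$ is orthonormal, for any $f=\sum_i b_i u_i \in X_N$ one has $\int_\Omega f(y) D(x,y)\, d\mu(y) = \sum_i b_i u_i(x) = f(x)$, i.e.\ $D(\cdot,x)\in X_N$ for each fixed $x$ and it reproduces point evaluation through the $L_2$ inner product. So the right-hand side of~\eqref{gf2} should be thought of as replacing the integral $\int_\Omega f(y)D(x,y)\,d\mu(y)$ by the weighted sum $\sum_j \lambda_j f(\xi^j) D(x,\xi^j)$.

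Next I would want to upgrade the hypothesis~\eqref{gf1}, which is stated only as an identity for squares $\|f\|_2^2 = \sum_j \lambda_j f(\xi^j)^2$, to the corresponding bilinear (polarized) identity $\langle f,g\rangle = \sum_j \lambda_j f(\xi^j) g(\xi^j)$ for all $f,g\in X_N$. This follows by the usual polarization trick: apply~\eqref{gf1} to $f+g$ and to $f-g$ (both of which lie in the linear space $X_N$) and subtract; the quadratic terms cancel and one is left with $4\langle f,g\rangle = \sum_j \lambda_j\,\bigl((f(\xi^j)+g(\xi^j))^2 - (f(\xi^j)-g(\xi^j))^2\bigr) = 4\sum_j \lambda_j f(\xi^j)g(\xi^j)$. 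So the weighted discrete form also reproduces the full inner product on $X_N$.

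Finally I would fix $f\in X_N$ and an arbitrary $x\in\Omega$ and apply the polarized identity with the second argument $g = D(\cdot,x)\in X_N$. The left-hand side is $\langle f, D(\cdot,x)\rangle = \int_\Omega f(y)D(y,x)\,d\mu(y) = f(x)$ by the reproducing property; the right-hand side is $\sum_{j=1}^m \lambda_j f(\xi^j) D(\xi^j,x)$. Since $D$ is symmetric, $D(\xi^j,x)=D(x,\xi^j)$, and this is exactly~\eqref{gf2}. I expect the only point requiring any care is the bookkeeping in the polarization step (making sure one genuinely has a bilinear identity valid for \emph{all} pairs in $X_N$, using that $X_N$ is a linear space so $f\pm g\in X_N$), and noting that applying the identity to $g=D(\cdot,x)$ is legitimate precisely because $D(\cdot,x)\in X_N$; there is no real obstacle beyond that.
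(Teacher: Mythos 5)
Your argument is correct and is essentially identical to the paper's own proof: polarize the exact quadrature identity \eqref{gf1} to obtain $\langle f,g\rangle=\sum_j\lambda_j f(\xi^j)g(\xi^j)$ for all $f,g\in X_N$, then apply it with $g=D(\cdot,x)\in X_N$ and use the reproducing property $f(x)=\langle f,D(x,\cdot)\rangle$. No issues.
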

\begin{proof} For $f,g\in X_N$ denote
$$
\<f,g\> := \int_\Omega f(x)g(x)d\mu.
$$
Using identity $4\<f,g\>= \|f+g\|_2^2 - \|f-g\|_2^2$, we obtain from (\ref{gf1}) that for any $f,g\in X_N$
$$
\<f,g\> = \sum_{j=1}^m \lambda_j f(\xi^j)g(\xi^j).
$$
This implies
$$
f(x) = \<f,D(x,\cdot)\> = \sum_{j=1}^m \lambda_j f(\xi^j)D(x,\xi^j),
$$
which completes the proof.
\end{proof}

Note that if for some orthonormal bases $\{u_i\}_{i=1}^N$ of $X_N$ there exist
sets of points $\{\xi^j\}_{j=1}^m$ and of weights $\{\lambda_j\}_{j=1}^m$ such that
(\ref{gf2}) holds for all $f\in X_N$ then also (\ref{gf1}) holds for all $f\in X_N$.
Indeed,
$$
\sum_{j=1}^m \lambda_j f(\xi^j)^2 = \sum_{j=1}^m \lambda_j f(\xi^j)\int_\Omega f(y)D(\xi^j,y)d\mu
$$
$$
= \int_\Omega f(y)\left(\sum_{j=1}^m \lambda_j f(\xi^j) D(\xi^j,y)\right)d\mu = \int_\Omega f(y)^2d\mu = \|f\|_2^2.
$$

Given    a finite subset $Q$ of $\Z^d$,  we denote
$$
\Tr(Q):= \Bigl\{f: f=\sum_{\bk\in Q}c_\bk e^{i(\bk,\bx)},\   \  c_{\mathbf{k}}\in \mathbb{C},\  \  \bk\in Q\Bigr\}.
$$
Also, given $\bN=(N_1,\dots,N_d)\in\mathbb{Z}_+^d$, we write  $\Tr(\bN)$ for the set  $\Tr(\Pi(\bN))$ with $$\Pi(\bN):=\bl\{ \bk=(k_1,  \cdots, k_d)\in\Z_+^d:\  \   |k_j|\leq N_j,\   \ j=1,\cdots, d\br\}. $$

\begin{Proposition}\label{gfP4} Let $\bN=(N_1,\dots,N_d)\in\mathbb{Z}_+^d$. Suppose a cubature formula
$\Lambda_m(\cdot,\xi)$ is exact for $\Tr(2\bN)$.
 Then $m\ge \prod_{j=1}^d (2N_j+1)$.
\end{Proposition}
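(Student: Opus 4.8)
The plan is to argue by contradiction, using the classical ``one extra degree of freedom'' trick: if there were too few nodes, I would build a nonzero nonnegative polynomial in $\Tr(2\bN)$ with positive integral that is nonetheless annihilated by the cubature formula, contradicting exactness. Suppose $m<\prod_{j=1}^d(2N_j+1)$. Since $\prod_{j=1}^d(2N_j+1)=|\Pi(\bN)|=\dim\Tr(\bN)$, the linear evaluation map $t\mapsto (t(\xi^1),\dots,t(\xi^m))$ from $\Tr(\bN)$ into $\bbC^m$ has nontrivial kernel, so there exists $t\in\Tr(\bN)$ with $t\not\equiv 0$ and $t(\xi^\nu)=0$ for all $\nu=1,\dots,m$.

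Next I would pass to $g:=|t|^2=t\overline t$. Writing $t=\sum_{\bk\in\Pi(\bN)}c_\bk e^{i(\bk,\bx)}$ gives $g=\sum_{\bk,\bl\in\Pi(\bN)}c_\bk\overline{c_\bl}\,e^{i(\bk-\bl,\bx)}$, and since $|k_j-l_j|\le 2N_j$ for every $j$ whenever $\bk,\bl\in\Pi(\bN)$, the polynomial $g$ has all its frequencies in $\Pi(2\bN)$, i.e. $g\in\Tr(2\bN)$. This frequency-support inclusion for the product $t\overline t$ is the only point requiring a (routine) verification, and it is immediate from the coordinatewise triangle inequality. Moreover $g\ge 0$ and its mean value satisfies $\int_{\Td}g\,d\mu=\widehat g(0)=\sum_{\bk\in\Pi(\bN)}|c_\bk|^2=\|t\|_2^2>0$ because $t\not\equiv 0$ (here $\mu$ is the normalized Lebesgue measure on $\Td$, as in the trigonometric setting of Section~\ref{sec2.1}).

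Finally I would invoke exactness of the cubature formula on $\Tr(2\bN)$: applied to $g$ it yields $\|t\|_2^2=\int_{\Td}g\,d\mu=\La_m(g,\xi)=\sum_{\nu=1}^m\la_\nu\,|t(\xi^\nu)|^2=0$, which is a contradiction. Hence $m\ge\prod_{j=1}^d(2N_j+1)$. I do not foresee any serious obstacle; the argument is short, and the only mild technical step is the product-of-polynomials frequency count showing $|t|^2\in\Tr(2\bN)$, which is what forces the degree doubling from $\bN$ to $2\bN$ in the hypothesis.
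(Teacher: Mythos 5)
Your argument is correct and is essentially identical to the paper's proof: both use the dimension count to find a nonzero $t\in\Tr(\bN)$ vanishing at all nodes, observe that $|t|^2\in\Tr(2\bN)$ has positive integral but zero cubature value, and derive the contradiction. Your version simply spells out the frequency-support verification that the paper leaves implicit.
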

\begin{proof} The proof is by contradiction. Suppose $m< \prod_{j=1}^d (2N_j+1)$.
Then, using the fact $\dim \Tr(\bN) = \prod_{j=1}^d (2N_j+1)$, we find a non-zero $f\in \Tr(\bN)$ such that $f(\xi^\nu)=0$, $\nu=1,\dots,m$. Then $|f|^2\in \Tr(2\bN)$ and
$\int_{\T^d}|f(\bx)|^2d\mu \neq 0$ but $\Lambda_m(|f|^2,\xi)=0$. We got a contradiction, which proves Proposition~\ref{gfP4}.
\end{proof}

\subsection{Exact weighted discretization for spaces of spherical harmonics}

Theorems \ref{gfT1} and \ref{gfT2} solve the problem of optimal behavior of $m$ for
exact weighted discretization in the general setting. Theorem \ref{gfT1} shows that in case of even natural number $q$ we always have $X_N\in \cM^w(M(N,q),q,0)$.
Theorem \ref{gfT2} shows that the parameter $m=M(N,q)$ is the best possible one in a general setting. However, it is well known that for specific subspaces $X_N$ the growth of $m$ allowing exact weighted discretization may be much slower than
$M(N,q)\asymp N^q$. In this subsection we show that the subspaces of spherical harmonics are as bad (in the sense of order) as the worst subspaces.

Let $ \mathcal{H}_n^2$ denote the space of spherical harmonics of degree $n$ on the unit sphere $\mathbb{S}^2$ of $\mathbb{R}^3$. It is known that $\text{dim}\  (\mathcal{H}_n^2)=2n+1$.  Let $\{ Y_{n,j}\}_{j=1}^{2n+1}$ denote   an orthonormal basis  in $ \mathcal{H}_n^2$.  Denote by  $d\sigma$  the surface Lebesgue measure on $\mathbb{S}^2$ normalized by $\int_{\mathbb{S}^2} d\sigma=1$.

\begin{Theorem}\label{thm-1-1}
	Assume that there exist   distinct points $\xi_1, \cdots, \xi_m\in\mathbb{S}^2$ and real numbers $\lambda_1,\cdots, \lambda_m$  such that
	\begin{equation}\label{1}\int_{\mathbb{S}^2} |f(x)|^2 \, d\sigma(x) =\sum_{j=1}^m  \lambda_j |f(\xi_j)|^2,\   \   \ \forall f\in \mathcal{H}_n^2.\end{equation}
	Then
	$m\ge \frac {n(n+1)}2$. \end{Theorem}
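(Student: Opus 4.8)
The plan is to polarize the hypothesis, observe that the resulting cubature formula (now with signed weights) is exact on a large space $U$ of spherical harmonics, and then run the classical ``take a function vanishing at all nodes and square it'' argument, which is insensitive to the signs of the weights. Put $\langle f,g\rangle:=\int_{\mathbb S^2}fg\,d\sigma$. Since $4\langle f,g\rangle=\int_{\mathbb S^2}(f+g)^2\,d\sigma-\int_{\mathbb S^2}(f-g)^2\,d\sigma$, assumption \eqref{1} gives
$$\int_{\mathbb S^2}fg\,d\sigma=\sum_{j=1}^m\lambda_j f(\xi_j)g(\xi_j)\qquad(f,g\in\mathcal H_n^2),$$
hence by linearity $\int_{\mathbb S^2}h\,d\sigma=\sum_{j=1}^m\lambda_j h(\xi_j)$ for every $h$ in
$$U:=\operatorname{span}\bigl\{fg:\ f,g\in\mathcal H_n^2\bigr\}.$$
(For a complex basis $\{Y_{n,j}\}$ one uses $f\bar g$ and $\overline{\mathcal H_n^2}=\mathcal H_n^2$; nothing changes.)

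The key point is to identify $U$. A product of two degree-$n$ spherical harmonics is the restriction to $\mathbb S^2$ of a homogeneous polynomial of degree $2n$, so $U\subseteq\mathcal H_0^2\oplus\mathcal H_2^2\oplus\cdots\oplus\mathcal H_{2n}^2$; and in fact
$$U=\mathcal H_0^2\oplus\mathcal H_2^2\oplus\cdots\oplus\mathcal H_{2n}^2 .$$
This may be quoted from the classical theory of products of spherical harmonics (non-vanishing of the Gaunt, equivalently Wigner $3j$, coefficients $\binom{n\ n\ 2l}{0\ 0\ 0}$ for $0\le l\le n$); alternatively it can be shown directly, since by the addition formula $\sum_i Y_{n,i}(x)Y_{n,i}(Rx)=(2n+1)P_n(x\cdot Rx)\in U$ for every rotation $R$, and $x\cdot Rx=\cos\theta+(1-\cos\theta)(x\cdot a)^2$ for a rotation by angle $\theta$ about an axis $a$, so letting $\theta$ and $a$ vary shows that $U$ contains all powers $(x\cdot a)^{2k}$ with $0\le k\le n$, whose span is exactly $\bigoplus_{l=0}^n\mathcal H_{2l}^2$.

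Now set $V_0:=\bigoplus_{l=0}^{\lfloor n/2\rfloor}\mathcal H_{2l}^2$. If $g\in V_0$ then $g$ is antipodally even and is the restriction of a polynomial of degree $\le 2\lfloor n/2\rfloor$, so $g^2$ is antipodally even and is the restriction of a polynomial of degree $\le 4\lfloor n/2\rfloor\le 2n$; hence $g^2\in\bigoplus_{l=0}^n\mathcal H_{2l}^2=U$. Assume, towards a contradiction, that $m<\dim V_0$. Then the linear map $V_0\to\mathbb R^m$, $g\mapsto(g(\xi_1),\dots,g(\xi_m))$, has a nontrivial kernel, so there is $g\in V_0$ with $g\neq 0$ and $g(\xi_j)=0$ for all $j$. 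Since $g^2\in U$, exactness of the cubature formula on $U$ yields
$$0<\int_{\mathbb S^2}g^2\,d\sigma=\sum_{j=1}^m\lambda_j g(\xi_j)^2=0,$$
a contradiction (the strict inequality because $g$ is continuous and not identically zero on $\mathbb S^2$). Hence $m\ge\dim V_0=\sum_{l=0}^{\lfloor n/2\rfloor}(4l+1)=(\lfloor n/2\rfloor+1)(2\lfloor n/2\rfloor+1)$, and an elementary computation gives $(\lfloor n/2\rfloor+1)(2\lfloor n/2\rfloor+1)\ge\frac{n(n+1)}2$ for all $n\ge1$ (with equality for odd $n$), which is the asserted bound.

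The main obstacle is the second step: establishing that products of degree-$n$ spherical harmonics already span every even-degree harmonic space up to degree $2n$. Once that structural fact about $U$ is available, the remainder is routine — polarization, the dimension count for $V_0$, and the null-function/squaring trick that makes the sign of the weights irrelevant.
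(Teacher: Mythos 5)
Your proof is correct and reaches the same intermediate milestone as the paper --- exactness of the (signed) cubature formula on $\bigoplus_{l=0}^{n}\mathcal H_{2l}^2$, followed by the squaring/null-function argument on $V_0=\bigoplus_{l=0}^{\lfloor n/2\rfloor}\mathcal H_{2l}^2$ --- but it gets to that milestone by a genuinely different route. The paper does not polarize: it applies the hypothesis to the zonal functions $C_n^{1/2}(x\cdot\,\cdot\,)$, integrates, and invokes Askey's explicit linearization identity $|C_n^\lambda(t)|^2=\sum_j b_{n,j}^\lambda\frac{2j+\lambda}{\lambda}C_{2j}^\lambda(t)$ with \emph{positive} coefficients; the positivity turns the resulting identity into a sum of squares equal to zero, forcing $\sum_i\lambda_i Y_{2k,j}(\xi_i)=0$ for $1\le k\le n$ and hence exactness on $\bigoplus_{l\le n}\mathcal H_{2l}^2$ (together with $\sum_j\lambda_j=1$ for the constant component). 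Your polarization shortcut replaces all of this by the single structural fact $\operatorname{span}(\mathcal H_n^2\cdot\mathcal H_n^2)=\bigoplus_{l=0}^n\mathcal H_{2l}^2$, which makes it transparent that only the \emph{non-vanishing} of the linearization (Gaunt) coefficients is needed, not their positivity; and your rotation argument via the addition formula gives a self-contained proof of that inclusion without quoting the explicit formula. The one place you are too quick is the assertion that ``letting $\theta$ and $a$ vary shows that $U$ contains all powers $(x\cdot a)^{2k}$, $0\le k\le n$'': writing $P_n(\cos\theta+(1-\cos\theta)t^2)=P_n(1+\lambda(t^2-1))=\sum_{k=0}^n\frac{P_n^{(k)}(1)}{k!}\lambda^k(t^2-1)^k$ with $\lambda=1-\cos\theta$, you need $P_n^{(k)}(1)\neq0$ for all $0\le k\le n$ (true, since $P_n^{(k)}(1)=\frac{(n+k)!}{2^kk!(n-k)!}$) plus a Vandermonde argument in $\lambda$ to conclude that the span contains each $(t^2-1)^k$ and hence each $t^{2k}$. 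This is an easy and standard fill-in, so I regard the proof as complete modulo that one line; everything else, including the dimension count $(\lfloor n/2\rfloor+1)(2\lfloor n/2\rfloor+1)\ge n(n+1)/2$, checks out.
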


For the proof of Theorem \ref{thm-1-1}, we need the following identity on ultraspherical polynomials, which can be found in \cite[p. 39, (5.7)]{A1}.
\begin{Lemma} For each positive integer $n$ and every $\lambda>0$,
	\begin{equation}\label{2} \Bl|C_n^\lambda (t)\Br|^2 =\sum_{j=0}^n b^{\lambda}_{n,j}\f {2j+\lambda}{\lambda} C_{2j}^{\lambda}(t),\end{equation}
	where
	$$ b_j^{\lambda} =\frac{\lambda}{n+\lambda+j}\frac{ (\lambda)_{n-j} ( (\lambda)_j)^2 (2\lambda)_{n+j} (2j)!} {(n-j)! (j!)^2 (\lambda)_{n+j} (2\lambda)_{2j} },\   \  0\leq j\leq n,$$
	and
	$(a)_j=a(a+1) \cdots (a+j-1)$.
\end{Lemma}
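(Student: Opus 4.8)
The plan is to derive the bound from an injectivity (interpolation) argument in the spirit of the Delsarte--Goethals--Seidel lower bound for designs; the key point will be that the signs of the weights $\lambda_j$ become harmless once the test function is a square that vanishes at every node. The stated Lemma enters exactly to guarantee that the quadrature is exact on a large enough space of even polynomials.

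First I would upgrade the hypothesis \eqref{1} from squares of degree-$n$ harmonics to a genuine cubature statement on even polynomials. By linearity, \eqref{1} says that the signed measure $\nu:=\sum_{j=1}^m\lambda_j\delta_{\xi_j}$ reproduces $d\sigma$ on $V:=\sp\{|f|^2:f\in\H_n^2\}$. Each $|f|^2$ is an even function on $\mathbb{S}^2$ (since $f(-x)=(-1)^nf(x)$) of degree at most $2n$, so $V\subseteq\bigoplus_{l=0}^n\H_{2l}^2$. The reverse inclusion is where the Lemma is used: taking $\lambda=1/2$, so that $C_n^{1/2}=P_n$ is the Legendre polynomial and, by the addition formula, $x\mapsto P_n(x\cdot\eta)\in\H_n^2$ for each fixed $\eta$, the Lemma gives $P_n(x\cdot\eta)^2=\sum_{l=0}^n\beta_l\,P_{2l}(x\cdot\eta)$ with all coefficients $\beta_l>0$. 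The span over $\eta\in\mathbb{S}^2$ of these zonal squares is an $SO(3)$-invariant subspace of $\bigoplus_{l=0}^n\H_{2l}^2$ whose $\H_{2l}^2$-component, namely $\beta_l P_{2l}(x\cdot\eta)$, is nonzero for every $l$ precisely because $\beta_l>0$; hence that span is all of $\bigoplus_{l=0}^n\H_{2l}^2$. Therefore $V=\bigoplus_{l=0}^n\H_{2l}^2$ is the full space of even polynomials of degree $\le 2n$, and $\nu$ reproduces $\int\cdot\,d\sigma$ on all of it.

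With this in hand the core step is short. Let $E:=\bigoplus_{2l\le n}\H_{2l}^2$ be the real space of even polynomials of degree $\le n$. For any real $p\in E$ the product $p^2$ is even of degree $\le 2n$, hence $p^2\in V$, so the reproducing property yields $\int_{\mathbb{S}^2}p^2\,d\sigma=\sum_{j=1}^m\lambda_j\,p(\xi_j)^2$. If $p$ vanished at all nodes $\xi_1,\dots,\xi_m$, the right-hand side would be $0$ irrespective of the signs of the $\lambda_j$, forcing $\|p\|_2^2=0$ and $p\equiv0$. Thus the evaluation map $p\mapsto(p(\xi_1),\dots,p(\xi_m))$ is injective on $E$, and consequently $m\ge\dim E$.

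It then remains to compute $\dim E=\sum_{l=0}^{\lfloor n/2\rfloor}(4l+1)$, using $\dim\H_{2l}^2=4l+1$. For odd $n=2s+1$ this equals $(s+1)(2s+1)=n(n+1)/2$, while for even $n$ it is strictly larger than $n(n+1)/2$; in either case $m\ge n(n+1)/2$, as claimed. I expect the main obstacle to be the second half of the first step: showing that the quadrature is exact on the whole even space $\bigoplus_{l=0}^n\H_{2l}^2$, i.e. that squares of degree-$n$ harmonics span it. This is the only place where the positivity of \emph{every} linearization coefficient $\beta_l$ in the Lemma is genuinely needed, and it is what makes the conclusion insensitive to the (unknown) signs of the weights; the subsequent injectivity and dimension count are then routine.
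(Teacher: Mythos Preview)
Your proposal does not prove the stated Lemma at all; it is a proof of Theorem~\ref{thm-1-1} (the lower bound $m\ge n(n+1)/2$ for exact weighted $L_2$-discretization on $\mathcal H_n^2$), with the Lemma used as an input. The paper does not prove the Lemma either---it is quoted verbatim from Askey's monograph~\cite{A1}---so there is nothing to compare on that front. The meaningful comparison is between your argument and the paper's proof of Theorem~\ref{thm-1-1}.

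On that score the two proofs share the same architecture: extend exactness of the signed cubature from $\{|f|^2:f\in\mathcal H_n^2\}$ to all of $\bigoplus_{l=0}^n\mathcal H_{2l}^2$, then note that for real $p\in E:=\bigoplus_{2l\le n}\mathcal H_{2l}^2$ one has $p^2$ in that larger space, so $\int p^2=\sum_j\lambda_j p(\xi_j)^2$ regardless of the signs of the $\lambda_j$, the evaluation map on $E$ is injective, and $m\ge\dim E$. The only genuine difference is in how the first step is executed. You argue structurally: the span over $\eta\in\mathbb S^2$ of the zonal squares $P_n(x\cdot\eta)^2$ is $SO(3)$-invariant, and the Lemma (with $\lambda=1/2$) shows its projection onto each $\mathcal H_{2l}^2$ is nonzero because every linearization coefficient is strictly positive; irreducibility forces the span to be all of $\bigoplus_{l=0}^n\mathcal H_{2l}^2$, and linearity transfers exactness of the cubature. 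The paper instead applies the hypothesis twice---once in each variable of $|C_n^{1/2}(x\cdot y)|^2$---expands via the Lemma and the addition formula, and arrives at an identity $0=\sum_k b_{n,k}^{1/2}\sum_\ell\bigl|\sum_i\lambda_i Y_{2k,\ell}(\xi_i)\bigr|^2$; positivity of every $b_{n,k}^{1/2}$ then forces each inner sum to vanish, which is precisely the statement that the cubature integrates every even harmonic of degree $\le 2n$. Both routes rest on the same crucial fact (strict positivity of all coefficients in the Lemma) and reach the same conclusion; yours is a bit more conceptual, the paper's a bit more hands-on, but neither gains anything the other lacks.
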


\begin{proof}[Proof of Theorem \ref{thm-1-1}]
	First, we show that
	\begin{equation}\label{1-3-0}\sum_{j=1}^m \lambda_j =1.\end{equation}
	Recall that the function $(2j+1) C_j^{1/2}(x\cdot y)$, $x, y\in\mathbb{S}^2$ is the reproducing kernel of the space $ \mathcal{H}_n^2$.
	Thus, for each $x\in\mathbb{S}^2$, we have
	\begin{align*}
	\int_{\mathbb{S}^2} |C_n^{1/2} (x\cdot y)|^2 \, d\sigma(y) =\sum_{j=1}^m  \lambda_{j} |C_n^{1/2}(x\cdot \xi_j)|^2 .
	\end{align*}	
	Integrating over $x\in\mathbb{S}^2$ then gives
	\begin{align*}
	\int_{\mathbb{S}^2}	\int_{\mathbb{S}^2} |C_n^{1/2} (x\cdot y)|^2 \, d\sigma(y)d\sigma(x) & =\sum_{j=1}^m \lambda_{j} \int_{\mathbb{S}^2}|C_n^{1/2}(x\cdot \xi_j)|^2\, d\sigma(x) \\
	&=\bl(\sum_{j=1}^m  \lambda_j\bigr)\int_{\mathbb{S}^2}\int_{\mathbb{S}^2} |C_n^{1/2} (x\cdot y)|^2 \, d\sigma(x)d\sigma(y).
	\end{align*}
	This implies
	\eqref{1-3-0}.
	
	Next, we show that
	\begin{equation}\label{1-4-0}\int_{\mathbb{S}^2} f(x)\, d\sigma(x) =\sum_{j=1}^m \lambda_{j} f(\xi_j),\   \ \ \forall f\in \bigoplus_{j=0}^{n} \mathcal{H}^2_{2j}.\end{equation}	
	Indeed,  using \eqref{1}, we have 	
	\begin{align*}
	\int_{\mathbb{S}^2} \int_{\mathbb{S}^2} |C^{1/2}_n(x\cdot y)|^2\, d\sigma(x) d\sigma(y)& =\sum_{j=1}^m  \lambda_{j} \int_{\mathbb{S}^2}| C^{1/2}_n(x\cdot \xi_j)|^2 \, d\sigma(x) \\
	&=\sum_{j=1}^m \sum_{k=1}^m \lambda_{j}\lambda_{k} |C^{1/2}_n(\xi_j\cdot \xi_k)|^2,  \end{align*}
	which, using \eqref{2} and \eqref{1-3-0}, equals
	\begin{align*}	& =b_{n,0}^{\frac12}+ \sum_{i=1}^n b_{n,i}^{\frac12}\sum_{j=1}^m \sum_{k=1}^m \lambda_{j}\lambda_{k}   (4i+1) C_{2i}^{1/2} (\xi_j\cdot \xi_k).\end{align*}
	It then follows by the addition formula for spherical harmonics that
	\begin{align*}\int_{\mathbb{S}^2} \int_{\mathbb{S}^2} |C^{1/2}_n(x\cdot y)|^2\, d\sigma(x) d\sigma(y) 	&=b_{n,0}^{\frac12}+ \sum_{i=1}^n b_{n,i}^{\frac12}\sum_{\ell=1}^{4i+1} \sum_{j=1}^m \sum_{k=1}^m  \lambda_{j}\lambda_{k}   Y_{2i,\ell}(\xi_j) Y_{2i,\ell}(\xi_k)\\
	&=b_{n,0}^{\frac12}+ \sum_{k=1}^n b_{n,k}^{\frac12}\sum_{j=1}^{4k+1}\Bl| \sum_{i=1} ^m \lambda_{i}   Y_{2k,j}(\xi_i) \Br|^2.
	\end{align*}
	Note  that by \eqref{2},
	$$ b_{n,0}^{\frac12} =	\int_{\mathbb{S}^2} \int_{\mathbb{S}^2} |C^{1/2}_n(x\cdot y)|^2\, d\sigma(x) d\sigma(y).$$
	It follows that  for $1\leq k\leq n$ and $1\leq j\leq 4k+1$,
	$$\sum_{i=1}^m \lambda_{i} Y_{2k,j}(\xi_i)=0.$$
	This together with \eqref{1-3-0} implies \eqref{1-4-0}.

	Finally, we show that
	$
	m \ge (n_0+1) (2n_0+1),
	$
	where $n_0$ is the integer part of $n/2$.
	To see this, note that
	for each $f\in V_n: =\bigoplus_{0\leq j\leq n/2}\mathcal{ H}_{2j}^2$, we have
	$ |f|^2 \in \bigoplus_{0\leq j\leq n}\mathcal{ H}_{2j}^2.$
	Thus, using   \eqref{1-4-0}, we obtain
	$$ \int_{\mathbb{S}^2} |f(x)|^2 \, d\sigma(x) =\sum_{i=1}^m \lambda_{i} |f(\xi_i)|^2,\   \  \forall f\in V_n.$$
	In particular, this implies that
	$$m \ge \text{dim} ( V_n)=\sum_{0\leq j\leq n/2} (4j+1)=(n_0+1) (2n_0+1).$$
\end{proof}

\subsection{Exact weighted discretization for  trigonometric polynomials}\label{Ex--}

In this subsection we show that some subspaces of trigonometric polynomials are as bad (in the sense of order) as the worst subspaces.   Recall that given    a finite subset $Q$ of $\Z^d$,
$$
\Tr(Q):= \Bigl\{f: f=\sum_{\bk\in Q}c_\bk e^{i(\bk,\bx)},\   \  c_{\mathbf{k}}\in \mathbb{C},\  \  \bk\in Q\Bigr\}.
$$
We begin with a univariate trigonometric polynomials.
\begin{Theorem} Let $N$ be a given positive integer and let
$$
Q:=\Bl\{j^2:\  \  j=1,2,\cdots, N\Br\}\cup\Bl\{0, 1, 2, \cdots, 2N\Br\}.
$$
Assume that there  are points  $ x_1, \cdots, x_m\in [0,2\pi)$ and real  numbers $\lambda_1,\cdots, \lambda_m$ such that
	\begin{equation}\label{5-2}\f 1{2\pi} \int_0^{2\pi} |f( x)|^2\, dx = \sum_{j=1}^m  \lambda_j |f(x_j)|^2,\   \ \forall f\in \mathcal{T}(Q).\end{equation}
Then
$$
 m\ge N^2\ge \f {(|Q|-1)^2}9.
 $$
\end{Theorem}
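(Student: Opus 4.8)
The plan is to reuse the mechanism behind Propositions~\ref{gfP3}--\ref{gfP4} and Theorem~\ref{thm-1-1}: promote the exact weighted $L_2$ discretization \eqref{5-2} for $\Tr(Q)$ to an exact cubature formula for the much larger space $\Tr(Q-Q)$, then show that $Q-Q$ contains a long interval of consecutive integers, and conclude by a dimension count.

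\emph{Step 1 (polarization).} Following the proof of Proposition~\ref{gfP3}, I would apply the polarization identity
\[
f\bar g=\tfrac14\bigl((|f+g|^2-|f-g|^2)+i(|f+ig|^2-|f-ig|^2)\bigr)
\]
to \eqref{5-2}: since $f\pm g$ and $f\pm ig$ lie in $\Tr(Q)$ and the $\lambda_j$ are real, one obtains $\frac1{2\pi}\int_0^{2\pi}f\bar g\,dx=\sum_{j=1}^m\lambda_jf(x_j)\overline{g(x_j)}$ for all $f,g\in\Tr(Q)$. Taking $f=e^{ikx}$, $g=e^{ilx}$ with $k,l\in Q$ gives $\sum_{j=1}^m\lambda_je^{i(k-l)x_j}=\delta_{k,l}$, which means exactly that the quadrature formula $\Lambda_m(h):=\sum_{j=1}^m\lambda_jh(x_j)$ is exact on $\Tr(Q-Q)$. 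Equivalently, \eqref{5-2} remains valid with $\Tr(Q)$ replaced by $\Tr(R)$ for every finite $R\subset\Z$ satisfying $R-R\subseteq Q-Q$, because $|g|^2\in\Tr(R-R)\subseteq\Tr(Q-Q)$ whenever $g\in\Tr(R)$.

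\emph{Step 2 (a long interval inside $Q-Q$).} This is the combinatorial heart of the argument, and the part I expect to be the main obstacle. I claim $\{-N^2,-N^2+1,\dots,N^2\}\subseteq Q-Q$; by symmetry of $Q-Q$ it suffices to show $t\in Q-Q$ for every integer $t$ with $0\le t\le N^2$. The case $t=0$ is trivial. For $1\le t\le N^2$, set $j:=\lceil\sqrt t\,\rceil$; then $1\le j\le N$ (as $t\le N^2$) and $(j-1)^2<t\le j^2$, so $0\le j^2-t<j^2-(j-1)^2=2j-1\le 2N-1$. Hence $j^2\in\{1^2,2^2,\dots,N^2\}\subseteq Q$ and $j^2-t\in\{0,1,\dots,2N\}\subseteq Q$, giving $t=j^2-(j^2-t)\in Q-Q$. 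The reason this works is that consecutive squares not exceeding $N^2$ differ by at most $2N$, which is precisely the span of the block $\{0,1,\dots,2N\}\subseteq Q$.

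\emph{Step 3 (dimension count).} Take $R:=\{0,1,\dots,N^2\}$, so $\dim\Tr(R)=N^2+1$ and $R-R=\{-N^2,\dots,N^2\}\subseteq Q-Q$ by Step~2. Suppose $m\le N^2$. Then the homogeneous system $g(x_j)=0$, $j=1,\dots,m$, has fewer equations than $\dim\Tr(R)$, so there is a nonzero $g\in\Tr(R)$ vanishing at all the $x_j$. But $|g|^2\in\Tr(R-R)\subseteq\Tr(Q-Q)$, so by Step~1, $\frac1{2\pi}\int_0^{2\pi}|g(x)|^2\,dx=\sum_{j=1}^m\lambda_j|g(x_j)|^2=0$, which forces $g\equiv0$ -- a contradiction. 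Hence $m\ge N^2+1\ge N^2$. Finally, $|Q|\le N+(2N+1)=3N+1$, so $|Q|-1\le 3N$ and therefore $N^2\ge\tfrac{(|Q|-1)^2}{9}$, which finishes the proof.
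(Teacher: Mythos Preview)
Your proof is correct and follows essentially the same route as the paper: polarization of \eqref{5-2} to obtain exactness of the cubature on $\Tr(Q-Q)$, the gap argument $j^2-(j-1)^2=2j-1\le 2N-1$ to show $\{-N^2,\dots,N^2\}\subseteq Q-Q$, and then the dimension count (which the paper invokes as Proposition~\ref{gfP4}, whose proof you reproduce directly in Step~3). The only cosmetic differences are that you use the full complex polarization identity whereas the paper uses the real part and then linearity, and that you phrase Step~2 via $j=\lceil\sqrt t\,\rceil$ rather than as a union of overlapping intervals; both are equivalent.
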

\begin{proof}
	Note first that since $\text{Re}(a\overline{b})=\frac1{4}\big(|a+b|^2-|a-b|^2\big)$,  \eqref{5-2} implies that
\be\label{5-2'}
	\f 1{2\pi}\int_0^{2\pi} f(x) \overline{g(x)}\, dx =\sum_{j=1}^m \lambda_j f(x_j) \overline{g(x_j)},\    \  \forall f,g\in \mathcal{T}(Q).
\ee
	Applying this last formula to $f(x)=e^{ijx}$ and $g(x)=e^{ikx}$ with $j,k\in Q$, and using linearity of the integral, we then conclude that
	\begin{equation}\label{5-3}
	\f 1{2\pi}\int_0^{2\pi} h(x)\, dx =\sum_{j=1}^m \lambda_j h(x_j),\   \   \forall h\in \mathcal{T}(Q-Q).
	\end{equation}
	
	Next, we note that  $3N+1-\sqrt{2N}\leq |Q| \leq 3N+1$, and
\begin{eqnarray*}	 Q-Q &\supset&
\Big(\bigcup_{k=1}^N \{ k^2, k^2-1, \cdots, k^2-2N\}\Big)
\\
 &&\cup\Big(\{1^2, 2^2, \cdots, N^2\}\Big).
\end{eqnarray*}
	Since
	$ k^2-(k-1)^2=2k-1<2N$  for $1\leq k\leq N$,
	this together with symmetry  implies that
	\begin{equation}\label{5-4}
	 \Bl\{ \pm j:\   \ j=0,1,2,\cdots, N^2\Br\}\subset Q-Q.
	\end{equation}

Finally, \eqref{5-4} combined with \eqref{5-3} implies that the cubature formula in  \eqref{5-3} is exact for every $f\in\mathcal{T}(N^2)$.
Thus,
by Proposition \ref{gfP4} we obtain the required lower bound.	
\end{proof}
It is worth mentioning that the construction of the set $Q$ satisfying the conditions
$|Q|  \asymp N$ and \eqref{5-4}
 is closely related to the so-called Sidon's sets, see, e.g., \cite{sidon1, sidon2}.

We now give one more example of a subspace of multivariate trigonometric polynomials, which is ``difficult'' for exact weighted discretization. Let
$\cR\Tr(n)$ denote the set of real trigonometric polynomials of degree at most $n$.
For $q=2^s$, $s\in\N$, consider the following subspace
$$
X_N:=\{f(x_1,\dots,x_q)=f_1(x_1)+\cdots+f_q(x_q):\, f_j\in \cR\Tr(n),\, j=1,\dots,q\}.
$$
Then $N=\dim(X_N) = (2n+1)q$. Assume that sets $\{\xi^\nu\}$ and $\{\la_\nu\}$, $\nu=1,\dots,m$ are such that for any $f\in X_N$ we have
$$
(2\pi)^{-q}\int_{\T^q}f(\bx)^qd\bx = \sum_{\nu=1}^m \la_\nu f(\xi^\nu)^q.
$$
Using the form $q=2^s$ and applying $s$ times the argument, which we used above to derive (\ref{5-2'}) from (\ref{5-2}), we obtain that for any $f_j\in \cR\Tr(n)$, $j=1,\dots,q$ we have
$$
(2\pi)^{-q}\int_{\T^q}f_1(x_1)\cdots f_q(x_q)d\bx = \sum_{\nu=1}^m \la_\nu f_1(\xi^\nu_1)\cdots f_q(\xi^\nu_q).
$$
In particular, this implies that the cubature formula with nodes $\{\xi^\nu\}$ and weights $\{\la_\nu\}$, $\nu =1,\dots,m$, is exact for $\Tr(\bN)$, $\bN=(n,\dots,n)$. Therefore, by
Proposition \ref{gfP4} we get $m\ge n^q$.

\section{Exact weighted discretization with constraints on the weights}
\label{constr}

In Section \ref{Ex} we discussed the problem of exact weighted discretization and
related problems of recovery and numerical integration. In that setting we did not impose any restrictions on the weights $\{\la_\nu\}$. In this section we consider numerical integration and exact weighted discretization with additional constraints on
weights $\{\la_\nu\}$. We only consider two natural types of constraint.

{\textsc{Positivity.}} We assume that $\la_\nu \ge 0$, $\nu=1,\dots,m$.

{\textsc{Stability.}} We assume that $\sum_{\nu=1}^m |\la_\nu| \le B$.
\\In Subsection \ref{section stable exact} we  will also consider a more general stability property than the one above.

\subsection{ Exact weighted discretization with positive weights}
\label{sec:3-1}

In this section, we shall  prove that given an  $N$-dimensional subspace of continuous and integrable functions on a sequentially compact space, one can always find an exact positive cubature formula with at most $N$ nodes.

\begin{Theorem}\label{thm-4-1}
	Let $\Omega$ be a sequentially  compact topological  space with the     probability Borel  measure $\mu$. Then for each given    $N$-dimensional  real  linear subspace $X_N$  of $L_1(\Omega,\mu)\cap C(\Omega)$,   there exist a set of $N$ points $\{\xi^1, \cdots, \xi^N\}\subset \Omega$ and  a set of  nonnegative    real numbers $\lambda_1,\cdots, \lambda_N$   such that
	\begin{equation}\label{4-1-0}
	\int_{\Omega} f(x)\, d\mu(x) =\sum_{j=1}^{N}\lambda_j f(\xi^j),\quad  \ \forall f\in X_N.
	\end{equation}
\end{Theorem}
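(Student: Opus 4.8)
The plan is to prove this via a compactness and dimension-counting argument on the convex set of all possible "integration vectors." First I would fix a basis $u_1,\dots,u_N$ of $X_N$ and consider the continuous map $T\colon\Omega\to\R^N$ given by $T(x):=(u_1(x),\dots,u_N(x))$. The target integration functional is encoded by the vector $\mathbf{v}:=\bigl(\int_\Omega u_1\,d\mu,\dots,\int_\Omega u_N\,d\mu\bigr)\in\R^N$. Since $\mu$ is a probability measure and each $u_i$ is continuous on the sequentially compact $\Omega$, the vector $\mathbf{v}$ lies in the closed convex hull $K:=\overline{\conv}\,T(\Omega)$: indeed one can approximate $\mu$ weakly by finitely supported probability measures (e.g.\ via partitions of $\Omega$ into small sets on which the continuous $u_i$ vary little), each of which produces a point of $\conv T(\Omega)$, and $K$ is closed; one should check $K$ is compact, which follows since $T(\Omega)$ is bounded (continuous image of a sequentially compact space) and $\conv$ of a bounded set in $\R^N$ has compact closure. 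So $\mathbf{v}\in K$.

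Next I would invoke Carath\'eodory's theorem for convex hulls: any point of $\conv T(\Omega)$ is a convex combination of at most $N+1$ points of $T(\Omega)$. To get down to $N$ points rather than $N+1$, the standard refinement is to note that $\mathbf v$ lies on the boundary of $K$ or else to use the sharper version of Carath\'eodory which exploits that we only need to represent $\mathbf v$, together with the normalization $\sum\lambda_j=1$: embed into $\R^{N+1}$ by appending a constant coordinate $1$ to $T$, i.e.\ work with $\widetilde T(x):=(1,u_1(x),\dots,u_N(x))$ and target $\widetilde{\mathbf v}:=(1,\mathbf v)$; then $\widetilde{\mathbf v}\in\overline{\conv}\,\widetilde T(\Omega)\subset\R^{N+1}$, and Carath\'eodory gives a representation as a convex combination of at most $N+1$ points $\widetilde T(\xi^1),\dots,\widetilde T(\xi^{N+1})$ with nonnegative coefficients $\lambda_1,\dots,\lambda_{N+1}$ summing to $1$. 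Reading off the first coordinate gives $\sum\lambda_j=1$ and the remaining coordinates give $\sum_j\lambda_j u_i(\xi^j)=\int u_i\,d\mu$ for all $i$, which by linearity yields \eqref{4-1-0}. This uses $N+1$ points; to reach $N$ points one observes that since the subspace $X_N$ does \emph{not} necessarily contain constants, one should instead append nothing and argue directly that $\mathbf v$ is a convex combination of at most $N$ points of $T(\Omega)$ — this requires a separate small argument: if a Carath\'eodory representation uses $N+1$ affinely independent points then $\mathbf v$ is interior to that simplex, and one can slide along a line until a coefficient vanishes, unless $\mathbf v\in T(\Omega)$ already, in which case one point suffices.

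The main obstacle I anticipate is the reduction from $N+1$ to $N$ nodes together with handling the closure carefully. The point $\mathbf v\in K=\overline{\conv}\,T(\Omega)$ a priori need not lie in $\conv T(\Omega)$ itself (the convex hull of a non-closed bounded set need not be closed), so one cannot directly apply Carath\'eodory to $\mathbf v$; the fix is to approximate, extract weights and nodes for approximants, and pass to the limit using sequential compactness of $\Omega$ (so the $N$ nodes converge along a subsequence) and compactness of the weight simplex — here continuity of the $u_i$ is essential to pass the limit inside. Once one has $\mathbf v\in\conv T(\Omega)$ established this way with at most $N+1$ nodes, the final trimming to $N$ nodes is the genuinely delicate combinatorial-geometric step: one uses that $\{T(\xi^j)\}$ spanning an $N$-dimensional affine flat forces $\mathbf v$ to be expressible with $N$ of them whenever $\mathbf v$ is not in the relative interior in a way that pins down all $N+1$ — equivalently, one perturbs the weights within the kernel of the $N\times(N+1)$ system $\bigl[\,\sum\lambda_j u_i(\xi^j)=\int u_i\,d\mu\,\bigr]$, which has nontrivial kernel, to drive some $\lambda_j$ to zero while keeping all $\lambda_j\ge 0$, exactly as in the proof of Carath\'eodory's theorem. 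I would present this last step cleanly as: take any representation with the fewest nonzero weights; if it has $\ge N+1$ nonzero weights, the vectors $T(\xi^j)$ over the support are affinely dependent, giving a direction to move the weights that reduces the support, a contradiction.
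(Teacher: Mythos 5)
Your plan is essentially the paper's proof: both reduce the statement to showing that the moment vector $\mathbf{a}=\int_\Omega(u_1,\dots,u_N)\,d\mu$ lies in $\conv\Phi(\Omega)$, where $\Phi=(u_1,\dots,u_N)$, first placing it in $\overline{\conv\Phi(\Omega)}$, then upgrading to $\conv\Phi(\Omega)$ by taking Carath\'eodory representations of approximants and passing to the limit via sequential compactness of $\Omega$, continuity of the $u_i$, and compactness of the weight simplex, and finally trimming to $N$ nodes. Two remarks. First, for the membership $\mathbf{a}\in\overline{\conv\Phi(\Omega)}$ you approximate $\mu$ weakly by finitely supported measures, whereas the paper uses a separating-hyperplane argument (if $\mathbf{a}\notin\overline{\conv\Phi(\Omega)}$ then some $\alpha$ satisfies $\alpha\cdot\mathbf{a}>\sup_x\alpha\cdot\Phi(x)$, contradicting $\alpha\cdot\mathbf{a}=\int\alpha\cdot\Phi\,d\mu$); both are valid, the separation route being slightly cleaner since it avoids constructing partitions of small oscillation. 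Second, your discussion of the reduction from $N+1$ to $N$ nodes wobbles before landing on the right argument: the remark that an interior point of a nondegenerate $N$-simplex can be "slid until a coefficient vanishes" is false for \emph{convex} combinations (the barycentric coordinates there are unique, and an interior point is not a convex combination of any $N$ of the vertices), and in your last paragraph the operative word should be \emph{linearly} dependent, not affinely dependent. The step works precisely because the theorem does not require $\sum_j\lambda_j=1$: any $N+1$ vectors $\Phi(\xi^j)$ in $\R^N$ are automatically linearly dependent, so the kernel of the $N\times(N+1)$ system is nontrivial and one can perturb the weights to kill one of them while keeping the rest nonnegative. This is exactly the conic Carath\'eodory theorem, which is the paper's Lemma on positive representations (cited from Rockafellar); with that correction your argument is complete.
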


	This theorem guarantees existence of exact positive cubature formula with at most $N$ nodes. One can observe that we actually have $2N$ parameters as both the nodes and the weights can be chosen, while the dimension of the subspace is $N$. Therefore, in many concrete situations reduction of the number of nodes is possible. Perhaps the simplest example is the classical Gaussian quadrature of highest algebraic degree of exactness, see, e.g.~\cite{Krylov}. 

In the  case when $\Omega$ is a compact subset of $\R^d$, and  $X_N$ is  the space of all  real algebraic polynomials in $d$ variables  of total degree at most $n$, Theorem~ \ref{thm-4-1} is known as the  Tchakaloff theorem, and its  proof can be found in \cite{P} (see also \cite{Wi}).  It is worthwhile to point out that
Theorem~ \ref{thm-4-1} here is applicable in  a more general setting, and our  proof is different from that of  the  Tchakaloff theorem in  \cite{P}.

 Theorem \ref{thm-4-1} has two useful corollaries,  the first of which  provides   an exact positive cubature formula with  one more  node (i.e., $N+1$ nodes instead of $N$ nodes)   and the additional property that  the sum of all the weights $\lambda_\nu$ is  $1$.

\begin{Corollary}\label{cor-3-1-FD} Under the conditions of Theorem~ \ref{thm-4-1}, there
	 exist a set of $N+1$ points $\{\xi^1, \cdots, \xi^{N+1}\}\subset \Omega$ and  a set of  nonnegative    real numbers $\lambda_1,\cdots, \lambda_{N+1}$   such that $\sum_{j=1}^{N+1} \lambda_j=1$ and
	\begin{equation}\label{4-1-0}
	\int_{\Omega} f(x)\, d\mu(x) =\sum_{j=1}^{N+1}\lambda_j f(\xi^j),\quad  \ \forall f\in X_N.
	\end{equation}
	
	\end{Corollary}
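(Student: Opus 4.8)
The plan is to deduce Corollary~\ref{cor-3-1-FD} from Theorem~\ref{thm-4-1} by applying the latter not to $X_N$ itself but to the one-dimensional enlargement $\widetilde X := X_N + \operatorname{span}\{\mathbf 1\}$, where $\mathbf 1$ is the function identically equal to $1$ on $\Omega$. Since $\mu$ is a probability Borel measure and $\Omega$ is sequentially compact, $\mathbf 1 \in L_1(\Omega,\mu)\cap C(\Omega)$, so $\widetilde X$ is again a finite-dimensional real subspace of $L_1(\Omega,\mu)\cap C(\Omega)$, of dimension $N$ or $N+1$, and Theorem~\ref{thm-4-1} applies to it verbatim.

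First I would treat the generic case $\dim\widetilde X = N+1$, i.e.\ $\mathbf 1 \notin X_N$. Applying Theorem~\ref{thm-4-1} to $\widetilde X$ yields points $\xi^1,\dots,\xi^{N+1}\in\Omega$ and nonnegative reals $\lambda_1,\dots,\lambda_{N+1}$ with $\int_\Omega f\,d\mu = \sum_{j=1}^{N+1}\lambda_j f(\xi^j)$ for every $f\in\widetilde X$. In particular this identity holds for every $f\in X_N$, which is the required exactness, and specializing to $f=\mathbf 1$ gives $1=\int_\Omega \mathbf 1\,d\mu = \sum_{j=1}^{N+1}\lambda_j$, the claimed normalization.

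Second I would handle the degenerate case $\mathbf 1\in X_N$, so that $\widetilde X = X_N$ has dimension $N$. Here Theorem~\ref{thm-4-1} produces only $N$ nodes and $N$ nonnegative weights; I would simply adjoin an arbitrary extra node $\xi^{N+1}\in\Omega$ with weight $\lambda_{N+1}=0$. This changes neither the exactness on $X_N$ nor the value of $\sum_j\lambda_j$, and evaluating the (extended) formula at $f=\mathbf 1\in X_N$ again gives $\sum_{j=1}^{N+1}\lambda_j=\int_\Omega\mathbf 1\,d\mu=1$.

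There is essentially no analytic obstacle: the two things to check are (i) that $\widetilde X$ still satisfies the hypotheses of Theorem~\ref{thm-4-1}, which reduces to the continuity and integrability of the adjoined constant and is immediate, and (ii) the bookkeeping in the degenerate case so that the stated count of exactly $N+1$ nodes is achieved. Both are routine, so the corollary follows directly from Theorem~\ref{thm-4-1}.
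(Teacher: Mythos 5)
Your proof is correct, but it takes a genuinely different route from the paper's. The paper obtains Corollary~\ref{cor-3-1-FD} by re-running the proof of Theorem~\ref{thm-4-1}: there one shows that the moment vector $\mathbf{a}=\int_\Omega \Phi(x)\,d\mu(x)$ lies in $\conv(\Phi(\Omega))\subset\R^N$, and the only change is to invoke the classical Carath\'eodory theorem (a convex combination of at most $N+1$ points of $\Phi(\Omega)$, with weights summing to $1$) instead of Lemma~\ref{lem:carath positive} (a nonnegative combination of at most $N$ points, with no normalization). You instead use Theorem~\ref{thm-4-1} as a black box applied to the enlarged space $X_N+\operatorname{span}\{\mathbf 1\}$, which is legitimate since the constant function is continuous and $\mu$-integrable for a probability measure, and you correctly handle both the case $\mathbf 1\notin X_N$ (where the enlarged space has dimension $N+1$ and evaluation at $f=\mathbf 1$ yields the normalization) and the degenerate case $\mathbf 1\in X_N$ (padding with a zero-weight node, harmless under the paper's convention that nodes may coincide). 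What your reduction buys is that one never has to reopen the proof of Theorem~\ref{thm-4-1}; what the paper's route buys is transparency about where the ``$+1$'' and the constraint $\sum_j\lambda_j=1$ actually come from, namely the difference between the conic and the convex forms of Carath\'eodory's theorem. Both arguments are complete and yield the stated conclusion.
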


The proof of Corollary \ref{cor-3-1-FD} is almost identical to that of Theorem \ref{thm-4-1}. The only difference is that one uses  the  Carath\'eodory theorem instead of Lemma~\ref{lem:carath positive} below.

The  second corollary  guarantees  the existence of   an exact weighted discretization theorem
with positive weights and  at most $M(N,q)$ nodes  for each even positive integer $q$ and  each given  $N$-dimensional real linear subspace of $L_q$, where
$$
M(N,q):= {N+q-1 \choose q}=\frac{(N+q-1)!}{q!(N-1)!}\asymp N^q.
$$

Following the proof of Theorem \ref{gfT1}, one can easily deduce from  Theorem~\ref{thm-4-1} the following corollary,  which in particular improves   Theorem \ref{gfT1} in the sense that all the weights $\lambda_\nu$  are nonnegative.

\begin{Corollary}\label{thm:positive general}
	Assume that the conditions of Theorem~ \ref{thm-4-1} are satisfied, and
	   $X_N\subset L_q(\Omega,\mu)$ for some positive integer $q$.  Let  $M:={N+q-1 \choose q}$. Then there exist $\xi^\nu\in\Omega$ and $\lambda_\nu\ge0$, $\nu=1,\dots,M$, such that
	\[
	\int_{\Omega}f^qd\mu = \sum_{\nu=1}^M\lambda_\nu f(\xi^\nu)^q,\quad   \ \forall f\in X_N.
	\]
	In particular, if $q$ is even, then $X_N \in \cM^w_+(M,q,0)$.
\end{Corollary}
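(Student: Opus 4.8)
The plan is to mimic exactly the proof of Theorem \ref{gfT1}, but to replace the appeal to Proposition \ref{gfP2} (which gives an exact cubature formula with $N$ nodes but no control on the sign of the weights) by an appeal to Theorem \ref{thm-4-1} (which gives an exact positive cubature formula with at most $N$ nodes under the sequential compactness and continuity hypotheses). Concretely, write $X_N = \sp\{u_1,\dots,u_N\}$ and, for $\bk=(k_1,\dots,k_N)\in\Z_+^N$, set $u_\bk := u_1^{k_1}\cdots u_N^{k_N}$. Consider the subspace
\[
X_N(q) := \sp\{u_\bk : \bk\in K(N,q)\},\qquad K(N,q):=\{\bk\in\Z_+^N : k_1+\dots+k_N = q\},
\]
so that $\dim X_N(q)\le M = \binom{N+q-1}{q}$.

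The key observation is that $X_N(q)$ still satisfies the hypotheses of Theorem \ref{thm-4-1}: each $u_\bk$ is a product of continuous functions, hence continuous on $\Omega$; and since $f\in X_N\subset L_q(\Omega,\mu)$ for every $f\in X_N$, each monomial $u_\bk$ with $|\bk|=q$ is a finite linear combination of functions of the form $f^q$ with $f\in X_N$ (by polarization of the $q$-th power, or simply because products $u_{i_1}\cdots u_{i_q}$ lie in the span of $q$-th powers of elements of $X_N$), so $u_\bk\in L_1(\Omega,\mu)$ and $X_N(q)\subset L_1(\Omega,\mu)\cap C(\Omega)$. Applying Theorem \ref{thm-4-1} to the subspace $X_N(q)$ (whose dimension is at most $M$), we obtain points $\xi^1,\dots,\xi^M\in\Omega$ and nonnegative weights $\lambda_1,\dots,\lambda_M\ge 0$ — padding with zero weights if $\dim X_N(q)<M$ — such that
\[
\int_\Omega g\,d\mu = \sum_{\nu=1}^M \lambda_\nu g(\xi^\nu),\qquad \forall g\in X_N(q).
\]
Now take any $f=\sum_{i=1}^N b_i u_i\in X_N$. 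Then $f^q = \bigl(\sum_i b_i u_i\bigr)^q$ expands, by the multinomial theorem, into a linear combination of the monomials $u_\bk$ with $\bk\in K(N,q)$, so $f^q\in X_N(q)$; applying the displayed identity with $g=f^q$ gives exactly $\int_\Omega f^q\,d\mu = \sum_{\nu=1}^M \lambda_\nu f(\xi^\nu)^q$ with $\lambda_\nu\ge 0$. Finally, if $q$ is even, then $f(\xi^\nu)^q = |f(\xi^\nu)|^q$ and the same identity reads $\int_\Omega |f|^q\,d\mu = \sum_{\nu=1}^M \lambda_\nu |f(\xi^\nu)|^q$ with nonnegative weights, which is precisely the assertion $X_N\in\cM^w_+(M,q,0)$.

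I expect the only point requiring care — and the ``main obstacle,'' though it is modest — is the verification that $X_N(q)\subset L_1(\Omega,\mu)$, i.e. that the hypothesis $X_N\subset L_q(\Omega,\mu)$ genuinely transfers to integrability of all the degree-$q$ monomials $u_\bk$; this is the step where the assumption on $q$ (integer) is used, and it follows from the polarization identity expressing each $u_{i_1}\cdots u_{i_q}$ as a $\bbC$-linear (or $\R$-linear) combination of $q$-th powers $\left(\sum_j \e_j u_{i_j}\right)^q$ with signs $\e_j\in\{\pm1\}$, together with the elementary bound $|u_{i_1}\cdots u_{i_q}|\le \tfrac1q\sum_{l=1}^q |u_{i_l}|^q$. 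Everything else is a routine reprise of the proof of Theorem \ref{gfT1}, now invoking Theorem \ref{thm-4-1} in place of Proposition \ref{gfP2} so as to inherit the nonnegativity of the weights.
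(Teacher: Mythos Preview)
Your proposal is correct and follows exactly the route the paper indicates: reprise the proof of Theorem~\ref{gfT1}, but invoke Theorem~\ref{thm-4-1} in place of Proposition~\ref{gfP2} so that the resulting weights are nonnegative. The paper itself gives no details beyond that one-line remark, so your explicit verification that $X_N(q)\subset L_1(\Omega,\mu)\cap C(\Omega)$ (via the AM--GM bound $|u_{i_1}\cdots u_{i_q}|\le \tfrac1q\sum_{l}|u_{i_l}|^q$ and continuity of products) is a welcome addition rather than a deviation.
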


   Note that  according to Theorem \ref{gfT2},   the lower bound  ${N+q-1 \choose q}$  for the number of  nodes  in the exact weighted  discretization theorem
  is sharp even without  the positivity assumption.

Now we turn to the proof of Theorem \ref{thm-4-1}. We will use the notation $\conv(E)$ to denote the convex hull of a set $E\subset\R^M$, while $\overline{\text{conv} (E)}$ will denote the closure of $\conv(E)$. We need  the following lemma:
\begin{Lemma}\label{lem:carath positive}
	Suppose that $E\subset\R^M$ and $z\in\conv(E)$. Then one can find $ y_\nu\in E$ and $\lambda_\nu\ge0$, $\nu=1,\dots,M$, satisfying $z=\sum_{\nu=1}^M\lambda_\nu y_\nu$.
\end{Lemma}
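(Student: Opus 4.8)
The plan is to establish the ``conic'' form of Carath\'eodory's theorem. Recall that the classical Carath\'eodory theorem writes a point of $\conv(E)$ as a \emph{convex} combination of at most $M+1$ points of $E$; here we are allowed to drop the normalization $\sum_\nu\lambda_\nu=1$ and to keep only nonnegativity of the coefficients, and the point is that this extra freedom lowers the bound on the number of points from $M+1$ to $M$.

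First I would pass to a most economical representation. Since $z\in\conv(E)$ (in particular $E\ne\emptyset$), there is at least one way to write $z=\sum_{i=1}^k\lambda_i y_i$ with $y_i\in E$ and $\lambda_i>0$ — namely a convex combination with the zero terms deleted — so among all such representations I pick one with $k$ minimal. It then suffices to prove $k\le M$, because the lemma follows by appending $M-k$ trivial terms with coefficient $0$ and arbitrary node in $E$.

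The core step is a standard reduction via a linear dependence. Assume, for contradiction, $k\ge M+1$. Then $y_1,\dots,y_k$ are more than $M$ vectors in $\R^M$, hence linearly dependent, so there is $(c_1,\dots,c_k)\ne 0$ with $\sum_i c_iy_i=0$; after replacing $(c_i)$ by $(-c_i)$ if needed we may assume $c_i>0$ for some $i$. Taking $t:=\min\{\lambda_i/c_i:\ c_i>0\}>0$ and $\lambda_i':=\lambda_i-tc_i$, one checks that $\lambda_i'\ge 0$ for every $i$ (for indices with $c_i\le 0$ this is immediate since $\lambda_i'\ge\lambda_i>0$, and for $c_i>0$ it is the defining property of the minimum), that $\lambda_i'=0$ for the minimizing index, and that $\sum_i\lambda_i'y_i=z-t\sum_i c_iy_i=z$. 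Deleting the vanishing terms gives a representation of $z$ with strictly fewer than $k$ positive coefficients, contradicting minimality. Hence $k\le M$, which proves the lemma.

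I do not anticipate a genuine obstacle here: this is essentially the textbook Carath\'eodory reduction, and the only subtlety worth flagging is that, because we are \emph{not} required to preserve $\sum_i\lambda_i=1$, we may subtract $t\sum_i c_iy_i$ freely without renormalizing — and it is precisely this freedom that saves one node compared with the affine version.
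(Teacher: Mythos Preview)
Your argument is correct: it is the standard ``conic Carath\'eodory'' reduction, and the key observation you highlight --- that dropping the constraint $\sum_i\lambda_i=1$ lets you exploit a \emph{linear} (rather than affine) dependence among $k\ge M+1$ vectors and thereby save one node --- is exactly the point of the lemma.

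By way of comparison, the paper does not give a self-contained proof at all: it simply quotes the result as a special case of a generalized Carath\'eodory theorem from Rockafellar's \emph{Convex Analysis} (Corollary~17.1.2), applied with singleton sets $C_y=\{y\}$. Your route is the direct elementary argument that underlies that reference, so the two are not really different in substance; yours has the advantage of being fully self-contained and of making transparent \emph{why} the bound drops from $M+1$ to $M$, which the citation alone does not.
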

\begin{proof}
	This is a corollary from a generalization of the Carath\'eodory theorem. For instance, one can use~\cite[Corollary~17.1.2, p.~156]{Ro} for one-element sets $C_y:=\{y\}$, $y\in E=:I$.
\end{proof}

\begin{proof}[Proof of Theorem \ref{thm-4-1}] Let $u_1,\cdots, u_N$ be a basis of the linear space $X_N$.
		Define the mapping
	$\Phi:\  \  \Omega\to \R^N$ by $\Phi(x)=(u_1(x),\cdots, u_N(x))$, $x\in\Omega$.
By linearity, it is easily seen that relation \eqref{4-1-0} is equivalent to the system of equations:
\begin{equation}\label{3-3-FD}
 \sum_{j=1}^N \lambda_j \Phi(x_j)=\mathbf{a},\   \   x_1,\dots, x_N\in\Omega,\  \  \lambda_1,\dots, \lambda_N\ge 0,
\end{equation}
where $$\mathbf{a}=\Bigl( \int_{\Omega} u_1(x) \, d\mu, \int_{\Omega} u_2(x)\, d\mu, \dots, \int_{\Omega} u_N(x)\, d\mu\Bigr)=\int_{\Omega} \Phi(x) \, d\mu(x).$$
For the proof of \eqref{3-3-FD},  by Lemma \ref{lem:carath positive}, it suffices to prove that $\mathbf{a} \in\text{conv} (E)$, where $E=\Phi(\Omega)\subset \mathbb{R}^N$. 	
	
To this end, we first prove that $\mathbf{a} \in \overline	{\text{conv} (E)}$.   Assume to the contrary that this is not true. Then by the convex  separation theorem in $\mathbb{R}^N$,  we can find $\alpha\in\mathbb{R}^N$ and $t\in\mathbb{R}$ such that  $\alpha \cdot \mathbf{a} >t\ge \sup_{x\in \Omega} \alpha \cdot \Phi(x)$. This gives a contradiction since $\alpha\cdot\mathbf{a}=\int_\Omega \alpha\cdot \Phi(x) d\mu(x)$.
	
	Next, we show that $\mathbf{a} \in 	{\text{conv} (E)}$.  Since $\mathbf{a} \in \overline	{\text{conv} (E)}$, it follows by the Carath\'eodory  theorem that for any positive integer $n$, there exist $\lambda_{n,v}\ge 0$ and $\xi_{n,v}\in\Omega$, $\nu=1,\cdots, N+1$ such that
	$\sum_{\nu=1}^{N+1} \lambda_{n,v}=1$ and
	$$ \Bigl\| \mathbf{a} - \sum_{\nu=1}^{N+1} \lambda_{n,v} \Phi(\xi_{n,v})\Bigr\|\leq n^{-1}.$$
	Since $\Omega$ is sequentially compact, without loss of generality, we may assume that
	$\lim_{n\to\infty}\lambda_{n,v}=\lambda_\nu\ge 0$ and $\lim_{n\to\infty} \xi_{n,v} =\xi_v \in \Omega$, $\nu=1, \dots, N+1$. Then $\sum_{\nu=1}^{N+1}\lambda_{v} =1$ and
	by continuity of the mapping $\Phi: \Omega \to \mathbb{R}^N$,
	$\mathbf{a} =\sum_{\nu=1}^{N+1} \lambda_\nu \Phi(\xi_v)$. This proves that $\mathbf{a} \in \text{conv}(E)$. 	
\end{proof}

 \subsection{Stable exact weighted discretization}\label{section stable exact}

In this subsection we prove that the one-sided Marcinkiewicz-type estimate implies the existence of exact cubature formula with the same number of nodes.

 Let  $\Omega$ be a subset of $\R^d$ equipped with a probability Borel measure $\mu$.
Let  $1\leq p\leq \infty$ and  $X_N\subset L_p(\Omega, \mu)\cap C(\Omega)$ be an $N$-dimensional real subspace.
\begin{Theorem} Assume that there exist a finite subset $W\subset \Omega$  and a set $\{\mu_{\omega}:\  \ \omega\in W\}$ of positive  numbers  such that
	\begin{equation}\label{3-1-0}
	\|f\|_{L_p(\Omega, \mu)} \leq C_1 \Bl(\sum_{\omega\in W} \mu_{\omega} |f(\omega)|^p\Br)^{1/p},\quad   \  \forall f\in X_N,
	\end{equation}
	if $p<\infty$, and
	\[
	\|f\|_{L_\infty(\Omega, \mu)} \leq C_1\sup_{\omega\in W} |f(\omega)|,\quad   \  \forall f\in X_N,
	\]
	if $p=\infty$. Then there exists a sequence of real numbers $\{ \lambda_\omega:\  \ \omega\in W\}$ such that
	$$ \int_{\Omega} f(x)\, d\mu(x) =\sum_{\omega\in W} \lambda_{\omega} f(\omega),\   \   \ \forall f\in X_N,$$
	and
	\begin{equation}\label{3-1}
	\left(\sum_{\omega\in W}   \left|\frac{\lambda_\omega}{\mu_{\omega}}\right|^{p'} \mu_{\omega}\right)^{1/p'}\leq C_1,
	\end{equation}
	where $\tfrac1p+\tfrac1{p'}=1$ if $p\ne1$, while in the case of $p=1$ we obtain
	$$|\lambda_\omega|\leq C_1\mu_\omega \quad \mbox{for all}\quad \omega\in W$$
	instead of~\eqref{3-1}.
\end{Theorem}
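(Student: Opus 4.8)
The plan is to recast the claimed cubature identity as a norm-preserving extension problem and solve it via the Hahn--Banach theorem together with the duality of a finite-dimensional weighted $\ell_p$ space.

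First I would let $\nu$ be the (finite, purely atomic) measure on $W$ with $\nu(\{\omega\})=\mu_\omega$, write $E_p:=L_p(W,\nu)=(\R^W,\|\cdot\|_{E_p})$ where $\|a\|_{E_p}=\bl(\sum_{\omega\in W}\mu_\omega|a_\omega|^p\br)^{1/p}$ for $p<\infty$ and $\|a\|_{E_\infty}=\sup_{\omega\in W}|a_\omega|$, and introduce the evaluation map $T\colon X_N\to E_p$, $Tf:=(f(\omega))_{\omega\in W}$ (well defined since the elements of $X_N$ are continuous). The hypothesis \eqref{3-1-0} is exactly the statement $\|f\|_{L_p(\Omega,\mu)}\le C_1\|Tf\|_{E_p}$ for all $f\in X_N$; in particular $T$ is injective, so
$$\ell\colon T(X_N)\to\R,\qquad \ell(Tf):=\int_\Omega f\,d\mu$$
is a well-defined linear functional on the subspace $T(X_N)\subset E_p$. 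Since $\mu$ is a probability measure, $\bl|\int_\Omega f\,d\mu\br|\le\|f\|_{L_1(\Omega,\mu)}\le\|f\|_{L_p(\Omega,\mu)}$ by Hölder's inequality, hence $|\ell(Tf)|\le C_1\|Tf\|_{E_p}$, i.e. $\ell$ has norm $\le C_1$ as a functional on $T(X_N)$.

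Next I would extend $\ell$, by the Hahn--Banach theorem, to a functional $\Lambda$ on all of $E_p$ with $\|\Lambda\|_{E_p^*}\le C_1$. Because $E_p$ is a finite-dimensional $L_p$ space over the atomic measure $\nu$, its dual is $E_p^*=L_{p'}(W,\nu)$ under the pairing $\langle a,b\rangle=\sum_{\omega\in W}\mu_\omega a_\omega b_\omega$, with $\tfrac1p+\tfrac1{p'}=1$ (and the usual conventions $p'=\infty$ when $p=1$, $p'=1$ when $p=\infty$). Thus there is $b=(b_\omega)_{\omega\in W}$ with $\Lambda(a)=\sum_{\omega\in W}\mu_\omega a_\omega b_\omega$ and $\|b\|_{L_{p'}(W,\nu)}=\|\Lambda\|\le C_1$. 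Setting $\lambda_\omega:=\mu_\omega b_\omega$ and evaluating $\Lambda$ on $a=Tf$ yields $\int_\Omega f\,d\mu=\sum_{\omega\in W}\lambda_\omega f(\omega)$ for every $f\in X_N$, while the norm bound becomes $\bl(\sum_{\omega\in W}\mu_\omega|\lambda_\omega/\mu_\omega|^{p'}\br)^{1/p'}=\|b\|_{L_{p'}(W,\nu)}\le C_1$, which is \eqref{3-1} for $1<p\le\infty$; in the case $p=1$ the same computation with $p'=\infty$ gives $|\lambda_\omega|=\mu_\omega|b_\omega|\le C_1\mu_\omega$.

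The argument is essentially routine; the points deserving attention are (i) that $\mu$ being a probability measure is precisely what gives $\|\cdot\|_{L_1(\Omega,\mu)}\le\|\cdot\|_{L_p(\Omega,\mu)}$, so that the target functional $f\mapsto\int_\Omega f\,d\mu$ is genuinely controlled by the $E_p$-norm on the image $T(X_N)$, and (ii) bookkeeping of the weights: one must pair $E_p$ with $E_p^*$ using the measure $\nu$ (so that the pairing carries the factor $\mu_\omega$) in order that the extracted coefficients $\lambda_\omega=\mu_\omega b_\omega$ satisfy exactly the normalization \eqref{3-1} rather than an unweighted variant. No compactness of $\Omega$ is used beyond what is needed for $T$ to be defined, and the finite-dimensionality of $E_p$ makes the Hahn--Banach extension and the dual representation completely elementary.
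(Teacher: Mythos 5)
Your proof is correct, and it is essentially the paper's argument in dual form: the paper separates the functional $f\mapsto\int_\Omega f\,d\mu$ from the convex set of admissible discrete functionals $\sum_\omega\lambda_\omega\delta_\omega$ via the separation theorem and the same weighted $\ell_p$--$\ell_{p'}$ duality on $W$, whereas you run the equivalent Hahn--Banach extension argument directly on the image of the sampling map and read off $\lambda_\omega=\mu_\omega b_\omega$ from the dual representation. Both proofs rest on the identical ingredients (the bound $|\int_\Omega f\,d\mu|\le\|f\|_{L_p(\Omega,\mu)}$ for a probability measure and the duality of $L_p(W,\nu)$), so this is the same approach, merely phrased as extension rather than separation.
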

\begin{proof}
	We give the proof for the case $p\ne1$ only, the required modifications for $p=1$ are obvious. Denote by $X_N^\ast$ the dual space of $X_N$.  Define  $$ E:=\Bl\{ \sum_{\omega\in W} \lambda_\omega \delta_\omega:\  \   \lambda_\omega\in\R,\   \   \Bigl(\sum_{\omega\in W}    |\lambda_\omega|^{p'} \mu_{\omega}^{1-p'}\Bigr)^{1/p'}\leq C_1\Br\},$$
	where  $\delta_x$ denotes  the linear functional in $ X_N^\ast$  given by $\delta_x(g)=g(x)$,  $g\in X_N$, $x\in\Omega$.  Clearly, it is sufficient to show that the linear functional  $\ell\in X_N^\ast$ given by
	$$\ell(g):=\int_{\Omega}  g(x)\, d\mu(x),\   \  g\in X_N,$$
	lies in the set $E$.   Assume to the contrary that $\ell\notin E$.  It then  follows by the convex separation theorem that there exists a nonzero function  $f\in X_N$ such that
	$$ \Bigl\langle \sum_{\omega\in W} \lambda_\omega \delta_{\omega}, f\Bigr\rangle =\sum_{\omega\in W} \lambda_{\omega} f(\omega) <1< \int_{\Omega} f(x)\, d\mu(x)\leq \|f\|_p$$
	for every sequence $\{\lambda_\omega\}_{\omega\in\Lambda}$ of real numbers  satisfying \eqref{3-1}. Taking supremum over all real  sequences  $\{\lambda_\omega\}_{\omega\in W}$ satisfying \eqref{3-1}, we  obtain
	$$C_1\Bl(\sum_{\omega\in W} \mu_{\omega} |f(\omega)|^p\Br)^{1/p'} < \|f\|_{L_p(\Omega, d\mu)},$$
	which contradicts the condition  \eqref{3-1-0}.
\end{proof}

\section{Marcinkiewicz-type inequality for the hyperbolic cross polynomials for $q=\infty$ }
\label{hyper}

Recall that the  set of hyperbolic polynomials is defined as
$$
\mathcal{T}(N):= \Tr(N,d) := \Bl\{ f:\  \  f=\sum_{\mathbf{k} \in \Gamma(N)} c_{\mathbf{k}} e^{i (\mathbf{k},\bx)}\Br\},$$
where $ \Gamma(N)$ is the hyperbolic cross
$$  \Gamma(N):=\Gamma(N,d) :=\Bl\{\mathbf{k}\in\Z^d:\  \  \   \prod_{j=1}^d \max\{ |k_j|, 1\} \leq N\Br\}.$$
Throughout this section, we define
$$\al_d:=\sum_{j=1}^d \f 1j\qquad\mbox{ and}\qquad
\b_d:=d-\al_d.$$ We use the following notation here. For $\bx\in\T^d$ and $j\in\{1,\dots,d\}$ we denote $\bx^j:=(x_1,\dots,x_{j-1},x_{j+1},\dots,x_d)$.
Our main result in this section can be stated as follows.
\begin{Theorem}\label{thm-2-1} For each $d\in\N$ and each $N\in\N$ there exists a set $W(N,d)$ of at most $C_d N^{\alpha_d}(\log N)^{\beta_d}$ points in $[0, 2\pi)^d$ such that for all $f\in\mathcal{T}(N)$,
	$$ \|f\|_\infty \le C(d) \max_{\bw\in W(N,d) }|f(\bw)|.$$
\end{Theorem}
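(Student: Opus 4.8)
The plan is to argue by induction on the dimension $d$. For $d=1$ the statement is the classical one--dimensional Marcinkiewicz theorem: one may take $W(N,1)$ to be an equispaced net of $\asymp N$ points in $[0,2\pi)$, so that $\|g\|_\infty\le C\|g\|_{L_\infty(W(N,1))}$ for every $g\in\mathcal{T}(N,1)$; since $\alpha_1=1$ and $\beta_1=0$ this is exactly the asserted bound.

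For the inductive step I would write $\bx=(\bx^d,x_d)$ with $\bx^d=(x_1,\dots,x_{d-1})$ and view $f\in\mathcal{T}(N,d)$ as a trigonometric polynomial in the variable $x_d$ whose Fourier coefficients are $(d-1)$--variate polynomials, $f(\bx^d,x_d)=\sum_{|m|\le N}g_m(\bx^d)e^{imx_d}$, where the hyperbolic--cross condition forces $g_m\in\mathcal{T}(\lceil N/\max(|m|,1)\rceil,d-1)$. A de la Vallée Poussin decomposition carried out in the single variable $x_d$ yields $f=\sum_{j=0}^{n}f_j$ with $n\asymp\log_2N$, where $f_j$ collects the terms with $|m|\asymp 2^j$; thus $f_j$ has degree $\asymp 2^j$ in $x_d$, its $\bx^d$--frequencies lie in $\Gamma(CN2^{-j},d-1)$, one has $\|f_j\|_\infty\le C_d\|f\|_\infty$, and $\sum_{j}f_j=f$. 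For each block the inductive hypothesis supplies a set $W_j'\subset[0,2\pi)^{d-1}$, with $|W_j'|\le C_{d-1}(N2^{-j})^{\alpha_{d-1}}(\log N)^{\beta_{d-1}}$, that discretizes the uniform norm on $\mathcal{T}(CN2^{-j},d-1)$; in the variable $x_d$ I would attach to block $j$ a net $\mathcal{N}_j\subset[0,2\pi)$, and then build $W(N,d)$ from the products $W_j'\times\mathcal{N}_j$ (together, if needed, with one additional net common to all scales).

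The heart of the proof --- and the step I expect to be the main obstacle --- is to pass from the per--block inequalities $\|f_j\|_\infty\le C\|f_j\|_{L_\infty(W_j'\times\mathcal{N}_j)}$ to the single inequality $\|f\|_\infty\le C(d)\|f\|_{L_\infty(W(N,d))}$ for the whole sum $f=\sum_j f_j$. Taking simply the union of the block sampling sets does not work: on $W_j'\times\mathcal{N}_j$ the cross contribution $\sum_{j'\neq j}f_{j'}$ is not controlled, and in any case such a union has too few points --- for $d=2$ it would consist of only $\asymp N\log N\asymp\dim\mathcal{T}(N,2)$ points, which is excluded by the negative results on discretization of the uniform norm on hyperbolic--cross polynomials recalled in Section~\ref{survey} and studied in this section. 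The way around this is to take the nets $\mathcal{N}_j$ (and the common net) large enough that, after localizing a point where $\|f\|_\infty$ is nearly attained, the blocks of scales other than the ``active'' one can be absorbed as an error term; quantitatively this forces $|\mathcal{N}_j|$ to be substantially larger than the $\asymp 2^j$ points that $f_j$ alone would require, and one then balances $|\mathcal{N}_j|$ against $|W_j'|$.

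Executing this balance produces the recursion $|W(N,d)|\asymp|W(N,d-1)|\cdot|\Gamma(N,d)|^{1/d}$; since $|\Gamma(N,d)|\asymp N(\log N)^{d-1}$, unrolling it (the sum over $j$ at each step being essentially geometric and dominated by one endpoint) gives $|W(N,d)|\asymp\prod_{i=1}^{d}|\Gamma(N,i)|^{1/i}=N^{\alpha_d}(\log N)^{\beta_d}$ with $\alpha_d=\sum_{i=1}^{d}\tfrac1i$ and $\beta_d=d-\alpha_d$, while the constant only grows by a bounded factor at each step. The trivial bound $\|f\|_{L_\infty(W(N,d))}\le\|f\|_\infty$ then completes the asserted equivalence.
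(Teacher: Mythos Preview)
Your inductive setup and the target recursion $|W(N,d)|\asymp |W(N,d-1)|\cdot\bigl(N(\log N)^{d-1}\bigr)^{1/d}$ are correct, but the route you outline --- a one-variable de la Vall\'ee Poussin decomposition $f=\sum_j f_j$ into $\asymp\log N$ dyadic blocks in $x_d$ --- is not the paper's, and the step you yourself flag as ``the main obstacle'' is a genuine gap rather than a technicality. On any sampling set the values $f(\bw)=\sum_{j'}f_{j'}(\bw)$ mix all scales; since each block can have $\|f_j\|_\infty\asymp\|f\|_\infty$ and there are $\asymp\log N$ of them, ``absorbing the blocks of scales other than the active one as an error term'' naively costs a factor $\log N$ in the constant at every inductive step, and nothing in your sketch explains how refining the one-dimensional nets $\mathcal N_j$ would separate the scales pointwise. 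Your balance heuristic that lands on the right recursion implicitly assumes this recombination has already succeeded.

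The paper avoids block decompositions altogether. Its only analytic input is the mixed Bernstein inequality $\|f^{(1,\dots,1)}\|_\infty\le C(d)\,N(\log N)^{d-1}\|f\|_\infty$ for $f\in\Tr(N,d)$ (Lemma~\ref{lem-2-1}). One places a single equispaced net $V_M$ of size $M\asymp\bigl(N(\log N)^{d-1}\bigr)^{1/d}$ in \emph{every} coordinate axis --- so that $M^{-d}N(\log N)^{d-1}\le\varepsilon$ --- and sets
\[
W(N,d)\;=\;\bigcup_{j=1}^{d}\bigl\{\bx\in\T^d:\ x_j\in V_M,\ \bx^j\in W(N,d-1)\bigr\}.
\]
If $|f(\bx)|=\|f\|_\infty$ and $\ba\in V_M^d$ is the nearest lattice point, the integral of $f^{(1,\dots,1)}$ over the box $[\ba,\bx]$ equals the alternating sum $\sum_{\by}(-1)^{n_{\by}}f(\by)$ over the $2^d$ vertices of $[\ba,\bx]$; the Bernstein bound makes this alternating sum at most $\varepsilon\|f\|_\infty$, so some vertex $\by\neq\bx$ --- which necessarily has at least one coordinate in $V_M$ --- satisfies $|f(\by)|\ge\frac{1-\varepsilon}{2^d-1}\|f\|_\infty$. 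Freezing that coordinate and applying the $(d-1)$-dimensional hypothesis to the corresponding slice finishes the step, and the cardinality count $|W(N,d)|\le d\,M\,|W(N,d-1)|$ gives exactly the recursion you wrote. In short, the paper replaces your missing recombination argument by a single pointwise mixed-finite-difference estimate, which is precisely what keeps the constant $C(d)$ independent of $N$.
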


Theorem \ref{thm-2-1} for $d=1$ is well known (see, for instance, Subsection 6.1 for a detailed discussion).  We prove Theorem \ref{thm-2-1} by induction on $d$. For readers' convenience, first we demonstrate in Subsection \ref{subsection2:1} the step from $d=1$ to $d=2$. Second, we demonstrate in Subsection \ref{subsection2:2} the general step from $d-1$ to $d$. An important ingredient in the proof is the following Bernstein's inequality (see, for instance, \cite{Tmon}):
\begin{Lemma}\label{lem-2-1}
	For each $f\in \mathcal{T}(N)$,
	$$\|f^{(1,\cdots, 1)}\|_\infty \leq C(d) N (\log N)^{d-1} \|f\|_\infty.$$
\end{Lemma}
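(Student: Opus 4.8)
The plan is to realize the mixed derivative as convolution with a single kernel and to reduce everything to an $L_1$-estimate for that kernel. Writing $f=\sum_{\bk\in\Gamma(N)}c_\bk e^{i(\bk,\bx)}$, we have $f^{(1,\dots,1)}=\sum_{\bk\in\Gamma(N)}c_\bk\prod_{j=1}^d(ik_j)\,e^{i(\bk,\bx)}$. The multiplier $\prod_j(ik_j)$ annihilates every frequency having some $k_j=0$, so only $\bk$ with all coordinates nonzero contribute; for such $\bk\in\Gamma(N)$ one has the decisive bound $\bigl|\prod_j k_j\bigr|\le\prod_j\max(|k_j|,1)\le N$. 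The goal is therefore to construct a trigonometric kernel $\mathcal K$ on $\Td$ with $\widehat{\mathcal K}(\bk)=\prod_j(ik_j)$ for every $\bk\in\Gamma(N)$: then $f^{(1,\dots,1)}=\mathcal K*f$ for all $f\in\mathcal T(N)$, whence $\|f^{(1,\dots,1)}\|_\infty\le\|\mathcal K\|_1\,\|f\|_\infty$, and it remains to prove $\|\mathcal K\|_1\le C(d)\,N(\log N)^{d-1}$.

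To build $\mathcal K$ I would decompose the hyperbolic cross into dyadic blocks. Fix a smooth one-dimensional partition of unity $\{\eta_s\}_{s\ge1}$ adapted to the scales $|k|\sim2^s$, so that $\sum_{s\ge1}\eta_s(k)=1$ for all $|k|\ge1$ with $\eta_s$ supported where $2^{s-1}\lesssim|k|\lesssim2^{s+1}$, and let $\psi_s$ be the trigonometric polynomial with $\widehat{\psi_s}(k)=(ik)\eta_s(k)$. For $\bs\in\N^d$ set $\Psi_\bs(\bx):=\prod_{j=1}^d\psi_{s_j}(x_j)$ and define
\[
\mathcal K:=\sum_{\bs\in S(N)}\Psi_\bs,\qquad S(N):=\Bigl\{\bs\in\N^d:\ \prod_{j=1}^d2^{s_j}\le c_dN\Bigr\},
\]
where $c_d$ is a constant chosen large enough to absorb the overlap of the cutoffs. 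Because $\{\eta_s\}$ is a partition of unity, $\sum_{\bs\in\N^d}\prod_{j=1}^d\eta_{s_j}(k_j)=\prod_{j=1}^d\bigl(\sum_{s\ge1}\eta_s(k_j)\bigr)=1$ for every $\bk$ with all $k_j\neq0$; and for $\bk\in\Gamma(N)$ every block $\bs$ contributing to this sum already satisfies $\prod_j2^{s_j}\le c_dN$, hence lies in $S(N)$. Thus $\widehat{\mathcal K}(\bk)=\prod_j(ik_j)$ on $\Gamma(N)$, as required.

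The $L_1$-bound then factors. By scaling, the one-dimensional differentiated dyadic kernel obeys $\|\psi_s\|_1\le C2^s$: a smooth frequency bump at scale $2^s$ is the inverse transform of a fixed Schwartz profile dilated by $2^s$, hence of $L_1(\T)$-norm $O(1)$, while the factor $ik\sim2^s$ contributes one spatial derivative, i.e. a factor $2^s$. Consequently $\|\Psi_\bs\|_1=\prod_{j=1}^d\|\psi_{s_j}\|_1\le C^d\,2^{s_1+\dots+s_d}$, and writing $\sigma:=s_1+\dots+s_d$ and $L:=\log_2(c_dN)$,
\[
\|\mathcal K\|_1\le\sum_{\bs\in S(N)}\|\Psi_\bs\|_1\le C^d\sum_{\sigma\le L}2^{\sigma}\,\#\{\bs\in\N^d:\,s_1+\dots+s_d=\sigma\}.
\]
Since the number of blocks with $s_1+\dots+s_d=\sigma$ equals $\binom{\sigma-1}{d-1}\le\sigma^{d-1}$, the sum is dominated by its last layer: $\sum_{\sigma\le L}2^\sigma\sigma^{d-1}\le2^{L+1}L^{d-1}$, and since $2^L\asymp N$ and $L\asymp\log N$ this yields $\|\mathcal K\|_1\le C(d)\,N(\log N)^{d-1}$.

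The main obstacle is the bookkeeping at the ragged boundary of $\Gamma(N)$: one must choose $\{\eta_s\}$ and the constant $c_d$ in $S(N)$ so that the restricted sum $\sum_{\bs\in S(N)}\prod_j\eta_{s_j}(k_j)$ is genuinely $1$ at every boundary frequency of $\Gamma(N)$, without the support-overlap spilling into so many extra blocks that the $L_1$-estimate degrades by more than a constant factor; the slack $\prod_j2^{s_j}\le c_dN$ resolves this, shifting $L$ only by $O_d(1)$. The two points requiring genuine care are verifying that adjacent overlapping cutoffs telescope correctly at such frequencies, and confirming the one-dimensional estimate $\|\psi_s\|_1\lesssim2^s$ with the correct dependence on $s$. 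Everything else is the elementary geometric summation above, whose decisive feature is that $\sum_{\sigma\le L}2^\sigma\sigma^{d-1}$ is controlled by the single diagonal layer $\sigma\approx L$, producing exactly the factor $(\log N)^{d-1}$ rather than the $(\log N)^{d}$ one would get from counting all blocks.
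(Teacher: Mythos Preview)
The paper does not prove this lemma; it simply cites Temlyakov's monograph \cite{Tmon} for the inequality. Your argument---dyadic decomposition of $\Gamma(N)$, realization of the mixed-derivative multiplier on each block as a tensor-product kernel with $\|\Psi_\bs\|_1\le C^d2^{\|\bs\|_1}$, and summation over $\|\bs\|_1\le\log_2(c_dN)$---is precisely the standard proof found there, and it is correct. The only cosmetic difference is that the classical version uses de la Vall\'ee Poussin kernels (piecewise-linear frequency cutoffs) in place of your smooth $\eta_s$: this makes the one-dimensional bound $\|\psi_s\|_1\le C2^s$ immediate from the univariate Bernstein inequality together with the uniform $L_1$-bound on de la Vall\'ee Poussin kernels, and it also dispatches the ``ragged boundary'' issue you flagged, since those cutoffs form an exact partition of unity on $\Z\setminus\{0\}$.
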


\subsection{Step from $d=1$ to $d=2$}\label{subsection2:1}
This subsection is devoted to the proof of Theorem \ref{thm-2-1}  for $d=2$. As we already pointed out above Theorem \ref{thm-2-1} is known in the case $d=1$.
For $M\in\N$ define
$$
V_M:=\Bl\{ \f {2\pi j }{M}:\   \   j=0,1,\cdots, M-1\Br\} .
$$
For natural numbers $M$ and $N$ we set
$$
V(M,N,2,j) := \{\bx\in \T^2: x_j \in V_M,\, \bx^j \in W(N,1)\}, \quad j=1,2,
$$
where $2$ stands for dimension.
Finally, define
$$
W:=W_{M,N}:= V(M,N,2,1)\cup V(M,N,2,2).
$$
Let $\varepsilon\in (0, 1/8)$ be a small positive number. In our further argument we specify $M\in\N$ to be the smallest number satisfying the inequality
\be\label{4.1b}
C_0 M^{-2} N \log N \leq \varepsilon,
\ee
with a sufficiently large positive     constant $C_0$.
 It is easily seen that then $|W_{M,N}| \le C(\epsilon) N^{3/2}(\log N)^{1/2}$.
In this subsection, we show that  for each $f\in \mathcal{T}(N)$,
\begin{equation}\label{2-1}
\|f\|_\infty \leq C(\varepsilon) \max_{\bw\in W} |f(\bw)|.
\end{equation}

Assume that  $\bx \in  [0,2\pi)^2$ is such that $\|f\|_\infty =|f(\bx)|$. Let $\ba = (a_1,a_2)$, $a_j\in V_M$, $j=1,2$ be such that $0\le x_j-a_j\leq 2\pi M^{-1}$, $j=1,2$.
Then using Lemma \ref{lem-2-1}, we obtain
$$
\Bl| f(a_1,a_2)-f(a_1,x_2) - f(x_1,a_2)+f(x_1,x_2)\Br|=\Bl| \int_{a_1}^{x_1} \int_{a_2}^{x_2} f^{(1,1)}(u,v) \, dvdu\Br|
$$
$$
\leq C_1 N( \log N) M^{-2}\|f\|_\infty \leq \varepsilon \|f\|_\infty
$$
provided $C_1\le C_0$.
In particular, this implies that
\begin{align*}
\max\Bl\{|f(a_1,a_2)|, |f(a_1,x_2)|,  | f(x_1,a_2)| \Br\}\ge \f {1-\varepsilon} 3 \|f\|_\infty.
\end{align*}
Suppose we have (the other two cases are treated in the same way)
\be\label{4.2n}
|f(a_1,x_2)| \ge \frac{1-\varepsilon}{3} \|f\|_\infty.
\ee
 Then by Theorem \ref{thm-2-1} with $d=1$ we obtain
 $$
 |f(a_1,x_2)| \le \max_{u\in\T} |f(a_1,u)|
 $$
 \be\label{4.3n}
   \le C(1) \max_{w\in W(N,1)}|f(a_1,w)| \le C(1)\max_{\bw\in W }|f(\bw)|.
 \ee
 Inequalities (\ref{4.2n}) and (\ref{4.3n}) imply (\ref{2-1}) with $W=W_{M,N}$, where $M$ satisfies condition (\ref{4.1b}).

\subsection{Step from $d-1$ to $d$ } \label{subsection2:2}
In this subsection we  prove Theorem \ref{thm-2-1}  for all $d\ge 2$.
We use induction on the dimension $d$.
Assume that Theorem \ref{thm-2-1} has been proved for the case of  $d-1$. That is, there exists a set $W(N,d-1)\subset [0, 2\pi)^{d-1}$ of at most $C_{d-1} N^{\al_{d-1}}(\log N)^{\b_{d-1}}$ points such that
$$
\|f\|_\infty \leq C(d-1) \max_{\bw\in W(N,d-1)} |f(\bw)|,\   \   \   \forall f\in \mathcal{T}(N,d-1).
$$
For natural numbers $M$ and $N$ define
$$
V(M,N,d,j) := \{\bx\in \T^d: x_j \in V_M,\, \bx^j \in W(N,d-1)\}, \quad j=1,\dots,d.
$$
Finally, define
$$
W(d):=W_{M,N}(d):=  \cup_{j=1}^d V(M,N,d,j).
$$
Let $\varepsilon\in (0, 1/8)$ be a small positive number. In our further argument we specify $M\in \N$ to be the smallest number satisfying the inequality
\[
C_0(d) M^{-d} N (\log N)^{d-1} \leq \varepsilon,
\]
with a sufficiently large positive     constant $C_0(d)$.
 It is easily seen that then
\be\label{4.6n}
  |W_{M,N}(d)| \leq C_{d-1} M N^{\al_{d-1}}(\log N)^{\b_{d-1}}\leq C(d,\varepsilon) N^{\al_d} (\log N)^{\b_d},
\ee
where
$\al_d=\al_{d-1}+\f 1d =\sum_{j=1}^d \f 1j$ and
$\b_d=\b_{d-1} + 1-\f 1d =d-\al_d$.

Assume that  $\bx \in  [0,2\pi)^d$ is such that $\|f\|_\infty =|f(\bx)|$. Let $\ba = (a_1,\dots,a_d)$, $a_j\in V_M$, $j=1,\dots,d$ be such that $0\le x_j-a_j\leq 2\pi M^{-1}$, $j=1,\dots,d$.
By a straightforward calculation we have
\begin{align*}
& \Bl|\int_{a_1}^{x_1} \dots \int_{a_d}^{x_d}  f^{(1,\dots,  1)}(u_1,  \dots, u_d)\, du_d \dots du_1 \Br|
=\Bl| \sum_{\mathbf{y}\in \mathbf{A}} (-1)^{n_{\mathbf{y}}}  f(\mathbf{y})\Br|,
\end{align*}
where
$$
\mathbf{A}:=\Bl\{(y_1,\dots, y_d):\   \  y_j=a_j\  \ \text{or}\  \ x_j\ \  \text{for $j=1,\dots, d$}\Br\}
$$
and
$$n_{\mathbf{y}} =\left|\Bl\{ j:\  \ y_j =a_j,\   \ 1\leq j\leq d\Br\}\right|.
$$
It follows by Lemma \ref{lem-2-1} that
\begin{align*}
\sum_{\mathbf{y}\in \mathbf{A}\setminus \{\mathbf{x}\} } |f(\mathbf {y})| \ge (1-C(d) N(\log N)^{d-1} M^{-d})\|f\|_\infty \ge (1-\varepsilon)\|f\|_\infty,
\end{align*}
provided $C(d)\le C_0(d)$.
This implies
$$
\max_{\mathbf{y}\in \mathbf{A}\setminus \{\mathbf{x}\}} |f(\mathbf{y})|\ge \f{1-\varepsilon}{2^d-1} \|f\|_\infty.
$$
Let $\by^0\in \mathbf{A}\setminus \{\mathbf{x}\}$ be the one for which the inequality
\be\label{4.4n}
|f(\by^0)| \ge \frac{1-\varepsilon}{2^d-1}\|f\|_\infty
\ee
holds. Then there exists  $j:=j_{\mathbf{y^0}}\in\{1,\dots, d\}$  such that $y_j^0=a_j$. For simplicity of notations assume that $j=1$.  By the induction assumption and the definition of $W_{M,N}(d)$ we have
$$
|f(\mathbf{y^0})|\leq \sup_{\mathbf{u}\in [0, 2\pi)^{d-1}}|f(a_1,  \mathbf{u})|\leq C(d-1)\max_{\mathbf{w}\in W(N,d-1)} |f(a_1, \mathbf{w})|
$$
\be\label{4.5n}
  \leq  C(d-1) \max_{\mathbf{w}\in W_{M,N}(d)}|f(\bw)| .
 \ee
 Combining inequalities (\ref{4.4n}), (\ref{4.5n}) and taking into account bound (\ref{4.6n}) we
 complete the proof of Theorem \ref{thm-2-1} with $W(N,d) = W_{M,N}(d)$.

 \subsection{Some historical remarks and an application to  Remez inequalities}
 It is well known (see Subsection 6.1 for a detailed discussion) that
 $$
 \Tr(\Pi(N)) \in \cM(C(d)N^d,\infty),\quad \Pi(N):= \{\bk\in\Z^d: |k_j| \le N, j=1,\dots,d\}.
 $$
 In particular, this implies that
 $$
 \Tr(N) \in \cM(C(d)N^d,\infty).
 $$
 Theorem \ref{thm-2-1} shows that we can improve the above relation to
 $$
 \Tr(N) \in \cM(C(d)N^{\alpha_d}(\log N)^{\beta_d},\infty).
 $$
 Note that $\alpha_d \asymp \ln d$.
 A trivial lower bound for $m$ in the inclusion $\Tr(N) \in \cM(m,\infty)$ is $m\ge \dim(\Tr(N)) \asymp N(\log N)^{d-1}$. The following nontrivial lower bound was obtained in \cite{KT3} -- \cite{KaTe03}.
 \begin{Theorem}\label{C2.5.1} Let a set $W\subset \mathbb T^2$ have a property:
 $$
\forall t \in \Tr(N) \qquad \|t\|_\infty \le b(\log N)^\al \max_{\bw\in W} |t(\bw)|
$$
with some $0\le \al <1/2$. Then
$$
|W| \ge C_1 N \log N e^{C_2b^{-2}(\log N)^{1-2\al}}.
$$
\end{Theorem}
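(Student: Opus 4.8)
The plan is a proof by contradiction built on a random–polynomial construction, in the spirit of the probabilistic method used elsewhere in this paper. Write $n:=\lfloor\log_2 N\rfloor$, so that $2^n\asymp N$ and $\dim\Tr(N)=|\Gamma(N)|\asymp N\log N$. One factor of the bound is immediate: if $|W|<|\Gamma(N)|$ then the homogeneous system $t(\bw)=0$, $\bw\in W$, has a nonzero solution $t\in\Tr(N)$, and for it $\|t\|_\infty>0=b(\log N)^\al\max_{\bw\in W}|t(\bw)|$, a contradiction; hence $|W|\ge|\Gamma(N)|\asymp N\log N$ automatically. The essential point is the exponential factor.

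To get it, fix the outermost dyadic layer of $\Gamma(N)$ — the blocks $\rho(\bs)$ with $\|\bs\|_1=n$ — and attach to them a trigonometric ``wavelet type'' system $\{\eta_{\bs,j}\}$, $\|\bs\|_1=n$, $j\in J_{\bs}$, $|J_{\bs}|=2^n$, namely the one for which the Small Ball Inequality is available (see \cite{VT156}). The properties I would use are: (i) $\eta_{\bs,j}\in\Tr(N)$ and $\|\eta_{\bs,j}\|_\infty\asymp 1$; (ii) at every point $\bx$ only $O(\log N)$ of the $\eta_{\bs,j}$ are non-negligible, so $\sum_{\bs,j}|\eta_{\bs,j}(\bx)|^2\le C\log N$ uniformly in $\bx$; (iii) the planar Small Ball Inequality
\[
\Bigl\|\sum_{\|\bs\|_1=n}\sum_{j\in J_{\bs}}a_{\bs,j}\,\eta_{\bs,j}\Bigr\|_\infty\ \ge\ c_0\,2^{-n}\sum_{\|\bs\|_1=n}\sum_{j\in J_{\bs}}|a_{\bs,j}|.
\]
Let $\epsilon=(\epsilon_{\bs,j})$ be independent Rademacher signs and put $t_\epsilon:=\sum_{\bs,j}\epsilon_{\bs,j}\eta_{\bs,j}\in\Tr(N)$. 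Since the number of pairs $(\bs,j)$ is $(n+1)2^n$, (iii) with $a_{\bs,j}=\epsilon_{\bs,j}$ gives $\|t_\epsilon\|_\infty\ge c_1\log N$ for \emph{every} realization of the signs.

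Now combine this deterministic lower bound with subgaussian concentration of $t_\epsilon$ on $W$. By (ii) and Hoeffding's inequality, $\mathbb P\bigl(|t_\epsilon(\bw)|>\lambda\bigr)\le 2\exp\!\bigl(-\lambda^2/(2C\log N)\bigr)$ for each fixed $\bw$, whence
\[
\mathbb P\Bigl(\max_{\bw\in W}|t_\epsilon(\bw)|>\lambda\Bigr)\le 2|W|\exp\Bigl(-\frac{\lambda^2}{2C\log N}\Bigr).
\]
Choosing $\lambda$ just below $\lambda_0:=c_1\log N/(b(\log N)^\al)$, the right–hand side is $<1$ as soon as $|W|<\tfrac14\exp\!\bigl(\lambda_0^2/(2C\log N)\bigr)$; then some sign pattern gives a $t_\epsilon\in\Tr(N)$ with $\|t_\epsilon\|_\infty\ge c_1\log N$ but $\max_{\bw\in W}|t_\epsilon(\bw)|\le\lambda_0=c_1\log N/(b(\log N)^\al)$, contradicting the hypothesis. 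Since $\lambda_0^2/(2C\log N)\asymp b^{-2}(\log N)^{1-2\al}$ — and this grows precisely because $\al<1/2$ — we conclude $|W|\ge c\exp\!\bigl(C_2 b^{-2}(\log N)^{1-2\al}\bigr)$. This, together with $|W|\ge|\Gamma(N)|\asymp N\log N$, gives a bound of the claimed shape; obtaining the full \emph{product} $N\log N\cdot e^{C_2 b^{-2}(\log N)^{1-2\al}}$ rather than the maximum of the two factors (which would be weaker for moderate $b$) requires running the probabilistic step somewhat more carefully — e.g. testing against $t_\epsilon$ translated over a net, or against the restrictions of $W$ to the cells of a partition adapted to $\Gamma(N)$ — and this refinement is the technical heart of \cite{KT3, KT4, KaTe03}.

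The one deep ingredient is property (iii), the $d=2$ Small Ball Inequality for the trigonometric wavelet-type system: it is what makes the random polynomial reach sup norm $\asymp\log N$ while staying uniformly ``flat'', and it is by far the main obstacle. The rest — the dimension count, Hoeffding's bound, the union bound — is routine; the remaining points needing care are the construction of an explicit system $\{\eta_{\bs,j}\}\subset\Tr(N)$ satisfying (i)–(iii) at once, and the bookkeeping in the last step that preserves the polynomial factor $N\log N$ alongside the exponential one.
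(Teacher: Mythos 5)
Your probabilistic scheme (wavelet-type system on the outer dyadic layer, the $d=2$ Small Ball Inequality to force $\|t_\epsilon\|_\infty\gtrsim\log N$ deterministically, Hoeffding plus a union bound over $W$ to make $t_\epsilon$ small on $W$) is a reasonable way to produce the exponential factor, but as you yourself concede it only yields
$|W|\ \ge\ \max\bigl(c\,N\log N,\ c\,e^{C_2b^{-2}(\log N)^{1-2\al}}\bigr)$,
not the product. This is not a cosmetic loss: in the principal regime ($b$ a fixed constant, $\al=0$, $N$ large) the exponential factor equals $N^{C_2b^{-2}}$, which for moderate $C_2b^{-2}$ is smaller than $N\log N$, so your maximum collapses to the trivial linear-algebra bound $|W|\ge\dim\Tr(N)$ and proves nothing beyond it. In particular it does not recover the Kashin--Temlyakov phenomenon $m\gg|Q_n|^{1+c}$, which is exactly what the theorem is for. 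The suggested repairs (translating $t_\epsilon$ over a net, partitioning $W$ into cells) are not carried out, and a union bound over points is structurally the wrong tool here: it sees only the cardinality of $W$, never the ratio of $|W|$ to the dimension, which is where the multiplicative $N\log N$ must come from.

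The argument behind this theorem (in \cite{KT3}--\cite{KaTe03}, and reproduced in this paper for the lacunary sets $\La(\K,\nu)$ as Theorems \ref{BT1} and \ref{BT3}) runs through volumes rather than random signs. One first proves the volume estimate $(vol(B_{\Gamma(N)}(L_\infty)))^{1/D}\le K_1D^{-1/2}$ with $K_1\asymp(\log N)^{-1/2}$ and $D=2|\Gamma(N)|$ --- this is where the Small Ball Inequality enters, so your ``one deep ingredient'' is correctly identified, but it is used to bound a volume, not to build a random polynomial. One then observes that the discretization hypothesis with constant $K_2=b(\log N)^{\al}$ places the body $\{A(f):|f(\bw)|\le K_2^{-1},\ \bw\in W\}$ inside $B_{\Gamma(N)}(L_\infty)$, and Gluskin's theorem bounds the volume of such an intersection of $S=2|W|$ slabs from below by $C(1+\ln(S/D))^{-1/2}$ per dimension. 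Comparing the two volume bounds gives $K_1K_2\ge C'(\ln(e|W|/|\Gamma(N)|))^{-1/2}$, i.e. $|W|\ge e^{-1}|\Gamma(N)|\,e^{c(K_1K_2)^{-2}}$, which is the product form at once because Gluskin's estimate is phrased in terms of $S/D$. If you want to salvage your approach you would need to replace the union bound by an argument of this comparative type; as written, the proof establishes a strictly weaker statement.
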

In particular, Theorem \ref{C2.5.1} with $\alpha=0$ implies that a necessary condition on $m$ for  inclusion $\Tr(N) \in \cM(m,\infty)$ is $m\ge \dim(\Tr(N)) N^c$ with positive absolute constant $c$.

An operator $T_N$ with the following properties was constructed in \cite{T93}.
The operator $T_N$ has the form
$$
T_N(f) = \sum_{j=1}^m f(\bx^j) \psi_j(\bx),\quad m\le c(d)N (\log N)^{d-1},\quad \psi_j \in \Tr(N 2^d)
$$
and
\be\label{3.21n}
T_N(f) =f,\quad f\in \Tr(N),
\ee
\be\label{3.22n}
\|T_N\|_{L_\infty\to L_\infty} \asymp (\log N)^{d-1}.
\ee
Points $\{\bx^j\}$ are form the Smolyak net.
Properties (\ref{3.21n}) and (\ref{3.22n}) imply that all $f\in\Tr(N)$ satisfy the discretization inequality (see \cite{KaTe03})
\[
\|f\|_\infty \le C(d)(\log N)^{d-1} \max_{1\le j\le m} |f(\bx^j)|.
\]

The general form of the Remez inequality for a function $f\in X_N\subset L_p(\Omega)$, $0<p\le \infty$, reads as follows:
 for any Lebesgue measurable $B\subset \Omega$ with the measure $\meas(B)\le b<1$
$$
\|f\|_{L_p(\Omega)} \le C (N, \meas(B), p)\|f\|_{L_p(\Omega\setminus B)}.
$$
Applications of Remez type inequalities include many different results in approximation theory and harmonic analysis; see \cite{remezpaper} for more details and references.

For trigonometric polynomials  $\Tr(Q)$ with frequencies from $Q\subset \Z^d$ (here $X_N=\Tr(Q)$ and $\Omega=\T^d$) the following result is well known \cite{remez}.
For $d\ge 1$ and
$$
Q=\Pi(\bN):= \{\bk \in \Z^d :   |k_j| \le N_j, \quad j=1,\dots,d\},
$$
where $N_j\in\N$, for any $p\in (0,\infty]$,
we have that $C (N, \meas(B), p)= C (d,  p)$ provided that $$\meas(B)\le \frac{C}{\prod_{j=1}^dN_j}.$$

The investigation of the Remez-type inequalities for the
hyperbolic cross trigonometric polynomials
with
\begin{equation}\label{qq} Q =\Gamma(N)=\Bl\{\mathbf{k}\in\Z^d:\  \  \   \prod_{j=1}^d \max\{ |k_j|, 1\} \leq N\Br\}
\end{equation}
 has been recently initiated in \cite{remezpaper}.
 It turns out that for such  polynomials
the problem to obtain the optimal Remez inequalities
has different solutions when $p<\infty$ and  $p=\infty$.
If $p<\infty$, then  $C (N, \meas(B), p)= C (d,  p)$ provided that $$\meas(B)\le \frac{C}{N}.$$

The case   $p=\infty$  was also studied in \cite{remezpaper}.
 \begin{Theorem}\label{T3.1}
  There exist two positive constants $C_1(d)$ and $C_2(d)$ such that for any set $B\subset \T^d$ of normalized measure $$\meas(B)\le \frac{C_2(d)}{N(\log N)^{d-1}}$$ and for any
$f\in \Tr(Q)$, where $Q$ is given by (\ref{qq}), we have
\begin{equation}\label{qqq}\|f\|_\infty \le C_1(d)(\log N)^{d-1} \sup_{{\mathbf u}\in \T^d \setminus B} |f({\mathbf u})|.
\end{equation}\end{Theorem}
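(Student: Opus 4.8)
The plan is to derive \eqref{qqq} from the $L_\infty$ sampling discretization of $\Tr(Q)$, $Q=\Gamma(N)$, by a translation argument. Recall from the discussion preceding the theorem that the operator $T_N$ of \cite{T93} provides a Smolyak-type net $S=\{\bx^j\}_{j=1}^m\subset\T^d$ with $m\le c(d)N(\log N)^{d-1}$ such that
$$
\|g\|_\infty\le C(d)(\log N)^{d-1}\max_{1\le j\le m}|g(\bx^j)|,\qquad \forall\, g\in\Tr(Q).
$$
The point is that $\Tr(Q)$ is translation invariant: for any $\by\in\T^d$ and $f\in\Tr(Q)$ the shifted polynomial $f(\cdot+\by)$ still has all its frequencies in $\Gamma(N)$, so applying the displayed inequality to $g=f(\cdot+\by)$ gives
$$
\|f\|_\infty=\|f(\cdot+\by)\|_\infty\le C(d)(\log N)^{d-1}\max_{1\le j\le m}|f(\bx^j+\by)|,\qquad \forall\,\by\in\T^d.
$$

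I would then choose a translation $\by$ for which every translated node avoids the exceptional set $B$. Setting $B_j:=\{\by\in\T^d:\ \bx^j+\by\in B\}$, translation invariance of Lebesgue measure gives $\meas(B_j)=\meas(B)$, hence
$$
\meas\Bl(\bigcup_{j=1}^m B_j\Br)\le m\,\meas(B)\le c(d)N(\log N)^{d-1}\cdot\frac{C_2(d)}{N(\log N)^{d-1}}=c(d)\,C_2(d).
$$
Taking $C_2(d):=1/(2c(d))$ makes this at most $1/2<1$, so there exists $\by\notin\bigcup_{j=1}^m B_j$, i.e. $\bx^j+\by\in\T^d\setminus B$ for all $j$. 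For such $\by$,
$$
\|f\|_\infty\le C(d)(\log N)^{d-1}\max_{1\le j\le m}|f(\bx^j+\by)|\le C(d)(\log N)^{d-1}\sup_{{\mathbf u}\in\T^d\setminus B}|f({\mathbf u})|,
$$
which is \eqref{qqq} with $C_1(d)=C(d)$.

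The genuine content is the discretization input itself — a sampling set of near-optimal cardinality $m\asymp N(\log N)^{d-1}$ that realizes the uniform norm up to the factor $(\log N)^{d-1}$ — and it must be matched to the measure threshold on $B$: running the same translation argument with the larger net of Theorem \ref{thm-2-1} (size $\asymp N^{\alpha_d}(\log N)^{\beta_d}$) would only permit the more restrictive hypothesis $\meas(B)\ll N^{-\alpha_d}(\log N)^{-\beta_d}$, giving a weaker statement. So the one thing to get right is to use the $\asymp N(\log N)^{d-1}$ net of \cite{T93}; the remaining ingredients — translation invariance of $\Tr(Q)$ and the union bound on bad shifts — are routine, and I expect no real obstacle in the Remez deduction beyond invoking the correct discretization result.
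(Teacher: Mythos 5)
Your proposal is correct and follows exactly the route the paper indicates: the paper imports Theorem \ref{T3.1} from \cite{remezpaper}, whose mechanism is precisely ``discretization implies Remez in $L_\infty$'' (the translation/union-bound argument you give), fed with the Smolyak-net discretization inequality of \cite{T93} with $m\le c(d)N(\log N)^{d-1}$ nodes and constant $C(d)(\log N)^{d-1}$ --- the same scheme the paper uses to derive Theorem \ref{thm-remez} from Theorem \ref{thm-2-1}. You also correctly identify the key matching of the net cardinality to the measure threshold on $B$, so there is nothing to add.
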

It is worth mentioning that this result is sharp with respect to the logarithmic factor.
 This is because
the following statement is false (see \cite{remezpaper}).

{\it There exist  $\de>0$, $A$, $c$, and $C$ such that for any $f\in \Tr(N)$ and any set $B\subset \T^d$ of measure
$\meas(B) \le (cN(\log N)^A)^{-1}$ the Remez-type inequality holds
$$\|f\|_\infty \le C(\log N)^{(d-1)(1-\de)} \sup_{{\mathbf u}\in \T^d\setminus B} |f({\mathbf u})|.
$$}

Let us now give a nontrivial Remez inequality with no logarithmic factor in (\ref{qqq}).
It follows from the fact  \cite[Th.2.4]{remezpaper} that
the discretization inequality implies Remez inequality in $L_\infty$.
Together with Theorem \ref{thm-2-1} this implies

\begin{Theorem}\label{thm-remez}
Let $d\ge 2$, $\al_d=\sum_{j=1}^d \f 1j$, and
$\b_d=d-\al_d$.
There exist two positive constants $C_1(d)$ and $C_2(d)$ such that for any set $B\subset \T^d$ of normalized measure $$\meas(B)\le \frac{C_2(d)}{N^{\alpha_d}(\log N)^{\beta_d}}$$ and for any
$f\in \Tr(Q)$, where $Q$ is given by (\ref{qq}), we have
$$\|f\|_\infty \le C_1(d) \sup_{{\mathbf u}\in \T^d \setminus B} |f({\mathbf u})|.
$$
\end{Theorem}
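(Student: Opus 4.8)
The plan is to obtain Theorem~\ref{thm-remez} from Theorem~\ref{thm-2-1} via the general principle \cite[Th.~2.4]{remezpaper} that a sampling discretization inequality in the uniform norm implies a Remez-type inequality; for completeness I would include the short argument. First note that with $Q=\Gamma(N)$ one has $\Tr(Q)=\Tr(N)$ in the notation of this section, so Theorem~\ref{thm-2-1} furnishes a finite set $W=W(N,d)\subset[0,2\pi)^d$ with $|W|\le C_dN^{\alpha_d}(\log N)^{\beta_d}$ such that $\|f\|_\infty\le C(d)\max_{\bw\in W}|f(\bw)|$ for every $f\in\Tr(N)$.

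The crucial observation is the translation invariance of $\Tr(N)$: for any $\bv\in\T^d$ and $f\in\Tr(N)$ we have $f(\cdot+\bv)\in\Tr(N)$ and $\|f(\cdot+\bv)\|_\infty=\|f\|_\infty$, so applying the discretization inequality to $f(\cdot+\bv)$ gives $\|f\|_\infty\le C(d)\max_{\bw\in W}|f(\bw+\bv)|$. Thus every translate $W+\bv$ is again an admissible set of nodes, with the same constant $C(d)$. If one can choose $\bv$ so that $(W+\bv)\cap B=\emptyset$, then each node $\bw+\bv$ lies in $\T^d\setminus B$, whence $\|f\|_\infty\le C(d)\max_{\bw\in W}|f(\bw+\bv)|\le C(d)\sup_{{\mathbf u}\in\T^d\setminus B}|f({\mathbf u})|$, which is the assertion with $C_1(d):=C(d)$.

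It remains to produce such a $\bv$. The condition $(W+\bv)\cap B=\emptyset$ fails precisely when $\bv\in\bigcup_{\bw\in W}(B-\bw)$, and since $\meas$ is translation invariant this bad set has measure at most $|W|\,\meas(B)$. Hence it suffices that $|W|\,\meas(B)<\meas(\T^d)=1$, which holds as soon as $\meas(B)\le C_2(d)\,N^{-\alpha_d}(\log N)^{-\beta_d}$ with, say, $C_2(d):=(2C_d)^{-1}$; this completes the argument.

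I do not expect a genuine obstacle here: the whole proof is an elementary counting/averaging trick layered on top of Theorem~\ref{thm-2-1}, and the measurability of $B$ — the only hypothesis beyond $f\in\Tr(Q)$ — enters only through the bound $\meas\bigl(\bigcup_{\bw\in W}(B-\bw)\bigr)\le|W|\,\meas(B)$. In particular, unlike the proof of Theorem~\ref{thm-2-1} itself, no Bernstein inequality or quantitative control of the modulus of continuity of $f$ is needed at this stage; all the analytic work has already been done in establishing the discretization inequality.
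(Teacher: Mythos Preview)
Your proposal is correct and follows exactly the route the paper takes: the paper simply invokes \cite[Th.~2.4]{remezpaper} together with Theorem~\ref{thm-2-1}, and you have written out the standard translation-averaging argument that underlies that citation. The only difference is expository---you spell out explicitly why a discretization set of size $m$ yields a Remez inequality whenever $\meas(B)<1/m$, which the paper leaves to the reference.
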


\section{Marcinkiewicz-type inequality for general trigonometric polynomials for $q=\infty$}
\label{ungen}

In this section we present results from \cite{KT168} and \cite{KT169}. More specifically, we
demonstrate how to obtain the first part of Theorem \ref{ITmain}.

\subsection{Small ball inequality}
\label{A}

In this section we consider special subspaces of the univariate trigonometric polynomials. For $n\in \N$ let $\K :=\{k_j\}_{j=n}^{2n-1}$ be a finite set of $n$ natural numbers such that $k_{j+1}>k_j$, $j=n,\dots,2n-2$. For $\nu\in\N$ define
$$
\Tr(\K,\nu) := \left\{f\, :\, f=\sum_{j=n}^{2n-1} p_j(x) e^{ik_jx}\right\},
$$
where $p_j\in \Tr(\nu):=\Tr([-\nu,\nu])$, $j=n,\dots, 2n-1$, $n=1,2,\dots$.

We prove some results for a set $\K$ and a number $\nu$ satisfying the following condition.

{\bf Condition L.} Suppose that all $k_j$, $j=n,\dots,2n-1$, are divisible by $k_n$ and that there exists a number $b>1$ such that $k_{j+1}\ge bk_j$, $j=n,\dots,2n-2$. Moreover, there is a constant $K$ such that we have $\nu\le (b-1)k_n/3$ and $\nu n \le Kk_n$.

\begin{Theorem}\label{AT1} Suppose that the pair $\K$, $\nu$ satisfies Condition L. Then there exists a constant $C=C(K,b)$, which may only depend on $K$ and $b$ such that for any
\be\label{A1}
f=\sum_{j=n}^{2n-1} p_j(x) e^{ik_jx}
\ee
we have for all $x\in [0,2\pi)$
\be\label{A2}
\sum_{j=n}^{2n-1} |p_j(x)| \le C\|f\|_\infty.
\ee
\end{Theorem}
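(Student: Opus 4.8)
The plan is to reduce the multi-frequency estimate \eqref{A2} to a single-frequency statement and then apply a known ``small ball'' type inequality for a lacunary-like system. The first observation is that, since all $k_j$ are divisible by $k_n$ and $\nu \le (b-1)k_n/3$, the spectra of the blocks $p_j(x)e^{ik_jx}$ are pairwise disjoint and well-separated: the $j$-th block lives in the frequency window $[k_j-\nu, k_j+\nu]$, and consecutive windows are separated because $k_{j+1}-k_j \ge (b-1)k_j \ge (b-1)k_n > 3\nu$. Hence each $p_j$ can be recovered from $f$ by a bounded Fourier projection, and in particular $\|p_j\|_\infty \le C\|f\|_\infty$ for each individual $j$; but this only gives $\sum_j\|p_j(x)\| \le Cn\|f\|_\infty$, which is too weak. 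The point of Condition L is that the extra gain of a factor $n$ comes from the geometric growth $k_{j+1}\ge bk_j$ together with the bound $\nu n \le Kk_n$.

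First I would set up the ``de la Vallée-Poussin'' type kernels $\mathcal V_j$, trigonometric polynomials supported (in frequency) on a neighborhood of the window $[k_j-\nu,k_j+\nu]$, equal to $1$ there, vanishing outside $[k_j-2\nu,k_j+2\nu]$, and with $\|\mathcal V_j\|_1 \le C$ uniformly (such kernels exist because the window has width $O(\nu)$ and the guard band also has width $\asymp\nu$ by the $\nu\le(b-1)k_n/3$ assumption). Then $p_j(x)e^{ik_jx} = (f*\mathcal V_j)(x)$. The heart of the argument is to compare $\sum_j |p_j(x)|$ with $\|f\|_\infty$ by exploiting cancellation: I would fix $x$, choose unimodular constants $\varepsilon_j$ with $\varepsilon_j p_j(x) e^{ik_jx} = |p_j(x)|$, and consider the auxiliary polynomial $g := \sum_j \varepsilon_j \mathcal V_j * (\text{something})$ — more precisely, I want to produce a single function $h$ with $\|h\|_1 \le C$ (independent of $n$) such that $(f*h)(x) = \sum_j |p_j(x)|$. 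Writing $h = \sum_j \varepsilon_j \mathcal V_j(x-\cdot)$ would give $(f*h)(x) = \sum_j \varepsilon_j (f*\mathcal V_j)(x) = \sum_j |p_j(x)|$ as desired, so the whole problem collapses to the bound
\[
\Bigl\| \sum_{j=n}^{2n-1}\varepsilon_j \mathcal V_j \Bigr\|_1 \le C(K,b),
\]
uniformly in the choice of signs $\varepsilon_j\in\mathbb T$ and in $n$.

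This last $L_1$-norm bound is exactly where Condition L does its work, and I expect it to be the main obstacle. The functions $\mathcal V_j$ have frequency support near $k_j$ with $k_{j+1}\ge b k_j$, so $\{\mathcal V_j\}$ is a lacunary-type system of polynomials each of ``bandwidth'' $\asymp\nu$; after a dilation by $k_n^{-1}$ one may normalize $k_n$ itself, and the condition $\nu n\le Kk_n$ says the total bandwidth $n\nu$ of all blocks is comparable to the base frequency. One route is to invoke a Littlewood--Paley / Sidon-set estimate for lacunary sequences: since $\{k_j\}$ is a $b$-lacunary set, the functions $\mathcal V_j$ behave like a Riesz system in $L_1$ after smoothing, and $\|\sum\varepsilon_j\mathcal V_j\|_1$ is controlled by $\sum\|\mathcal V_j\|_1$ weighted by an $\ell^2$ factor — here one must be careful because naive Sidon estimates go the wrong way; the correct statement is $\|\sum\varepsilon_j\mathcal V_j\|_1 \le \|\sum\varepsilon_j\mathcal V_j\|_2 = (\sum\|\mathcal V_j\|_2^2)^{1/2}$, and $\|\mathcal V_j\|_2^2 \asymp \nu$, giving a bound of order $(n\nu)^{1/2}$ — still too large. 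The resolution is that one should not estimate the full $L_1$ norm but rather use that $\mathcal V_j$ can be taken to be a single bump whose inverse Fourier transform decays; then $\sum_j\varepsilon_j\mathcal V_j$ is, up to modulation, $k_n$ copies (by lacunarity, roughly $\log_b$-many scales) of a fixed de la Vallée-Poussin kernel, and its $L_1$ norm telescopes to $O(1)$. I would carry this out by grouping the indices $j$ into the $O(1)$ scales dictated by the lacunarity and the constraint $\nu n\le Kk_n$, applying the triangle inequality across the $O_{K,b}(1)$ groups and, within each group, using that the kernels are essentially translates/modulations of one fixed kernel of bounded $L_1$ norm. Assembling these pieces yields \eqref{A2} with $C=C(K,b)$.
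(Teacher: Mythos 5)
Your reduction is clean up to the point where everything is made to hinge on the estimate $\bigl\|\sum_{j=n}^{2n-1}\varepsilon_j\mathcal V_j\bigr\|_1\le C(K,b)$ uniformly in the unimodular constants $\varepsilon_j$, but that estimate is false, and this is a genuine gap rather than a technicality. Since each $\mathcal V_j$ is a modulation of one fixed kernel, $|\mathcal V_j(t)|=|V_\nu(t)|$ for every $j$, so Khintchine's inequality applied pointwise and integrated gives, for random signs,
$$
\mathbb{E}\,\Bigl\|\sum_{j=n}^{2n-1}\varepsilon_j\mathcal V_j\Bigr\|_1\;\ge\;c\,\Bigl\|\bigl(\sum_{j}|\mathcal V_j|^2\bigr)^{1/2}\Bigr\|_1\;=\;c\,n^{1/2}\,\|V_\nu\|_1\;\asymp\;n^{1/2},
$$
so for most (hence some admissible) choices of signs the kernel has $L_1$ norm of order $n^{1/2}$, not $O(1)$; and the signs in your argument are dictated by the phases of $p_j(x)e^{ik_jx}$, which can be arbitrary as $f$ varies. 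The proposed rescue --- grouping the indices into ``$O_{K,b}(1)$ scales'' --- rests on a miscount: $b$-lacunarity of $\{k_j\}_{j=n}^{2n-1}$ produces $n$ distinct scales (the ratios $k_{2n-1}/k_n$ grow like $b^{n-1}$), and the hypothesis $\nu n\le Kk_n$ controls the total bandwidth relative to $k_n$, not the number of scales. This is exactly the obstruction that prevents one from proving the classical Sidon inequality $\sum_j|c_j|\le C(b)\|g\|_\infty$ by testing $g$ against $\sum_j\varepsilon_je^{ik_jt}$; the classical remedy is a Riesz product, not a sum of bumps, and your kernel $h$ would have to be replaced by something of Riesz-product type for the duality argument to have a chance.

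The paper's proof sidesteps the kernel construction entirely. Fix $x_0$ with $\|f\|_\infty$ attained nearby; the convolution step you describe does give $\|p_j\|_\infty\le C_1\|f\|_\infty=:A$. Then, by Bernstein's inequality, $|p_j(x_0+y/k_n)-p_j(x_0)|\le 2\pi A\nu/k_n$ for $y\in[0,2\pi)$, so summing over the $n$ blocks and using $\nu n\le Kk_n$ shows that $f(x_0+y/k_n)$ differs from $g(y):=\sum_j p_j(x_0)e^{ik_jx_0}e^{i(k_j/k_n)y}$ by at most $2\pi KA\le C(K)\|f\|_\infty$ uniformly in $y$. The divisibility of each $k_j$ by $k_n$ makes $g$ a trigonometric polynomial with integer lacunary frequencies $k_j/k_n$, so the one-line Sidon inequality $\sum_j|p_j(x_0)|\le C_2(b)\|g\|_\infty$ finishes the proof. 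If you want to salvage your duality framework, you would need to prove the Sidon-type statement for the blocks anyway, so the rescaling-plus-classical-Sidon route is both shorter and the only part of the argument where Condition L is genuinely used.
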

\begin{proof} Take a point $x_0\in [0,2\pi)$ and prove (\ref{A2}) for this point. First of all,
considering a convolution of $f(x)$ with $\V_{\nu}(x) e^{ik_jx}$, where $\V_N(x)$ is the
de la Vall{\'e}e Poussin kernel (see \cite{VTbookMA}, p. 10), we obtain
\be\label{A3}
\|p_j\|_\infty \le C_1\|f\|_\infty =: A,\quad j=n,\dots, 2n-1.
\ee
Second, consider $f(x)$ with $x=x_0+ y/k_n$, $y\in [0,2\pi)$. By the Bernstein inequality
we get from (\ref{A3}) for $y\in [0,2\pi)$
\be\label{A4}
|p_j(x_0+y/k_n)-p_j(x_0)| \le A\nu 2\pi/k_n.
\ee
We now use a well known fact from the theory of lacunary series (see \cite{Z}, Ch.6). For a function
\be\label{A5}
g(y):=\sum_{j=n}^{2n-1} c_je^{ik_jy}
\ee
we have
\be\label{A6}
\sum_{j=n}^{2n-1} |c_j| \le C_2(b)\|g\|_\infty
\ee
with a constant  $C_2(b)$, which may only depend on $b$.

Consider a function
$$
g(y) := \sum_{j=n}^{2n-1} p_j(x_0) e^{i(k_jx_0+k_jy/k_n)}.
$$
Then, $\{k_j/k_n\}_{j=n}^{2n-1}$ is a lacunary set and (\ref{A4}), (\ref{A6}) imply
$$
\sum_{j=n}^{2n-1} |p_j(x_0)| \le C_2(b)\|g\|_\infty \le \max_{y\in[0,2\pi)} |f(x_0+y/k_n)| +
nA\nu 2\pi/k_n \le C(K,b)\|f\|_\infty.
$$
This completes the proof of Theorem \ref{AT1}.

\end{proof}
Theorem \ref{AT1} implies immediately the following result.
\begin{Theorem}\label{AT2} Suppose that the pair $\K$, $\nu$ satisfies Condition L. Then there exists a constant $C=C(K,b)$, which may only depend on $K$ and $b$ such that for any
\be\label{A1'}
f=\sum_{j=n}^{2n-1} p_j(x) e^{ik_jx}
\ee
we have
\be\label{A2'}
\sum_{j=n}^{2n-1} \|p_j\|_1 \le C\|f\|_\infty.
\ee
\end{Theorem}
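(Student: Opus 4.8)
The plan is to obtain Theorem~\ref{AT2} as an immediate consequence of Theorem~\ref{AT1} by integrating the pointwise estimate. Fix $f$ as in~\eqref{A1'} and assume the pair $\K$, $\nu$ satisfies Condition~L. Theorem~\ref{AT1} then furnishes a constant $C=C(K,b)$, depending only on $K$ and $b$, such that
$$
\sum_{j=n}^{2n-1}|p_j(x)| \le C\|f\|_\infty \quad\text{for every } x\in[0,2\pi).
$$
Since the right-hand side does not depend on $x$, I would integrate this inequality in $x$ against the normalized Lebesgue measure on $\T$, and, the sum on the left being finite, interchange summation and integration:
$$
\sum_{j=n}^{2n-1}\|p_j\|_1 = \frac1{2\pi}\int_0^{2\pi}\sum_{j=n}^{2n-1}|p_j(x)|\,dx \le \frac1{2\pi}\int_0^{2\pi} C\|f\|_\infty\,dx = C\|f\|_\infty,
$$
which is precisely~\eqref{A2'}.

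There is no genuine obstacle: the entire content of Theorem~\ref{AT2} is already present in Theorem~\ref{AT1}, and the $L_1$ statement is formally weaker, since $\|g\|_1\le\|g\|_\infty$ on a probability space and Theorem~\ref{AT1} controls the sup norm of $\sum_j|p_j|$. The one point worth recording is that the constant $C$ can be taken to be the very same one as in Theorem~\ref{AT1}. For completeness I would also note that the identical one-line argument, applied to the $p$-th power of the pointwise bound, yields $\bigl\|\sum_{j=n}^{2n-1}|p_j|\bigr\|_p \le C\|f\|_\infty$ for every $1\le p\le\infty$; Theorems~\ref{AT1} and~\ref{AT2} are simply the endpoint cases $p=\infty$ and $p=1$ of this scale of inequalities.
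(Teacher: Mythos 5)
Your proof is correct and is exactly what the paper intends: the paper states that Theorem~\ref{AT1} ``implies immediately'' Theorem~\ref{AT2}, and integrating the pointwise bound $\sum_{j}|p_j(x)|\le C\|f\|_\infty$ over the probability space $\T$ is the intended one-line deduction. The constant is indeed the same as in Theorem~\ref{AT1}, and your remark about the full $L_p$ scale is a harmless bonus.
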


 As above
  for a finite set $\L \subset \Z^d$ denote $\Tr(\La)$ the set of trigonometric polynomials with frequencies in $\La$. Denote
$$
  \Tr(\L)_p :=\{f\in\Tr(\L): \|f\|_p\le 1\}.
$$
For a finite set $\L$ we assign to each $f = \sum_{\bk\in \L} \hat f(\bk) e^{i(\bk,\bx)}\in \Tr(\L)$ a vector
$$
A(f) := \{(\text{Re}(\hat f(\bk)), \text{Im}(\hat f(\bk))),\quad \bk\in \L\} \in \R^{2|\L|}
$$
where $|\L|$ denotes the cardinality of $\L$ and define
$$
B_\L(L_p) := \{A(f) : f\in \Tr(\L)_p\}.
$$
The volume estimates of the sets $B_\L(L_p)$ and related questions have been studied in a number of papers: the case $\L=[-n,n]$, $p=\infty$ in \cite{K1}; the case $\L=[-N_1,N_1]\times\cdots\times[-N_d,N_d]$, $p=\infty$ in \cite{TE2}, \cite{T3}.   In the case $\L = \Pi(\bN,d) := [-N_1,N_1]\times\cdots\times[-N_d,N_d]$, $\bN:=(N_1,\dots,N_d)$, the following estimates follow from results of \cite{K1}, \cite{TE2}, and \cite{T3} (see also \cite{VTbookMA}, p.333).
\begin{Theorem}\label{AT3} For any $1\le p\le \infty$ we have
$$
(vol(B_{\Pi(\bN,d)}(L_p)))^{(2|\Pi(\bN,d)|)^{-1}} \asymp |\Pi(\bN,d)|^{-1/2},
$$
with constants in $\asymp$ that may depend only on $d$.
\end{Theorem}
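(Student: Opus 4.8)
Write $\La:=\Pi(\bN,d)$ and $L:=|\La|$, so $A(f)\in\R^{2L}$ and $B_\La(L_p)\subset\R^{2L}$. Two facts are immediate. Since $\{e^{i(\bk,\bx)}\}_{\bk\in\La}$ is orthonormal on the probability space $(\T^d,\mu)$, Parseval gives $\|f\|_2^2=\sum_\bk|\hat f(\bk)|^2=|A(f)|^2$, so $B_\La(L_2)$ is the Euclidean unit ball of $\R^{2L}$, and by Stirling $\operatorname{vol}(B_\La(L_2))^{1/(2L)}=(\pi^L/L!)^{1/(2L)}\asymp L^{-1/2}$. Also, $\mu$ being a probability measure, $\|f\|_1\le\|f\|_p\le\|f\|_\infty$ and $\|f\|_1\le\|f\|_2\le\|f\|_\infty$, hence
\[ B_\La(L_\infty)\subseteq B_\La(L_p)\subseteq B_\La(L_1),\qquad B_\La(L_2)\subseteq B_\La(L_1),\qquad B_\La(L_\infty)\subseteq B_\La(L_2). \]
Therefore the upper bound for $p=\infty$ and the lower bound for $p=1$ are free from the last two inclusions and the $L_2$ computation, and for every $1\le p<\infty$ the estimate is squeezed between the cases $p=1$ and $p=\infty$. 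It remains to prove (i) $\operatorname{vol}(B_\La(L_\infty))^{1/(2L)}\gtrsim_d L^{-1/2}$ and (ii) $\operatorname{vol}(B_\La(L_1))^{1/(2L)}\lesssim_d L^{-1/2}$.

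\textbf{Lower bound for $p=\infty$.} Here the product structure of $\La$ enters. By the classical Marcinkiewicz theorem for a box (see Subsection~\ref{sec2.1}) there is a grid $\{\xi^j\}_{j=1}^m\subset\T^d$ with $m=\nu(4\bN)\asymp_d L$ that works simultaneously for $q=\infty$ and $q=2$: for all $f\in\Tr(\La)$,
\[ c_\infty(d)\|f\|_\infty\le\max_{1\le j\le m}|f(\xi^j)|\le\|f\|_\infty,\qquad c_2(d)\|f\|_2^2\le {\textstyle\frac1m\sum_{j=1}^m|f(\xi^j)|^2}\le C_2(d)\|f\|_2^2 . \]
Identify $\bbC^m\cong\R^{2m}$ and let $T:\R^{2L}\to\R^{2m}$ be the linear map $T(A(f))=(f(\xi^j))_{j=1}^m$. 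The $L_2$ inequality reads $c_2(d)\,m\,|v|^2\le|Tv|^2\le C_2(d)\,m\,|v|^2$ for all $v$, so $T$ is injective onto a $2L$-dimensional subspace $V$ and $\det(T^{*}T)^{1/(4L)}\asymp_d m^{1/2}$. The $L_\infty$ inequality says that $\max_j|f(\xi^j)|\le c_\infty(d)$ forces $\|f\|_\infty\le1$, i.e. $B_\La(L_\infty)\supseteq T^{-1}(c_\infty(d)\,\mathbb D^m)$, where $\mathbb D^m=\{z\in\bbC^m:|z_j|\le1\ \forall j\}$ is the polydisc. As a subset of $\R^{2m}$ we have $\mathbb D^m\supseteq[-2^{-1/2},2^{-1/2}]^{2m}$, so by Vaaler's theorem on central hyperplane sections of the cube the $(2L)$-dimensional section $\mathbb D^m\cap V$ has volume at least $2^{L}$. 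Using $\operatorname{vol}(T^{-1}(B))=\operatorname{vol}_{2L}(B\cap V)/\sqrt{\det(T^{*}T)}$ for an injective linear map, we get
\[ \operatorname{vol}(B_\La(L_\infty))\ge\frac{c_\infty(d)^{2L}\operatorname{vol}_{2L}(\mathbb D^m\cap V)}{\sqrt{\det(T^{*}T)}}\ge\frac{c_\infty(d)^{2L}\,2^{L}}{(C_2(d)\,m)^{L}}, \]
hence $\operatorname{vol}(B_\La(L_\infty))^{1/(2L)}\gtrsim_d m^{-1/2}\asymp_d L^{-1/2}$.

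\textbf{Upper bound for $p=1$, and assembly.} In the standard Euclidean structure on $\R^{2L}$ one has $\langle A(f),A(g)\rangle=\operatorname{Re}\langle f,g\rangle_{L_2}=\operatorname{Re}\int_{\T^d}f\bar g\,d\mu$; hence if $\|g\|_\infty\le1$ then $|\langle A(f),A(g)\rangle|\le\|f\|_1\|g\|_\infty\le1$ for all $\|f\|_1\le1$, i.e. $B_\La(L_\infty)\subseteq(B_\La(L_1))^{\circ}$. By the Blaschke--Santaló inequality $\operatorname{vol}(B_\La(L_1))\operatorname{vol}((B_\La(L_1))^{\circ})\le\operatorname{vol}(B_2^{2L})^2$; combining with $\operatorname{vol}((B_\La(L_1))^{\circ})\ge\operatorname{vol}(B_\La(L_\infty))$, step (i), and $\operatorname{vol}(B_2^{2L})^{1/(2L)}\asymp L^{-1/2}$ gives $\operatorname{vol}(B_\La(L_1))^{1/(2L)}\le\operatorname{vol}(B_2^{2L})^{1/L}/\operatorname{vol}(B_\La(L_\infty))^{1/(2L)}\asymp_d L^{-1/2}$. (Alternatively, and closer to \cite{K1,TE2,T3}: prove (ii) directly with the same $T$ and a Meyer--Pajor/Vaaler-type bound for $2L$-dimensional central sections of the complex cross-polytope $\{z\in\bbC^m:\sum_j|z_j|\le1\}\subset\R^{2m}$, whose normalized volume is $\asymp L^{-1}$.) Together with the inclusions of the first paragraph, (i) and (ii) establish the two-sided bound for every $1\le p\le\infty$.

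\textbf{Where the difficulty sits.} Everything except (i) is convex geometry with absolute constants (Stirling, Vaaler's section theorem, Blaschke--Santaló). The only substantial input is (i), and it rests on two features genuinely special to the box $\Pi(\bN)$: that it admits an $L_\infty$ Marcinkiewicz grid of size $O_d(L)$ — this fails badly for, say, hyperbolic crosses, which is precisely why the clean equivalence $\asymp L^{-1/2}$ uses the product structure of $\La$ — and that one may take \emph{the same} grid to be an $L_2$ Marcinkiewicz grid, which is what allows the Jacobian $\sqrt{\det(T^{*}T)}$ to be matched against the section volume. I would expect the bookkeeping of these two compatible grids, rather than any single inequality, to be the one delicate point.
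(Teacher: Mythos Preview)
The paper does not prove Theorem~\ref{AT3}; it is quoted as a known result from \cite{K1}, \cite{TE2}, \cite{T3} (with a pointer to \cite{VTbookMA}, p.~333). So there is no in-paper proof to compare against.

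Your argument is correct. The reduction by monotonicity of $\|\cdot\|_p$ to the two extreme cases is standard; the $L_2$ case is Parseval plus Stirling; and the two nontrivial steps are handled cleanly. For the $L_\infty$ lower bound, using a single grid of size $\nu(4\bN)\asymp_d L$ that is simultaneously an $L_2$ and an $L_\infty$ Marcinkiewicz grid (which the paper itself records in Subsection~\ref{sec2.1}) lets you control the Jacobian $\det(T^\ast T)^{1/2}$ and apply Vaaler's cube-section theorem to $[-2^{-1/2},2^{-1/2}]^{2m}\subset\mathbb D^m$; this is exactly the right mechanism. For the $L_1$ upper bound, the inclusion $B_\La(L_\infty)\subseteq(B_\La(L_1))^\circ$ together with Blaschke--Santal\'o closes the loop against step~(i).

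This is close in spirit to the cited sources. Kashin's original $d=1$ argument \cite{K1} and Temlyakov's multivariate extensions \cite{TE2,T3} also pass through a discretization of $\Tr(\Pi(\bN))$ on a grid of size $\asymp_d L$ and a section-volume estimate; the $L_1$ side is likewise obtained by duality. Your packaging through Vaaler and Blaschke--Santal\'o is a bit more streamlined than the original presentations, but the underlying ingredients are the same. Your closing diagnosis is also on point: the product structure of $\Pi(\bN)$ is essential precisely because it supplies an $L_\infty$ Marcinkiewicz grid of cardinality $\asymp_d L$, and the failure of this for $\La(\K,\nu)$ and for the hyperbolic cross (Theorems~\ref{C2.5.1} and~\ref{BT3}) is exactly what drives Theorem~\ref{AT4}.
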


Denote
$$
\La(\K,\nu):= \cup_{j=n}^{2n-1}\La_j(\nu),\qquad \La_j(\nu):= [k_j-\nu,k_j+\nu].
$$
 We now estimate from above
the $vol(B_{\La(\K,\nu)}(L_\infty))$ under Condition L.
\begin{Theorem}\label{AT4} Suppose that the pair $\K$, $\nu$ satisfies Condition L. Then
$$
(vol(B_{\La(\K,\nu)}(L_\infty)))^{(2|\La(\K,\nu)|)^{-1}} \le C'(n|\La(\K,\nu)|)^{-1/2}.
$$

\end{Theorem}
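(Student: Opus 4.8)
The plan is to upgrade the trivial containment of $B_{\La(\K,\nu)}(L_\infty)$ in a Euclidean ball to a containment in a much thinner $\ell_1$-type direct sum of $L_1$-balls of the one-dimensional blocks, using the small ball inequality of Theorem~\ref{AT2}, and then to compute the volume of that sum via a Dirichlet integral together with the block estimate of Theorem~\ref{AT3}.

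First I would record that under Condition L the intervals $\La_j(\nu)=[k_j-\nu,k_j+\nu]$, $j=n,\dots,2n-1$, are pairwise disjoint, because $k_{j+1}-k_j\ge(b-1)k_j\ge(b-1)k_n>2\nu$; hence $|\La(\K,\nu)|=n(2\nu+1)$, and every $f\in\Tr(\La(\K,\nu))$ has a unique block decomposition $f=\sum_{j=n}^{2n-1}p_j(x)e^{ik_jx}$ with $p_j\in\Tr(\nu)$. Under the natural coordinate identification $\R^{2|\La(\K,\nu)|}\cong(\R^{2(2\nu+1)})^n$ — a permutation of coordinates, hence measure preserving — the vector $A(f)$ corresponds to $(A(p_n),\dots,A(p_{2n-1}))$. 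Applying Theorem~\ref{AT2} to any $f$ with $\|f\|_\infty\le1$ gives $\sum_{j=n}^{2n-1}\|p_j\|_1\le C$ with $C=C(K,b)$. Writing $D:=2(2\nu+1)$, $B^1_\nu:=B_{[-\nu,\nu]}(L_1)\subset\R^D$, and letting $\|\cdot\|_{B^1_\nu}$ be the norm on $\R^D$ whose unit ball is $B^1_\nu$ (so that $\|A(p)\|_{B^1_\nu}=\|p\|_1$ for $p\in\Tr(\nu)$), we obtain
\[
B_{\La(\K,\nu)}(L_\infty)\subseteq S:=\Bl\{(v_1,\dots,v_n)\in(\R^D)^n:\ \textstyle\sum_{j=1}^n\|v_j\|_{B^1_\nu}\le C\Br\}.
\]

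Next I would compute $vol(S)=C^{nD}\,vol(S_1)$, where $S_1$ is the $\ell_1$-sum of $n$ copies of $B^1_\nu$. Introducing in each block the polar coordinates associated to $B^1_\nu$ and using the Dirichlet integral $\int_{\{t_j\ge0,\,\sum_j t_j\le1\}}\prod_{j=1}^n t_j^{D-1}\,dt=\Gamma(D)^n/\Gamma(nD+1)$, one gets
\[
vol(S_1)=\bigl(D\,vol(B^1_\nu)\bigr)^n\frac{\Gamma(D)^n}{\Gamma(nD+1)}=vol(B^1_\nu)^n\,\frac{(D!)^n}{(nD)!}.
\]
Taking the $(nD)$-th root, using $(D!)^{1/D}\le D$ and $((nD)!)^{1/(nD)}\ge nD/e$ so that their quotient is $\le e/n$, and invoking Theorem~\ref{AT3} (with $d=1$, $p=1$) in the form $vol(B^1_\nu)^{1/D}\le c_0(2\nu+1)^{-1/2}$, I obtain
\[
(vol(S))^{1/(nD)}=C\,vol(B^1_\nu)^{1/D}\,\frac{(D!)^{1/D}}{((nD)!)^{1/(nD)}}\le\frac{Cc_0e}{n}(2\nu+1)^{-1/2}.
\]
Since $nD=2|\La(\K,\nu)|$ and $n^2(2\nu+1)=n|\La(\K,\nu)|$, the inclusion $B_{\La(\K,\nu)}(L_\infty)\subseteq S$ yields
\[
\bigl(vol(B_{\La(\K,\nu)}(L_\infty))\bigr)^{1/(2|\La(\K,\nu)|)}\le Cc_0e\,\bigl(n|\La(\K,\nu)|\bigr)^{-1/2},
\]
which is the desired bound with $C'=Cc_0e$.

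The only genuinely substantive input is Theorem~\ref{AT2}: it is the small ball (lacunary) estimate that replaces the trivial containment $B_{\La(\K,\nu)}(L_\infty)\subseteq B_{\La(\K,\nu)}(L_2)$, which would only give the weaker exponent $|\La(\K,\nu)|^{-1/2}$, by the much smaller body $S$; the extra factor $n^{-1/2}$ is precisely the volumetric cost of an $\ell_1$-sum over an $\ell_2$-sum. I do not anticipate any real difficulty in the remaining steps — the volume-of-$\ell_1$-sum computation and the Stirling bounds are routine — the only points deserving care being that the block identification of coordinates is measure preserving and that Theorem~\ref{AT3} is applied in the block dimension $2(2\nu+1)$ rather than in the ambient dimension $2n(2\nu+1)$.
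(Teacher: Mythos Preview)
Your proof is correct; the key input (Theorem~\ref{AT2}) and the final block estimate (Theorem~\ref{AT3}) are exactly those used in the paper. The difference lies only in how the volume of the containing body is computed. The paper discretizes the simplex $\bigl\{\sum_j\|p_j\|_1\le C\bigr\}$ by rounding each $\|p_j\|_1$ to an integer multiple $a_j/n$ of $C/n$, then covers $B_{\La(\K,\nu)}(L_\infty)$ by a union over the lattice set $A(n)=\{\ba\in\N^n:\sum a_j\le 2n\}$ of products of dilated $L_1$-balls; the volume is bounded by summing over $A(n)$, using AM--GM for $\prod a_j$ and a stars-and-bars count for $|A(n)|$. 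You instead compute the volume of the continuous $\ell_1$-sum $S$ in closed form via the Dirichlet integral and Stirling. Your route is shorter and gives the constant explicitly; the paper's discretization avoids the Dirichlet integral and is perhaps more elementary, but at the cost of the auxiliary combinatorial estimates. Either way the substantive content --- that the extra factor $n^{-1/2}$ comes from the $\ell_1$-constraint furnished by Theorem~\ref{AT2} --- is identical.
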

\begin{proof} Let $f\in \Tr(\La(\K,\nu))$ and $\|f\|_\infty \le 1$. Then $f$ has a form (\ref{A1'}) and by Theorem \ref{AT2} we get
\be\label{A9}
\sum_{j=n}^{2n-1} \|p_j\|_1 \le C.
\ee
Inequality (\ref{A9}) guarantees that there exist numbers $a_j:= [n\|p_j\|_1/C]+1\in \N$ such that
\be\label{A10}
\|p_j\|_1 \le \frac{Ca_j}{n},\qquad \sum_{j=n}^{2n-1} a_j \le 2n.
\ee
Denote $A(n):= \{\ba=(a_n,\dots,a_{2n-1})\in \N^n: a_n+\dots+a_{2n-1}\le 2n\}$. Then
\be\label{A11}
vol(B_{\La(\K,\nu)}(L_\infty)) \le \sum_{\ba \in A(n)} \prod_{j=n}^{2n-1} vol(B_{\La_j(\nu)}(L_1))(Ca_j/n)^{2(2\nu+1)}.
\ee
For $\{a_j\}$ satisfying inequality (\ref{A10}) we obtain
\be\label{A12}
a_n\cdots a_{2n-1} \le ((a_n+\cdots+a_{2n-1})/n)^n \le 2^n.
\ee
It is known that
$$
|\{(b_1,\dots,b_n)\in \Z_+^n: b_1+\cdots+b_n =q\}| = \binom{n+q-1}{q}.
$$
Therefore, for the number of summands in (\ref{A11}) we have
\be\label{A13}
|A(n)| \le \sum_{q=0}^n \binom{n+q-1}{q} \le 2^{2n}.
\ee
Combining (\ref{A11}) -- (\ref{A13}), using Theorem \ref{AT3} and taking into account that $|\La(\K,\nu)|= n(2\nu+1)$ we obtain
\be\label{A14}
(vol(B_{\La(\K,\nu)}(L_\infty)))^{(2|\La(\K,\nu)|)^{-1}} \le C'(n|\La(\K,\nu)|)^{-1/2}.
\ee

\end{proof}

\subsection{Discretization}
\label{B}

The above Theorem \ref{AT4} implies an interesting and surprising result on discretization for polynomials from $\Tr(\La(\K,\nu))$. We derive from Theorem \ref{BT3}, which is a corollary of Theorem \ref{AT4},
that there is no analog of the Marcinkiewicz theorem  in $L_\infty$ for polynomials from $\Tr(\La(\K,\nu))$.
We present here some results from \cite{KaTe03} (see also \cite{VTbookMA}, pp. 344--345).
We begin with the following conditional statement.
\begin{Theorem}\label{BT1} Assume that a finite set $\La\subset \Z^d$ has the following properties:
\begin{equation}\label{B1}
(vol(B_\La(L_\infty)))^{1/D} \le K_1D^{-1/2},\quad D:=2|\L|,
\end{equation}
and a set $\Omega_M =\{\bx^1,\dots,\bx^M\}$ satisfies the condition
\begin{equation}\label{B2}
\forall f \in \Tr(\L) \qquad \|f\|_\infty\le K_2\|f\|_{\Omega_M},\quad
\|f\|_{\Omega_M}:=\max_{\bx\in \Omega_M}|f(\bx)|.
\end{equation}
Then there exists an absolute constant $c>0$ such that
$$
M\ge \frac{|\La|}{e}e^{c(K_1K_2)^{-2}}.
$$
\end{Theorem}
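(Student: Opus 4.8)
The plan is a volume-comparison argument in $\R^D$, $D:=2|\La|$, resting on the principle that a sampling set $\Omega_M$ which is too small defines a convex body that is too big, contradicting the smallness of $vol(B_\La(L_\infty))$ forced by \eqref{B1}. Using the coordinate identification $f\mapsto A(f)$, set
\[
W:=\{A(f):\ f\in\Tr(\La),\ \|f\|_{\Omega_M}\le 1\}\subset\R^D .
\]
Condition \eqref{B2} makes the evaluation map $f\mapsto(f(\bx^1),\dots,f(\bx^M))$ injective on $\Tr(\La)$, so $W$ is a bounded symmetric convex body with nonempty interior (and, in particular, $M\ge|\La|$), and it also gives $W\subseteq K_2\,B_\La(L_\infty)$, whence by \eqref{B1}
\[
(vol(W))^{1/D}\le K_2\,(vol(B_\La(L_\infty)))^{1/D}\le K_1K_2\,D^{-1/2}.
\]
Everything reduces to a matching lower bound for $(vol(W))^{1/D}$ that blows up as $M$ decreases.

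For that I would pass to the polar body. Writing $\hat f(\bk)=a_\bk+ib_\bk$ one checks that the constraint $|f(\bx^j)|\le 1$ says precisely that the orthogonal projection of $A(f)$ onto a certain $2$-plane $E_j\subset\R^D$ has Euclidean length at most $|\La|^{-1/2}$, the normalization appearing because $\sum_{\bk\in\La}|e^{i(\bk,\bx^j)}|^2=|\La|$. Thus $W$ is an intersection of $M$ solid cylinders, and by polarity $W^\circ=\overline{\conv(\bigcup_{j=1}^M D_j)}$, where $D_j$ is the Euclidean disk of radius $|\La|^{1/2}$ lying in $E_j$. Circumscribing each $D_j$ by a square, $W^\circ$ is contained in the convex hull of at most $4M$ points of norm at most $(2|\La|)^{1/2}=D^{1/2}$, so the standard bound for the volume of the convex hull of $N$ points in a Euclidean ball $B_2^D$ (of Carl--Pajor / B\'ar\'any--F\"uredi type), together with $vol(B_2^D)\le(2\pi e/D)^{D/2}$, yields
\[
vol(W^\circ)\le D^{D/2}\,vol(B_2^D)\,\Bigl(\frac{C\log(8M/D)}{D}\Bigr)^{D/2}\le\Bigl(\frac{C_1\log(8M/D)}{D}\Bigr)^{D/2}.
\]
Inserting this into the Bourgain--Milman inequality $vol(W)\cdot vol(W^\circ)\ge(c_0/D)^D$ gives
\[
(vol(W))^{1/D}\ge\frac{c_1}{\sqrt{D\log(8M/D)}}.
\]

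Comparing the two estimates for $(vol(W))^{1/D}$, the powers of $D$ cancel and one is left with $\log(8M/D)\ge c_1^2(K_1K_2)^{-2}$, i.e. $M\ge\frac{D}{8}\exp(c_1^2(K_1K_2)^{-2})$. Since also $M\ge|\La|=D/2$ unconditionally, the numerical factor in front can be traded for $1/e$ at the price of shrinking the constant in the exponent, producing $M\ge\frac{|\La|}{e}\exp(c(K_1K_2)^{-2})$ with an absolute $c>0$. The one genuinely nontrivial step is the lower bound on $vol(W)$: one must recognize $W^\circ$ as the hull of only $M$ disks, control its volume by the convex-hull-of-points inequality --- which is exactly where the decisive factor $\log(M/|\La|)$ enters, measuring how far the gauge $\max_{j}|f(\bx^j)|$ of $W$ is from a Euclidean norm --- and then come back from $W^\circ$ to $W$ via an inverse Santal\'o (Bourgain--Milman) estimate. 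The rest is routine bookkeeping with Stirling's formula and absolute constants.
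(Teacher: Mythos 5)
Your proposal is correct, and it reaches the same volume comparison as the paper but by a genuinely different route in the one nontrivial step. Both arguments share the skeleton: condition (\ref{B2}) places the body cut out by the $2M$ real evaluation functionals inside $K_2\,B_\La(L_\infty)$ (after observing, as you do, that $|f(\bx^j)|^2$ is the squared Euclidean length of the projection of $A(f)$ onto a $2$-plane spanned by two orthogonal vectors of norm $|\La|^{1/2}$), and then a lower bound on the volume of that body, combined with (\ref{B1}), forces $\log(M/|\La|)\gg (K_1K_2)^{-2}$. The difference is how the lower volume bound is obtained. The paper simply invokes Gluskin's theorem (Theorem \ref{BT2}) for the intersection of the $2M$ slabs $|(\cdot,\by^i(\bx))|\le 1$, which delivers $(vol)^{1/D}\ge C(1+\ln(S/D))^{-1/2}$ in one stroke. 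You instead re-derive a Gluskin-type bound from scratch: pass to the polar body, identify $W^\circ$ as the convex hull of $M$ planar disks, circumscribe to get the hull of $4M$ points in $D^{1/2}B_2^D$, apply the B\'ar\'any--F\"uredi/Carl--Pajor convex-hull volume estimate, and return via the Bourgain--Milman inverse Santal\'o inequality. This is a legitimate and essentially standard proof of the Gluskin-type estimate, so your argument is complete modulo those two cited results; what it costs is the invocation of the (deep) Bourgain--Milman theorem where the paper only needs Gluskin's statement as a black box, and what it buys is a self-contained explanation of where the decisive $\log(M/|\La|)$ factor comes from. Your closing bookkeeping is also sound: the unconditional bound $M\ge|\La|$ (from injectivity of the evaluation map, itself a consequence of (\ref{B2})) lets you absorb the constant $1/8$ into $1/e$ at the price of shrinking $c$.
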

\begin{proof} We use the following result of E. Gluskin \cite{G}.
\begin{Theorem}\label{BT2} Let $Y=\{\by_1,\dots,\by_S\} \subset \R^D$, $\|\by_i\|=1$, $i=1,\dots,S$, $S\ge D$, and
$$
W(Y) := \{\bx\in \R^D:|(\bx,\by_i)| \le 1,\quad i=1,\dots,S\}.
$$
Then
$$
(vol(W(Y)))^{1/D} \ge C(1+\ln (S/D))^{-1/2}.
$$
\end{Theorem}

By our assumption (\ref{B2}) we have
\be\label{B1p}
\forall f \in \Tr(\La) \qquad \|f\|_\infty\le K_2\|f\|_{\Omega_M}.
\ee
Thus,
\be\label{B2p}
\{A(f):f\in \Tr(\L),\quad |f(\bx)|\le K_2^{-1},\quad \bx\in \Omega_M\} \subseteq B_\L(L_\infty).
\ee
Further
$$
|f(\bx)|^2 = |\sum_{\bk\in\L}\hat f(\bk) e^{i(\bk,\bx)}|^2 =
$$
$$
\left(\sum_{\bk\in\L}\text{Re}\hat f(\bk) \cos(\bk,\bx) - \text{Im}\hat f(\bk) \sin(\bk,\bx)\right)^2
$$
$$
+\left(\sum_{\bk\in\L}\text{Re}\hat f(\bk) \sin(\bk,\bx) + \text{Im}\hat f(\bk) \cos(\bk,\bx)\right)^2 .
$$
We associate with each point $\bx\in \Omega_M$ two vectors $\by^1(\bx)$ and $\by^2(\bx)$ from $\R^D$:
$$
\by^1(\bx) := \{(\cos(\bk,\bx),-\sin(\bk,\bx)),\quad \bk\in \L\},
$$
$$
 \by^2(\bx) := \{(\sin(\bk,\bx),\cos(\bk,\bx)),\quad \bk\in \L\}.
$$
Then
$$
\|\by^1(\bx)\|^2 =\|\by^2(\bx)\|^2 = |\L|
$$
and
$$
|f(\bx)|^2 = (A(f),\by^1(\bx))^2 +(A(f),\by^2(\bx))^2.
$$
It is clear that the condition $|f(\bx)| \le K_2^{-1}$ is satisfied if
$$
|(A(f),\by^i(\bx))| \le 2^{-1/2}K_2^{-1}, \quad i=1,2.
$$
Let now
$$
Y:=\{\by^i(\bx)/\|\by^i(\bx)\|:\quad \bx\in \Omega_M,\quad i=1,2\}.
$$
Then $S=2M$ and by Theorem \ref{BT2}
\be\label{B3p}
(vol(W(Y)))^{1/D} \ge C(1+\ln (S/D))^{-1/2} .
\ee
Using that the condition
$$
|(A(f),\by^i(\bx))|\le 1
$$
is equivalent to the condition
$$
|(A(f),\by^i(\bx)/\|\by^i(\bx)\|)| \le (D/2)^{-1/2}
$$
we get from (\ref{B2p}) and (\ref{B3p})
$$
(vol(B_\L(L_\infty)))^{1/D} \ge C'D^{-1/2}K_2^{-1}(1+\ln (S/D))^{-1/2}.
$$
We now use our assumption (\ref{B1}) and obtain
\be\label{B4p}
K_1K_2 \ge C'(\ln (eM/|\La|))^{-1/2}.
\ee
This completes the proof of Theorem \ref{BT1}.
\end{proof}

 We now give some corollaries of Theorem \ref{BT1}.
\begin{Theorem}\label{BT3} Assume that a finite set $\Omega\subset \mathbb T$ has
the following property.
\begin{equation}\label{B4}
\forall t\in \Tr(\La(\K,\nu)) \qquad \|t\|_\infty \le K_2\|t\|_{\Omega}.
\end{equation}
Then
$$
|\Omega| \ge \frac{|\La(\K,\nu)|}{e}e^{Cn/K_2^2}
$$
with an absolute constant $C>0$.
\end{Theorem}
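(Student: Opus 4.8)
The plan is to obtain Theorem \ref{BT3} as an immediate corollary of the volume estimate in Theorem \ref{AT4} combined with the general conditional lower bound of Theorem \ref{BT1}, applied to $\La=\La(\K,\nu)\subset\Z$ (so $d=1$). Throughout we keep the standing hypothesis that the pair $\K,\nu$ satisfies Condition L, which is what makes Theorem \ref{AT4} available; if one prefers a self-contained statement, Condition L should simply be added to the hypotheses of Theorem \ref{BT3}.

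First I would record the volume bound. Set $D:=2|\La(\K,\nu)|$. By Theorem \ref{AT4},
\[
\bigl(vol(B_{\La(\K,\nu)}(L_\infty))\bigr)^{1/D}\le C'\,(n|\La(\K,\nu)|)^{-1/2}.
\]
Since $|\La(\K,\nu)|=D/2$, the right-hand side equals $\sqrt2\,C'\,n^{-1/2}D^{-1/2}$, so the hypothesis (\ref{B1}) of Theorem \ref{BT1} holds with $K_1=\sqrt2\,C'\,n^{-1/2}$. The crucial point — the source of the exponential gain in $n$ — is exactly this extra factor $n^{-1/2}$ in Theorem \ref{AT4} beyond the ``generic'' normalization $|\La|^{-1/2}$ appearing in Theorem \ref{AT3}: it is the decay of $K_1$ in $n$ that Theorem \ref{BT1} amplifies into $e^{Cn/K_2^2}$.

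Next, the assumption (\ref{B4}) on $\Omega$ is precisely condition (\ref{B2}) of Theorem \ref{BT1} with $\Omega_M=\Omega$ and $M=|\Omega|$. Applying Theorem \ref{BT1} then yields
\[
|\Omega|\ge\frac{|\La(\K,\nu)|}{e}\,\exp\!\bigl(c(K_1K_2)^{-2}\bigr)
=\frac{|\La(\K,\nu)|}{e}\,\exp\!\left(\frac{c\,n}{2(C')^2K_2^2}\right),
\]
and setting $C:=c/(2(C')^2)$ gives the claimed inequality. Since both ingredients are already established in the excerpt, there is no substantive obstacle here; the only care needed is bookkeeping of constants — verifying that $c$ (from Theorem \ref{BT1}) and $C'$ (from Theorem \ref{AT4}) are absolute, hence $C$ is absolute — and checking that Condition L is in force so that Theorem \ref{AT4} genuinely applies to $\La(\K,\nu)$.
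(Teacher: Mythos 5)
Your proposal is correct and follows exactly the paper's own route: the volume bound of Theorem \ref{AT4} supplies the hypothesis (\ref{B1}) of Theorem \ref{BT1} with $K_1\asymp n^{-1/2}$, and Theorem \ref{BT1} then converts the assumption (\ref{B4}) into the stated exponential lower bound on $|\Omega|$. The only caveat, which the paper's own proof concedes ("a constant $C>0$, which may depend on $K$ and $b$"), is that the constant $C'$ of Theorem \ref{AT4} comes from Condition L via Theorem \ref{AT2}, so the resulting $C$ is really $C(K,b)$ rather than absolute — your bookkeeping should record that dependence rather than claim $C'$ is absolute.
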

\begin{proof} By Theorem \ref{AT2} we have with $D:=2|\La(\K,\nu)|$
$$
(vol(B_{\La(\K,\nu)}(L_\infty)))^{1/D} \le C(n|\La(\K,\nu)|)^{-1/2} \le Cn^{-1/2}D^{-1/2}
$$
with a constant $C>0$, which may depend on $K$ and $b$. Using Theorem \ref{BT1} we obtain
$$
|\Omega|\ge \frac{|\La(\K,\nu)|}{e}e^{Cn/K_2^2}.
$$
This proves Theorem \ref{BT3}.
 \end{proof}
\begin{Corollary}\label{BC1} Denote $N:= |\La(\K,\nu)|$. Theorem \ref{BT3} implies for $m\ge N$
\be\label{B5}
D(\La(\K,\nu),m) \ge Cn^{1/2} \left(\ln\frac{em}{N}\right)^{-1/2}.
\ee
In particular, (\ref{B5}) with $\nu=0$ implies for $m\le c'N$ that
\be\label{B6}
D(\La(\K,0),m) \ge C'N^{1/2} \quad \Rightarrow\quad D(N,m)\ge C'N^{1/2}.
\ee
\end{Corollary}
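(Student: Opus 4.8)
The plan is to read off Corollary~\ref{BC1} directly from Theorem~\ref{BT3} by unwinding the definition of $D(\La(\K,\nu),m)$. By definition, $D(\La(\K,\nu),m)$ is the infimum over all $m$-point sets $\Omega\subset\T$ of the smallest constant $K_2$ for which $\|t\|_\infty\le K_2\|t\|_\Omega$ holds for every $t\in\Tr(\La(\K,\nu))$. If this infimum is $+\infty$ there is nothing to prove, so assume it is finite; fix $\e>0$ and pick an $m$-point set $\Omega$ realizing it up to $\e$, i.e.\ $\|t\|_\infty\le K_2\|t\|_\Omega$ for all $t\in\Tr(\La(\K,\nu))$, where $K_2:=D(\La(\K,\nu),m)+\e$. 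This $\Omega$ satisfies hypothesis~(\ref{B4}) of Theorem~\ref{BT3}, so
\[
m=|\Omega|\ge \frac{N}{e}\,e^{Cn/K_2^2},\qquad N:=|\La(\K,\nu)|,
\]
with the constant $C>0$ of Theorem~\ref{BT3} (absolute once $b$ and $K$ in Condition~L are fixed).

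The rest is algebra. Rearranging the last display gives $\tfrac{em}{N}\ge e^{Cn/K_2^2}$, hence $\ln\tfrac{em}{N}\ge Cn/K_2^2$. For $m\ge N$ we have $\tfrac{em}{N}\ge e$, so $\ln\tfrac{em}{N}\ge 1>0$ and we may divide to get $K_2^2\ge Cn\bigl(\ln\tfrac{em}{N}\bigr)^{-1}$, that is $K_2\ge C^{1/2}n^{1/2}\bigl(\ln\tfrac{em}{N}\bigr)^{-1/2}$. Letting $\e\to0$ yields
\[
D(\La(\K,\nu),m)\ge C^{1/2}n^{1/2}\Bigl(\ln\tfrac{em}{N}\Bigr)^{-1/2},
\]
which is~(\ref{B5}) after renaming $C^{1/2}$ as $C$.

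For~(\ref{B6}) we specialize to $\nu=0$. Then $\La(\K,0)=\{k_n,\dots,k_{2n-1}\}$, so $N=|\La(\K,0)|=n$, and Condition~L with $\nu=0$ reduces to requiring that $\K$ be lacunary with each $k_j$ divisible by $k_n$ (for instance $k_j=2^j$, $b=2$), which can be arranged for every $n$. If $m<N$ then $m<\dim\Tr(\La(\K,0))$, so some nonzero $t\in\Tr(\La(\K,0))$ vanishes on any $m$-point set and $D(\La(\K,0),m)=+\infty$. If $N\le m\le c'N$ then $\ln\tfrac{em}{N}\le\ln(ec')=1+\ln c'$, a constant, so~(\ref{B5}) gives $D(\La(\K,0),m)\ge C^{1/2}N^{1/2}(1+\ln c')^{-1/2}=:C'N^{1/2}$ with $C'$ absolute. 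In either case $D(\La(\K,0),m)\ge C'N^{1/2}$ whenever $m\le c'N$. Finally, since $\La(\K,0)$ is one particular frequency set of cardinality $N$, the definition $D(N,m)=\sup_{|Q|=N}D(Q,m)$ forces $D(N,m)\ge D(\La(\K,0),m)\ge C'N^{1/2}$, which is the implication displayed in~(\ref{B6}).

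There is no real obstacle: the genuine content sits in Theorem~\ref{BT3} (and behind it the small-ball volume estimate Theorem~\ref{AT4} together with Gluskin's lower bound Theorem~\ref{BT2}). The only points needing a word of care are (i) passing through the infimum in the definition of $D$ by the $\e$-argument, (ii) observing that $m\ge N$ makes $\ln\tfrac{em}{N}$ strictly positive so the rearrangement is legitimate, and (iii) disposing of the degenerate range $m<N$ in~(\ref{B6}) by the dimension count $\dim\Tr(\La(\K,0))=N$.
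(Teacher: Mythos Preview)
Your proof is correct and matches the paper's approach: the paper states Corollary~\ref{BC1} as an immediate consequence of Theorem~\ref{BT3} without spelling out the rearrangement, and you have supplied exactly those omitted details (the $\e$-argument to handle the infimum, the algebra inverting $m\ge\frac{N}{e}e^{Cn/K_2^2}$, and the specialization $\nu=0$ giving $N=n$). The one minor point worth noting is that the paper implicitly assumes $m\ge N$ throughout the discussion of $D(Q,m)$, so your separate treatment of the degenerate range $m<N$ in~(\ref{B6}) is extra care the paper does not bother with, but it is correct and harmless.
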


\begin{Remark}\label{BR1} In a particular case $K_2=Bn^\al$, $0\le \al\le 1/2$, Theorem \ref{BT3} gives
$$
|\Omega|\ge \frac{|\La(\K,\nu)|}{e}e^{CB^{-2}n^{1-2\al}}.
$$
\end{Remark}
\begin{Corollary}\label{BC2} Let a set $\Omega\subset \mathbb T$ have a property:
 $$
\forall t \in \Tr(\La(\K,\nu)) \qquad \|t\|_\infty \le Bn^\al\|t\|_{\infty,\Omega}
$$
with some $0\le \al <1/2$. Then
$$
|\Omega| \ge C_3|\La(\K,\nu)|e^{CB^{-2}n^{1-2\al}} \ge C_1(K,b,B,\al)|\La(\K,\nu)|e^{C_2(K,b,B,\al)n^{1-2\al}}.
$$
\end{Corollary}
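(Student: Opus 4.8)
The plan is to read off Corollary~\ref{BC2} as an immediate specialization of Theorem~\ref{BT3} (precisely the case recorded in Remark~\ref{BR1}). First I would dispose of a trivial case: if $\Omega$ is infinite then $|\Omega|=\infty$ and both asserted inequalities hold vacuously, so we may assume $\Omega\subset\mathbb T$ is a finite set. Next, observing that the notation $\|t\|_{\infty,\Omega}$ in the hypothesis is just $\|t\|_\Omega=\max_{\bx\in\Omega}|t(\bx)|$, the assumption of Corollary~\ref{BC2} is exactly condition~\eqref{B4} of Theorem~\ref{BT3} with the choice $K_2:=Bn^\al$. Applying Theorem~\ref{BT3} therefore gives
$$|\Omega|\ \ge\ \frac{|\La(\K,\nu)|}{e}\,e^{Cn/K_2^2}\ =\ \frac{|\La(\K,\nu)|}{e}\,e^{CB^{-2}n^{1-2\al}},$$
where $C=C(K,b)>0$ is the constant produced by Theorem~\ref{BT3}. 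This is precisely the first inequality in the statement, with $C_3:=1/e$.

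For the second inequality one only has to name constants: put $C_1(K,b,B,\al):=1/e$ and $C_2(K,b,B,\al):=CB^{-2}$. Then $C_3|\La(\K,\nu)|e^{CB^{-2}n^{1-2\al}}=C_1(K,b,B,\al)|\La(\K,\nu)|e^{C_2(K,b,B,\al)n^{1-2\al}}$, so the displayed chain of inequalities holds; and since $C$ depends only on $K$ and $b$, the constant $C_2=CB^{-2}$ depends only on $K,b,B$ (and is in fact independent of $\al$). The restriction $\al<1/2$ is used only to guarantee $n^{1-2\al}\to\infty$, so that the right-hand bound is genuinely exponentially large in a positive power of $n$. There is no real obstacle here: the entire content of the corollary is already contained in Theorem~\ref{BT3}, and the argument is a one-line substitution followed by a trivial relabelling of constants; the only thing worth a sentence of care is matching the slightly different norm notation $\|\cdot\|_{\infty,\Omega}=\|\cdot\|_\Omega$ and reducing to the finite-$\Omega$ case.
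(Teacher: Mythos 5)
Your proof is correct and matches the paper's (implicit) argument exactly: Corollary~\ref{BC2} is obtained by substituting $K_2=Bn^\al$ into Theorem~\ref{BT3}, precisely as recorded in Remark~\ref{BR1}, followed by a relabelling of constants. Your added remarks on the finite-$\Omega$ reduction, the norm notation, and the fact that the constant from Theorem~\ref{BT3} actually depends on $K$ and $b$ are all consistent with the paper.
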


In the case $\nu=0$  we have $|\La(\K,\nu)|=n$ and, therefore, Corollary \ref{BC2} with $\al=0$
claims that for $D(\La(\K,0),m)\le B$ we need $m\ge Ce^{cn}$ points for discretization.

 \section{Universal discretization}
\label{ud}

{\bf Universal discretization problem.} This problem is about finding (proving existence) of
a set of points, which is good in the sense of the above Marcinkiewicz-type discretization
for a collection of linear subspaces. We formulate it in an explicit form. Let $\cX_N:= \{X_{N,j}\}_{j=1}^k$ be a collection of linear subspaces $X_{N,j}$ of the $L_q(\Omega)$, $1\le q \le \infty$. We say that a set $\{\xi^\nu \in \Omega, \nu=1,\dots,m\}$ provides {\it universal discretization} for the collection $\cX_N$ if, in the case $1\le q<\infty$, there are two positive constants $C_i(d,q)$, $i=1,2$, such that for each $j\in\{1,\dots,k\}$ and any $f\in X_{N,j}$ we have
\[
C_1(d,q)\|f\|_q^q \le \frac{1}{m} \sum_{\nu=1}^m |f(\xi^\nu)|^q \le C_2(d,q)\|f\|_q^q.
\]
In the case $q=\infty$  for each $j\in\{1,\dots,k\}$ and any $f\in X_{N,j}$ we have
\be\label{1.2u}
C_1(d)\|f\|_\infty \le \max_{1\le\nu\le m} |f(\xi^\nu)| \le  \|f\|_\infty.
\ee

\subsection{Anisotropic trigonometric polynomials}
\label{udsub1}
The problem of universal discretization for some special subspaces of the
trigonometric polynomials was studied in \cite{VT160}. Recall that for   a finite subset $Q$ of $\Z^d$,
$$
\Tr(Q):= \Bigl\{f: f=\sum_{\bk\in Q}c_\bk e^{i(\bk,\bx)},\   \  c_{\mathbf{k}}\in \mathbb{C},\  \  \bk\in Q\Bigr\}.
$$
For $\bs\in\Z^d_+$
define
$$
R(\bs) := \{\bk \in \Z^d :   |k_j| < 2^{s_j}, \quad j=1,\dots,d\}.
$$
Clearly, $R(\bs) = \Pi(\bN)$ with $N_j = 2^{s_j}-1$. Consider the collection $\cC(n,d):= \{\Tr(R(\bs)), \|\bs\|_1=n\}$.
The following result was proved in \cite{VT160}.
\begin{Theorem}\label{udT1}   For every $1\le q\le\infty$ there exists a large enough constant $C(d,q)$, which depends only on $d$ and $q$, such that for any $n\in \N$ there is a set $\xi:=\{\xi^\nu\}_{\nu=1}^m\subset \T^d$, with $m\le C(d,q)2^n$ that provides universal discretization in $L_q$   for the collection $\cC(n,d)$.
\end{Theorem}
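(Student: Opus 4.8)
The plan is to manufacture a single point set by welding together the classical one-box discretizations, exploiting two features of $\cC(n,d)$: it contains only $\binom{n+d-1}{d-1}\asymp_d n^{d-1}$ members, and all of its members have dimension $|R(\bs)|\asymp_d2^n$ and overlap heavily (they share the low frequencies and large blocks near every coordinate subspace). For a fixed $\bs$ with $\|\bs\|_1=n$ the space $\Tr(R(\bs))=\Tr(\Pi(\bN))$, $N_j=2^{s_j}-1$, is discretized in $L_q$ by $\asymp_d2^n$ equispaced nodes — exactly when $q=2$, via $\{2\pi n_j/(2N_j+1)\}$ for $1<q<\infty$, and via the finer grid $P'(\bN)$ of $\nu(4\bN)\le C(d)\dim\Tr(\Pi(\bN))$ nodes for $q\in\{1,\infty\}$. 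The whole difficulty is \emph{universality}: the obvious universal set, the union of these $\binom{n+d-1}{d-1}$ one-box grids, has $\asymp_d n^{d-1}2^n$ points, a spurious factor $n^{d-1}$, since grids adapted to different $\bs$ essentially do not overlap.

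For $q=2$ I would pass to the direct sum. Pick a sufficiently fine equispaced grid $G$ on $\T^d$, $|G|=M$, so that averaging over $G$ is exact on $\Tr(\Pi(\bN_0))$, $\bN_0=(2^n-1,\dots,2^n-1)$, hence on each $\Tr(R(\bs))$. For $\xi^i\in G$ put $v_i^{\bs}:=M^{-1/2}\bigl(e^{i(\bk,\xi^i)}\bigr)_{\bk\in R(\bs)}$ and $V_i:=\bigoplus_{\|\bs\|_1=n}v_i^{\bs}\in\mathbb{C}^{D}$, $D=\sum_{\|\bs\|_1=n}|R(\bs)|$. Then $\sum_iV_iV_i^{\ast}=I_D$, while $V_iV_i^{\ast}$ is block diagonal, so $\|V_iV_i^{\ast}\|=\max_{\bs}\|v_i^{\bs}\|^2=\max_{\bs}|R(\bs)|/M\asymp_d2^n/M$ — the maximum over blocks, not the sum, which is exactly where the factor $n^{d-1}$ disappears. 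Applying the matrix (higher-rank) form of the Marcus--Spielman--Srivastava partitioning argument that underlies Theorem \ref{NOUth} (see \cite{NOU}, \cite{MSS}), iterated $t^{\ast}\asymp\log_2(M/2^n)$ times until the scale matches $\|V_iV_i^{\ast}\|$, yields $J\subset G$ with $|J|\asymp2^{-t^{\ast}}M\asymp_d2^n$ and $\sum_{i\in J}V_iV_i^{\ast}\asymp_d(2^n/M)I_D$; read block by block (note $\mathrm{tr}(V_iV_i^{\ast})=D/M$ for every $i$, so the cancellation $|J|\asymp_d\max_\bs|R(\bs)|$ is forced) this gives $\frac1{|J|}\sum_{i\in J}|f(\xi^i)|^2\asymp_d\|f\|_2^2$ simultaneously for all $f\in\Tr(R(\bs))$ and all $\bs$. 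For general $1\le q<\infty$ the same assembly philosophy applies to the union class $\bigcup_{\|\bs\|_1=n}\{f\in\Tr(R(\bs)):\|f\|_q\le1\}$ via the probabilistic/entropy machinery of \cite{VT158,VT159} (cf.\ Theorems \ref{T4.10} and \ref{T6.1}): each $\Tr(R(\bs))$ is a tame subspace with essentially optimal entropy numbers $\e_k(\Tr(R(\bs))_q,L_\infty)$ (with $d$-dependent constants), and passing to the union of $\binom{n+d-1}{d-1}$ boxes only multiplies covering numbers by that factor, i.e.\ adds $O_d(\log n)$ to the entropy at every scale.

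For $q=\infty$ a random set of $\asymp_d2^n$ points will not do: its dispersion is $\asymp n2^{-n}$, too coarse to meet a box of volume $2^{-n}$. Instead I would take $\xi$ to be a low-discrepancy digital net in a base $b\asymp d$, of cardinality $m=|\xi|\asymp_d2^n$, chosen so that $\xi$ meets every elementary box $\prod_j[a_j2^{-(s_j+c_0)},(a_j+1)2^{-(s_j+c_0)})$ of volume $\asymp_d2^{-n}$ — a single net resolving \emph{all} aspect ratios $\bs$ with $\|\bs\|_1=n$ at once. If $\|f\|_\infty=|f(\bx)|$ for $f\in\Tr(R(\bs))$, then by the box Bernstein inequality $\|\partial_jg\|_\infty\le N_j\|g\|_\infty$ (a special case of Lemma \ref{lem-2-1}) $f$ oscillates by at most $\tfrac12\|f\|_\infty$ over the elementary box through $\bx$ once $c_0=c_0(d)$ is large, so the net point in that box has $|f(\xi^\nu)|\ge\tfrac12\|f\|_\infty$; the reverse inequality $\max_\nu|f(\xi^\nu)|\le\|f\|_\infty$ is automatic.

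The main obstacle — and the recurring theme across all three cases — is the factor $\binom{n+d-1}{d-1}\asymp_d n^{d-1}$ coming from the sheer number of subspaces: the naive fixes (union of one-box grids; a random set controlled by a union bound; a recursion on $d$ tensoring one-dimensional grids; a single rank-one lattice) all leak a factor $n$ per coordinate. Killing it rests on the fact that the complexity of the \emph{joint} discretization problem aggregates over the boxes as a \emph{maximum} rather than a sum — of operator norms for $q=2$, of entropy profiles (plus a negligible $\log(\#\text{boxes})$) for finite $q$, and via the single dispersion property of a digital net for $q=\infty$ — which is the analytic shadow of the heavy overlap of the $\Tr(R(\bs))$ near the coordinate subspaces. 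The points I expect to require the most care are the higher-rank matrix partitioning step for $q=2$ (each grid point must be kept intact, not split among its blocks) and, for $q$ near $1$, checking that the union-chaining estimate still comes out $O_{d,q}(2^n)$ despite the large $L_\infty$-diameter $\asymp2^{n/q}$ of $\Tr(R(\bs))_q$.
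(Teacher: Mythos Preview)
The paper does not give a self-contained proof of this theorem; it quotes the result from \cite{VT160} and then, for the case $q=\infty$, explains the mechanism via Theorems~\ref{udT2}--\ref{udT3}: a point set of cardinality $\asymp_d 2^n$ whose dispersion is $\le C(d)2^{-n}$ (for example, a $(t,r,d)$-net in base $2$, Definition~\ref{udD1}) provides universal $L_\infty$ discretization for $\cC(n,d)$. Your $q=\infty$ argument is exactly this, so on that case you agree with the paper. The essential point you should take from the paper's discussion is that a \emph{single deterministic} net carries the whole collection at once; the factor $\binom{n+d-1}{d-1}$ never appears because the defining property of a $(t,r,d)$-net is already uniform over all dyadic aspect ratios $\bs$ with $\|\bs\|_1=n$.

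For $q=2$ your proposal, as written, contains a real error. With $V_i:=\bigoplus_{\|\bs\|_1=n}v_i^{\bs}\in\mathbb{C}^D$ a concatenated vector, the outer product $V_iV_i^\ast$ is rank one, hence \emph{not} block diagonal, and $\|V_iV_i^\ast\|=\|V_i\|^2=\sum_{\bs}\|v_i^{\bs}\|^2\asymp_d n^{d-1}2^n/M$ --- the sum, not the maximum, so the $n^{d-1}$ does \emph{not} disappear. Moreover $\sum_iV_iV_i^\ast\neq I_D$: the off-diagonal block indexed by $(\bs,\bs')$ has entries $M^{-1}\sum_i e^{i(\bk-\bk',\xi^i)}=\delta_{\bk,\bk'}$, which is $1$ whenever the same frequency $\bk$ lies in $R(\bs)\cap R(\bs')$, and these overlaps are large. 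What you presumably intend is the genuinely block-diagonal $A_i:=\bigoplus_{\bs}v_i^{\bs}(v_i^{\bs})^\ast$; then indeed $\sum_iA_i=I_D$ and $\|A_i\|=\max_{\bs}\|v_i^{\bs}\|^2\asymp_d2^n/M$, but now $A_i$ has rank $\binom{n+d-1}{d-1}$, and the partitioning step you invoke from \cite{MSS,NOU} is stated for rank-one summands. A higher-rank Weaver/paving input would be needed, which is precisely the ``keep each grid point intact'' issue you flag at the end --- so you have identified the obstacle but not resolved it. Your finite-$q$ sketch via union entropy is too vague to assess; the added $O_d(\log n)$ in the entropy does not by itself guarantee the chaining sum stays $O_{d,q}(2^n)$, and you yourself note the diameter issue near $q=1$.

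In short: your $q=\infty$ route coincides with the paper's explanation; your $q=2$ route is a genuinely different idea (spectral sparsification simultaneously across blocks) that could be made to work but not with the objects you wrote down; and the paper's philosophy --- one structured net for all $q$ --- is rather different from your three-construction plan.
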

Theorem \ref{udT1}, basically, solves the universal discretization problem for the collection $\cC(n,d)$. It provides the upper bound  $m\le C(d,q)2^n$ with $2^n$ being of the order of the dimension of each $\Tr(R(\bs))$ from the collection $\cC(n,d)$.
 Obviously, the lower bound for the cardinality of a set, providing the Marcinkiewicz discretization theorem for $\Tr(R(\bs))$ with $\|\bs\|_1=n$, is $\ge C(d)2^n$.

It was observed in \cite{VT160} that the universal discretization problem in $L_\infty$
for the collection $\cC(n,d)$ is, in a certain sense, equivalent to the minimal {\it dispersion} problem. Let us describe this phenomenon in detail. Let $d\ge 2$ and $[0,1)^d$ be the $d$-dimensional unit cube. For $\bx,\by \in [0,1)^d$ with $\bx=(x_1,\dots,x_d)$ and $\by=(y_1,\dots,y_d)$ we write $\bx < \by$ if this inequality holds coordinate-wise. For $\bx<\by$ we write $[\bx,\by)$ for the axis-parallel box $[x_1,y_1)\times\cdots\times[x_d,y_d)$ and define
$$
\cB:= \{[\bx,\by): \bx,\by\in [0,1)^d, \bx<\by\}.
$$
For $n\ge 1$ let $T$ be a set of points in $[0,1)^d$ of cardinality $|T|=n$. The volume of the largest empty (from points of $T$) axis-parallel box, which can be inscribed in $[0,1)^d$, is called the dispersion of $T$:
$$
\text{disp}(T) := \sup_{B\in\cB: B\cap T =\emptyset} vol(B).
$$
An interesting extremal problem is to find (estimate) the minimal dispersion of point sets of fixed cardinality:
$$
\text{disp*}(n,d) := \inf_{T\subset [0,1)^d, |T|=n} \text{disp}(T).
$$
It is known that
\be\label{ud1}
\text{disp*}(n,d) \le C^*(d)/n.
\ee
Inequality (\ref{ud1}) with $C^*(d)=2^{d-1}\prod_{i=1}^{d-1}p_i$, where $p_i$ denotes the $i$th prime number, was proved in \cite{DJ} (see also \cite{RT}). The authors of \cite{DJ} used the Halton-Hammersly set of $n$ points (see \cite{Mat}). Inequality (\ref{ud1}) with $C^*(d)=2^{7d+1}$ was proved in
\cite{AHR}. The authors of \cite{AHR}, following G. Larcher, used the $(t,r,d)$-nets.
\begin{Definition}\label{udD1} A $(t,r,d)$-net (in base $2$) is a set $T$ of $2^r$ points in
$[0,1)^d$ such that each dyadic box $[(a_1-1)2^{-s_1},a_12^{-s_1})\times\cdots\times[(a_d-1)2^{-s_d},a_d2^{-s_d})$, $1\le a_j\le 2^{s_j}$, $j=1,\dots,d$, of volume $2^{t-r}$ contains exactly $2^t$ points of $T$.
\end{Definition}
A construction of such nets for all $d$ and $t\ge Cd$, $r\ge t$  is given in \cite{NX}.
The following conditional theorem, based on the concept of dispersion, was proved in \cite{VT160}.

\begin{Theorem}\label{udT2} Let a set $T$ with cardinality $|T|= 2^r=:m$ have dispersion
satisfying the bound disp$(T) < C(d)2^{-r}$ with some constant $C(d)$. Then there exists
a constant $c(d)\in \N$ such that the set $2\pi T:=\{2\pi\bx: \bx\in T\}$ provides the universal discretization in $L_\infty$ for the collection $\cC(n,d)$ with $n=r-c(d)$.
\end{Theorem}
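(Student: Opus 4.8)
The plan is to check both inequalities in \eqref{1.2u} for the point set $2\pi T=\{2\pi\bx:\bx\in T\}$. The upper bound $\max_{1\le\nu\le m}|f(2\pi\bx^\nu)|\le\|f\|_\infty$ is automatic, so everything reduces to producing a constant $C_1(d)>0$ such that for every $\bs\in\Z^d_+$ with $\|\bs\|_1=n$ and every $f\in\Tr(R(\bs))$ one has $\max_{\bx\in T}|f(2\pi\bx)|\ge C_1(d)\|f\|_\infty$. Normalize so that $\|f\|_\infty=1$ and pick $\bz^*\in[0,1)^d$ with $|f(2\pi\bz^*)|=1$ (possible by continuity and periodicity of $f$). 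It then suffices to find a point of $T$ at which $\bx\mapsto|f(2\pi\bx)|$ is still at least $\tfrac12$.

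First I would fix $\rho=\rho(d):=\tfrac1{4\pi d}\in(0,1)$ and consider the axis-parallel box $B\subset[0,1)^d$ whose $j$-th edge is a half-open subinterval $I_j$ of $[0,1)$ of length $\rho\,2^{-s_j}$ containing $z^*_j$; such an $I_j$ exists in each coordinate because $\rho\,2^{-s_j}<1$. Its volume is
\[
\operatorname{vol}(B)=\prod_{j=1}^d\rho\,2^{-s_j}=\rho^d\,2^{-\|\bs\|_1}=\rho^d\,2^{-n},
\]
which depends on $\bs$ only through $n$; this shape-independence across the collection $\cC(n,d)$ is exactly what will yield \emph{universality}. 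Now put $n=r-c(d)$ and choose $c(d)\in\N$ so large that $\rho^d\,2^{c(d)}\ge C(d)$, where $C(d)$ is the dispersion constant from the hypothesis. Then $\operatorname{vol}(B)=\rho^d\,2^{c(d)-r}\ge C(d)\,2^{-r}>\operatorname{disp}(T)$, so $B$ cannot be empty of points of $T$, and we pick $\bx^\nu\in T\cap B$. Since $x^\nu_j$ and $z^*_j$ lie in the common interval $I_j$ of length $\rho\,2^{-s_j}$, we get $|x^\nu_j-z^*_j|\le\rho\,2^{-s_j}$ for $j=1,\dots,d$.

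The second ingredient is the anisotropic Bernstein inequality: applying the univariate Bernstein inequality in each variable separately gives $\|\partial f/\partial x_j\|_\infty\le(2^{s_j}-1)\|f\|_\infty\le2^{s_j}$ for $f\in\Tr(R(\bs))$ with $\|f\|_\infty=1$. Writing $g(\bx):=f(2\pi\bx)$ and integrating $\tfrac{d}{dt}g(\gamma(t))$ along the segment $\gamma(t)=\bx^\nu+t(\bz^*-\bx^\nu)$, $t\in[0,1]$, which stays in $[0,1)^d$, one obtains
\be\label{udT2pf}
|f(2\pi\bz^*)-f(2\pi\bx^\nu)|\le 2\pi\sum_{j=1}^d\Bl\|\frac{\partial f}{\partial x_j}\Br\|_\infty|x^\nu_j-z^*_j|\le 2\pi\sum_{j=1}^d 2^{s_j}\cdot\rho\,2^{-s_j}=2\pi d\,\rho=\tfrac12 .
\ee
Hence $|f(2\pi\bx^\nu)|\ge|f(2\pi\bz^*)|-\tfrac12=\tfrac12$, which is the required lower bound with $C_1(d)=\tfrac12$. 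Together with the trivial upper bound this establishes \eqref{1.2u} for the whole collection $\cC(n,d)$, proving the theorem with $c(d)=\bigl\lceil\log_2\!\bigl(C(d)(4\pi d)^d\bigr)\bigr\rceil$.

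I do not expect a serious obstacle. The proof is short once one sees the duality between the dispersion of $T$ — a single scalar that measures how well axis-parallel boxes of \emph{any} prescribed anisotropic shape can be made to trap an arbitrary point of the cube — and the anisotropic Bernstein inequality, which controls the oscillation of an $f\in\Tr(R(\bs))$ precisely over boxes of the reciprocal shape (edge $\asymp 2^{-s_j}$ in the $j$-th direction). The one step that requires care is the volume bookkeeping forcing $\operatorname{vol}(B)=\rho^d\,2^{-n}>\operatorname{disp}(T)$ simultaneously for every $\bs$ with $\|\bs\|_1=n$; this is where the hypothesis $\operatorname{disp}(T)<C(d)2^{-r}$ enters and where the additive loss $n=r-c(d)$ is incurred.
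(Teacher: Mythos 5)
Your proof is correct, and the paper itself does not reproduce a proof of Theorem \ref{udT2} (it defers to \cite{VT160}); the argument there is the same one you give: place an anisotropic box of edge lengths $\asymp 2^{-s_j}$ (hence volume $\asymp 2^{-n}$, independent of the shape $\bs$) around a maximum point, use the dispersion bound to find a point of $T$ inside it, and control the oscillation of $f$ over that box by the coordinatewise Bernstein inequality. Your volume bookkeeping, the choice $c(d)=\lceil\log_2(C(d)(4\pi d)^d)\rceil$, and the resulting constant $C_1(d)=1/2$ are all in order.
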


The following Theorem \ref{udT3} (see \cite{VT160}) can be seen as an inverse to Theorem~\ref{udT2}.

 \begin{Theorem}\label{udT3}  Assume that $T\subset [0,1)^d$ is such that the set $2\pi T$ provides universal discretization in $L_\infty$ for the collection
 $\cC(n,d)$ with a constant $C_1(d)$ (see (\ref{1.2u})). Then there exists a positive constant $C(d)$ such that  disp$(T) \le C(d)2^{-n}$.
 \end{Theorem}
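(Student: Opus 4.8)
The plan is to prove the contrapositive in the form of a localization argument: if $T$ admitted an empty axis-parallel box $B$ of volume exceeding $C(d)2^{-n}$, I would build a trigonometric polynomial $f$ lying in one of the subspaces $\Tr(R(\bs))$, $\|\bs\|_1=n$, of the collection $\cC(n,d)$, which is strongly concentrated near the center of $B$: $\|f\|_\infty=1$ while $|f|$ is below a prescribed tiny level on all of $2\pi T$. Since $T$ avoids $B$, this would violate the left inequality in the universal discretization property (\ref{1.2u}) for $\cC(n,d)$, and hence no such box can exist, i.e. $\text{disp}(T)\le C(d)2^{-n}$.

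For the construction, suppose $B=\prod_{j=1}^d[x_j,y_j)\subset[0,1)^d$ with $B\cap T=\emptyset$ and $\prod_j\ell_j>C(d)2^{-n}$, where $\ell_j:=y_j-x_j$; set $C(d):=(2L)^d$ for a constant $L=L(d)\ge 2$ to be chosen. Let $\mathbf c$ be the center of $B$. For each $j$ let $s_j\ge 0$ be the least integer with $L2^{-s_j}\le \ell_j$ (so $M_j':=2^{s_j}\ge L/\ell_j$). Then $s_j\le\log_2(L/\ell_j)+1$, so $\sum_j s_j<d\log_2 L+d-\log_2\prod_j\ell_j<n$ by the choice of $C(d)$; I then put $s_1':=s_1+\bigl(n-\sum_j s_j\bigr)$ and $s_j':=s_j$ for $j\ge 2$, so that $\bs:=(s_1',\dots,s_d')$ satisfies $\|\bs\|_1=n$ and $\Tr(R(\bs))\in\cC(n,d)$. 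With $M_j:=2^{s_j'}\ge L/\ell_j$ I take the tensor product of normalized Fej\'er kernels,
\[
f(\bx):=\prod_{j=1}^d M_j^{-1}F_{M_j}(x_j-2\pi c_j),\qquad F_M(\theta):=\sum_{|k|<M}\left(1-\tfrac{|k|}{M}\right)e^{ik\theta}.
\]
Since $F_{M_j}$ has frequencies in $[-(M_j-1),M_j-1]$ and $M_j=2^{s_j'}$, we have $f\in\Tr(R(\bs))$; and since $0\le F_M\le F_M(0)=M$, each factor lies in $[0,1]$ and equals $1$ at $x_j=2\pi c_j$, so $\|f\|_\infty=f(2\pi\mathbf c)=1$.

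The estimate on $2\pi T$ is where the real content sits. Fix $\mathbf t\in T$; since $2\pi\mathbf t\notin 2\pi B$, there is a coordinate $j$ with $2\pi t_j\notin[2\pi x_j,2\pi y_j)=[2\pi c_j-\pi\ell_j,2\pi c_j+\pi\ell_j)$, and because $\ell_j\le 1$ this forces the distance on $\T$ from $2\pi t_j$ to $2\pi c_j$ to be at least $\pi\ell_j$. Using the elementary bound $F_M(\theta)\le \pi^2M^{-1}\theta^{-2}$ for the circle distance $|\theta|\le\pi$ and $M_j\ge L/\ell_j$, the $j$-th factor of $f(2\pi\mathbf t)$ is at most $\pi^2M_j^{-2}(\pi\ell_j)^{-2}=(M_j\ell_j)^{-2}\le L^{-2}$, while all other factors are $\le1$; hence $\max_{\mathbf t\in T}|f(2\pi\mathbf t)|\le L^{-2}$. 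Choosing $L:=\lceil 2/\sqrt{C_1(d)}\rceil$ gives $L^{-2}<C_1(d)=C_1(d)\|f\|_\infty$, contradicting (\ref{1.2u}) for this $f$. Thus $\text{disp}(T)\le C(d)2^{-n}$ with $C(d)=(2L(d))^d$.

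The main obstacle is getting enough spatial localization from a polynomial of the allowed degree: a bare Fej\'er kernel of degree $M_j\sim 1/\ell_j$ has peak $M_j$ and is still of order $M_j$ at the scale $\theta\sim\ell_j$ that matters, so it gains nothing. The trick is to over-resolve by a fixed factor, taking $M_j\approx L/\ell_j$; this costs only an additive $d\log_2 L$ in $\sum_j s_j$ (hence a multiplicative $(2L)^d$ in $C(d)$) but drops the off-peak value at scale $\ell_j$ from $\sim M_j$ to $\sim M_j/L^2$, exactly the gain needed to undercut the fixed constant $C_1(d)$. One must also make sure the argument survives boxes $B$ with long sides and the wrap-around on $\T$; this is handled by centering the kernels at the true center $\mathbf c$ of $B$ and using the circle distance together with $\ell_j\le1$. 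Everything else — the frequency-support check, the counting estimate $\sum_j s_j<n$, and topping up $\bs$ to $\|\bs\|_1=n$ — is routine.
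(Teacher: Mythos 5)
Your proof is correct, and it is essentially the standard argument: the paper itself does not reproduce a proof of Theorem \ref{udT3} (it only cites \cite{VT160}), and the argument there is likewise a localization by tensor products of normalized Fej\'er kernels centered in a putative large empty box. All the delicate points are handled properly in your write-up: the over-resolution factor $L$ that buys the decay $(M_j\ell_j)^{-2}\le L^{-2}$ below $C_1(d)$, the integer budget $\sum_j s_j<n$ forced by $\prod_j\ell_j>(2L)^d2^{-n}$ together with the top-up to $\|\bs\|_1=n$, and the wrap-around estimate $\mathrm{dist}_{\T}(2\pi t_j,2\pi c_j)\ge\pi\ell_j$, which indeed follows from $[c_j-\ell_j/2,c_j+\ell_j/2)\subset[0,1)$.
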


 \subsection{Arbitrary trigonometric polynomials}
 \label{udsub2}
 For $n\in \N$ denote $\Pi_n :=\Pi(\bN)\cap \Z^d$ with $\bN =(2^{n-1}-1,\dots,2^{n-1}-1)$, where, as above, $\Pi(\bN) := [-N_1,N_1]\times\cdots\times[-N_d,N_d]$.
 Then $|\Pi_n| = (2^n-1)^d <2^{dn}$. Let $v\in\N$ and $v\le |\Pi_n|$. Consider
 $$
 \cS(v,n):= \{Q\subset \Pi_n : |Q|=v\}.
 $$
 Then it is easy to see that
 \[
 |\cS(v,n)| =\binom{|\Pi_n|}{v}<2^{dnv}.
 \]

 We are interested in solving the following problem of universal discretization.
 For a given $\cS(v,n)$ and $q\in [1,\infty)$ find a condition on $m$ such that there exists a set
 $\xi = \{\xi^\nu\}_{\nu=1}^m$ with the property: for any $Q\in \cS(v,n)$ and each
 $f\in \Tr(Q)$ we have
 \[
 C_1(q,d)\|f\|_q^q \le \frac{1}{m}\sum_{\nu=1}^m |f(\xi^\nu)|^q \le C_2(q,d)\|f\|^q_q.
 \]
 We present results for $q=2$ and $q=1$.

 {\bf The case $q=2$.} We begin with a general construction. Let $X_N = \sp(u_1,\dots,u_N)$, where $\{u_j\}_{j=1}^N$ is a real orthonormal system on $\T^d$.
With each $\bx\in\T^d$ we associate the matrix $G(\bx) := [u_i(\bx)u_j(\bx)]_{i,j=1}^N$. Clearly, $G(\bx)$ is a symmetric matrix. For a set of points $\xi^k\in \T^d$, $k=1,\dots,m$, and $f=\sum_{i=1}^N b_iu_i$ we have
$$
\frac{1}{m}\sum_{k=1}^m f(\xi^k)^2 - \int_{\mathbb{T}^d}  f(x)^2 d\mu = {\mathbf b}^T\left(\frac{1}{m}\sum_{k=1}^m G(\xi^k)-I\right){\mathbf b},
$$
where ${\mathbf b} = (b_1,\dots,b_N)^T$ is the column vector. Therefore,
\[
\left|\frac{1}{m}\sum_{k=1}^m f(\xi^k)^2 - \int_{\mathbb{T}^d} f(x)^2 d\mu \right| \le
\left\|\frac{1}{m}\sum_{k=1}^m G(\xi^k)-I\right\|\|{\mathbf b}\|_2^2.
\]
We recall that
the system $\{u_j\}_{j=1}^N$ satisfies
{Condition~{\bf E}} (see~\eqref{ud5}) if there exists a constant $t$ such that
$$
w(x):=\sum_{i=1}^N u_i(x)^2 \le Nt^2.
$$
Let points $\bx^k$, $k=1,\dots,m$, be independent uniformly distributed on $\T^d$ random variables. Then with a help of deep results on random matrices (see Theorem~\ref{T5.3} or~\cite[Theorem 1.1]{Tro12}) it was proved in~\cite{VT159} that
\[
\bP\left\{\left\|\sum_{k=1}^m (G(\mathbf{x}^k)-I) \right\|\ge m\eta\right\} \le N\exp\left(-\frac{m\eta^2}{ct^2N}\right)
\]
with an absolute constant $c$.
Consider real trigonometric polynomials from the collection $\cS(v,n)$. Using the union bound for the probability we get that the probability of the event
$$
\left\|\sum_{k=1}^m (G_Q(\mathbf{x}^k)-I) \right\|\le m\eta \quad\text{for all}\quad Q\in \cS(v,n)
$$
is bounded from below by
$$
1- |\cS(v,n)|v\exp\left(-\frac{m\eta^2}{cv}\right).
$$
For any fixed $\eta\in(0,1/2]$ the above number is positive provided $m \ge C(d)\eta^{-2}v^2n$ with large enough $C(d)$.
The above argument proves the following result.

\begin{Theorem}\label{udT4} There exist three positive constants $C_i(d)$, $i=1,2,3$,
such that for any $n,v\in\N$ and $v\le |\Pi_n|$ there is a set $\xi =\{\xi^\nu\}_{\nu=1}^m \subset \T^d$, with $m\le C_1(d)v^2n$, which provides universal discretization
in $L_2$ for the collection $\cS(v,n)$: for any $f\in \cup_{Q\in \cS(v,n)} \Tr(Q)$
$$
C_2(d)\|f\|_2^2 \le \frac{1}{m} \sum_{\nu=1}^m |f(\xi^\nu)|^2 \le C_3(d)\|f\|_2^2.
$$
\end{Theorem}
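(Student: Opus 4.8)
The plan is to follow the random-matrix route to $L_2$ discretization used for Theorem~\ref{T5.4} and to make it universal over $\cS(v,n)$ by a union bound. Fix $Q\in\cS(v,n)$ and let $\{u_i\}_{i=1}^N$ be the real orthonormal system spanning $\Tr(Q)$, so that $N\asymp v$. I would first record the matrix identity displayed just before the theorem: for any nodes $\xi^1,\dots,\xi^m\in\T^d$ and any $f=\sum_i b_iu_i\in\Tr(Q)$,
$$
\frac1m\sum_{k=1}^m f(\xi^k)^2-\|f\|_2^2={\mathbf b}^T\Bigl(\frac1m\sum_{k=1}^m G_Q(\xi^k)-I\Bigr){\mathbf b},\qquad G_Q(\bx):=[u_i(\bx)u_j(\bx)]_{i,j=1}^N,
$$
so it suffices to bound $\bigl\|\frac1m\sum_k G_Q(\xi^k)-I\bigr\|$ in the spectral norm by $1/2$ for all $Q\in\cS(v,n)$ simultaneously.

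Next I would invoke the matrix Chernoff bound. Since the trigonometric system is uniformly bounded, $\Tr(Q)$ satisfies Condition~{\bf E} with an absolute constant $t$ (indeed $w(\bx)\equiv N$ here). Taking $\bx^1,\dots,\bx^m$ i.i.d.\ uniform on $\T^d$ and setting $T_k:=G_Q(\bx^k)$, these matrices are independent, positive semidefinite, each of rank one with $\lambda_{\max}(T_k)=w(\bx^k)\le Nt^2=:R$, and $\bE T_k=I$ by orthonormality, so $\sum_k\bE T_k=mI$. Applying Theorem~\ref{T5.3} and estimating the two Chernoff functions in the usual way gives, as in~\cite{VT159}, for any fixed $\eta\in(0,1/2]$,
$$
\bP\Bigl\{\Bigl\|\sum_{k=1}^m\bigl(G_Q(\bx^k)-I\bigr)\Bigr\|\ge m\eta\Bigr\}\le N\exp\Bigl(-\frac{m\eta^2}{ct^2N}\Bigr)\le Cv\exp\Bigl(-\frac{c'\eta^2 m}{v}\Bigr)
$$
with absolute constants $c,c',C$.

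Finally I would union-bound over the collection. There are $|\cS(v,n)|=\binom{|\Pi_n|}{v}<2^{dnv}$ choices of $Q$, so the probability that $\bigl\|\frac1m\sum_k G_Q(\bx^k)-I\bigr\|\le\eta$ fails for some $Q\in\cS(v,n)$ is at most
$$
|\cS(v,n)|\cdot Cv\exp\Bigl(-\frac{c'\eta^2 m}{v}\Bigr)<2^{dnv}\cdot Cv\exp\Bigl(-\frac{c'\eta^2 m}{v}\Bigr).
$$
Choosing $\eta=1/2$ and $m\ge C_1(d)v^2n$ with $C_1(d)$ large enough that $\frac{c'm}{4v}>(dn\ln2)v+\ln(Cv)$ makes this quantity $<1$; hence some realization $\xi=\{\xi^\nu\}_{\nu=1}^m$ works simultaneously for every $Q\in\cS(v,n)$. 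Feeding $\eta=1/2$ back into the quadratic-form identity yields $\tfrac12\|f\|_2^2\le\frac1m\sum_\nu f(\xi^\nu)^2\le\tfrac32\|f\|_2^2$ for all $f\in\bigcup_{Q\in\cS(v,n)}\Tr(Q)$, which is the assertion with $C_2(d)=1/2$, $C_3(d)=3/2$.

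The heavy analytic input sits entirely inside Tropp's inequality (Theorem~\ref{T5.3}), so I do not expect a genuine obstacle. The two points requiring care are verifying Condition~{\bf E} with an absolute $t$ uniformly in $Q$ (immediate from $|e^{i(\bk,\bx)}|=1$), and the bookkeeping in the exponent of the union bound — in particular checking that the $\ln(Cv)$ term is dominated by the $v^2n$ budget, so that the final node count is the clean $m\le C_1(d)v^2n$ rather than $m\lesssim v^2n+v\log v$.
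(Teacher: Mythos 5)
Your proposal is correct and follows essentially the same route as the paper: the quadratic-form identity reducing discretization to a spectral-norm bound on $\frac1m\sum_k G_Q(\bx^k)-I$, the matrix Chernoff tail bound from Theorem~\ref{T5.3} (as in \cite{VT159}) under Condition~{\bf E}, and a union bound over the $|\cS(v,n)|<2^{dnv}$ subsets, with $m\ge C_1(d)v^2n$ absorbing the exponent. The bookkeeping you flag (the $\ln(Cv)$ term being dominated by $v^2n$) indeed goes through, so there is no gap.
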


The classical Marcinkiewicz-type result for $\Tr(\Pi_n)$ provides a universal set $\xi$ with cardinality $m\le C(d)2^{dn}$. Thus, Theorem \ref{udT4} gives a non-trivial result
for $v$ satisfying $v^2n\le C(d)2^{dn}$.

{\bf Case $q=1$.} Similar to the case $q=2$ a result on the universal discretization
for the collection $\cS(v,n)$ will be derived from the probabilistic result on the Marcinkiewicz-type theorem for $\Tr(Q)$, $Q\subset  \Pi_n$. However, the probabilistic technique used in the case of $q=1$ is different from the probabilistic technique used in the case $q=2$. The proof of Theorem 3.1 from \cite{VT159} gives the following result.

\begin{Theorem}\label{udT5} Let points $\bx^j\in\T^d$, $j=1,\dots,m$, be independently and uniformly distributed on $\T^d$. There exist positive constants $C_1(d)$, $C_2$, $C_3$, and $\kappa\in (0,1)$ such that for any $Q\subset \Pi_n$ and $m \ge yC_1(d)|Q|n^{7/2}$, $y\ge 1$,
$$
\bP\left\{\text{For any}\quad f\in\Tr(Q), \quad C_2\|f\|_1 \le \frac{1}{m}\sum_{j=1}^m |f(\bx^j)| \le C_3\|f\|_1\right\} \ge 1-\kappa^y.
$$
\end{Theorem}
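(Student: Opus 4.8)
The plan is to follow, almost verbatim, the proof of Theorem~3.1 of~\cite{VT159} — the $L_1$ Marcinkiewicz-type discretization theorem for $\Tr(Q)$, $Q\subset\Pi_n$ — and simply to record the probability with which the random nodes work. Write $N:=|Q|$ and, for i.i.d.\ uniformly distributed $\bx^1,\dots,\bx^m$ on $\T^d$, set
$$Z_m:=\sup_{f\in\Tr(Q),\,\|f\|_1\le1}\Bl|\f1m\sum_{j=1}^m|f(\bx^j)|-\|f\|_1\Br|.$$
By homogeneity of the norm, on the event $\{Z_m\le\tfrac12\}$ one has $\tfrac12\|f\|_1\le\tfrac1m\sum_j|f(\bx^j)|\le\tfrac32\|f\|_1$ for every $f\in\Tr(Q)$, so it suffices to prove $\bP\{Z_m>\tfrac12\}\le\kappa^y$; then $C_2=\tfrac12$, $C_3=\tfrac32$, and $\kappa$ can be taken absolute.

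First I would bound $\bE Z_m$. For $f\in\Tr(Q)$ with $\|f\|_1\le1$ one has $\|f\|_\infty\le N$ and $\|f\|_2\le N^{1/2}$ (from $\|f\|_2^2\le\|f\|_\infty\|f\|_1$ and $\|f\|_\infty\le N^{1/2}\|f\|_2$), so the class $\{\,|f|\,\}$ consists of bounded functions of small variance. Feeding the entropy estimate~\eqref{6.2} for $\e_k(\Tr(Q)_1,L_\infty)$ (valid for all $d$ and all $Q$, as established in~\cite{VT159}) into the chaining technique of~\cite{KoTe} (see also~\cite{Tbook2}, Ch.~4), together with a Bernstein-type concentration inequality of~\cite{BLM} at each level of the chain, yields $\bE Z_m\le C(d)\sqrt{Nn^{7/2}/m}$ — the factor $n^{3/2}$ coming from~\eqref{6.2} and an extra $n^2$ from the chaining. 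Hence there is $C_1(d)$ with $\bE Z_m\le\tfrac18$ for all $m\ge C_1(d)Nn^{7/2}$.

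Second, I would concentrate $Z_m$ about its mean. Since the functions $|f|$, $\|f\|_1\le1$, satisfy $\||f|\|_\infty\le N$ and $\bE|f|^2=\|f\|_2^2\le N$, the Bernstein/Talagrand-type concentration inequality for suprema of empirical processes (see~\cite{BLM}) gives, for $0<t\le1$ and using $\bE Z_m\le\tfrac18$,
$$\bP\{Z_m\ge\bE Z_m+t\}\le\exp\Bl(-\f{c\,m\,t^2}{N}\Br)$$
with an absolute constant $c>0$. Taking $t=\tfrac38$ we obtain $\bP\{Z_m>\tfrac12\}\le\exp(-c'm/N)$ with $c'>0$ absolute; for $m\ge yC_1(d)Nn^{7/2}$ and $n\ge1$ this is at most $\exp(-c'C_1(d)y)$, and after enlarging $C_1(d)$ so that $c'C_1(d)\ge\ln2$ we get $\bP\{Z_m>\tfrac12\}\le2^{-y}$, i.e.\ the claim with $\kappa=\tfrac12$.

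The main obstacle is the first step: obtaining the sharp dependence $\bE Z_m\ll\sqrt{Nn^{7/2}/m}$ rather than the much weaker bound a crude Dudley integral in the $L_\infty$-metric would produce (which would force $m\gg N^2$). This is exactly where the refined chaining of~\cite{KoTe} — which exploits that the increments $|g|-|h|$ of the process have variance controlled by $\|g-h\|_2$ and not merely by $\|g-h\|_\infty$ — together with the general-$Q$ entropy estimate~\eqref{6.2} of~\cite{VT159} is essential. Once $\bE Z_m$ is under control, the passage to the exponential tail $\kappa^y$ is routine, since the concentration exponent is linear in $m$, hence in $y$; in particular no splitting into independent blocks is used (that would only give the weaker bound $1-y\kappa$).
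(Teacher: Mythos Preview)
Your proposal is correct and follows exactly the approach the paper indicates: the paper simply states that ``the proof of Theorem~3.1 from~\cite{VT159} gives the following result,'' and you have accurately reconstructed that argument --- bounding $\bE Z_m$ via the entropy estimate~\eqref{6.2} and the chaining of~\cite{KoTe}, then applying the Bernstein-type concentration of~\cite{BLM} to extract the exponential tail $\kappa^y$ from the fact that the concentration exponent is linear in $m/N$, hence in $y$. The constants $C_2=\tfrac12$, $C_3=\tfrac32$ and an absolute $\kappa$ (e.g.\ $\kappa=\tfrac12$ after enlarging $C_1(d)$) are exactly what the proof in~\cite{VT159} yields.
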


Therefore, using the union bound for probability we obtain the Marcinkiewicz-type inequalities
for all $Q\in \cS(v,n)$ with probability at least $1-|\cS(v,n)|\kappa^y$. Choosing $y= y(v,n):= C(d)vn$ with large enough $C(d)$ we get
$$
1-|\cS(v,n)|\kappa^{y(v,n)}>0.
$$
This argument implies the following result on universality in $L_1$.

\begin{Theorem}\label{udT6} There exist three positive constants $C_1(d)$, $C_2$, $C_3$,
such that for any $n,v\in\N$ and $v\le |\Pi_n|$ there is a set $\xi =\{\xi^\nu\}_{\nu=1}^m \subset \T^d$, with $m\le C_1(d)v^2n^{9/2}$, which provides universal discretization
in $L_1$ for the collection $\cS(v,n)$: for any $f\in \cup_{Q\in \cS(v,n)} \Tr(Q)$
$$
C_2\|f\|_1 \le \frac{1}{m} \sum_{\nu=1}^m |f(\xi^\nu)| \le C_3\|f\|_1.
$$
\end{Theorem}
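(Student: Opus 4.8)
The plan is to derive Theorem~\ref{udT6} from the single‑subspace probabilistic statement of Theorem~\ref{udT5} by a union bound over the collection $\cS(v,n)$. First I would record the two inputs needed: every $Q\in\cS(v,n)$ has $|Q|=v$, and $|\cS(v,n)|=\binom{|\Pi_n|}{v}<2^{dnv}$. I would then take $\bx^1,\dots,\bx^m\in\T^d$ independent and uniformly distributed, with $m$ to be specified, and for each $Q\in\cS(v,n)$ apply Theorem~\ref{udT5} to this $Q$ with the parameter $y=y(v,n):=C(d)vn$, where $C(d)$ is a large constant to be fixed at the end. Since $|Q|=v$, the hypothesis $m\ge yC_1(d)|Q|n^{7/2}$ of Theorem~\ref{udT5} becomes $m\ge C_1(d)C(d)\,v^2n^{9/2}$, so it suffices to take $m=\lceil C_1(d)C(d)v^2n^{9/2}\rceil$, which is of the claimed form $m\le C_1(d)v^2n^{9/2}$ after renaming the constant.

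For each fixed $Q$, Theorem~\ref{udT5} says that the \emph{bad} event $B_Q$ — that some $f\in\Tr(Q)$ violates $C_2\|f\|_1\le\frac1m\sum_{j=1}^m|f(\bx^j)|\le C_3\|f\|_1$ — has probability $\bP(B_Q)\le\kappa^{y(v,n)}$. Here the constants $C_2,C_3$ and the base $\kappa\in(0,1)$ are absolute, independent of $Q$, so the two‑sided inequality we ask for is literally the same requirement in every subspace of the collection. By the union bound,
$$
\bP\Bigl(\bigcup_{Q\in\cS(v,n)}B_Q\Bigr)\le|\cS(v,n)|\,\kappa^{y(v,n)}<2^{dnv}\kappa^{C(d)vn}=2^{-(C(d)\log_2(1/\kappa)-d)vn}.
$$
Choosing $C(d)>d/\log_2(1/\kappa)$ makes the exponent negative, so this probability is strictly less than $1$ (indeed it tends to $0$). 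Hence with positive probability none of the events $B_Q$ occurs; any realization $\xi=\{\xi^\nu\}_{\nu=1}^m$ of $\{\bx^j\}$ avoiding all the $B_Q$ provides universal discretization in $L_1$ for $\cS(v,n)$ with the constants $C_2,C_3$ from Theorem~\ref{udT5}, which is exactly the assertion of Theorem~\ref{udT6}.

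I do not expect a genuine obstacle here: all the analytic difficulty — the Bernstein‑type concentration inequality, the chaining argument, and the entropy estimates responsible for the $n^{7/2}$ factor — is already packaged inside Theorem~\ref{udT5}. The only points requiring care are (i) that the free parameter $y$ must be allowed to grow with $v$ and $n$ (Theorem~\ref{udT5} only imposes $y\ge1$), so that the exponential decay $\kappa^{y}$ outpaces the combinatorial count $|\cS(v,n)|\le2^{dnv}$; and (ii) the bookkeeping of the resulting $m\asymp v^2n^{9/2}$, one factor $vn$ coming from the choice of $y$ and the factor $vn^{7/2}$ from the per‑subspace requirement of Theorem~\ref{udT5}.
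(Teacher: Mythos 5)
Your proposal is correct and follows essentially the same route as the paper: apply Theorem~\ref{udT5} to each $Q\in\cS(v,n)$ with $y=C(d)vn$, use the union bound together with $|\cS(v,n)|<2^{dnv}$ to make the failure probability less than $1$, and read off $m\asymp v^2n^{9/2}$ from the hypothesis $m\ge yC_1(d)|Q|n^{7/2}$. The bookkeeping of the two factors ($vn$ from the choice of $y$ and $vn^{7/2}$ from the per-subspace requirement) matches the paper exactly.
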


The classical Marcinkiewicz-type result for $\Tr(\Pi_n)$ provides a universal set $\xi$ with cardinality $m\le C(d)2^{dn}$. Thus, Theorem \ref{udT6} gives a non-trivial result
for $v$ satisfying $v^2n^{9/2}\le C(d)2^{dn}$.

\section{Open problems}
\label{OP}

We collect a number of open problems in this section. Probably, some of them are
rather simple and others are very difficult. By listing these problems we want to illustrate
that there are many interesting and important directions to go.

\subsection{Exact}

Results of Section \ref{Ex} solve the problem of exact weighted discretization.
The problem of exact discretization, that is the problem with equal weights $1/m$,
is still open.

\begin{description}
  \item[Open problem 1.]Find necessary and sufficient conditions on $X_N$ for
$X_N \in \cM(c(d)N^2,2,0)$.
\end{description}

Theorem 4.4 from \cite{VT158} gives the following relation for the trigonometric polynomials $\Tr(Q)$ with frequencies from $Q\subset \Z^d$, satisfying some extra conditions,
\be\label{2.2}
\Tr(Q) \in \cM(c(d)|Q|^2,2,0).
\ee
Results of Subsection \ref{Ex--} show that (\ref{2.2}) cannot be improved by replacing $|Q|^2$ by a slower growing function on $|Q|$.

\begin{description}
  \item[Open problem 2.]
Does (\ref{2.2}) hold for all $Q$?
\end{description}

\begin{description}
  \item[Open problem 3 (conjecture).]
For a real subspace $X_N\subset L_2(\Omega,\mu)$ define
$$
m(X_N,w) := \min\{m: \, X_N \in \cM^w(m,2,0)\}.
$$
Let $m=m(X_N,w)$ and let $\{\xi^\nu\}$, $\{\la_\nu\}$, $\nu=1,\dots,m$, be such that for any $f\in X_N$ we have
$$
\int_{\Omega}f^2d\mu = \sum_{\nu=1}^m\la_\nu f(\xi^\nu)^2.
$$
Then $\la_\nu>0$, $\nu=1,\dots,m$.
\end{description}

\subsection{$\cM(m,q)$, $1\le q\le \infty$}
\label{M}

For the trigonometric polynomials the problem is basically solved in the case $q=2$ (see Theorem \ref{NOUth} above and Theorem 1.1 from \cite{VT158}):
\be\label{3.1}
\Tr(Q) \in \cM(c(d)|Q|,2).
\ee

\begin{description}
  \item[Open problem 4.]
Find conditions (necessary and sufficient) for $\Tr(Q) \in \cM(m,q)$ in the case $q\in [1,\infty]\setminus 2$.
\end{description}

Here is a particular case of open problem 4, which is of special interest.

\begin{description}
  \item[Open problem 5.]
Prove open problem 4 for $\Tr(Q_n)$ -- the set of trigonometric polynomials with frequencies from a step hyperbolic cross $Q_n$.
\end{description}

A very interesting and very difficult problem is an analog of open problem~4 for general
subspaces $X_N$:

\begin{description}
  \item[Open problem 6.]
Find conditions (necessary and sufficient) for $X_N \in \cM(m,q)$ in the case $q\in [1,\infty]$. This problem includes conditions on both $X_N$ and $m$.
\end{description}

All the above problems, especially in the case of general $X_N$, are of interest for
$\cM^w(m,q)$. Open problem 6 contains interesting subproblems. We discuss some of them.

\begin{description}
  \item[Open problem 6a.]
 Let $\Omega:=[0,1]^d$ be a unit $d$-dimensional cube and $\mu$ be a probability measure on $\Omega$. Take $q\in [1,\infty)$. Is the following statement true? There exists $C(d,q)$ such that for any $N$-dimensional
 subspace $X_N \subset L_q(\Omega,\mu)$ we have $X_N\in \cM(C(d,q)N,q)$.
 \end{description}

 \begin{description}
  \item[Open problem 6b.]
 Let $\Omega:=[0,1]^d$ be a unit $d$-dimensional cube and $\mu$ be a probability measure on $\Omega$. Take $q\in [1,\infty)$. Is the following statement true? There exists $C(d,q)$ such that for any $N$-dimensional
 subspace $X_N \subset L_q(\Omega,\mu)$ we have $X_N\in \cM^w(C(d,q)N,q)$.
 \end{description}

It turns out that results for the Marcinkiewicz discretization problems in $L_q$, $1\le q<\infty$ and in $L_\infty$ 
are different. We demonstrate this phenomenon on the above Open problems~6a and~6b. In analogy with 
Open problems~6a and~6b one could formulate the following version of them in the case of $L_\infty$. 

 \begin{description}
  \item[Open problem 6c.]
 Let $\Omega:=[0,1]^d$ be a unit $d$-dimensional cube.  Is the following statement true? There exists $C(d)$ such that for any $N$-dimensional
 subspace $X_N \subset L_\infty(\Omega)$ of continuous functions we have $X_N\in \cM(C(d)N,\infty)$.
 \end{description}

Open problem 6c is actually not an open problem. The negative answer to this problem follows from the first part of Theorem \ref{ITmain}. Moreover, the answer is negative even if
we restrict ourselves to subspaces $\Tr(Q)$ of trigonometric polynomials.   The reader can find results in this paper, which give
partial progress in Open problems 6a and 6b. The most progress is made in case $q=2$. In case $q=2$ and $\mu$ is a discrete measure concentrated on $\Omega_M=\{x^j\}_{j=1}^M$ with $\mu(x^j)=1/M$, $j=1,\dots,M$, the answer to Open problem 6b is positive. This follows directly from (\ref{C2'}). It is clear that it can be generalized for many other probability measures $\mu$, for instance, for the Lebesgue measure on $\Omega$. It is likely that the answer to the Open problem 6b is "yes". Probably, the best progress in Open problem 6b for arbitrary $\mu$
is given in Theorem \ref{CT5'}. Certainly, the above two open problems are of interest in the case of trigonometric polynomials as well. We formulate them explicitly.

\begin{description}
  \item[Open problem 6at.]
 Let $\Omega:=\T^d$ and $q\in [1,\infty)$. Is the following statement true? There exists $C(d,q)$ such that for any $N$-dimensional
 subspace $X_N = \Tr(Q)$ we have $X_N\in \cM(C(d,q)N,q)$.
 \end{description}

 \begin{description}
  \item[Open problem 6bt.]
 Let $\Omega:=\T^d$ and $q\in [1,\infty)$. Is the following statement true? There exists $C(d,q)$ such that for any $N$-dimensional
 subspace $X_N = \Tr(Q)$ we have $X_N\in \cM^w(C(d,q)N,q)$.
 \end{description}

 Theorem \ref{NOUth} gives a positive answer to Open problems 6at and 6bt in the case $q=2$. In all other cases of $q$ we do not have an answer.

\begin{description}
  \item[Open problem 7.]
 In the case $q=\infty$ there is  the Kashin-Temlyakov phenomenon,
which says that for $\Tr(Q_n) \in \cM(m,\infty)$ it is necessary to have $m\ge c(d)|Q_n|^{1+c}$, $c>0$. Is it true that for all dimensions $\Tr(Q_n) \in \cM(m,\infty)$ provided $m\ge C(d)|Q_n|^2$?
\end{description}

The following is a weaker form of open problem 7.
\begin{description}
  \item[Open problem 8.] Theorem~\ref{thm-2-1} shows
that
$$
\Tr(Q_n) \in \cM(C_d2^{n\alpha_d}n^{\beta_d},\infty)
$$
with $\alpha_d \asymp \ln d$.
Does there exist an absolute constant $c$ such that
$$
\Tr(Q_n) \in \cM(C_d2^{cn},\infty) ?
$$
\end{description}



Assume that $X_N =\sp\{u_1(x),\dots,u_N(x)\}$ where
$\{u_i(x)\}_{i=1}^N$ is a real orthonormal  system on $\Omega$. The condition {\bf E} (see (\ref{ud5})) is a typical sufficient condition for some results.
For instance, let $\Omega_M=\{x^j\}_{j=1}^M$ be a discrete set with the probability measure $\mu(x^j)=1/M$, $j=1,\dots,M$.  Then it is known (Rudelson for $\Omega_M$, see \cite{VT159} for general $\Omega$) that
\be\label{3.3}
X_N \in \cM(CN\log N, 2).
\ee

It would be interesting to understand how important condition {\bf E} is for the Marcinkiewicz-type discretization theorems.

\subsection{$\cM^w(m,q)$, $1\le q\le \infty$}
\label{Mw}

For $q=2$ there is a strong result from \cite{BSS} (see a discussion in Subsection \ref{sec2.1} and at the end of Section 6 of \cite{VT159})
\be\label{3.4}
X_N(\Omega_M) \in \cM^w(m,2,\epsilon)\quad \text{provided} \quad m \ge CN\epsilon^{-2}
\ee
with large enough $C$.


\begin{description}
  \item[Open problem 9.]
  For which $X_N$ we have different conditions on $m$ for
$X_N\in\cM(m,q)$ and $X_N\in\cM^w(m,q)$?
\end{description}

\subsection{Constructive proofs}

Theorem \ref{gfT1} establishes the following inclusion for even positive integers $q$
$$
X_N \in \cM(M(N,q),q,0).
$$
The proof of Theorem \ref{gfT1} is not constructive. In Subsection \ref{sec2.4} we give a constructive proof of Theorem \ref{gfT1} in case $q=2$.

\begin{description}
  \item[Open problem 10.]Give a constructive proof of Theorem \ref{gfT1} for all even positive integers $q$.
  \end{description}

We pointed out in Section \ref{survey} that the main technique used for proving the Marcinkiewicz-type discretization theorems is a probabilistic technique.
\begin{description}
  \item[Open problem 11.] Give a constructive proof of Theorem \ref{NOUth}.
  \item[Open problem 12.]  Give a constructive proof of Theorem \ref{T6.1}.
  \item[Open problem 13.]  Give a constructive proof of Theorem \ref{T5.4}.
\end{description}

 \vskip 1.0cm

\newpage

\Addresses
\end{document}